\titleformat{\paragraph}[runin]
{\normalfont\normalsize\bfseries}{\theparagraph}{1em}{}
\titleformat{\subparagraph}[runin]
{\normalfont\normalsize\bfseries}{\thesubparagraph}{1em}{}
\titlespacing*{\chapter} {0pt}{50pt}{40pt}
\titlespacing*{\section} {30pt}{3.5ex plus 1ex minus .2ex}{2.3ex plus .2ex}
\titlespacing*{\subsection} {0pt}{3.25ex plus 1ex minus .2ex}{1.5ex plus .2ex}
\titlespacing*{\subsubsection}{0pt}{3.25ex plus 1ex minus .2ex}{1.5ex plus .2ex}
\titlespacing*{\paragraph} {0pt}{3.25ex plus 1ex minus .2ex}{1em}
\titlespacing*{\subparagraph} {\parindent}{3.25ex plus 1ex minus .2ex}{1em}
\newcommand{\hs}{\hspace{-0.5pt}}
\def\xcirc{\hs\circ\hs}
\newtheorem{theorem}{Theorem}[section]
\newtheorem{lemma}[theorem]{Lemma}
\newtheorem{proposition}[theorem]{Proposition}
\newtheorem{corollary}[theorem]{Corollary}
\theoremstyle{definition}
\newtheorem{definition}[theorem]{Definition}
\newtheorem{example}[theorem]{Example}
\theoremstyle{remark}
\newtheorem{remark}[theorem]{Remark}
\DeclareMathOperator{\scc}{SCC}
\newcommand{\Coalg}{\textbf{Coalg}}
\DeclareMathOperator{\ord}{ord}
\DeclareMathOperator{\ide}{id}
\DeclareMathOperator{\End}{End}
\newcommand{\ov}{\overline}
\newcommand{\ot}{\otimes}
\newcommand{\dpu}{\mathbin{:}}
\begin{document}
\title[Cocommutative $q$-cycle coalgebra structures]{Cocommutative $q$-cycle coalgebra structures on the dual of the truncated polynomial algebra}

\author{Jorge A. Guccione}
\address{Departamento de Matem\'atica\\ Facultad de Ciencias Exactas y Naturales-UBA, Pabell\'on~1-Ciudad Universitaria\\ Intendente Guiraldes 2160 (C1428EGA) Buenos Aires, Argentina.}
\address{Instituto de Investigaciones Matem\'aticas ``Luis A. Santal\'o"\\ Facultad de Ciencias Exactas y Natu\-ra\-les-UBA, Pabell\'on~1-Ciudad Universitaria\\ Intendente Guiraldes 2160 (C1428EGA) Buenos Aires, Argentina.}
\email{vander@dm.uba.ar}

\author{Juan J. Guccione}
\address{Departamento de Matem\'atica\\ Facultad de Ciencias Exactas y Naturales-UBA\\ Pabell\'on~1-Ciudad Universitaria\\ Intendente Guiraldes 2160 (C1428EGA) Buenos Aires, Argentina.}
\address{Instituto Argentino de Matem\'atica-CONICET\\ Savedra 15 3er piso\\ (C1083ACA) Buenos Aires, Argentina.}
\email{jjgucci@dm.uba.ar}

\thanks{Jorge A. Guccione and Juan J. Guccione were supported by UBACyT 20020150100153BA (UBA).}

\author[C. Valqui]{Christian Valqui}
\address{Pontificia Universidad Cat\'olica del Per\'u, Secci\'on Matem\'aticas, PUCP, Av. Universitaria 1801, San Miguel, Lima 32, Per\'u.}
\address{Instituto de Matem\'atica y Ciencias Afines (IMCA) Calle Los Bi\'ologos 245. Urb San C\'esar.
La Molina, Lima 12, Per\'u.}
\email{cvalqui@pucp.edu.pe}

\begin{abstract}
  In order to construct solutions of the braid equation we consider bijective left non-degenerate set-theoretic type solutions,
which correspond to regular q-cycle coalgebras.  We obtain a partial classification of the different q-cycle coalgebra structures on the dual coalgebra of $K[y]/\langle y^n\rangle$, the truncated polynomial algebra. We obtain an interesting family of involutive q-cycle coalgebras which we call Standard Cycle Coalgebras.  They are parameterized by free parameters
$\{p_1,...,p_{n-1}\}$ and in order to verify that they are compatible with the braid equation, we have to
verify that certain differential operators $\partial^j$ on formal power series in two variables  $K[[x, y]]$ satisfy the condition
 $(\partial^j G)_i = (\partial^i G)_j$ for all i, j, where $G$ is a formal power series associated to the given q-cycle coalgebra. It would be interesting
to find out the relation of these operators with the operators given by Yang in the context with 2-dimensional quantum field theories, which was one of the
origins of the Yang-Baxter equation.
\end{abstract}

\maketitle

\tableofcontents

\section*{Introduction}
Since the Yang-Baxter equation first appeared in 1967 in the paper \cite{Y} by Yang, and then in 1972 in the paper \cite{B} by
Baxter, many variations of the Yang-Baxter equation have been constructed by physicists and mathematicians. In the present article we will consider
the approach to the Yang-Baxter equation from the point of view of the braid equation.

Let $V$ be a vector space over a field $k$ and let $s\colon V \ot V \to V \ot V$ be a linear map. We say that $s$ satisfies the braid equation if
\begin{equation}\label{ecuacion de trenzas}
s_{12} \xcirc s_{23} \xcirc s_{12} = s_{23} \xcirc s_{12} \xcirc s_{23},
\end{equation}
where $s_{ij}$ means $s$ acting in the $i$-th and $j$-th components.
In~\cite{D}, Drinfeld raised the question of finding set-theoretical (or combinatorial) solutions; i.e. pairs $(Y,s)$, where $Y$ is a set and
$s\colon Y\times Y\to Y\times Y$ is a map satisfying~\eqref{ecuacion de trenzas}.
	
This approach was first considered by Etingof, Schedler and Soloviev~\cite{ESS} and Gateva-Ivanova and Van den Bergh~\cite{GIVB}, for involutive
solutions, and by Lu, Yan and Zhu~\cite{LYZ}, and Soloviev~\cite{S}, for non-involutive solutions. Now it is known that there are connections between
solutions and affine torsors, Artin-Schelter regular rings, Biberbach groups and groups of $I$-type, Garside structures, Hopf-Galois theory, left
symmetric algebras, etc. (\cites{AGV, DG, De1, De2, DDM, Ch, GI1, GI2, GI3, GI4, GIVB, JO}).

In~\cite{GGVa}, the relationships between set-theoretical solutions, $q$-cycle sets, $q$-braces, skew-braces, matched pairs of groups and invertible
$1$-cocycles considered in the set-theoretical setting in \cites{CJO, ESS, GV, LYZ, R1}, were generalized to the setting of non-cocommutative
coalgebras following the ideas of Rump in~\cite{R1} and~\cite{R2}.

We take the definition of $q$-cycle coalgebra of~\cite{GGVa}, and
analyze the different $q$-cycle coalgebra structures on cocommmutative coalgebras, in particular on $C$, the dual coalgebra of $K[y]/(y^n)$,
the truncated polynomial algebra.
We achieve a partial classification of such structures, in particular we find a family of involutive $q$-cycle coalgebras which we call {\bf S}tandard
{\bf C}ycle {\bf C}oalgebras ($\scc$, see Definition~\ref{SIQ def}).

Denote by $\cdot$ and $:$ the $q$-cycle coalgebra operations on $(C,\frak{p},\frak{d})$, and write
$$
x_i\cdot x_j=\sum_k \frak{p}_{ij}^k x_k\quad\text{and}\quad  x_i : x_j=\sum_k \frak{d}_{ij}^k x_k,
$$
where $\{x_0,x_1,x_2,\dots,x_{n-1}\}$ is the dual basis of  $\{1,y,y^2,\dots,y^{n-1}\}$.
The associated solution of the braid equation is involutive if and only if $\frak{p}=\frak{d}$.
An involutive $q$-cycle algebra is called a cycle coalgebra and in the case of a $\scc$ all the coefficients $\frak{p}_{ij}^k$ depend only on the
values $\frak{p}_{i1}^1$, for $i=1,\dots,n-1$, and the degree of a
$\scc$ is defined as $i_0=\min\{i>0, \frak{p}_{i1}^1\ne 0\}$.
Moreover, in this case the element $x_0$, corresponding to the element $y^0=1$ in the truncated polynomial
algebra, acts as the identity via $\cdot$, i.e., we have $x\cdot x_0=x$ for all $x$. Note that $x_0$ acts as identity via $\cdot$
if and only if $\frak{p}_{j0}^k=\delta_{kj}$ for all $j,k$. One can show, using that $\frak{p}$ is compatible with the comultiplication, that
this is equivalent to $\frak{p}_{j0}^1=\delta_{1j}$ for all $j$.

One of our main results is the construction of a unique $\scc$ of degree $i_0$, starting from the values of the free parameters
$\{\frak{p}_{1i}^1\}_{i=i_0,\dots,n-1}$. For this we construct several differential operators on the ring of formal power series in two variables,
among them certain operators called $\partial^j$, and show that the braid equation is satisfied for $\frak{p}$ if and only if
$(\partial^j G)_i=(\partial^i G)_j$ for all $i,j$, where the subindex denotes the homogeneous component with respect to the
$y$-degree of a formal power series in $K[[x,y]]$, and $G(x,y)=\sum_{i,j} \frak{p}_{ij}^1 x^i y^j$ (See Remark~\ref{remark importante1}).

Another rather surprising result is the fact that if $\frak{p}_{11}^1\ne 0$, then we automatically have a $\scc$ (of degree $1$).
In general, the structure of the $q$-cycle coalgebra depends on the value of $\frak{p}_{10}^1$ which in the case of a $\scc$ is equal to $1$.
On the other end of the classification table we obtain a complete
classification result, if we consider the case when $\frak{p}_{10}^1$ is not a root of the unity of order less than $n$, or
the case when $\frak{d}_{10}^1$ is not a root of the unity of order less than $n$.

\begin{table}[htb] CLASSIFICATION TABLE\vspace{0.5cm}\\
  \begin{tabular}{| p{1.2cm}| p{0.9cm} | l |  l |  l | l | l |}
    \hline
    \multicolumn{5}{|c|}{Conditions} & Results  & Reference\\ \hline
    $\frak{p}_{11}^1\ne 0$ &\multicolumn{4}{|c|}{In this case automatically $\frak{p}=\frak{d}$ and $\frak{p}_{j0}^1=\delta_{1j}$}&CC,$\scc$ &
    Cor. \ref{cor i0 igual a 1}  \\   \hline
    \multirow{8}{1.2cm}{$\frak{p}_{11}^1 = 0$}
    & \multirow{6}{0.9cm}{$\frak{p}=\frak{d}$} & \multirow{2}{1.4cm}{$\frak{p}_{j0}^1=\delta_{1j}$}&
    \multicolumn{2}{|c|}{$\exists j>1, \frak{p}_{1j}^1\ne 0$}
    &  CC,$\scc$ & Cor. \ref{cor i0 mayor que 1} \\ \cline{4-7}
    && &\multicolumn{2}{|c|}{$\forall j>0, \frak{p}_{1j}^1 = 0$}& PR + Ex  & P.~\ref{caso casi SIQ}, Ex.~\ref{ej casi SIQ involutivo}\\ \cline{3-7}
    &   & \multirow{3}{1.4cm}{$\frak{p}_{j0}^1\ne \delta_{1j}$} &$\frak{p}_{10}^1=1$&& Ex & Ex.~\ref{ej casi SIQ involutivo} \\ \cline{4-7}
    &   & & \multirow{2}{1.15cm}{$\frak{p}_{10}^1\ne 1$}&$\exists k<n, (\frak{p}_{10}^1)^k=1$&PR + Ex &T.~\ref{teorema no raiz de la unidad},
    Ex.~\ref{ejemplo involutivo} \\ \cline{5-7}
    &   & &&$\forall k<n, (\frak{p}_{10}^1)^k\ne 1$&CC& Cor. ~\ref{ejemplo completo no raiz de la unidad}\\ \cline{2-7}
    & \multirow{3}{0.9cm}{$\frak{p}\ne \frak{d}$}   & \multicolumn{3}{|c|}{$\exists k_1,k_2<n,\ (\frak{p}_{10}^1)^{k_1}=1,
    (\frak{d}_{10}^1)^{k_2}=1$}&PR + Ex&T.~\ref{teorema no raiz de la unidad}, R.~\ref{simetrico}, Ex.~\ref{ejemplo no involutivo}\\ \cline{3-7}
    &   & \multicolumn{3}{|c|}{$\forall k<n,\ (\frak{p}_{10}^1)^{k}\ne 1$}&CC&Cor. ~\ref{ejemplo completo no raiz de la unidad}\\ \cline{3-7}
    &   & \multicolumn{3}{|c|}{$\forall k<n,\ (\frak{d}_{10}^1)^{k}\ne 1$}&CC&Cor. ~\ref{ejemplo completo no raiz de la unidad},
    R.~\ref{simetrico}\\ \hline
  \end{tabular}\vspace{0.3cm}
  CC=Complete Classification,\quad PR=Partial Results,\quad Ex=Examples
\end{table}

\newpage
\setcounter{equation}{0}
\section{Preliminaries}
Let $C$ be a cocommutative coalgebra and let $s\in\End_{\Coalg}(C^2)$, where $C^2$ denotes the coalgebra $C\ot C$. The maps
$s_1\coloneqq (C\ot\epsilon)\circ s$ and $s_2\coloneqq (\epsilon\ot C) \circ s$ are the unique coalgebra morphisms such that
$s=(s_1\ot s_2)\circ\Delta_{C^2}$. We will write ${}^ab\coloneqq s_1(a\ot b)$ and $a^b\coloneqq s_2(a\ot b)$.

\begin{definition}\label{def de left no degenerado}
  A coalgebra endomorphism $s$ of $C^2$, is called {\em left non-degenerate} if there exists a map $\mathfrak{p}\colon C^2\to C$ such that
  \begin{equation}\label{no deg a izq}
    a^{b_{(1)}}\cdot b_{(2)}=(a\cdot b_{(1)})^{b_{(2)}}=\epsilon(b)a,
  \end{equation}
  where $a\cdot b\coloneqq\mathfrak{p}(a\ot b)$. In the sequel we write $\mathfrak{d}(a\ot b)=a\dpu b\coloneqq {}^{b\cdot a_{(1)}}a_{(2)}$. Note that
  the map $\mathfrak{p}$ is unique and that $\mathfrak{p}$ and $\mathfrak{d}$ are coalgebra morphisms from $C^2$ to $C$.
\end{definition}

\begin{definition}\label{YBE}
  A $K$-linear map $s\colon C^2\to C^2$ is a {\em solution of the braid equation} if
  $$
    s_{12}\circ s_{23}\circ s_{12}=s_{23}\circ s_{12}\circ s_{23},
  $$
  where $s_{12}\coloneqq s\ot C$ and $s_{23}\coloneqq C\ot s$. If $s$ is also a coalgebra endomorphism of $C^2$, then we say that $s$ is
  a {\em set-theoretic type} solution.
\end{definition}

\begin{definition}\label{q cycle coalgebra}
  A triple $\mathcal{C}\coloneqq (C,\mathfrak{p},\mathfrak{d})$, where $\mathfrak{p}$ and $\mathfrak{d}$ are coalgebra morphisms from $C^2$ to $C$,
  is a {\em regular $q$-magma coalgebra over $C$} if there exist coalgebra morphisms $a\ot b\mapsto a^b$ and $a\ot b\mapsto a_b$, from $C^2$ to $C$,
  such that identities~\eqref{no deg a izq} are fulfilled, and
  \begin{equation}\label{reg a derecha}
    (a\dpu b_{(2)})_{b_{(1)}}=a_{b_{(2)}}\dpu b_{(1)}=\epsilon(b)a\quad\text{for all $a,b\in C$,}
  \end{equation}
  where $a\cdot b\coloneqq \mathfrak{p}(a\ot b)$ and $a\dpu b\coloneqq \mathfrak{d}(a\ot b)$. We say that $\mathcal{C}$ is
  a {\em $q$-cycle coalgebra} if and the following conditions hold for all $a,b,c\in C$:
  \begin{enumerate}[itemsep=0.7ex, topsep=1.0ex, label={\arabic*.}]

    \item $(a\cdot b_{(1)})\cdot (c\dpu b_{(2)})=(a\cdot c_{(2)})\cdot (b\cdot c_{(1)})$,

    \item $(a\cdot b_{(1)})\dpu (c\cdot b_{(2)})=(a\dpu c_{(2)})\cdot (b\dpu c_{(1)})$,

    \item $(a\dpu  b_{(1)})\dpu (c\dpu b_{(2)})=(a\dpu c_{(2)})\dpu (b\cdot c_{(1)})$.

  \end{enumerate}

\end{definition}

\begin{remark}
  By~\cite{GGV}*{Remarks~1.12 and 1.15} the maps $a\ot b\mapsto a^b$ and $a\ot b\mapsto a_b$ are unique, and the maps
  $G_p(a\ot b)\coloneqq a\cdot b_{(1)}\ot b_{(2)}$ and $G_d(a\ot b)\coloneqq a\dpu b_{(2)}\ot b_{(1)}$ are bijective.
\end{remark}

\noindent The following result is a particular case of~\cite{GGVa}*{Propositions 2.4, 2.6 and Remark 2.7}

\begin{theorem}\label{soluciones de tipo conjuntista=q-cycle coalgebras}
  If $s\in\End_K(C^2)$ is a bijective left non-degenerate set-theoretic type solution of the braid equation, then
  $\mathcal{C}\coloneqq (C,\mathfrak{p},\mathfrak{d})$, where $\mathfrak{p}$ and $\mathfrak{d}$ are the maps introduced in
  Definition~\ref{def de left no degenerado}, is a regular $q$-cycle coalgebra. Conversely, if $\mathcal{C}\coloneqq (C,\mathfrak{p},\mathfrak{d})$
  is a regular $q$-cycle coalgebra, then the map $s\colon C^2\to C^2$, given by $s(a\ot b)\coloneqq {}^{a_{(1)}}b_{(2)}\ot {a_{(2)}}^{b_{(1)}}$,
  where $a^b$ is as in Definition~\ref{q cycle coalgebra} and ${}^ab\coloneqq b_{(2)}\dpu a^{b_{(1)}}$, is a bijective left non-degenerate
  set-theoretic type solution of the braid equation.
\end{theorem}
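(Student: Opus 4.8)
The statement is, as noted, a specialisation to cocommutative $C$ of \cite{GGVa}*{Propositions 2.4, 2.6 and Remark 2.7}, so the plan is to indicate how the general argument reads in this setting, where the two coactions collapse to the single coproduct $\De_C$ and several bookkeeping conditions become vacuous. The underlying mechanism is the decomposition $s=(s_1\ot s_2)\xcirc\De_{C^2}$: because $\De_{C^2}$, $s_1$, $s_2$ and $\ep$ are coalgebra morphisms, an identity between coalgebra endomorphisms of $C^3$ may be tested on its $C$-valued ``legs'', obtained by post-composing with the three maps that apply $\ep$ to two of the tensor factors. Thus the single operator identity~\eqref{ecuacion de trenzas} is equivalent to a short list of Sweedler identities in $C$, and one must check that this list coincides with conditions~1--3 of Definition~\ref{q cycle coalgebra} together with the regularity relations~\eqref{reg a derecha}.

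For the forward implication, suppose $s$ is a bijective left non-degenerate set-theoretic type solution. Left non-degeneracy provides the unique $\mathfrak p$ of Definition~\ref{def de left no degenerado}; we then set $a\dpu b\coloneqq{}^{b\cdot a_{(1)}}a_{(2)}$, so that $\mathfrak p$ and $\mathfrak d$ are coalgebra morphisms, being composites of $\mathfrak p$, $s_1$ and $\De$. The auxiliary morphism $a\ot b\mapsto a^b$ is just $s_2$; the morphism $a\ot b\mapsto a_b$ and the validity of~\eqref{reg a derecha} are obtained by inverting the defining relation of $\mathfrak d$, which is legitimate because $G_p$ and $G_d$ are bijective (\cite{GGV}*{Remarks~1.12 and 1.15}). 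Finally one expands $s_{12}\xcirc s_{23}\xcirc s_{12}=s_{23}\xcirc s_{12}\xcirc s_{23}$ in Sweedler form, repeatedly simplifying the inner layers by means of~\eqref{no deg a izq}; comparing the resulting legs, and using cocommutativity to commute factors, yields exactly conditions~1--3.

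For the converse, given a regular $q$-cycle coalgebra $(C,\mathfrak p,\mathfrak d)$ we put ${}^ab\coloneqq b_{(2)}\dpu a^{b_{(1)}}$ and $s(a\ot b)\coloneqq{}^{a_{(1)}}b_{(2)}\ot {a_{(2)}}^{b_{(1)}}$. Since $s$ is built from the coalgebra morphisms $a\ot b\mapsto a^b$, $\mathfrak d$ and $\De$ (here cocommutativity of $C$ is used to see that the intermediate transposition is again a coalgebra map), it is a coalgebra endomorphism of $C^2$, and one computes $s_1(a\ot b)={}^ab$ and $s_2(a\ot b)=a^b$, so that~\eqref{no deg a izq} is precisely left non-degeneracy. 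Bijectivity of $s$ is deduced from that of $G_p$ and $G_d$, writing down the inverse explicitly in terms of the ``right'' auxiliary maps $a_b$ and the relations~\eqref{reg a derecha}. It remains to verify the braid equation: expanding both triple composites into Sweedler form and matching the two legs reduces, after reindexing the coproducts, to conditions~1--3 and a consequence of~\eqref{reg a derecha}.

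The main obstacle is the bookkeeping in that last step: the triple composites $s_{12}\xcirc s_{23}\xcirc s_{12}$ and $s_{23}\xcirc s_{12}\xcirc s_{23}$ unfold into expressions with four or five nested coproduct splittings, and collapsing both sides to a common form requires applying cocommutativity and the relations~\eqref{no deg a izq} and~\eqref{reg a derecha} in precisely the right order -- this is where keeping track of Sweedler indices is delicate. A secondary point needing care is the construction of the ``right'' auxiliary maps $a_b$ and the verification that $G_p$ and $G_d$ are bijective, since the very notion of a \emph{regular} $q$-cycle coalgebra, and hence both halves of the equivalence, rests on these being well defined.
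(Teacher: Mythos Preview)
The paper does not give its own proof of this theorem: it is stated as a particular case of \cite{GGVa}*{Propositions~2.4, 2.6 and Remark~2.7} and left at that. Your proposal correctly identifies this and then goes further, sketching how the cited argument specialises to the cocommutative setting; that sketch is accurate in outline and matches the structure of the results being invoked, so there is nothing to compare against and no gap to flag.
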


\begin{definition}\label{cycle coalgebra}
  A pair $\mathcal{C}\coloneqq (C,\mathfrak{p})$ is a {\em regular magma coalgebra over $C$} if $(C,\mathfrak{p},\mathfrak{p})$ is a regular
  $q$-magma coalgebra. We say that $\mathcal{C}$ is a {\em cycle coalgebra} if $(C,\mathfrak{p},\mathfrak{p})$ is a $q$-cycle coalgebra. In
  this case conditions 1, 2 and~3 of Definition~\ref{q cycle coalgebra} reduce to the single condition
  $(a\cdot b_{(1)})\cdot (c\cdot b_{(2)})=(a\cdot c_{(2)})\cdot (b\cdot c_{(1)})$.
\end{definition}

\begin{remark}\label{soluciones involutivas=cycle coalgebras}
  A $q$-cycle coalgebra is a cycle coalgebra if and only if its associated solution
  according to Theorem~\ref{soluciones de tipo conjuntista=q-cycle coalgebras} is involutive.
\end{remark}

\subsection{Morphisms of the dual of the truncated polynomial algebra}
From now on $C$ is the dual  coalgebra of the algebra $K[y]/\langle y^n\rangle$, where $K$ is an algebraically closed field of characteristic~$0$.
So, if $\{x_0,x_1,x_2,\dots,x_{n-1}\}$ is the dual basis of  $\{1,y,y^2,\dots,y^{n-1}\}$, we have
$$
  \Delta(x_i)=\sum_{j+k=i} x_j\otimes x_k\quad\text{and}\quad\epsilon(x_i)=\delta_{i0}.
$$
Let $\mathfrak{p},\mathfrak{d}\colon C^2\to C$ be two maps. Write $\mathfrak{p}(x_i\ot x_j)=\sum_k\mathfrak{p}_{ij}^k x_k$ and
$\mathfrak{d}(x_i\ot x_j)=\sum_k\mathfrak{d}_{ij}^k x_k$. For convenience we set $\mathfrak{d}_{ij}^k=\mathfrak{p}_{ij}^k=0$ for
$k\ge n$, $i<0$ or $j<0$.

\begin{remark}\label{cond para q-magma}
  A direct computation shows that the maps $\mathfrak{p}$ and $\mathfrak{d}$ are coalgebra morphisms if and only if
  $$
    \mathfrak{p}_{ij}^0 = \mathfrak{d}_{ij}^0 =\delta_{0,i+j},\quad \mathfrak{p}_{ij}^k =
    \sum_{\substack{a+b=i\\ c+d=j}}\mathfrak{p}_{ac}^l\mathfrak{p}_{bd}^h \quad\text{and}\quad \mathfrak{d}_{ij}^k=\sum_{\substack{a+b=i\\c+d=j}}
    \mathfrak{d}_{ac}^l\mathfrak{d}_{bd}^h,
  $$
  for all $i,j,k,l,h\ge 0$ such that $l+h=k$.
\end{remark}

In the rest of this subsection we assume that $\mathfrak{p}$ and $\mathfrak{d}$ are coalgebra maps.

\begin{remark}\label{primeras condiciones}
  Since $x_0\otimes x_0$ is the unique group like element in $C^2$ and $x_0$ is the unique group like element in $C$, we have
  $\mathfrak{p}_{00}^j =\mathfrak{d}_{00}^j =\delta_{0j}$.
\end{remark}

\begin{lemma}\label{filtrado}
  We have $\mathfrak{p}_{ij}^k=\mathfrak{d}_{ij}^k=0$ for $k>i+j$.
\end{lemma}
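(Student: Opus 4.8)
The plan is to prove the two vanishing statements $\mathfrak{p}_{ij}^k=0$ and $\mathfrak{d}_{ij}^k=0$ for $k>i+j$ simultaneously by induction on the degree $m:=i+j$, using only the comultiplication-compatibility relations of Remark~\ref{cond para q-magma} and the normalization $\mathfrak{p}_{00}^j=\mathfrak{d}_{00}^j=\delta_{0j}$ of Remark~\ref{primeras condiciones}. Since the relations and the normalization for $\mathfrak{d}$ are literally the same as those for $\mathfrak{p}$, it suffices to carry out the argument for $\mathfrak{p}$. The base case $m=0$ forces $i=j=0$, and then the claim $\mathfrak{p}_{00}^k=0$ for $k\ge 1$ is exactly Remark~\ref{primeras condiciones}.

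For the inductive step, fix $(i,j)$ with $m=i+j\ge 1$ and $k>m$, so in particular $k\ge 2$, and choose a splitting $k=l+h$ with $l,h\ge 1$. Remark~\ref{cond para q-magma} gives $\mathfrak{p}_{ij}^k=\sum_{a+b=i,\,c+d=j}\mathfrak{p}_{ac}^l\mathfrak{p}_{bd}^h$. I would first peel off the two ``extreme'' summands: the quadruple $(a,b,c,d)=(i,0,j,0)$ contributes $\mathfrak{p}_{ij}^l\mathfrak{p}_{00}^h=\mathfrak{p}_{ij}^l\delta_{0h}=0$ because $h\ge 1$, and $(a,b,c,d)=(0,i,0,j)$ contributes $\mathfrak{p}_{00}^l\mathfrak{p}_{ij}^h=\delta_{0l}\mathfrak{p}_{ij}^h=0$ because $l\ge 1$. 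Every other quadruple satisfies $0<a+c<m$ and hence also $0<b+d<m$ (as $(a+c)+(b+d)=m$), so the inductive hypothesis applies to both factors: $\mathfrak{p}_{ac}^l\ne 0$ forces $l\le a+c$ and $\mathfrak{p}_{bd}^h\ne 0$ forces $h\le b+d$, whence $k=l+h\le(a+c)+(b+d)=m$, contradicting $k>m$. Thus every summand vanishes and $\mathfrak{p}_{ij}^k=0$, completing the induction.

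I do not expect a genuine obstacle here. The only point requiring care is that the ``diagonal'' terms $\mathfrak{p}_{ij}^l$ carrying the full indices $i,j$ are not covered by the inductive hypothesis; this is precisely why one must keep both $l$ and $h$ strictly positive, so that their companion factors $\mathfrak{p}_{00}^h$ and $\mathfrak{p}_{00}^l$ annihilate them via Remark~\ref{primeras condiciones} — a splitting such as $l=k$, $h=0$ would not do. The argument for $\mathfrak{d}$ is word for word the same, with $\mathfrak{d}_{00}^j=\delta_{0j}$ in place of $\mathfrak{p}_{00}^j=\delta_{0j}$.
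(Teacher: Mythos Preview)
Your proof is correct and follows essentially the same inductive strategy as the paper: induction on $m=i+j$, with the base case handled by Remark~\ref{primeras condiciones} and the inductive step by the comultiplication relation of Remark~\ref{cond para q-magma}. The only cosmetic difference is that the paper re-expands the coalgebra-morphism condition and reads off the coefficient of $x_1\otimes x_{k-1}$ (which amounts to choosing the split $l=1$, $h=k-1$), whereas you invoke the relation $\mathfrak{p}_{ij}^k=\sum_{a+b=i,\,c+d=j}\mathfrak{p}_{ac}^l\mathfrak{p}_{bd}^h$ directly with an arbitrary split $l,h\ge 1$ and handle the two extreme quadruples explicitly via $\mathfrak{p}_{00}^h=\mathfrak{p}_{00}^l=0$.
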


\begin{proof}
  It suffices to prove it for $\mathfrak{p}$, since the same argument applies to $\mathfrak{d}$. We proceed by induction on $r\coloneqq i+j$. If
  $r=0$, this is true by the previous remark. Assume it is true for $r<r_0$ and take $i,j$ with $i+j=r_0$. On one hand, by inductive hypothesis
  \begin{equation}\label{suma grande}
    (\mathfrak{p}\otimes\mathfrak{p})\circ\Delta(x_i\otimes x_j)=\sum_{k=0}^{n-1}\mathfrak{p}_{ij}^k x_k\otimes
    x_0+x_0\otimes\sum_{k=0}^{n-1}\mathfrak{p}_{ij}^k x_k +\sum_{\substack {a+b=i\\ c+d=j\\ a+c<i+j\\ b+d<
    i+j}}\sum_{l=0}^{a+c}\sum_{h=0}^{b+d}\mathfrak{p}_{ac}^l x_l\otimes\mathfrak{p}_{bd}^h x_h.
  \end{equation}
  On the other hand
  $$
    \Delta\circ\mathfrak{p}(x_i\otimes x_j)=\sum_{k=0}^{n-1}\sum_{l+h=k}\mathfrak{p}_{ij}^k x_l\otimes x_h.
  $$
  Since $\mathfrak{p}$ is a coalgebra map both expressions coincide. But if $k>i+j\ge 1$, then the coefficient of $x_1\ot x_{k-1}$
  in~\eqref{suma grande} equals $0$, because if $x_l\ot x_h$ has non zero coefficient, then $l+h\le a+c+b+d=i+j<k$, and so $\mathfrak{p}_{ij}^k=0$
  for $k>i+j$, concluding the proof.
\end{proof}

\begin{remark}\label{remark grande}
  Let $i,j,l,h<n$ and set $k\coloneqq l+h$. By Remark~\ref{cond para q-magma} and Lemma~\ref{filtrado}, we have
  \begin{equation}\label{eq: comultiplicative map}
    \mathfrak{p}_{ij}^k =\sum_{\substack{a+b=i\\ c+d=j}}\mathfrak{p}_{ac}^l\mathfrak{p}_{bd}^h=\sum_{\substack{a+b=i\\c+d=j\\a+c\ge l\\b+d\ge
    {h}}}\mathfrak{p}_{a c}^l \mathfrak{p}_{bd}^h\quad\text{and}\quad
    \mathfrak{d}_{ij}^k=\sum_{\substack{a+b=i\\c+d=j}}\mathfrak{d}_{ac}^l\mathfrak{d}_{bd}^h=\sum_{\substack{a+b=i\\c+d=j\\a+c\ge l\\
    b+d\ge h}}\mathfrak{d}_{a c}^l \mathfrak{d}_{bd}^h.
  \end{equation}
  From this it follows that
  \begin{equation}\label{producto}
    \mathfrak{p}_{ij}^k=\sum_{\substack{i_1+\dots+i_k=i\\ j_1+\dots+j_k=j\\ i_1+j_1,\dots, i_s+j_s
    \ge 1}}\prod_{s=1}^k\mathfrak{p}_{i_sj_s}^1\quad\text{and} \quad \mathfrak{d}_{ij}^k=\sum_{\substack{i_1+\dots+i_k=i\\ j_1+\dots+j_k=j\\
    i_1+j_1,\dots, i_s+j_s\ge 1}}\prod_{s=1}^k\mathfrak{d}_{i_sj_s}^1.
  \end{equation}
  Consequently,
  \begin{equation}\label{rho_ij^i+j}
    \mathfrak{p}_{ij}^{i+j}=\binom{i+j}{i}(\mathfrak{p}_{10}^1)^i(\mathfrak{p}_{01}^1)^j\quad\text{and}\quad\mathfrak{d}_{ij}^{i+j}=
    \binom{i+j}{i}(\mathfrak{d}_{10}^1)^i (\mathfrak{d}_{01}^1)^j.
  \end{equation}
\end{remark}

\begin{remark}\label{rho_10=0 o rho_01=0}
  By equality~\eqref{rho_ij^i+j}, we have $0=\mathfrak{p}_{n-1,1}^n=n(\mathfrak{p}_{10}^1)^{n-1}\mathfrak{p}_{01}^1$. Thus $\mathfrak{p}_{10}^1=0$ or
  $\mathfrak{p}_{01}^1 =0$. Similarly, $\mathfrak{d}_{10}^1=0$ or $\mathfrak{d}_{01}^1=0$.
\end{remark}

\begin{remark}\label{no degenerado a izquierda}
  The vector space $C^2$ is canonically graded. Moreover, by Lemma~\ref{filtrado}, the maps $G_p$ and $G_d$ are compatible with the associated
  filtration. Their associated graded maps are
  $$
    \tilde G_p(x_i\ot x_j)=\sum_{a+b=j}\mathfrak{p}_{ia}^{i+a} x_{i+a}\ot x_b\quad\text{and}\quad\tilde G_d(x_i\ot
    x_j)=\sum_{a+b=j}\mathfrak{d}_{ia}^{i+a} x_{i+a}\ot x_b.
  $$
  Recall that $(C,\mathfrak{p},\mathfrak{d})$ is a regular $q$-magma coalgebra if and only if $G_p$ and $G_d$ are bijective. By a standard argument
  this happens if and only if $\tilde G_p$ and $\tilde G_d$ are, which occurs if and only if $\mathfrak{p}_{i0}^i\ne 0$ and $\mathfrak{d}_{i0}^i\ne 0$
  for all $i>0$. Hence, by Remark~\ref{rho_10=0 o rho_01=0} we have $\mathfrak{p}_{01}^1=\mathfrak{d}_{01}^1=0$, and so, by
  equalities~\eqref{rho_ij^i+j}
  \begin{equation}\label{exacto se anula}
    \mathfrak{p}_{i0}^i = (\mathfrak{p}_{10}^1)^i,\qquad \mathfrak{d}_{i0}^i = (\mathfrak{d}_{10}^1)^i\qquad\text{and}\qquad
    \mathfrak{p}_{ij}^{i+j}=\mathfrak{d}_{ij}^{i+j}=0\quad\text{if~$j>0$.}
  \end{equation}
\end{remark}

In the rest of this subsection we assume that $(C,\mathfrak{p},\mathfrak{d})$ is a regular $q$-magma coalgebra.

\begin{proposition}\label{lado derecho}
  If $k>i$, then $\mathfrak{p}_{ij}^k=\mathfrak{d}_{ij}^k=0$.
\end{proposition}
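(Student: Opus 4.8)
The plan is to reduce the whole statement to the single family of identities $\mathfrak{p}_{0j}^1=\mathfrak{d}_{0j}^1=0$ for $j\ge 1$, and then prove these by induction on $j$, drawing on the left non-degeneracy identities~\eqref{no deg a izq} and the regularity identities~\eqref{reg a derecha}.

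\emph{Reduction.} By Remark~\ref{primeras condiciones} we already have $\mathfrak{p}_{00}^1=\mathfrak{d}_{00}^1=0$, so it suffices to show $\mathfrak{p}_{0j}^1=\mathfrak{d}_{0j}^1=0$ for $j\ge 1$. Granting this, formula~\eqref{producto} finishes the job: in a summand $\prod_{s=1}^k\mathfrak{p}_{i_sj_s}^1$ of $\mathfrak{p}_{ij}^k$ any factor with $i_s=0$ has $j_s\ge 1$ (because $i_s+j_s\ge 1$), hence vanishes; so a nonzero summand forces $i_s\ge 1$ for all $s$, whence $i=\sum_s i_s\ge k$, and similarly for $\mathfrak{d}$. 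It is convenient to run this as an induction on $j$: assuming the full statement for all $j'<j$, expanding~\eqref{eq: comultiplicative map} with $h=1$, $l=k-1$ and using the inductive hypothesis (plus Lemma~\ref{filtrado} and a short secondary induction on $i$) to kill the interior terms shows that, at level $j$, one has $\mathfrak{p}_{ij}^k=0$ for $k\ge i+2$ and $\mathfrak{p}_{ij}^{i+1}=\mathfrak{p}_{i0}^i\,\mathfrak{p}_{0j}^1$ — so the entire level $j$ collapses onto the single coefficient $\mathfrak{p}_{0j}^1$, and likewise for $\mathfrak{d}$.

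\emph{The key step.} What has to be shown is that the grouplike $x_0$ acts through $\mathfrak{p}$, and through $\mathfrak{d}$, as the counit: $\mathfrak{p}(x_0\ot x_j)=\epsilon(x_j)x_0$ and $\mathfrak{d}(x_0\ot x_j)=\epsilon(x_j)x_0$. Since $x_0$ is grouplike, each of the maps $b\mapsto\mathfrak{p}(x_0\ot b)$, $b\mapsto x_0^{\,b}$, $b\mapsto\mathfrak{d}(x_0\ot b)$ and the analogous map built from $a_b$ is a coalgebra endomorphism of $C$, hence dual to an algebra endomorphism of $K[y]/\langle y^n\rangle$; such an endomorphism is determined by the image of $y$, a power series with zero constant term whose coefficients are exactly the families $\mathfrak{p}_{0j}^1$, $\mathfrak{d}_{0j}^1$, etc. Dualising~\eqref{no deg a izq} at $a=x_0$ shows that the series attached to $\mathfrak{p}(x_0\ot-)$ and to $x_0^{\,(-)}$ are mutually compositional-inverse in one variable; dualising~\eqref{no deg a izq} and~\eqref{reg a derecha} at $b=x_0$ shows moreover that the "first-slot" maps $\mathfrak{p}(-\ot x_0)$, $\mathfrak{d}(-\ot x_0)$ are coalgebra automorphisms of $C$ (re-deriving $\mathfrak{p}_{10}^1\ne 0$, $\mathfrak{d}_{10}^1\ne 0$ from Remark~\ref{no degenerado a izquierda}). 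These inverse-pair relations do not by themselves pin $\mathfrak{p}_{0j}^1$ down; one must feed them back through the coherence built into a regular $q$-magma coalgebra — in particular through the defining relation $a\dpu b={}^{\,b\cdot a_{(1)}}a_{(2)}$ that ties $\mathfrak{d}$ to $\mathfrak{p}$ — so that, together with the inductive hypothesis at levels $<j$, the series attached to $\mathfrak{p}(x_0\ot-)$ and to $\mathfrak{d}(x_0\ot-)$ are forced to be $0$, i.e. $\mathfrak{p}_{0j}^1=\mathfrak{d}_{0j}^1=0$.

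\emph{Main obstacle.} I expect the hard part to be exactly this last closure. The difficulty is that neither the $\mathfrak{p}$-side data $(\mathfrak{p},a^b)$ nor the $\mathfrak{d}$-side data $(\mathfrak{d},a_b)$ is, taken alone, rigid enough to force $\mathfrak{p}_{0j}^1=0$: one has to carry a single induction on $j$ in which $\mathfrak{p}$, $\mathfrak{d}$, both auxiliary operations, and the compatibility between the two sides are used at once. Getting the bookkeeping of that joint induction right is the crux; everything else (the \eqref{producto}-reduction and the level-$j$ collapse via~\eqref{eq: comultiplicative map}) is routine.
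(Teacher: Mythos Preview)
Your reduction is sound: once one knows $\mathfrak{p}_{0j}^1=0$ for all $j\ge 1$, formula~\eqref{producto} (or the iterated use of~\eqref{eq: comultiplicative map} you sketch) indeed kills $\mathfrak{p}_{ij}^k$ for $k>i$. This matches the shape of the paper's argument.

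The gap is in your ``key step.'' Two problems. First, the relation $a\dpu b={}^{\,b\cdot a_{(1)}}a_{(2)}$ you invoke is \emph{not} part of the regular $q$-magma coalgebra axioms (Definition~\ref{q cycle coalgebra}); it comes from Definition~\ref{def de left no degenerado}, where $\mathfrak{d}$ is \emph{derived} from a given $s$. In the present setting there is no operation ${}^ab$, and $\mathfrak{p}$ and $\mathfrak{d}$ are completely independent data, each paired only with its own inverse-type map ($a^b$ for $\mathfrak{p}$ via~\eqref{no deg a izq}, and $a_b$ for $\mathfrak{d}$ via~\eqref{reg a derecha}). There is no cross-coherence to exploit, and indeed the paper proves the $\mathfrak{p}$ case and the $\mathfrak{d}$ case by two disjoint copies of the same argument.

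Second --- and this is the real omission --- you never use the one piece of structure that actually forces $\mathfrak{p}_{0j}^1=0$: the \emph{truncation} of $C$, i.e.\ the convention $\mathfrak{p}_{ij}^k=0$ for $k\ge n$. The non-degeneracy identities~\eqref{no deg a izq} only tell you that the endomorphism $b\mapsto x_0\cdot b$ has a convolution-type inverse; they impose no constraint on the value of $\mathfrak{p}_{0j}^1$ itself. What the paper does is push your ``level-$j$ collapse'' one step further: from~\eqref{eq: comultiplicative map} and the inductive hypothesis one gets the closed formula
\[
\mathfrak{p}_{ij}^{\,i+1}=(i+1)\,\mathfrak{p}_{0j}^{1}\bigl(\mathfrak{p}_{10}^{1}\bigr)^{i}\qquad\text{for all }i,
\]
and then \emph{evaluates at $i=n-1$}: since $\mathfrak{p}_{n-1,j}^{\,n}=0$ and $\mathfrak{p}_{10}^1\ne 0$, this forces $\mathfrak{p}_{0j}^1=0$. (An analogous evaluation at $i=n-h$ handles $\mathfrak{p}_{0j}^h$ for $h>1$.) Without appealing to the finite dimension of $C$ the claim is simply false in the untruncated setting, so any argument that does not use it cannot close.
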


\begin{proof}
  We will prove the result only for $\mathfrak{p}_{ij}^k$, since the result for $\mathfrak{d}_{ij}^k$ follows from the same argument. We proceed by
  induction on $j$. For $j=0$ this is true, since $\mathfrak{p}_{i0}^k=0$ by Lemma~\ref{filtrado}. Assume that $j=1$. If  $k>i+1$, then, again by
  Lemma~\ref{filtrado}, we have $\mathfrak{p}_{i1}^k=0$; while, if $k=i+1$, then~\eqref{exacto se anula} gives $\mathfrak{p}_{i1}^k=0$. Assume that
  $j>1$ and $\mathfrak{p}_{il}^k=0$ for $l<j$ and $k>i$. Set $h\coloneqq k-i$. When $h=j$, then by~\eqref{exacto se anula} we know that
  $\mathfrak{p}_{ij}^k=0$. Fix $0<h<j$ and assume that $\mathfrak{p}_{ij}^{i+h'}=0$  when $h'>h$. By~\eqref{eq: comultiplicative map}, we have
  \begin{equation*}\label{suma de productos}
    \mathfrak{p}_{ij}^{i+h}=\sum_{\substack{a+b=i\\ c+d=j\\ a+c\ge 1\\ b+d\ge i+h-1}}\mathfrak{p}_{a c}^1\mathfrak{p}_{b d}^{i+h-1}
    =\sum_{\substack{0\le a\le i\\ 0\le d\le j\\ a+h-1\le d < a+j}}\mathfrak{p}_{a,j-d}^1\mathfrak{p}_{i-a,d}^{i+h-1},
  \end{equation*}
  where the last equality holds because $a+c\ge 1\Leftrightarrow d < a+j$ and $b+d\ge i+h-1\Leftrightarrow d\ge a+h-1$. Thus,
  $$
    \mathfrak{p}_{ij}^{i+h}=\sum_{h-1\le d < j}\mathfrak{p}_{0,j-d}^1\mathfrak{p}_{i,d}^{i+h-1} +\sum_{\substack{0< a\le i\\ a+h-1\le d\le j}}
    \mathfrak{p}_{a,j-d}^1\mathfrak{p}_{i-a,d}^{i+h-1}.
  $$
  Using now the inductive hypothesis and that $\mathfrak{p}_{ij}^{i+h'}=0$ when $h'>h$, we obtain that
  \begin{equation}\label{inductivo para d mayor que uno}
    \mathfrak{p}_{ij}^{i+h}=\begin{cases}\mathfrak{p}_{10}^1\mathfrak{p}_{i-1,j}^{i+h-1} &\text{if $h>1$,}\\
    \mathfrak{p}_{0j}^1\mathfrak{p}_{i0}^i+\mathfrak{p}_{10}^1 \mathfrak{p}_{i-1,j}^i &\text{if $h=1$.}\end{cases}
  \end{equation}
  Assume that $h>1$. Applying~\eqref{inductivo para d mayor que uno} several times we obtain that
  $\mathfrak{p}_{ij}^{i+h}=\left(\mathfrak{p}^1_{10}\right)^i \mathfrak{p}_{0j}^h$. Since $\mathfrak{p}^1_{10}\ne 0$ and $\mathfrak{p}_{n-h,j}^n=0$,
  this equality implies that $\mathfrak{p}_{0j}^h=0$. Thus $\mathfrak{p}_{ij}^{i+d}=0$ for all $i$. Assume now that $h=1$. Then,
  by~\eqref{rho_ij^i+j} and~\eqref{inductivo para d mayor que uno},
  $$
    \mathfrak{p}_{ij}^{i+1}=\mathfrak{p}_{0j}^1\mathfrak{p}_{i0}^i
    +\mathfrak{p}_{10}^1\mathfrak{p}_{i-1,j}^i=\mathfrak{p}_{0j}^1\bigl(\mathfrak{p}_{10}^1\bigr)^i+ \mathfrak{p}_{10}^1 \mathfrak{p}_{i-1,j}^i.
  $$
  Using this it is easy to check by induction in $i$ that $\mathfrak{p}_{ij}^{i+1}=(i+1)\mathfrak{p}^1_{0j}\left(\mathfrak{p}^1_{10}\right)^i$. Since
  $\mathfrak{p}^1_{10}\ne 0$ and $\mathfrak{p}_{n-1,j}^n=0$, this equality implies that $\mathfrak{p}_{0j}^1=0$. Thus, $\mathfrak{p}_{ij}^{i+1}=0$
  for all $i$. This concludes the proof.
\end{proof}

\begin{proposition}\label{formula para d1j1}
  For all $i\ge 1$, we have
  $\mathfrak{p}_{i1}^i=i\mathfrak{p}_{11}^1\left(\mathfrak{p}^1_{10}\right)^{i-1}$ and
  $\mathfrak{d}_{i1}^i=i\mathfrak{d}_{11}^1\left(\mathfrak{d}^1_{10}\right)^{i-1}$.
\end{proposition}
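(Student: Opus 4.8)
The plan is to read off $\mathfrak{p}_{i1}^i$ directly from the product formula~\eqref{producto}, and then copy the argument verbatim for $\mathfrak{d}$. By~\eqref{producto},
$$
\mathfrak{p}_{i1}^i=\sum_{\substack{i_1+\dots+i_i=i\\ j_1+\dots+j_i=1\\ i_s+j_s\ge 1}}\ \prod_{s=1}^i\mathfrak{p}_{i_sj_s}^1 .
$$
Since the $j_s$ are non-negative integers summing to $1$, exactly one of them — call it $j_t$ — equals $1$ and all the others vanish. The constraint $i_s+j_s\ge 1$ then forces $i_s\ge 1$ for each of the $i-1$ indices $s\ne t$; as these $i-1$ entries sum to $i-i_t$, we get $i-i_t\ge i-1$, i.e. $i_t\in\{0,1\}$.

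First I would dispose of the case $i_t=0$: there the factor at position $t$ is $\mathfrak{p}_{01}^1$, which is $0$ by Remark~\ref{no degenerado a izquierda} (this is where regularity of the $q$-magma coalgebra, assumed in this subsection, enters), so all such terms contribute nothing. If instead $i_t=1$, then $\sum_{s\ne t}i_s=i-1$ with $i-1$ summands each $\ge 1$, hence $i_s=1$ for every $s\ne t$; the associated product is $\mathfrak{p}_{11}^1(\mathfrak{p}_{10}^1)^{i-1}$. There are exactly $i$ admissible choices of the distinguished index $t$, each giving this same product, so $\mathfrak{p}_{i1}^i=i\,\mathfrak{p}_{11}^1(\mathfrak{p}_{10}^1)^{i-1}$. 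Repeating the computation with $\mathfrak{d}$ in place of $\mathfrak{p}$, using $\mathfrak{d}_{01}^1=0$, yields the second identity.

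I do not anticipate a real obstacle; the only delicate point is the bookkeeping of which compositions survive, together with the two vanishing facts $\mathfrak{p}_{01}^1=\mathfrak{d}_{01}^1=0$. If a more self-contained argument is preferred, one can instead induct on $i$: for $i=1$ the statement is trivial, and for the inductive step one expands $\mathfrak{p}_{i1}^i$ via~\eqref{eq: comultiplicative map} with $(l,h)=(i-1,1)$, discards the summands killed by $\mathfrak{p}_{01}^1=0$ and by Proposition~\ref{lado derecho} (which annihilates $\mathfrak{p}_{i-2,1}^{i-1}$), and is left with $\mathfrak{p}_{11}^1(\mathfrak{p}_{10}^1)^{i-1}+\mathfrak{p}_{10}^1\,\mathfrak{p}_{i-1,1}^{i-1}$; plugging in the inductive value of $\mathfrak{p}_{i-1,1}^{i-1}$ gives $i\,\mathfrak{p}_{11}^1(\mathfrak{p}_{10}^1)^{i-1}$, and symmetrically for $\mathfrak{d}$.
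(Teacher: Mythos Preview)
Your proposal is correct. Your primary argument reads the answer directly off the product formula~\eqref{producto}, counting the admissible index tuples; the paper instead argues by induction on $i$, specializing~\eqref{eq: comultiplicative map} with $l=1$, $h=i-1$ and using Proposition~\ref{lado derecho} to isolate the two surviving summands $\mathfrak{p}_{10}^1\mathfrak{p}_{i-1,1}^{i-1}+\mathfrak{p}_{11}^1\mathfrak{p}_{i-1,0}^{i-1}$, then invoking $\mathfrak{p}_{i-1,0}^{i-1}=(\mathfrak{p}_{10}^1)^{i-1}$. This is exactly your ``alternative'' route (with the roles of $l$ and $h$ swapped, which is immaterial). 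Your combinatorial reading is a pleasant shortcut---it avoids the recursion entirely---while the paper's inductive version has the small advantage that the vanishing $\mathfrak{p}_{01}^1=0$ need not be invoked separately, since Proposition~\ref{lado derecho} already kills every unwanted term; but both approaches are equally valid and essentially equivalent in content.
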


\begin{proof}
  The case $i=1$ is trivial. Assume that $i>1$ and set $k=i$ and $j=l=1$ in~\eqref{eq: comultiplicative map}. By Proposition~\ref{lado derecho} the
  terms with $(a,b)\ne (i-1,1)$ vanish. So,
  $$
    \mathfrak{p}_{i1}^i=\mathfrak{p}_{10}^1\mathfrak{p}_{i-1,1}^{i-1}+\mathfrak{p}_{11}^1\mathfrak{p}_{i-1,0}^{i-1}=
    \mathfrak{p}_{10}^1\mathfrak{p}_{i-1,1}^{i-1}+ \mathfrak{p}_{11}^1
    \left(\mathfrak{p}_{10}^1\right)^{i-1}=\mathfrak{p}_{10}^1\left(\mathfrak{p}_{i-1,1}^{i-1} +\mathfrak{p}_{11}^1
    \left(\mathfrak{p}_{10}^1\right)^{i-2}\right),
  $$
  where the second equality holds by~\eqref{rho_ij^i+j}. The result for $\mathfrak{p}_{i1}^i$ follows from this formula by induction on $i$. For
  $\mathfrak{d}_{i1}^i$ the result follows mutatis mutandis.
\end{proof}

\begin{remark}\label{resumen}
  By Proposition~\ref{lado derecho} and Remarks~\ref{primeras condiciones} and~\ref{no degenerado a izquierda}, we know
  that $\mathfrak{p}_{ij}^0=\mathfrak{d}_{ij}^0= \delta_{0,i+j}$, $\mathfrak{p}_{10}^1\ne 0$, $\mathfrak{d}_{10}^1\ne 0$,
  $\mathfrak{p}_{i0}^i=(\mathfrak{p}_{10}^1)^i$, $\mathfrak{d}_{i0}^i =(\mathfrak{d}_{10}^1)^i$ and $\mathfrak{p}_{ij}^k =\mathfrak{d}_{ij}^k=0$
  for all $k>i$.
\end{remark}

\section{Solutions for the dual of the truncated polynomial algebra}
\setcounter{equation}{0}
In this section we study left non-degenerate bijective set-theoretic type solutions of the braid equation for the dual $C$ of the truncated
polynomial algebra $K[y]/\langle y^n\rangle$. By Theorem~\ref{soluciones de tipo conjuntista=q-cycle coalgebras}, for this it suffices to study the
regular $q$-cycle coalgebras whose underlying coalgebra is $C$.

\smallskip

Let $(C,\mathfrak{p},\mathfrak{d})$ be a regular $q$-magma coalgebra. Computing items~1, 2 and~3 of Definition~\ref{q cycle coalgebra} at $a=x_i$,
$b=x_j$ and $c=x_k$, and using Proposition~\ref{lado derecho}, we obtain
\begin{align}
  &\sum_{a+b=j}\sum_{h=0}^i\sum_{l=0}^k\sum_{m=0}^h\mathfrak{p}_{ia}^h\mathfrak{d}_{kb}^l\mathfrak{p}_{hl}^m x_m=\sum_{c+d=k} \sum_{h=0}^i
  \sum_{l=0}^j \sum_{m=0}^h \mathfrak{p}_{ic}^h\mathfrak{p}_{jd}^l\mathfrak{p}_{hl}^m x_m,\label{primera}\\
  & \sum_{a+b=j}\sum_{h=0}^i\sum_{l=0}^k\sum_{m=0}^h\mathfrak{p}_{ia}^h\mathfrak{p}_{kb}^l\mathfrak{d}_{hl}^m x_m=\sum_{c+d=k} \sum_{h=0}^i
  \sum_{l=0}^j \sum_{m=0}^h \mathfrak{d}_{ic}^h\mathfrak{d}_{jd}^l\mathfrak{p}_{hl}^m x_m\label{segunda},\\
  &\sum_{a+b=j}\sum_{h=0}^i\sum_{l=0}^k\sum_{m=0}^h\mathfrak{d}_{ia}^h\mathfrak{d}_{kb}^l\mathfrak{d}_{hl}^m x_m=\sum_{c+d=k} \sum_{h=0}^i
  \sum_{l=0}^j \sum_{m=0}^h \mathfrak{d}_{ic}^h\mathfrak{p}_{jd}^l\mathfrak{d}_{hl}^m x_m.\label{tercera}
\end{align}

\begin{proposition}\label{reduccion a m igual a 1}
  The triple $(C,\mathfrak{p},\mathfrak{d})$ is a $q$-cycle coalgebra if and only if the following
  equalities are true for all $i,j,k$:
  \begin{align}
    &\sum_{a+b=j}\sum_{h=0}^i\sum_{l=0}^k\mathfrak{p}_{ia}^h\mathfrak{d}_{kb}^l\mathfrak{p}_{hl}^1=\sum_{c+d=k}\sum_{h=0}^i\sum_{l=0}^j
    \mathfrak{p}_{ic}^h \mathfrak{p}_{jd}^l\mathfrak{p}_{hl}^1,\label{primera en m igual a 1}\\
    &\sum_{a+b=j}\sum_{h=0}^i\sum_{l=0}^k\mathfrak{p}_{ia}^h\mathfrak{p}_{kb}^l\mathfrak{d}_{hl}^1=\sum_{c+d=k}\sum_{h=0}^i\sum_{l=0}^j
    \mathfrak{d}_{ic}^h \mathfrak{d}_{jd}^l\mathfrak{p}_{hl}^1,\label{segunda en m igual a 1}\\
    &\sum_{a+b=j}\sum_{h=0}^i\sum_{l=0}^k\mathfrak{d}_{ia}^h\mathfrak{p}_{kb}^l
    \mathfrak{d}_{hl}^1=\sum_{c+d=k}\sum_{h=0}^i\sum_{l=0}^j\mathfrak{d}_{ic}^h\mathfrak{d}_{jd}^l\mathfrak{d}_{hl}^1.\label{tercera en m igual a 1}
  \end{align}
\end{proposition}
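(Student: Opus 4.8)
The plan is to deduce the full identities \eqref{primera}, \eqref{segunda} and \eqref{tercera} from their $x_1$-coefficients \eqref{primera en m igual a 1}, \eqref{segunda en m igual a 1} and \eqref{tercera en m igual a 1} by an abstract observation about coalgebra morphisms into $C$. One implication is immediate: if $(C,\mathfrak{p},\mathfrak{d})$ is a $q$-cycle coalgebra, then, as computed just above the statement, \eqref{primera}, \eqref{segunda} and \eqref{tercera} hold for all $i,j,k$, and comparing the coefficient of $x_1$ on the two sides of each equality yields \eqref{primera en m igual a 1}, \eqref{segunda en m igual a 1} and \eqref{tercera en m igual a 1}.

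For the converse, I would first recall that, by Definition~\ref{q cycle coalgebra} together with Proposition~\ref{lado derecho}, the triple $(C,\mathfrak{p},\mathfrak{d})$ is a $q$-cycle coalgebra if and only if \eqref{primera}, \eqref{segunda} and \eqref{tercera} hold for all $i,j,k$: these equalities are obtained by evaluating items~1, 2 and~3 of Definition~\ref{q cycle coalgebra} on the basis $\{x_i\ot x_j\ot x_k\}$ of $C\ot C\ot C$, and by linearity that is equivalent to the corresponding identity of maps $C\ot C\ot C\to C$. Next I would observe that in each of items~1, 2 and~3 both sides, regarded as functions of $(a,b,c)$, are built from $\Delta_C$, from flips $C\ot C\to C\ot C$, and from the coalgebra morphisms $\mathfrak{p}$ and $\mathfrak{d}$ by composition and tensor product; hence both sides are coalgebra morphisms $C\ot C\ot C\to C$. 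So it suffices to prove the following fact: a coalgebra morphism $f\colon D\to C$ (for an arbitrary coalgebra $D$) is completely determined by its $x_1$-component, that is, by the linear form $d\mapsto\langle y,f(d)\rangle$ on $D$, where $C^*$ is identified with $K[y]/\langle y^n\rangle$ and $y$ is its canonical generator (so that $\langle y^m,x_i\rangle=\delta_{mi}$).

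To prove this fact, write $f(d)=\sum_{m=0}^{n-1}c_m(d)x_m$, so that $c_m\colon D\to K$ is linear and $c_m(d)=\langle y^m,f(d)\rangle$. Pairing the identity $\Delta_C\xcirc f=(f\ot f)\xcirc\Delta_D$ with $y^p\ot y^q$, and using $\Delta(x_i)=\sum_{p+q=i}x_p\ot x_q$ and $\epsilon(x_i)=\delta_{i0}$, one gets $c_0=\epsilon_D$ and $c_{p+q}(d)=\sum c_p(d_{(1)})c_q(d_{(2)})$ whenever $p+q<n$; iterating, $c_m(d)=\sum c_1(d_{(1)})\cdots c_1(d_{(m)})$ for $1\le m\le n-1$, so every $c_m$ is a function of $c_1$ and of the intrinsic counit $\epsilon_D$. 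Now apply this with $D=C\ot C\ot C$ and $f,g$ the two sides of item~1 (resp.~2, 3): the hypothesis that \eqref{primera en m igual a 1} (resp.~\eqref{segunda en m igual a 1}, \eqref{tercera en m igual a 1}) holds for all $i,j,k$ says exactly that the $x_1$-components of $f$ and $g$ agree on the basis $\{x_i\ot x_j\ot x_k\}$, hence on all of $C\ot C\ot C$; therefore $f=g$, i.e.\ item~1 (resp.~2, 3) holds, and $(C,\mathfrak{p},\mathfrak{d})$ is a $q$-cycle coalgebra. The computations here are routine, and the only point that really has to be identified is that the $x_m$-coefficient functional on $C$ is precisely the $m$-th power $y^m$ of the algebra generator $y$ of $C^*$ — this is what makes $c_{p+q}=\sum c_p(d_{(1)})c_q(d_{(2)})$ hold and thus forces every higher coefficient to be determined by $c_1$; I do not anticipate any genuine obstacle beyond this bookkeeping.
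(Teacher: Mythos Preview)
Your argument is correct and is a genuinely different, more conceptual route than the paper's. The paper proves the equivalence by a direct induction on $m$: it expands $\mathfrak{p}_{hl}^m$ (resp.\ $\mathfrak{d}_{hl}^m$) via the comultiplicativity relation~\eqref{eq: comultiplicative map} as a sum of products $\mathfrak{p}_{h_1l_1}^{m-1}\mathfrak{p}_{h_2l_2}^1$, and then factors the resulting expression for the $x_m$-coefficient of each side of~\eqref{primera} (resp.~\eqref{segunda},~\eqref{tercera}) as a sum over splittings $(i_1,i_2,j_1,j_2,k_1,k_2)$ of $(i,j,k)$, with one factor being the $x_{m-1}$-coefficient at $(i_1,j_1,k_1)$ and the other the $x_1$-coefficient at $(i_2,j_2,k_2)$; the inductive hypothesis then closes the argument. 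Your approach instead identifies the two sides of each item in Definition~\ref{q cycle coalgebra} as coalgebra morphisms $C^3\to C$ (this uses cocommutativity of $C$, so that $\Delta$ and the flips are coalgebra maps) and appeals to the abstract lemma that any coalgebra morphism into $C$ is determined by its $x_1$-component---equivalently, that $C^*=K[y]/\langle y^n\rangle$ is generated as an algebra by $y$. The paper's computation is self-contained at the level of coefficients and makes the mechanism explicit; your version is shorter, explains structurally why the reduction to $m=1$ is possible, and the key lemma is reusable elsewhere (indeed the same principle underlies the paper's Remark~\ref{remark grande}, where all $\mathfrak{p}_{ij}^k$ are recovered from the $\mathfrak{p}_{ij}^1$).
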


\begin{proof} It suffices to show that~\eqref{primera} is equivalent to~\eqref{primera en m igual a 1},~\eqref{segunda} is equivalent
  to~\eqref{segunda en m igual a 1} and~\eqref{tercera} is equivalent to~\eqref{tercera en m igual a 1}. We only prove the first equivalence  since
  the other ones are similar. Clearly equality~\eqref{primera en m igual a 1} holds if and only if the coefficients of $x_1$ at the left and right
  sides of the equal sign in~\eqref{primera} coincide. Assume that $m>1$, the coefficients of $x_{m-1}$ coincide and~\eqref{primera en m igual a 1}
  holds. By~\eqref{eq: comultiplicative map}, the coefficient of $x_m$ on the left hand side of~\eqref{primera} reads
  \allowdisplaybreaks
  \begin{align*}
    \sum_{a+b=j}\sum_{h=0}^i\sum_{l=0}^k\mathfrak{p}_{ia}^h \mathfrak{d}_{kb}^l\mathfrak{p}_{hl}^m
    &=\sum_{a+b=j}\sum_{h=0}^i\sum_{l=0}^k\sum_{\substack{h_1+h_2=h\\ l_1+l_2=l}}\mathfrak{p}_{ia}^h\mathfrak{d}_{kb}^l\mathfrak{p}_{h_1
    l_1}^{m-1}\mathfrak{p}_{h_2 l_2}^1\\
    &=\sum_{\substack{a+b=j\\ 0\le h\le i\\ 0\le l\le k}}\sum_{\substack{h_1+h_2=h\\ l_1+l_2= l}}\raisebox{-1.7ex}{$\begin{pmatrix}\displaystyle
    \sum_{\substack{i_1+i_2=i\\ a_1+a_2=a}}\mathfrak{p}_{i_1 a_1}^{h_1}\mathfrak{p}_{i_2 a_2}^{h_2}\sum_{\substack{k_1+k_2=k\\ b_1+b_2=b}}
    \mathfrak{d}_{k_1 b_1}^{l_1}\mathfrak{d}_{k_2 b_2}^{l_2}\end{pmatrix}$}\mathfrak{p}_{h_1 l_1}^{m-1}\mathfrak{p}_{h_2 l_2}^1\\
    &=\sum_{\substack{i_1+i_2=i\\ j_1+j_2=j\\ k_1+k_2=k}}\sum_{\substack{a_1+b_1=j_1\\ 0\le h_1\le i_1\\ 0\le l_1\le k_1}}\mathfrak{p}_{i_1a_1}^{h_1}
    \mathfrak{d}_{k_1b_1}^{l_1}\mathfrak{p}_{h_1l_1}^{m-1}\sum_{\substack{a_2+b_2=j_2\\ 0\le h_2\le i_2\\ 0\le l_2\le k_2}}
    \mathfrak{p}_{i_2a_2}^{h_2} \mathfrak{d}_{k_2b_2}^{l_2}\mathfrak{p}_{h_2l_2}^1.
  \end{align*}
  On the other hand, again by~\eqref{eq: comultiplicative map}, the coefficient of $x_m$ on the right hand side of~\eqref{primera} reads
  \allowdisplaybreaks
  \begin{align*}
    \sum_{c+d=k}\sum_{h=0}^i\sum_{l=0}^j\mathfrak{p}_{ic}^h \mathfrak{p}_{jd}^l\mathfrak{p}_{hl}^m
    &=\sum_{c+d=k}\sum_{h=0}^i\sum_{l=0}^j\sum_{\substack{h_1+h_2=h\\ l_1+l_2=l}}\mathfrak{p}_{ic}^h\mathfrak{p}_{jd}^l\mathfrak{p}_{h_1
    l_1}^{m-1}\mathfrak{p}_{h_2 l_2}^1 \\
    &=\sum_{\substack{c+d=k\\ 0\le h\le i\\ 0\le l\le j}}\sum_{\substack{h_1+h_2=h\\ l_1+l_2=l}} \raisebox{-1.7ex}{$\begin{pmatrix}
    \displaystyle\sum_{\substack{i_1+i_2=i\\ c_1+c_2=c}}\mathfrak{p}_{i_1 c_1}^{h_1}\mathfrak{p}_{i_2 c_2}^{h_2}\sum_{\substack{j_1+j_2=j\\
    d_1+d_2=d}} \mathfrak{p}_{j_1 d_1}^{l_1}\mathfrak{p}_{j_2 d_2}^{l_2}\end{pmatrix} $}\mathfrak{p}_{h_1 l_1}^{m-1}\mathfrak{p}_{h_2 l_2}^1 \\
    &=\sum_{\substack{i_1+i_2=i\\ j_1+j_2=j\\ k_1+k_2=k}}\sum_{\substack{c_1+d_1=k_1\\ 0\le h_1\le i_1\\ 0\le l_1\le j_1}}\mathfrak{p}_{i_1c_1}^{h_1}
    \mathfrak{p}_{j_1d_1}^{l_1}\mathfrak{p}_{h_1l_1}^{m-1}\sum_{\substack{c_2+d_2=k_2\\ 0\le h_2\le i_2\\ 0\le l_2\le j_2}}
    \mathfrak{p}_{i_2c_2}^{h_2}
    \mathfrak{p}_{j_2d_2}^{l_2}\mathfrak{p}_{h_2l_2}^1.
  \end{align*}
  Hence, by the inductive hypothesis, the coefficients of $x_m$ at the left hand side and the right hand side of~\eqref{primera} coincide.
\end{proof}

\begin{proposition}\label{prop casos principales}
  Let $(C,\mathfrak{p},\mathfrak{d})$ be a regular $q$-cycle coalgebra. Then, one of the following cases necessarily happen:
  $\mathfrak{d}_{10}^1=\mathfrak{p}_{10}^1=1$ or $\mathfrak{d}_{11}^1=\mathfrak{p}_{11}^1=0$.
\end{proposition}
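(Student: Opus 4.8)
The plan is to push everything down to a handful of scalar identities by testing the $q$-cycle axioms of Definition~\ref{q cycle coalgebra} only on the two basis vectors $x_0$ and $x_1$. First I would record the relevant products: $x_0$ is group-like, $x_0\cdot x_1=x_0\dpu x_1=0$ (since $\mathfrak{p}_{01}^k=\mathfrak{d}_{01}^k=0$ for all $k$, by Remark~\ref{cond para q-magma}, equality~\eqref{exacto se anula} in Remark~\ref{no degenerado a izquierda}, and Lemma~\ref{filtrado}), $x_1\cdot x_0=\mathfrak{p}_{10}^1 x_1$, $x_1\dpu x_0=\mathfrak{d}_{10}^1 x_1$, and $x_1\cdot x_1=\mathfrak{p}_{11}^1 x_1$, $x_1\dpu x_1=\mathfrak{d}_{11}^1 x_1$ (the last two because $\mathfrak{p}_{11}^0=\mathfrak{d}_{11}^0=0$ by Remark~\ref{cond para q-magma} and $\mathfrak{p}_{11}^k=\mathfrak{d}_{11}^k=0$ for $k\ge 2$ by Proposition~\ref{lado derecho}). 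Consequently every product appearing in the axioms lies in $Kx_1$, and the axioms turn into equalities of scalars.

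Next I would evaluate item~1 of Definition~\ref{q cycle coalgebra}, $(a\cdot b_{(1)})\cdot(c\dpu b_{(2)})=(a\cdot c_{(2)})\cdot(b\cdot c_{(1)})$, at $(a,b,c)=(x_1,x_1,x_0)$ and then at $(a,b,c)=(x_1,x_0,x_1)$, expanding with $\Delta(x_1)=x_0\ot x_1+x_1\ot x_0$ and $\Delta(x_0)=x_0\ot x_0$. The first choice gives $\mathfrak{p}_{11}^1\mathfrak{p}_{10}^1=(\mathfrak{p}_{10}^1)^2\mathfrak{p}_{11}^1$, hence $\mathfrak{p}_{11}^1(\mathfrak{p}_{10}^1-1)=0$ since $\mathfrak{p}_{10}^1\ne 0$; the second gives $\mathfrak{p}_{10}^1\mathfrak{d}_{10}^1\mathfrak{p}_{11}^1=\mathfrak{p}_{10}^1\mathfrak{p}_{11}^1$, hence $\mathfrak{p}_{11}^1(\mathfrak{d}_{10}^1-1)=0$. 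Running the same two evaluations through item~3, $(a\dpu b_{(1)})\dpu(c\dpu b_{(2)})=(a\dpu c_{(2)})\dpu(b\cdot c_{(1)})$, the analogous computation produces $\mathfrak{d}_{11}^1(\mathfrak{p}_{10}^1-1)=0$ and $\mathfrak{d}_{11}^1(\mathfrak{d}_{10}^1-1)=0$. (Item~2 yields no new information on these triples; equivalently, all four identities can be read off from~\eqref{primera en m igual a 1} and~\eqref{tercera en m igual a 1} specialised to $(i,j,k)=(1,1,0)$ and $(1,1,1)$.)

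The statement then follows by a short case distinction. If $\mathfrak{p}_{11}^1\ne 0$, the first two relations force $\mathfrak{p}_{10}^1=\mathfrak{d}_{10}^1=1$; if $\mathfrak{d}_{11}^1\ne 0$, the last two do the same; and if $\mathfrak{p}_{11}^1=\mathfrak{d}_{11}^1=0$ we are already in the second alternative.

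I do not expect a genuine obstacle here: the argument is essentially Sweedler-notation bookkeeping on two small triples. The only thing to watch is that every product occurring in items~1 and~3 really collapses to a multiple of $x_1$ — which is exactly what Lemma~\ref{filtrado}, Proposition~\ref{lado derecho}, and the vanishing $\mathfrak{p}_{01}^1=\mathfrak{d}_{01}^1=0$ from Remark~\ref{no degenerado a izquierda} guarantee — and that no summand of $\Delta(x_1)$ is dropped when expanding the right-hand sides.
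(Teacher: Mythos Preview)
Your argument is correct and is essentially the paper's own proof: both extract the four scalar relations $\mathfrak{p}_{11}^1(\mathfrak{p}_{10}^1-1)=\mathfrak{p}_{11}^1(\mathfrak{d}_{10}^1-1)=\mathfrak{d}_{11}^1(\mathfrak{p}_{10}^1-1)=\mathfrak{d}_{11}^1(\mathfrak{d}_{10}^1-1)=0$ by specialising items~1 and~3 of Definition~\ref{q cycle coalgebra} (equivalently, \eqref{primera en m igual a 1} and~\eqref{tercera en m igual a 1}) at $(i,j,k)=(1,1,0)$ and $(i,j,k)=(1,0,1)$, and then conclude by the same dichotomy. One small slip: in your parenthetical you wrote $(i,j,k)=(1,1,1)$ where you meant $(1,0,1)$, matching your evaluation at $(a,b,c)=(x_1,x_0,x_1)$.
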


\begin{proof}
  By Remark~\ref{resumen} and equality~\eqref{primera en m igual a 1} with $k=0$ and $i=j=1$, we have $\mathfrak{p}_{11}^1\mathfrak{p}_{10}^1=
  \mathfrak{p}_{10}^1\mathfrak{p}_{10}^1 \mathfrak{p}_{11}^1$. Since $\mathfrak{p}_{10}^1\ne 0$, this implies that $\mathfrak{p}_{11}^1=
  \mathfrak{p}_{10}^1\mathfrak{p}_{11}^1$. Similarly, using equality~\eqref{tercera en m igual a 1} with $k=0$ and $i=j=1$, we obtain that
  $\mathfrak{d}_{11}^1=\mathfrak{d}_{10}^1\mathfrak{d}_{11}^1$; while using equality~\eqref{primera en m igual a 1} with $j=0$ and $i=k=1$ we obtain
  that $\mathfrak{p}_{11}^1=\mathfrak{d}_{10}^1\mathfrak{p}_{11}^1$. Finally from equality~\eqref{tercera en m igual a 1} with $j=0$ and $i=k=1$, it
  follows that $\mathfrak{d}_{11}^1=\mathfrak{p}_{10}^1\mathfrak{d}_{11}^1$. The statement follows immediately from these facts.
\end{proof}

\begin{proposition}\label{d111 igual a p111}
  Let $(C,\mathfrak{p},\mathfrak{d})$ be a regular $q$-cycle coalgebra. Then
  $\mathfrak{p}_{11}^1=\mathfrak{d}_{11}^1$.
\end{proposition}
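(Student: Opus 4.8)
The plan is to peel off a trivial case with Proposition~\ref{prop casos principales} and then to read the equality directly off the lowest-index instance of the $q$-cycle identities. Proposition~\ref{prop casos principales} offers two alternatives: either $\mathfrak{p}_{11}^1=\mathfrak{d}_{11}^1=0$, in which case there is nothing to prove, or $\mathfrak{p}_{10}^1=\mathfrak{d}_{10}^1=1$. So I would assume from now on that $\mathfrak{p}_{10}^1=\mathfrak{d}_{10}^1=1$.

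Under this assumption I would specialize equality~\eqref{primera en m igual a 1} to $i=j=k=1$. The coefficients that can occur are of the form $\mathfrak{p}_{1a}^h$, $\mathfrak{d}_{1b}^l$ and $\mathfrak{p}_{hl}^1$ with all indices in $\{0,1\}$, and by Remarks~\ref{no degenerado a izquierda} and~\ref{resumen} the only non-zero ones among them are $\mathfrak{p}_{10}^1=\mathfrak{d}_{10}^1=1$, $\mathfrak{p}_{11}^1$ and $\mathfrak{d}_{11}^1$ (recall in particular that $\mathfrak{p}_{01}^1=0$). Collecting the two summands with $a+b=1$ on the left-hand side and the two summands with $c+d=1$ on the right-hand side, the identity collapses to $\mathfrak{p}_{11}^1\mathfrak{d}_{11}^1+(\mathfrak{p}_{11}^1)^2=2(\mathfrak{p}_{11}^1)^2$, that is,
\[
  \mathfrak{p}_{11}^1\bigl(\mathfrak{p}_{11}^1-\mathfrak{d}_{11}^1\bigr)=0.
\]
Running the same computation on~\eqref{tercera en m igual a 1} at $i=j=k=1$, which is literally the previous one with $\mathfrak{p}$ and $\mathfrak{d}$ interchanged, yields the symmetric relation $\mathfrak{d}_{11}^1\bigl(\mathfrak{d}_{11}^1-\mathfrak{p}_{11}^1\bigr)=0$.

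To conclude, I would argue by contradiction: if $\mathfrak{p}_{11}^1\ne\mathfrak{d}_{11}^1$, then the first relation forces $\mathfrak{p}_{11}^1=0$ and the second forces $\mathfrak{d}_{11}^1=0$, whence $\mathfrak{p}_{11}^1=\mathfrak{d}_{11}^1$, a contradiction. Hence $\mathfrak{p}_{11}^1=\mathfrak{d}_{11}^1$.

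The algebra here is completely routine, so the real content of the argument is the choice of specialization. The subtlety is that the obvious neighbouring choices carry no new information: both~\eqref{primera en m igual a 1} and~\eqref{tercera en m igual a 1} evaluated at any index triple containing a $0$, and also~\eqref{segunda en m igual a 1} at $i=j=k=1$, just reproduce relations already established in the proof of Proposition~\ref{prop casos principales}; it is exactly the pair~\eqref{primera en m igual a 1}, \eqref{tercera en m igual a 1} at $i=j=k=1$ that couples $\mathfrak{p}_{11}^1$ and $\mathfrak{d}_{11}^1$ in a non-trivial way. Pinning down that specialization is the only (mild) obstacle; everything after it is a one-line computation.
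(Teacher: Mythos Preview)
Your proof is correct and follows exactly the same route as the paper: reduce via Proposition~\ref{prop casos principales} to $\mathfrak{p}_{10}^1=\mathfrak{d}_{10}^1=1$, then specialize~\eqref{primera en m igual a 1} and~\eqref{tercera en m igual a 1} at $i=j=k=1$ to obtain $\mathfrak{p}_{11}^1(\mathfrak{p}_{11}^1-\mathfrak{d}_{11}^1)=0$ and $\mathfrak{d}_{11}^1(\mathfrak{d}_{11}^1-\mathfrak{p}_{11}^1)=0$, and combine. The computations and the final deduction match the paper's proof line for line.
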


\begin{proof}
  By Proposition~\ref{prop casos principales}, we can assume that $\mathfrak{p}_{10}^1=\mathfrak{d}_{10}^1=1$.
  Equality~\eqref{primera en m igual a 1} with $i=j=k=1$ and Remark~\ref{resumen}, gives
  \begin{equation}\label{d111 casi igual a p111}
    0=2\mathfrak{p}_{10}^1\mathfrak{p}_{11}^1\mathfrak{p}_{11}^1-\mathfrak{p}_{10}^1\mathfrak{d}_{11}^1\mathfrak{p}_{11}^1-\mathfrak{p}_{11}^1
    \mathfrak{d}_{10}^1\mathfrak{p}_{11}^1= \mathfrak{p}_{11}^1(\mathfrak{p}_{11}^1-\mathfrak{d}_{11}^1).
  \end{equation}
  A similar computation using equality~\eqref{tercera en m igual a 1} with $i=j=k=1$, gives
  $$
    0=2\mathfrak{d}_{10}^1\mathfrak{d}_{11}^1\mathfrak{d}_{11}^1-\mathfrak{d}_{10}^1\mathfrak{p}_{11}^1\mathfrak{d}_{11}^1-\mathfrak{d}_{11}^1
    \mathfrak{p}_{10}^1\mathfrak{d}_{11}^1 =\mathfrak{d}_{11}^1(\mathfrak{d}_{11}^1-\mathfrak{p}_{11}^1),
  $$
  which, combined with~\eqref{d111 casi igual a p111}, yields the assertion.
\end{proof}

\begin{remark}
  Let $(C,\mathfrak{p},\mathfrak{d})$ be a regular $q$-cycle coalgebra. By Remark~\ref{resumen} and equality~\eqref{primera} with $k=0$ and $m=i$, we
  have $\sum_{l=0}^j\mathfrak{p}_{i0}^i\mathfrak{p}_{j0}^l\mathfrak{p}_{il}^i=\mathfrak{p}_{ij}^i\mathfrak{p}_{i0}^i$, and the same argument
  using~\eqref{tercera} instead~\eqref{primera}, gives $\sum_{l=0}^j\mathfrak{d}_{i0}^i\mathfrak{d}_{j0}^l\mathfrak{d}_{il}^i=
  \mathfrak{d}_{ij}^i\mathfrak{d}_{i0}^i$.	Since $\mathfrak{p}_{i0}^i\ne 0$ and $\mathfrak{d}_{i0}^{0}\ne 0$, we conclude that
  \begin{equation}\label{desarrollo para p0jk}
    \sum_{l=0}^j \mathfrak{p}_{j0}^l\mathfrak{p}_{il}^i=\mathfrak{p}_{ij}^i\qquad\text{and}\qquad\sum_{l=0}^j
    \mathfrak{d}_{j0}^l\mathfrak{d}_{il}^i=\mathfrak{d}_{ij}^i.
  \end{equation}
\end{remark}

\begin{remark}\label{equivalencia de q brazas}
  Let $\lambda\in K^{\times}$  and let $(C,\mathfrak{p},\mathfrak{d})$ be a regular $q$-magma coalgebra. A direct computation proves that
  $(C,\ov{\mathfrak{p}},\ov{\mathfrak{d}})$, where $\ov{\mathfrak{p}}$ and $\ov{\mathfrak{d}}$ are given by
  $\ov{\mathfrak{p}}_{ij}^k\coloneqq \lambda^{k-i-j} \mathfrak{p}_{ij}^k$ and $\ov{\mathfrak{d}}_{ij}^k\coloneqq\lambda^{k-i-j}\mathfrak{d}_{ij}^k$,
   is a regular $q$-magma coalgebra, and that the coalgebra automorphism $f_{\lambda}\colon C\to C$, given by
   $f_{\lambda}(x_i)\coloneqq\lambda^i x_i$,
   is a $q$-magma coalgebra isomorphism from $(C,\mathfrak{p},\mathfrak{d})$ to $(C,\ov{\mathfrak{p}},\ov{\mathfrak{d}})$.
\end{remark}

In the following  sections we will consider different values of the coefficients of $\mathfrak{p}$ and $\mathfrak{d}$ and classify these cases.

\section{Construction of standard cycle coalgebras}
\setcounter{equation}{0}
Let $K$ be an algebraically closed field of characteristic~$0$ and let $C$ be the dual coalgebra of the algebra $K[y]/\langle y^n\rangle$, where
$n\ge 2$. In this section  we will construct a family of cycle coalgebras $(C,\mathfrak{p})$ which we call standard cycle coalgebras. For each
$i,j,k\in \mathds{N}_0$ we consider the equation
\begin{equation}\tag{$E_{ijk}$}\label{E_{ijk}}
  \sum_{a+b=j}\sum_{h=0}^i \sum_{l=0}^k X_{ia}^h X_{kb}^l X_{hl}^1 = \sum_{c+d=k} \sum_{h=0}^i \sum_{l=0}^j X_{ic}^h X_{jd}^l X_{hl}^1.
\end{equation}
Given a family $\{\mathfrak{p}_{uv}^w\}_{u,v,w\in \mathds{N}_0}$ of elements of $K$ we set
$$
  R(i,j,k)\coloneqq \sum_{a+b=j}\sum_{h=0}^i\sum_{l=0}^k\mathfrak{p}^h_{ia}\mathfrak{p}^l_{kb}\mathfrak{p}^1_{hl}\quad\text{for each $i,j,k\in
  \mathds{N}_0$.}
$$
Clearly $(\mathfrak{p}_{uv}^w)_{u,v,w\in \mathds{N}_0}$ satisfies~\eqref{E_{ijk}} if and only if $R(i,j,k) = R(i,k,j)$.

\smallskip

Given $v_0>0$ and a formal series $f=1+\sum_{v\ge v_0} p_v x^v \in K[[x]]$, with $p_{v_0} = 1$, we will construct a family
$\{\mathfrak{p}_{uv}^w\}_{u,v,w\in \mathds{N}_0}$ with $\mathfrak{p}_{1v}^1=p_v$ for all $v\ge v_0$, satisfying~\eqref{E_{ijk}} for all $i,j,k$.
We will define the $\mathfrak{p}_{uv}^w$ in various steps. First we set $\mathfrak{p}_{uv}^0=\delta_{0,u+v}$. Then we define
$$
  g\coloneqq \frac{f(f^{v_0}-1)}{f'}\in K[[x]].
$$
So the equality
\begin{equation}\label{relation f and gi}
  gf'=f(f^{v_0}-1)
\end{equation}
is satisfied. Note that $g\in x+x^2 K[[x]]$. We now define the series $G=\sum_{v\ge 0} g_v(x) y^v\in K[[x]][[y]]$ by requiring that $g_0(x)=x$ and
that its formal partial derivatives $G_x$ and $G_y$ satisfy the identity
\begin{equation}\label{definition of G}
  G_y(x,y) = g(x) f'(y) G_x(x,y)-\ov{f}(y) G_y(x,y),
\end{equation}
where $\ov{f}\coloneqq f-1$. Note that~\eqref{definition of G} is equivalent to
\begin{equation}\label{definition of G por grado}
  (v+1)g_{v+1}= g \sum_{l=0}^{v-v_0+1} (v-l+1)p_{v-l+1} g_l'-\sum_{l=1}^{v-v_0+1}p_{v-l+1}lg_l,
\end{equation}
which together with $g_0=x$ determines $G$. Now we define the coefficients $\mathfrak{p}_{uv}^1$ by
\begin{equation}\label{definition of los p ij}
  G(x,y)=\sum_{u,v} \mathfrak{p}_{uv}^1 x^u y^v.
\end{equation}
Note that $\mathfrak{p}_{uv}^1$ is the coefficient of $x^u$ in $g_v$. Finally we recursively define
\begin{equation}\label{inductivo coproducto1}
  \mathfrak{p}_{uv}^w=\sum_{\substack{u_1+u_2=u \\ v_1+v_2=v}} \mathfrak{p}_{u_1 v_1}^1 \mathfrak{p}_{u_2 v_2}^{w-1}\qquad \text{for $w>1$.}
\end{equation}

\begin{lemma}\label{lema propiedades de G}
  The following equalities hold:
  \begin{enumerate}[itemsep=0.7ex, topsep=1.0ex, label={\arabic*.}]
    \item $g_v=0$ for $0<v<v_0$,

    \item $g_{v_0}=g$,

    \item $G(0,y)=0$,

    \item $G_x(0,y)=f(y)$,

    \item $\mathfrak{p}_{1v}^1=p_v$ for $v\ge v_0$,

    \item $\mathfrak{p}_{uv}^w=0$ for $u<w$.
  \end{enumerate}
\end{lemma}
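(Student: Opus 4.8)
The plan is to prove the six statements in the order listed, reading everything off the two equivalent forms \eqref{definition of G} and \eqref{definition of G por grado} of the defining relation for $G$, together with the remark (recorded just before the lemma) that $g\in x+x^2K[[x]]$, so that $g(0)=0$ and the coefficient of $x$ in $g$ equals $1$.

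Items~1 and~2 come straight from the recursion \eqref{definition of G por grado}. If $v+1<v_0$, then $v-v_0+1<0$, so both sums on the right-hand side are empty and $(v+1)g_{v+1}=0$; since the characteristic is $0$ this gives $g_{v+1}=0$, proving item~1. For item~2 I put $v=v_0-1$: now $v-v_0+1=0$, the second sum is empty, and the first sum contributes only its $l=0$ term, equal to $v_0\,p_{v_0}\,g_0'=v_0$ because $g_0=x$ and $p_{v_0}=1$; hence $v_0g_{v_0}=v_0g$, i.e.\ $g_{v_0}=g$.

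For items~3 and~4 I would instead use the functional identity \eqref{definition of G} and specialize at $x=0$. Since $g(0)=0$, setting $x=0$ in \eqref{definition of G} gives $G_y(0,y)=-\ov f(y)G_y(0,y)$, i.e.\ $f(y)G_y(0,y)=0$; as $f$ is a unit of $K[[y]]$ (constant term $1$) we get $G_y(0,y)=0$, and since $G(0,0)=g_0(0)=0$ we conclude $G(0,y)=0$. For item~4, I differentiate \eqref{definition of G} with respect to $x$ (noting that $f'(y)$ and $\ov f(y)$ do not involve $x$) and then set $x=0$; using $g(0)=0$ and $g'(0)=1$ this collapses to $\phi'=f'\phi-\ov f\,\phi'$, where $\phi(y):=G_x(0,y)$, that is, $f\phi'=f'\phi$. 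Then $(\phi/f)'=0$ in $K[[y]]$, so $\phi/f$ is a constant (here the characteristic $0$ hypothesis enters, since $\ker\partial_y=K$); evaluating at $y=0$ gives $\phi(0)=1=f(0)$, whence $\phi=f$. Item~5 is then just extraction of the coefficient of $x^1$ from $G$: $\mathfrak{p}_{1v}^1$ is the coefficient of $y^v$ in $G_x(0,y)=f(y)$, which equals $p_v$ for $v\ge v_0$.

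For item~6 I would induct on $w$. The case $w=1$ reads $\mathfrak{p}_{0v}^1=g_v(0)=0$, which is item~3. For $w>1$, in each summand $\mathfrak{p}_{u_1v_1}^1\mathfrak{p}_{u_2v_2}^{w-1}$ of \eqref{inductivo coproducto1} the first factor vanishes unless $u_1\ge 1$ (case $w=1$) and the second unless $u_2\ge w-1$ (inductive hypothesis), so a nonzero summand forces $u=u_1+u_2\ge w$; hence $\mathfrak{p}_{uv}^w=0$ whenever $u<w$. I expect the only item requiring more than routine bookkeeping to be item~4, where one must differentiate the formal identity \eqref{definition of G} correctly and then invoke that in characteristic $0$ a formal power series with vanishing $y$-derivative is constant; the remaining items are mechanical manipulations of the two recursions, and the arguments are uniform in $v_0>0$ (for small $v_0$, e.g.\ $v_0=1$, item~1 is simply vacuous).
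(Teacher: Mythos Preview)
Your proof is correct and follows essentially the same approach as the paper's: items~1, 2, 5 and~6 are handled identically, and for item~4 both you and the paper differentiate \eqref{definition of G} in $x$, evaluate at $x=0$, and solve the resulting ODE $f\phi'=f'\phi$ using $\phi(0)=f(0)=1$. The only minor difference is that for item~3 the paper runs the obvious induction on $v$ via the recursion \eqref{definition of G por grado} (using $g(0)=0$ and $g_l(0)=0$ for $l\le v$), whereas you specialize \eqref{definition of G} directly at $x=0$; both arguments are equally short and valid.
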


\begin{proof}
  Items~1 and~2 follow from equality~\eqref{definition of G por grado}. Since $g(0)=0$, item~3 also follows from
  equality~\eqref{definition of G por grado} by an evident inductive argument. Set $\hat{f}(y):=G_x(0,y)$. If we take the partial derivative with
  respect to $x$ of~\eqref{definition of G} and evaluate at $x=0$, we obtain
  $$
    G_{xy}(0,y)=g'(0)f'(y)G_x(0,y)+g(0)f'(y)G_{xx}(0,y)-\ov{f}(y) G_{xy}(0,y).
  $$
  Since $g(0)=0$ and $g'(0)=1$, we have $f(y)\hat{f}'(y)=f'(y)\hat{f}(y)$, which implies that $\hat{f}(y)=f(y)$, because $f(0)=\hat{f}(0)=1$. Thus
  item~4 is true. Since $G_x(0,y)=\sum_{v\ge 0} \mathfrak{p}_{1v}^1 y^v$, item~5 follows from item~4. Finally, using that $\mathfrak{p}_{0v}^1=0$ for
  all $v$ (by item~3) and~\eqref{inductivo coproducto1}, we obtain item~6.
\end{proof}

One verifies directly using Lemma~\ref{lema propiedades de G}(6), that for $i=0$ both sides of~\eqref{E_{ijk}} vanish.

\begin{theorem}\label{teorema principal}
  The coefficients defined above Lemma~\ref{lema propiedades de G} satisfy the equations~\eqref{E_{ijk}}.
\end{theorem}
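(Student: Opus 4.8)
The plan is to collapse the infinite system \eqref{E_{ijk}} into a single functional equation for the two‑variable series $G$, then to solve the defining relation \eqref{definition of G} in closed form and verify that equation by a direct manipulation of formal power series. Concretely, I would first observe that \eqref{inductivo coproducto1}, together with \eqref{definition of los p ij} and the convention $\mathfrak{p}_{uv}^0=\delta_{0,u+v}$, says exactly that $\sum_{u,v}\mathfrak{p}_{uv}^w x^uy^v=G(x,y)^w$ for every $w\ge 0$. By Lemma~\ref{lema propiedades de G}(6) the terms with $h>i$ or $l>k$ in $R(i,j,k)$ vanish, so $R(i,j,k)=\sum_{a+b=j}\sum_{h,l\ge 0}\mathfrak{p}_{ia}^h\mathfrak{p}_{kb}^l\mathfrak{p}_{hl}^1$; multiplying by $x^iy^jz^k$ and summing over all $i,j,k$ then gives
\begin{equation*}
  \sum_{i,j,k}R(i,j,k)\,x^iy^jz^k=\sum_{h,l}\mathfrak{p}_{hl}^1\,G(x,y)^h\,G(z,y)^l=G\bigl(G(x,y),G(z,y)\bigr).
\end{equation*}
Since the family satisfies all of \eqref{E_{ijk}} if and only if $R(i,j,k)=R(i,k,j)$ for all $i,j,k$ (as noted in the text), the theorem is equivalent to the identity
\begin{equation}\tag{$\star$}
  G\bigl(G(x,y),G(z,y)\bigr)=G\bigl(G(x,z),G(y,z)\bigr)\qquad\text{in }K[[x,y,z]].
\end{equation}

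\emph{Closed form for $G$.} Next I would introduce the series $\phi\in x+x^2K[[x]]$ determined by $v_0\,\phi^{v_0}=1-f^{-v_0}$; this is well defined since $1-f^{-v_0}=v_0x^{v_0}+\cdots$, and $\phi$ has a compositional inverse $\phi^{-1}$. Differentiating this relation and using \eqref{relation f and gi} one checks that $g\phi'=\phi$. It follows that $G_0(x,y):=\phi^{-1}\bigl(\phi(x)f(y)\bigr)$ satisfies \eqref{definition of G}: from $\phi(G_0)=\phi(x)f(y)$ one gets $\phi'(G_0)(G_0)_x=\phi'(x)f(y)$ and $\phi'(G_0)(G_0)_y=\phi(x)f'(y)$, hence $(G_0)_y/(G_0)_x=\phi(x)f'(y)/(\phi'(x)f(y))=g(x)f'(y)/f(y)$; moreover $G_0(x,0)=\phi^{-1}(\phi(x))=x=g_0(x)$. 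By the uniqueness statement following \eqref{definition of G por grado}, $G=G_0$, i.e. $\phi(G(x,y))=\phi(x)f(y)$.

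\emph{Verification of $(\star)$.} By construction $f(w)^{v_0}=(1-v_0\phi(w)^{v_0})^{-1}$ for any $w$ in the maximal ideal. Applying $\phi$ to the left side of $(\star)$, using $\phi(G(a,b))=\phi(a)f(b)$ twice, and raising to the $v_0$-th power,
\begin{align*}
  \phi\bigl(G(G(x,y),G(z,y))\bigr)^{v_0}
  &=\phi(x)^{v_0}f(y)^{v_0}\,f(G(z,y))^{v_0}
   =\frac{\phi(x)^{v_0}f(y)^{v_0}}{1-v_0\phi(z)^{v_0}f(y)^{v_0}}\\
  &=\frac{\phi(x)^{v_0}}{1-v_0\phi(y)^{v_0}-v_0\phi(z)^{v_0}},
\end{align*}
where the last equality uses $f(y)^{v_0}=(1-v_0\phi(y)^{v_0})^{-1}$. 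This expression is symmetric in $y\leftrightarrow z$, and the same computation applied to the right side of $(\star)$ gives the identical expression, so $\phi(\mathrm{LHS})^{v_0}=\phi(\mathrm{RHS})^{v_0}$. Both $\phi(\mathrm{LHS})$ and $\phi(\mathrm{RHS})$ lie in $xK[[x,y,z]]$ and restrict to $\phi(x)$ when $y=z=0$ (because $G(x,0)=x$), hence are of the form $x\cdot(\text{unit with constant term }1)$; since in characteristic $0$ such a series is uniquely determined by its $v_0$-th power, $\phi(\mathrm{LHS})=\phi(\mathrm{RHS})$, and applying $\phi^{-1}$ yields $(\star)$, proving the theorem.

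The only genuinely new ingredients are the collapse of \eqref{E_{ijk}} to the single equation $(\star)$ and the closed form $\phi(G(x,y))=\phi(x)f(y)$; once these are found, everything else is purely formal. I expect the main obstacle to be locating the substitution $\phi$ (equivalently, integrating the relation \eqref{relation f and gi} with the correct normalization constant), together with keeping the formal‑power‑series bookkeeping honest: the substitutions into $\phi$ and $\phi^{-1}$, the existence and uniqueness of $v_0$-th roots of unit power series in characteristic $0$, and the final leading‑term argument. None of this requires any analytic input.
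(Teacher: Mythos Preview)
Your proof is correct and takes a genuinely different, much cleaner route than the paper's.

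\textbf{What you do differently.} You collapse the entire system \eqref{E_{ijk}} into the single three-variable identity $(\star)$ by recognizing that $\sum_{u,v}\mathfrak{p}_{uv}^w x^uy^v=G^w$ makes $\sum_{i,j,k}R(i,j,k)x^iy^jz^k=G(G(x,y),G(z,y))$. Then you solve the defining ODE for $G$ explicitly as $\phi(G(x,y))=\phi(x)f(y)$ and verify $(\star)$ by a short direct computation exploiting the symmetry of $\phi(x)^{v_0}/(1-v_0\phi(y)^{v_0}-v_0\phi(z)^{v_0})$ in $y,z$. The paper finds the same closed form---your $\phi$ is their $Q_1$, your $\phi^{-1}$ their $A$, and your relation $v_0\phi^{v_0}=1-f^{-v_0}$ is their equation~\eqref{Q1 en funcion de f}---but instead of passing to $(\star)$ it builds a calculus of differential operators $\partial_x^v$, $\partial_y^u$, $\partial^k$, $\tilde\partial_x^v$, $\tilde\partial^v$ on $K[[x,y]]$, proves $R(i,j,k)=(\partial^jG)_{ik}$, and then spends Subsection~3.2 establishing the operator identity $\tilde\partial^kF=\sum_h(T^h)_k\tilde\partial_y^hF$ via auxiliary series $S$, $V$, $U$, $T$ and a recursive comparison of coefficients.

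\textbf{What each approach buys.} Your argument is dramatically shorter and more transparent: once one sees that \eqref{E_{ijk}} is the coefficient form of a cycle-set style associativity $G(G(x,y),G(z,y))=G(G(x,z),G(y,z))$, the closed form and the symmetry check are essentially forced. The paper's operator framework is heavier but leaves behind reusable machinery (Proposition~\ref{remark importante1}, Corollary~\ref{partial conmuta con gi}, the expressions for $\partial^v$ in terms of $\binom{\tilde\partial^1}{v}$) that may be of independent interest, and it makes the link to the Yang-type differential operators mentioned in the abstract explicit. One small point to tighten in your write-up: in the final step, rather than saying ``of the form $x\cdot(\text{unit})$'', it is cleanest to note directly that $\phi(\mathrm{LHS})=\phi(x)\,f(y)\,f(G(z,y))$ and $\phi(\mathrm{RHS})=\phi(x)\,f(z)\,f(G(y,z))$, cancel the common non-zero-divisor $\phi(x)^{v_0}$, and then invoke uniqueness of $v_0$-th roots of units with constant term $1$ in characteristic $0$.
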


The proof of the theorem is postponed until the end of this section.

\begin{corollary}\label{existe q cycle}
  Given $n\ge 2$, $1\le v_0< n$ and coefficients $\{p_v\}_{v_0\le v<n}$ in $K$, with $p_{v_0}=1$, there exists a cycle coalgebra such that
  $\mathfrak{p}_{1v}^1 = \delta_{0v}$ for $v<v_0$, and $\mathfrak{p}_{1v}^1 = p_v$ for $v_0\le v<n$.
\end{corollary}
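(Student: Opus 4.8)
The plan is to feed the data $(n,v_0,\{p_v\}_{v_0\le v<n})$ into the construction of this section and then cut the resulting family down to the finite-dimensional coalgebra $C$. Extend the given coefficients by $p_v\coloneqq 0$ for $v\ge n$ and put $f\coloneqq 1+\sum_{v\ge v_0}p_v x^v$, a polynomial with $f(0)=1$ and $p_{v_0}=1$; form $g\coloneqq f(f^{v_0}-1)/f'$, the series $G=\sum_{v\ge 0}g_v(x)y^v$ determined by $g_0=x$ and~\eqref{definition of G por grado}, the coefficients $\mathfrak{p}_{uv}^1$ via~\eqref{definition of los p ij}, and $\mathfrak{p}_{uv}^0\coloneqq\delta_{0,u+v}$ together with~\eqref{inductivo coproducto1}. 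Then define $\mathfrak{p}\colon C^2\to C$ by $\mathfrak{p}(x_i\ot x_j)\coloneqq\sum_{k=0}^{n-1}\mathfrak{p}_{ij}^k x_k$ for $0\le i,j<n$; the goal is to show $(C,\mathfrak{p})$ is the desired cycle coalgebra.

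First I would check that $\mathfrak{p}$ is a coalgebra morphism. By Remark~\ref{cond para q-magma} this reduces to $\mathfrak{p}_{ij}^0=\delta_{0,i+j}$, which holds by construction, and $\mathfrak{p}_{ij}^k=\sum_{a+b=i,\,c+d=j}\mathfrak{p}_{ac}^l\mathfrak{p}_{bd}^h$ whenever $l+h=k$. For the latter, observe that~\eqref{inductivo coproducto1} together with $\mathfrak{p}_{00}^1=0$ (since $g_0=x$) yields, by induction on $w$, the product formula $\mathfrak{p}_{uv}^w=\sum\prod_{s=1}^w\mathfrak{p}_{i_s j_s}^1$ over decompositions $(u,v)=\sum_{s=1}^w(i_s,j_s)$ with each $i_s+j_s\ge 1$, as in~\eqref{producto}; splitting a length-$k$ decomposition into its first $l$ and last $h$ factors gives the desired identity. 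The only delicate point is that a coalgebra morphism into the $n$-dimensional $C$ also requires the coefficient of $x_l\ot x_h$ with $l+h\ge n$ to vanish on the $(\mathfrak{p}\ot\mathfrak{p})\circ\Delta$ side; but that coefficient equals $\mathfrak{p}_{ij}^{l+h}$, which is $0$ because $l+h\ge n>n-1\ge i$ and $\mathfrak{p}_{uv}^w=0$ for $u<w$ by Lemma~\ref{lema propiedades de G}(6). Compatibility with $\epsilon$ is immediate from $\mathfrak{p}_{ij}^0=\delta_{0,i+j}$.

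Once $\mathfrak{p}$ is a coalgebra morphism, all of Section~1 applies. By~\eqref{rho_ij^i+j}, and since $\mathfrak{p}_{10}^1$ is the coefficient of $x$ in $g_0=x$, we get $\mathfrak{p}_{i0}^i=(\mathfrak{p}_{10}^1)^i=1\ne 0$ for all $i>0$; hence by Remark~\ref{no degenerado a izquierda} the triple $(C,\mathfrak{p},\mathfrak{p})$ is a regular $q$-magma coalgebra, i.e.\ $(C,\mathfrak{p})$ is a regular magma coalgebra. Then Proposition~\ref{reduccion a m igual a 1} reduces Definition~\ref{q cycle coalgebra}(1)--(3) to~\eqref{primera en m igual a 1}--\eqref{tercera en m igual a 1}, and on setting $\mathfrak{d}=\mathfrak{p}$ each of these becomes $R(i,j,k)=R(i,k,j)$, i.e.\ equation~\eqref{E_{ijk}} (only $i,j,k<n$ are needed). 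These hold by Theorem~\ref{teorema principal}, so $(C,\mathfrak{p},\mathfrak{p})$ is a $q$-cycle coalgebra, and thus $(C,\mathfrak{p})$ is a cycle coalgebra by Definition~\ref{cycle coalgebra}. Finally, $\mathfrak{p}_{1v}^1$ is the coefficient of $x$ in $g_v$: it is $0$ for $0<v<v_0$ by Lemma~\ref{lema propiedades de G}(1) and equals $1$ for $v=0$ (as $g_0=x$), so $\mathfrak{p}_{1v}^1=\delta_{0v}$ for $v<v_0$; and $\mathfrak{p}_{1v}^1=p_v$ for $v_0\le v<n$ by Lemma~\ref{lema propiedades de G}(5).

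The only substantive input is Theorem~\ref{teorema principal}, that~\eqref{E_{ijk}} holds, which I take as given; everything else is bookkeeping. The step most likely to conceal a subtlety is the passage from the $\mathds{N}_0$-indexed family produced by the construction to an honest coalgebra endomorphism of the finite-dimensional $C^2$: one must make sure that truncating all upper indices at $n-1$ creates no new relations, which is exactly what Lemma~\ref{lema propiedades de G}(6) rules out, and that after putting $\mathfrak{d}=\mathfrak{p}$ the three $q$-cycle axioms genuinely all collapse onto the single family~\eqref{E_{ijk}}.
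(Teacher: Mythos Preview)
Your proposal is correct and follows the same approach as the paper: build $f$ from the given coefficients, run the construction of Section~3, invoke Theorem~\ref{teorema principal} for the equations~\eqref{E_{ijk}}, and read off the values $\mathfrak{p}_{1v}^1$ from Lemma~\ref{lema propiedades de G}. The paper's own proof is two lines and leaves implicit the verifications you spell out (that $\mathfrak{p}$ is a coalgebra map, that truncation at $n$ is harmless via Lemma~\ref{lema propiedades de G}(6), and that regularity holds because $\mathfrak{p}_{10}^1=1$); your added detail is sound and not a departure in method.
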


\begin{proof}
  We set $f\coloneqq 1+\sum_{v=v_0}^{n-1} p_v x^v\in K[x]\subset K[[x]]$, and define the coefficients $\mathfrak{p}_{uv}^w$ as above. By
  Theorem~\ref{teorema principal}, the equalities~\eqref{E_{ijk}} are satisfied for all $i,j,k<n$. Thus $(C,\mathfrak{p})$ is a cycle co\-algebra.
\end{proof}

\begin{definition}\label{SIQ def}
  The cycle coalgebra defined in Corollary~\ref{existe q cycle} is called the {\em standard cycle coalgebra associated with $f$} and denoted by
  $\scc(f)$.
\end{definition}

\subsection{Differential operators in $K[[x,y]]$}\label{Differential operators in K[[x,y]]}

We will consider the double series $G\in K[[x,y]]$ defined above as a series in $K[[x]][[y]]$. Recall that $g_0=x$. Moreover by items~1, 2 and~3 of
Lemma~\ref{lema propiedades de G} we know that $g_v=0$ for $0<v<v_0$, $g_{v_0}=g$ and $x|g_v$ for all $v$ (i.e., $x|G)$. In particular
\begin{equation}\label{calculo de ovG}
  G = x+\ov{G} \quad \text{ with }\quad \ov{G}=g(x) y^{v_0}+g_{v_0+1}(x) y^{v_0+1}+g_{v_0+2}(x) y^{v_0+2}+\cdots.
\end{equation}
Combining the equalities~\eqref{relation f and gi} and~\eqref{definition of G}, we obtain
\begin{equation}\label{relation G_y with G_x1}
  g(y) G_y(x,y)=(f^{v_0}(y)-1)g(x)G_x(x,y).
\end{equation}
Since $x\mid G$, for each $h = h_0+h_1x+h_2x^2+\cdots \in K[[x]]$, the formal series
$$
  h(G)\coloneqq h_0+h_1 G+h_2 G^2+h_3 G^3+\cdots\in K[[x]][[y]]
$$
is well defined. We define the operators $\partial_x^v\colon K[[x]] \to K[[x]]$, which will simplify the expressions in $R(i,j,k)$, implicitly by
\begin{equation}\label{definicion de partial1}
  h(G)=\sum_{v\ge 0} \partial_x^vh(x) y^v\qquad\text{for all $h\in K[[x]]$}.
\end{equation}
For each $P\in (K[[x]][[y]])$ we write $P = P_0(x)+ P_1(x)y + P_2(x)y^2+\cdots$. In particular
$$
  h(G) = h(G)_0(x) + h(G)_1(x)y + h(G)_2(x)y^2 +\cdots
$$
Clearly
\begin{equation}\label{valores particulares de partial}
  \partial_x^0 h = h,\qquad \partial_x^v h = 0\quad\text{for $0<v<v_0$}\qquad\text{and}\qquad \partial_x^vx=g_v\quad\text{for all $v$.}
\end{equation}
Since $G = x+\ov{G}$, we have
$
(G^j)_v = \sum_{k=0}^j \binom{j}{k} x^{j-k}(\ov{G}^k)_v.
$
Hence, if $v>0$, then
$$
  h(G)_v = \sum_{j\ge 1} \sum_{k=1}^j h_j \binom{j}{k} x^{j-k}(\ov{G}^k)_v = \sum_{k\ge 1} \Biggl(\sum_{j=k}^{\infty} h_j \binom{j}{k}
  x^{j-k}\Biggr)(\ov{G}^k)_v = \sum_{k\ge 1} \frac{1}{k!} (\ov{G}^k)_v h^{(k)},
$$
where $h^{(k)}$ denotes the $k$-th formal derivative of $h$. This implies
\begin{equation}\label{definicion alternativa partial1}
  \partial_x^v h = h(G)_v =\sum_{k\ge 1} \frac{1}{k!} (\ov{G}^k)_v h^{(k)} =\sum_{k=1}^v \frac{1}{k!} (\ov{G}^k)_v h^{(k)},
\end{equation}
since $(\ov{G}^k)_v=0$ for $k>v$. Combining this with~\eqref{calculo de ovG}, we obtain $\partial_x^{v_0}h=g h'$. Consequently, $\partial_x^{v_0}$
is a derivation and $\partial_x^{v_0} f=g f'=f(f^{v_0}-1)$, where the last equality follows from~\eqref{relation f and gi}.

\medskip

For each $P\in K[[x,y]]$, we let $P_{uv}$ denote the coefficient of $x^uy^v$ in $P$.

\begin{lemma}\label{p como binomial1}
  Let $v,d\in \mathds{N}_0$. If $v\ne 0$ or $d\ne 0$, then $\mathfrak{p}_{d+w,v}^w=\sum_{i=1}^w \binom{w}{i} (\ov{G}^i)_{d+i,v}$ for all $w\ge 1$.
\end{lemma}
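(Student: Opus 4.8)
The plan is to reduce everything to the simple observation that $\mathfrak{p}^w_{uv}$ is nothing but the coefficient of $x^uy^v$ in the power $G^w$. Setting $P^{(w)}:=\sum_{u,v}\mathfrak{p}^w_{uv}x^uy^v\in K[[x]][[y]]$, the recursion~\eqref{inductivo coproducto1} asserts exactly that $P^{(w)}=P^{(1)}P^{(w-1)}$ for $w>1$, because the double convolution on its right-hand side is the Cauchy product of the two bivariate series. Since $P^{(1)}=G$ by~\eqref{definition of los p ij}, an immediate induction on $w$ gives $P^{(w)}=G^w$, i.e. $\mathfrak{p}^w_{uv}=(G^w)_{uv}$ for all $w\ge 1$.

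Next I would expand $G^w$ by the binomial theorem in the commutative ring $K[[x]][[y]]$, using $G=x+\ov{G}$ from~\eqref{calculo de ovG}:
\[
  G^w=(x+\ov{G})^w=\sum_{i=0}^w\binom{w}{i}x^{w-i}\ov{G}^i.
\]
Multiplication by $x^{w-i}$ raises the $x$-degree by $w-i$ and leaves the $y$-degree fixed, so the coefficient of $x^{d+w}y^v$ in the summand $x^{w-i}\ov{G}^i$ equals $(\ov{G}^i)_{d+i,v}$. Reading off the coefficient of $x^{d+w}y^v$ termwise therefore yields
\[
  \mathfrak{p}^w_{d+w,v}=(G^w)_{d+w,v}=\sum_{i=0}^w\binom{w}{i}(\ov{G}^i)_{d+i,v}.
\]

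Finally I would discard the $i=0$ term: it equals $\binom{w}{0}(\ov{G}^0)_{d,v}$, which is the coefficient of $x^dy^v$ in the constant series $1$, hence $\delta_{d0}\delta_{v0}$; this vanishes precisely under the hypothesis that $v\ne 0$ or $d\ne 0$, leaving the asserted formula. I do not expect a genuine obstacle here: the only points that need a little care are checking the base case $w=1$ of the induction (namely $\mathfrak{p}^1_{uv}=G_{uv}$, which is~\eqref{definition of los p ij}) and the index bookkeeping in the degree shift by $x^{w-i}$, both of which are routine.
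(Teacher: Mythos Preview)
Your proof is correct and considerably more streamlined than the paper's. The key observation you make---that the convolution recursion~\eqref{inductivo coproducto1} is precisely the Cauchy product of bivariate series, so that $\sum_{u,v}\mathfrak{p}^w_{uv}x^uy^v=G^w$---lets you apply the binomial theorem once to $G^w=(x+\ov{G})^w$ and read off the desired identity in one stroke. The paper instead carries out the induction on $w$ entirely at the level of individual coefficients: it splits the sum from~\eqref{inductivo coproducto1} by hand, separates boundary terms using $xy\mid\ov{G}$, invokes the inductive hypothesis on each piece, and reassembles everything via Pascal's identity $\binom{w-1}{l}+\binom{w-1}{l+1}=\binom{w}{l+1}$. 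Both arguments are ultimately the same induction, but yours packages it at the generating-function level, which hides the index bookkeeping inside the binomial expansion and makes the role of the hypothesis $(d,v)\ne(0,0)$ completely transparent (it kills exactly the $i=0$ term). The paper's version has the minor advantage of never writing down the series $G^w$ explicitly, but at the cost of a noticeably longer computation.
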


\begin{proof}
  We will proceed by induction on $w$. By \eqref{definition of los p ij} and~\eqref{calculo de ovG}, we have
  $\mathfrak{p}_{d+1,v}^1= \ov{G}_{d+1,v}$, as desired. Assume that the formula holds for $w-1$. Then, by~\eqref{inductivo coproducto1},
  $$
    \mathfrak{p}_{d+w,v}^w=\sum_{\substack{0\le i\le v\\ 0\le e \le d+w}}\mathfrak{p}_{ei}^1 \mathfrak{p}_{d+w-e,v-i}^{w-1}.
  $$
  But by Lemma~\ref{lema propiedades de G}(6), we know that $\mathfrak{p}_{ei}^1=0$ for $e<1$ and $\mathfrak{p}_{d+w-e,v-i}^{w-1}=0$ if $d+w-e<w-1$
  (or equivalently, if $e>d+1$), and so, setting $c\coloneqq e-1$, we have
  $$
    \mathfrak{p}_{d+w,v}^w=\sum_{i=0}^v\sum_{c=0}^d\mathfrak{p}_{c+1,i}^1 \mathfrak{p}_{d-c+w-1,v-i}^{w-1} = \mathfrak{p}_{d+1,v}^1 +
    \mathfrak{p}_{d+w-1,v}^{w-1}+ \sum_{i=1}^{v-1}\sum_{c=0}^d\mathfrak{p}_{c+1,i}^1 \mathfrak{p}_{d-c+w-1,v-i}^{w-1},
  $$
  where the last equality holds since $\mathfrak{p}_{10}^1 =1$, $\mathfrak{p}_{c+1,0}^1 =0$ when $c>0$, $\mathfrak{p}_{w-1,0}^{w-1} = 1$ and
  $\mathfrak{p}_{d-c+w-1,0}^{w-1} = 0$ when $c<d$ (take into account that $xy\mid \ov{G}$). By the inductive hypothesis we have
  $$
    \mathfrak{p}_{d+1,v}^1 = \ov{G}_{d+1,v},\qquad \mathfrak{p}_{d+w-1,v}^{w-1} = \sum_{i=1}^{w-1} \binom{w-1}{i} (\ov{G}^i)_{d+i,v} =
    \sum_{l=0}^{w-2} \binom{w-1}{l+1} (\ov{G}^{l+1})_{d+l+1,v},
  $$
  and
  \begin{align*}
    \sum_{i=1}^{v-1}\sum_{c=0}^d \mathfrak{p}_{c+1,i}^1 \mathfrak{p}_{d-c+w-1,v-i}^{w-1}&= \sum_{i=1}^{v-1}\sum_{c=0}^d \ov{G}_{c+1,i}
    \sum_{l=1}^{w-1} \binom{w-1}{l} (\ov{G}^l)_{d-c+l,v-i}\\
    & = \sum_{l=1}^{w-1}\binom{w-1}{l} \sum_{i=1}^{v-1} \sum_{c=0}^d \ov{G}_{c+1,i}(\ov{G}^l)_{d-c+l,v-i}\\
    &=\sum_{l=1}^{w-1} \binom{w-1}{l}(\ov{G}^{l+1})_{d+l+1,v},
  \end{align*}
  where the last equality follow from the fact that $xy|\ov{G}$. Consequently,
  \begin{align*}
    \mathfrak{p}_{d+w,v}^w &= (\ov{G}^w)_{d+w,v} +\sum_{l=0}^{w-2} \left(\binom{w-1}{l}+ \binom{w-1}{l+1} \right)(\ov{G}^{l+1})_{d+l+1,v}\\
    & = (\ov{G}^w)_{d+w,v} +\sum_{l=0}^{w-2} \binom{w}{l+1}(\ov{G}^{l+1})_{d+l+1,v}\\
    &=\sum_{i=1}^w\binom{w}{i} (\ov{G}^i)_{d+i,v},
  \end{align*}
  as desired.
\end{proof}

The following proposition explains how the use of $\partial_x^i$ will simplify the expressions in $R(i,j,k)$. For each $q\in K[[x]]$, we let $q_i$
denote the coefficient of $x^i$ in $q$.

\begin{proposition}\label{suma es operador diferencial1}
  For all $u\ge 1$, $v\ge 0$ and $\ell =\sum_{i\ge 0}\ell_i x^i\in K[[x]]$, we have $\sum_{h=1}^u\mathfrak{p}_{uv}^h \ell_h=(\partial_x^v \ell)_u$.
\end{proposition}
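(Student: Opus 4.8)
The plan is to reduce the statement to Lemma~\ref{p como binomial1} together with the definition of $\partial_x^v$ via equation~\eqref{definicion alternativa partial1}, by matching coefficients of $x^u$ term by term. First I would handle the degenerate case $v=0$: by \eqref{valores particulares de partial} and Remark~\ref{resumen} we have $\mathfrak{p}_{u0}^h=(\mathfrak{p}_{10}^1)^u\delta_{hu}=\delta_{hu}$ for the relevant range (recall $\mathfrak{p}_{10}^1=1$ in the standard setting), so the left side collapses to $\ell_u$, while $\partial_x^0\ell=\ell$ gives $(\partial_x^0\ell)_u=\ell_u$ as well; this disposes of $v=0$. So from now on assume $v\ge 1$, which puts us in the situation where Lemma~\ref{p como binomial1} applies with $d=u-h$ (so that $d+h=u$ and $d+w=u$ in the notation there, with $w=h$).

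Next, for $v\ge 1$ I would write, using Lemma~\ref{p como binomial1} with $d=u-h\ge 0$,
\begin{equation*}
  \sum_{h=1}^u\mathfrak{p}_{uv}^h\ell_h
  =\sum_{h=1}^u\ell_h\sum_{i=1}^h\binom{h}{i}(\ov G^i)_{u-h+i,\,v},
\end{equation*}
and then swap the order of summation, summing over $i$ first and then over $h\ge i$. Setting $h=i+m$ with $m\ge 0$, the inner sum over $m$ becomes $\sum_{m\ge 0}\ell_{i+m}\binom{i+m}{i}(\ov G^i)_{u-m,\,v}$, and I would recognize $\sum_{m\ge 0}\binom{i+m}{i}\ell_{i+m}\,x^{u-m}$ — summed appropriately — as producing exactly $\frac{1}{i!}\ell^{(i)}$ evaluated in the right coefficient: indeed the coefficient of $x^{u-m}$ in $\frac{1}{i!}\ell^{(i)}$ is $\binom{i+(u-m)}{i}\ell_{i+u-m}$, wait — more carefully, the coefficient of $x^{c}$ in $\tfrac1{i!}\ell^{(i)}$ is $\binom{c+i}{i}\ell_{c+i}$, so with $c=u-m$ this is $\binom{u-m+i}{i}\ell_{u-m+i}$, which is not quite $\binom{i+m}{i}\ell_{i+m}$; the clean way is instead to use the Cauchy-product form: the coefficient of $x^u$ in $(\ov G^i)(x,\cdot)_v\cdot\bigl(\tfrac1{i!}\ell^{(i)}\bigr)$ equals $\sum_{a+b=u}(\ov G^i)_{a,v}\cdot\bigl(\tfrac1{i!}\ell^{(i)}\bigr)_b=\sum_{a+b=u}(\ov G^i)_{a,v}\binom{b+i}{i}\ell_{b+i}$, and reindexing $a=u-h+i$, $b=h-i$ shows $b+i=h$ and $\binom{b+i}{i}=\binom{h}{i}$, matching the term above. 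Hence the double sum equals $\sum_{i\ge1}\bigl(\tfrac1{i!}(\ov G^i)_v\,\ell^{(i)}\bigr)_u$, which by \eqref{definicion alternativa partial1} is precisely $(\partial_x^v\ell)_u$.

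The main obstacle, and the step I would write out most carefully, is the bookkeeping in the interchange of summations and the reindexing that converts $\sum_h\ell_h\binom{h}{i}x^{u-h+i}$-type data into the Cauchy product of $(\ov G^i)_v$ with $\tfrac1{i!}\ell^{(i)}$; one must check that the ranges are consistent (all indices nonnegative, $(\ov G^i)_{a,v}=0$ unless $a\ge i$ since $x\mid\ov G$, and the sum over $i$ truncates at $i\le v$ because $(\ov G^i)_v=0$ for $i>v$), and that no boundary terms are dropped. Everything else — the $v=0$ base case and the appeal to \eqref{definicion alternativa partial1} — is immediate. I expect the whole argument to be short once the index juggling is set up correctly.
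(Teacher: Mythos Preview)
Your proposal is correct and follows essentially the same route as the paper: apply Lemma~\ref{p como binomial1} to expand each $\mathfrak{p}_{uv}^h$, swap the order of summation, and recognize the resulting inner sum as the coefficient of $x^u$ in the Cauchy product $(\ov G^i)_v\cdot\tfrac{1}{i!}\ell^{(i)}$, which by~\eqref{definicion alternativa partial1} yields $(\partial_x^v\ell)_u$. Your reindexing $a=u-h+i$, $b=h-i$ is exactly the one the paper uses (there with $k$ in place of $i$ and $r=b$), and your remarks about the truncations ($x\mid\ov G$ forces $a\ge i$, $y\mid\ov G$ forces $i\le v$) match the paper's justifications verbatim. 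One minor point: you are actually more careful than the paper in separating out the case $v=0$, since Lemma~\ref{p como binomial1} excludes the pair $(v,d)=(0,0)$, i.e.\ the term $h=u$ when $v=0$; the paper's displayed chain of equalities tacitly assumes $v\ge1$.
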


\begin{proof}
  On one hand, by equality~\eqref{definicion alternativa partial1},
  \begin{equation*}
    (\partial_x^v \ell)_u= \sum_{k=1}^v \frac{1}{k!} \bigl((\ov{G}^k)_v\ell^{(k)}\bigr)_u = \sum_{k=1}^v \frac{1}{k!}
    \sum_{r=0}^u (\ov{G}^k )_{u-r,v}(\ell^{(k)})_r = \sum_{k=1}^v \frac{1}{k!} \sum_{r=0}^{u-k} (\ov{G}^k )_{u-r,v} (\ell^{(k)})_r,
  \end{equation*}
  where the last equality holds since $(\ov{G}^k)_{u-r,v}=0$ for $r>u-k$, because $x|\ov{G}$. On the other hand, by Lemma~\ref{p como binomial1},
  $$
    \sum_{h=1}^u\mathfrak{p}_{uv}^h \ell_h = \sum_{h=1}^u \sum_{k=1}^h\binom{h}{k} (\ov{G}^k)_{u-h+k,v} \ell_h = \sum_{k=1}^u \sum_{h=k}^u
    \binom{h}{k} (\ov{G}^k)_{u-h+k,v} \ell_h.
  $$
  Therefore
  $$
    \sum_{h=1}^u\mathfrak{p}_{uv}^h \ell_h = \sum_{k=1}^u \frac{1}{k!} \sum_{r=0}^{u-k} (\ov{G}^k)_{u-r,v}(\ell^{(k)})_r = \sum_{k=1}^{\min(u,v)}
    \frac{1}{k!} \sum_{r=0}^{u-k} (\ov{G}^k)_{u-r,v}(\ell^{(k)})_r = (\partial_x^v \ell)_u,
  $$
  where the second equality holds since $(\ov{G}^k)_{u-r,v}=0$ for $k>v$, because $y|\ov{G}$; and the last equality, holds since the sum
  $\sum_{r=0}^{u-k} (\ov{G}^k )_{u-r,v} (\ell^{(k)})_r$ is empty when $k>u$.
\end{proof}

We can extend the operators $\partial_x^v\in \End_K(K[[x]])$ to operators on $K[[x,y]]=K[[x]][[y]]$ simply by acting on the coefficient
ring $K[[x]]$. We also define operators $\partial_y^u$ on $K[[x,y]]$ by
\begin{equation}\label{definition of partial y1}
  \partial_y^u H \coloneqq \sum_{i=1}^u \frac{1}{i!}(\ov{G}^i)_u(y) H^{(i)}_y,
\end{equation}
where $(\ov{G}^i)_u(y)$ is obtained replacing $x$ by $y$ in $(\ov{G}^i)_u$ and $H^{(i)}_y$ is the partial derivative of $H$ with respect to $y$,
iterated $i$ times. Note that $\partial_x^v \partial_y^u=\partial_y^u \partial_x^v$.

\begin{definition}\label{definition global partial}
  For $k\ge 0$ we define operators $\partial^k$ on $K[[x,y]]$, by $\partial^k \coloneqq \sum_{a+b=k} \partial_x^a \partial_y^b$.
\end{definition}
\noindent Note that for $k>0$, by Proposition~\ref{suma es operador diferencial1} we have
  \begin{equation}\label{Gjk}
    \left(\partial^j G\right)_k = \sum_{a+b=j} \left(\partial_y^b \left( \partial_x^a G\right) \right)_k = \sum_{a+b=j}\sum_{l=1}^k
    \mathfrak{p}_{kb}^l \left(\partial_x^a G\right)_l = \sum_{a+b=j}\sum_{l=1}^k \mathfrak{p}_{kb}^l \partial_x^a g_l.
  \end{equation}
The importance of $\partial^k$ lies in the following result.

\begin{proposition}\label{remark importante1}
  For all $i,k>0$ we have $(\partial^j G)_{ik}=R(i,j,k)$.
\end{proposition}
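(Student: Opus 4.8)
The plan is to reduce the claim to formula~\eqref{Gjk} together with Proposition~\ref{suma es operador diferencial1}, after which only the bookkeeping of the index ranges remains.

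First I would fix $i,k>0$ and extract the coefficient of $x^i$ in the series identity~\eqref{Gjk}, which gives
\[
  (\partial^j G)_{ik} = \sum_{a+b=j}\sum_{l=1}^k \mathfrak{p}_{kb}^l\,(\partial_x^a g_l)_i .
\]
Next I would apply Proposition~\ref{suma es operador diferencial1} with $\ell = g_l$, $u=i$ and $v=a$ (the hypothesis $u\ge 1$ is satisfied since $i\ge 1$); recalling that $(g_l)_h=\mathfrak{p}_{hl}^1$ is the coefficient of $x^h$ in $g_l$ by~\eqref{definition of los p ij}, this yields $(\partial_x^a g_l)_i = \sum_{h=1}^i \mathfrak{p}_{ia}^h\,\mathfrak{p}_{hl}^1$. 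Substituting, I obtain
\[
  (\partial^j G)_{ik} = \sum_{a+b=j}\sum_{h=1}^i\sum_{l=1}^k \mathfrak{p}_{ia}^h\,\mathfrak{p}_{kb}^l\,\mathfrak{p}_{hl}^1 .
\]

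It then remains to identify this triple sum with $R(i,j,k)=\sum_{a+b=j}\sum_{h=0}^i\sum_{l=0}^k \mathfrak{p}_{ia}^h\,\mathfrak{p}_{kb}^l\,\mathfrak{p}_{hl}^1$, i.e.\ to check that the extra terms of $R(i,j,k)$ with $h=0$ or with $l=0$ vanish. For $h=0$ the factor $\mathfrak{p}_{ia}^0=\delta_{0,i+a}$ is zero because $i\ge 1$; for $h\ge 1$ and $l=0$ the factor $\mathfrak{p}_{kb}^0=\delta_{0,k+b}$ is zero because $k\ge 1$. Hence $R(i,j,k)$ equals the sum restricted to $h\ge 1$ and $l\ge 1$, which is exactly the expression obtained for $(\partial^j G)_{ik}$, completing the argument.

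I do not anticipate a genuine obstacle: granting~\eqref{Gjk}, the statement is essentially a rewriting, and the only delicate points are verifying the hypotheses of Proposition~\ref{suma es operador diferencial1} and correctly discarding the boundary contributions using the normalization $\mathfrak{p}_{uv}^0=\delta_{0,u+v}$.
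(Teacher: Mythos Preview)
Your proposal is correct and follows essentially the same approach as the paper: both arguments combine~\eqref{Gjk} with Proposition~\ref{suma es operador diferencial1} and discard the boundary terms $h=0$, $l=0$ via $\mathfrak{p}_{uv}^0=\delta_{0,u+v}$. The only cosmetic difference is direction: the paper starts from $R(i,j,k)$, applies Proposition~\ref{suma es operador diferencial1} to the inner $h$-sum, and then invokes~\eqref{Gjk}, whereas you start from~\eqref{Gjk} and work back to $R(i,j,k)$.
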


\begin{proof}
  On one hand by Proposition~\ref{suma es operador diferencial1}, we have
  $$
    R(i,j,k) = \sum_{a+b=j}\sum_{h=1}^i\sum_{l=0}^k\mathfrak{p}^h_{ia}\mathfrak{p}^l_{kb}\mathfrak{p}^1_{hl} = \sum_{a+b=j}\sum_{l=0}^k
    \mathfrak{p}^l_{kb} \sum_{h=1}^i \mathfrak{p}^h_{ia}\mathfrak{p}^1_{hl} = \sum_{a+b=j}\sum_{l=0}^k \mathfrak{p}^l_{kb} (\partial_x^a g_l)_i.
  $$
  The result follows combining this with the equality~\eqref{Gjk} at degree $i$, since $\mathfrak{p}_{kb}^0=0$ for all $b$.
\end{proof}

\begin{remark}\label{caso ijk cero}
  By identity~\eqref{calculo de ovG} we have $\mathfrak{p}_{u0}^1 = \delta_{u1}$, which by~\eqref{inductivo coproducto1} implies that
  $\mathfrak{p}_{u0}^w = \delta_{uw}$. A direct computation using this and that $\mathfrak{p}_{uv}^0 = \delta_{0,u+v}$ shows that
  $(\mathfrak{p}_{uv}^w)_{u,v,w\in \mathds{N}_0}$ satisfies~\eqref{E_{ijk}} when $i=0$, $j=0$ or $k=0$. So by Proposition~\ref{remark importante1},
  in order to prove Theorem~\ref{teorema principal}, it suffices to prove that
  \begin{equation}\label{falta demostrar1}
    \left(\partial^k G\right)_{ij}=\left(\partial^j G\right)_{ik}\qquad\text{for all $i,j,k>0$}.
  \end{equation}
\end{remark}

Our first goal is to prove that the operators $\partial_x^j$ commute with each other. For this we find another expression for $\ov{G}$.

\begin{proposition}\label{prop 3.9}
  Set $P_1(x)\coloneqq g(x)$ and define recursively
  \begin{equation}\label{recursivo Pi}
    (v+1)P_{v+1}(x)=g(x) P_v'(x)-vP_v(x),\quad \text{for $v\ge 1$.}
  \end{equation}
  Then $\ov{G}(x,y)=\sum_{v\ge 1} \ov{f}^v(y) P_v(x)$, where $\ov{f} = f-1$.
\end{proposition}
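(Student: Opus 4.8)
The plan is to show that the series
$$
\wt G(x,y) \coloneqq x + \sum_{v\ge 1} \ov f(y)^v P_v(x)
$$
coincides with $G$, by verifying that $\wt G$ satisfies the two conditions that characterize $G$, namely that its $y^0$-coefficient is $x$ and that it satisfies~\eqref{definition of G} (equivalently the recursion~\eqref{definition of G por grado}, which together with $g_0=x$ determines $G$). First I would note that $\wt G$ is a well-defined element of $K[[x]][[y]]$: since $\ov f=f-1$ has $y$-adic valuation $v_0\ge 1$, the term $\ov f(y)^v P_v(x)$ has $y$-valuation $\ge v$, and the $P_v$ lie in $xK[[x]]$ (by induction from $P_1=g\in x+x^2K[[x]]$ and~\eqref{recursivo Pi}, dividing by $v+1$ being harmless in characteristic $0$). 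Because $\ov f(0)=0$, the sum $\sum_{v\ge 1}\ov f(y)^v P_v(x)$ has no $y^0$-term, so the $y^0$-coefficient of $\wt G$ is $x$.

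It then remains to check the differential equation. I would rewrite~\eqref{definition of G} in the equivalent form $f(y)\,G_y = g(x)f'(y)\,G_x$, obtained by adding $\ov f(y)G_y$ to both sides and using $1+\ov f=f$. Using $\ov f'=f'$, a direct computation gives
$$
\wt G_x = 1 + \sum_{v\ge 1}\ov f(y)^v P_v'(x),\qquad \wt G_y = f'(y)\sum_{v\ge 1} v\,\ov f(y)^{v-1} P_v(x).
$$
Hence $f(y)\wt G_y - g(x)f'(y)\wt G_x = f'(y)\bigl[f(y)Q - g(x)\wt G_x\bigr]$, where $Q\coloneqq \sum_{v\ge 1} v\,\ov f(y)^{v-1}P_v(x)$. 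Since $f'(y)=v_0 y^{v_0-1}+\cdots\ne 0$ and $K[[x,y]]$ is an integral domain, the equation $f(y)\wt G_y = g(x)f'(y)\wt G_x$ is equivalent to $f(y)Q = g(x)\wt G_x$. Writing $t$ for $\ov f(y)$ (a legitimate substitution, as $\ov f$ has positive $y$-valuation, under which $1+t$ becomes $f(y)$), this in turn follows from the formal identity
$$
(1+t)\sum_{v\ge 1} v\,t^{v-1} P_v(x) = g(x)\Bigl(1+\sum_{v\ge 1} t^v P_v'(x)\Bigr)\qquad\text{in }K[[x]][[t]].
$$
I would prove this by comparing coefficients of $t^w$: for $w=0$ the left side has coefficient $P_1$ and the right side $g$, which agree by the definition $P_1=g$; for $w\ge 1$ the left side has coefficient $(w+1)P_{w+1}+wP_w$ and the right side $gP_w'$, so these cases are precisely the recursion~\eqref{recursivo Pi}.

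Having checked both conditions, $\wt G=G$ by uniqueness, whence $\ov G = G-x = \sum_{v\ge 1}\ov f(y)^v P_v(x)$, as claimed. The only mildly delicate point is the cancellation of the factor $f'(y)$, which is why I work inside the domain $K[[x,y]]$ rather than inverting $f'$ — the latter need not be a unit when $v_0>1$; everything else is a routine coefficient comparison driven by the recursion defining the $P_v$.
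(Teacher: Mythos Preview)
Your proof is correct and follows essentially the same route as the paper: both arguments verify that the candidate series satisfies the defining first-order equation and initial condition for $G$ (equivalently for $\ov G$), using the recursion~\eqref{recursivo Pi} to check the differential equation termwise, and then appeal to uniqueness. The paper works directly with $R=\sum_{v\ge 1}\ov f^v P_v$ and shows $R_y=g(x)\ov f'(y)-\ov f(y)R_y+g(x)R_x\ov f'(y)$, whereas you pass to the equivalent form $f(y)G_y=g(x)f'(y)G_x$ and compare $t$-coefficients after substituting $t=\ov f(y)$; this is a cosmetic difference. One small remark: for the implication you actually need (from $f(y)Q=g(x)\wt G_x$ to $f(y)\wt G_y=g(x)f'(y)\wt G_x$) no cancellation of $f'(y)$ is required---just multiply by $f'(y)$---so the domain observation, while true, is not needed.
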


\begin{proof}
  Set $R\coloneqq \sum_{v\ge 1} \ov{f}^v(y) P_v(x)$. Since $R_y = \sum_{v\ge 1} v \ov{f}^{v-1}(y)\ov{f}'(y) P_v(x)$, we have
  $$
    \sum_{v\ge 1}(v+1)\ov{f}^v(y) P_{v+1}(x)\ov{f}'(y) = R_y-P_1(x)\ov{f}'(y)\quad\text{and}\quad \sum_{v\ge 1}v \ov{f}^v(y) P_v(x)\ov{f}'(y)=
    \ov{f}(y) R_y.
  $$
  Using this, the fact that $R_x = \sum_{v\ge 1} \ov{f}^v(y) P_v'(x)$ and equality~\eqref{recursivo Pi}, we obtain
  $$
    g(x) R_x \ov{f}'(y)= \sum_{v\ge 1}\ov{f}^v(y)\bigl((v+1)P_{v+1}(x)+vP_v(x)\bigr)\ov{f}'(y) = R_y-g(x)\ov{f}'(y)+ \ov{f}(y) R_y.
  $$
  In other words $R_y=g(x)\ov{f}'(y)- \ov{f}(y) R_y + g(x) R_x \ov{f}'(y)$. Since $\ov{f}(y)=y^{v_0} +\sum_{v>v_0} p_v y^v$, we have $R_v=0$ for
  $v<v_0$ and $R_{v_0}(x)=P_1(x)=\ov{G}_{v_0}(x)$. Combining this with the fact that, by equality~\eqref{definition of G}, we know that
  $\ov{G}_y=g(x)\ov{f}'(y)- \ov{f}(y) \ov{G}_y + g(x) \ov{G}_x \ov{f}'(y)$, we obtain $R=\ov{G}$, as desired.
\end{proof}

Set $P(x,y)\coloneqq \sum_{v\ge 1} P_v(x) y^v$. We define operators $\tilde{\partial}_x^v\in \End_K(K[[x]])$, by
$\tilde{\partial}_x^v h\coloneqq h(P+x)_v$.
Clearly $\tilde{\partial}_x^0=\ide$ and $\tilde{\partial}^v_x x = P_v$ for $v>0$. Moreover, arguing as in the proof
of~\eqref{definicion alternativa partial1} we obtain
\begin{equation}\label{definicion alternativa partial2}
  \tilde{\partial}_x^v h(x) = \sum_{k=1}^v \frac{1}{k!}(P^k)_v h^{(k)}(x)\qquad\text{for $v>0$.}
\end{equation}
Consequently $\tilde{\partial}_x^1 h = P_1h' = g h' = \partial_x^{v_0} h$.  We also consider the operators $\tilde{\partial}_y^v\in \End(K[[y]])$,
obtained replacing $x$ by $y$ in the definition (so, $\tilde{\partial}_y^v h$ is computed by replacing $x$ by $y$ in $\tilde{\partial}_x^v h(x)$).
We extend the operators $\tilde{\partial}_x^v$ and $\tilde{\partial}_y^v$ to operators on $K[[x,y]]=K[[x]][[y]]=K[[y]][[x]]$ by acting on the
coefficients ring $K[[x]]$ and $K[[y]]$, respectively. It is easy to check that $\tilde{\partial}_x^v \xcirc \tilde{\partial}_y^{v'}=
\tilde{\partial}_y^{v'}\xcirc \tilde{\partial}_x^v$ for all $v,v'$.

\begin{remark}\label{relation Py con Px}
  We claim that $g(x)(P+x)_x=(y+1)P_y$. In fact, from~\eqref{recursivo Pi} we obtain
  $$
    g(x)P_x=\sum_{v\ge 1}g(x)P_v'(x)y^v= \sum_{v\ge 1}(v+1)P_{v+1}(x)y^v+\sum_{v\ge 1}vP_v(x)y^v,
  $$
  and so $g(x)P_x=P_y+yP_y-P_1(x)=P_y+yP_y-g(x)$, as desired.
\end{remark}

\begin{lemma}
  The following equalities hold:
  \begin{align}
    & v \tilde{\partial}_x^v=\tilde{\partial}_x^1 \tilde{\partial}_x^{v-1}-(v-1) \tilde{\partial}_x^{v-1}\quad\text{for $v\ge 1$,}
    \label{partial tilde propiedad}\\
    &\tilde{\partial}_x^u \tilde{\partial}_x^v=\tilde{\partial}_x^v \tilde{\partial}_x^u\quad\text{for $u,v\ge 0$,}
    \label{conmutacion partial tilde j k}
    \shortintertext{and}
    &\bigl(\tilde{\partial}_x^u P\bigr)_v = \bigl(\tilde{\partial}_x^v P\bigr)_u\quad\text{for $u,v>0$.}\label{conmutacion partial tilde j P k}
  \end{align}
\end{lemma}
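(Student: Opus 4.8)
The plan is to deduce all three identities from the single partial differential equation $g(x)(P+x)_x=(1+y)P_y$ proved in Remark~\ref{relation Py con Px}, which I rewrite as $g(x)Q_x=(1+y)Q_y$ for $Q\coloneqq x+P$ (note $Q_y=P_y$ and $Q|_{y=0}=x$, since $xy\mid P$).

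First I would establish~\eqref{partial tilde propiedad}. Fix $h\in K[[x]]$ and set $F\coloneqq h(Q)=h(P+x)\in K[[x]][[y]]$, so that by the very definition of $\tilde\partial_x^v$ one has $F=\sum_{v\ge 0}\bigl(\tilde\partial_x^v h\bigr)(x)\,y^v$, with $\tilde\partial_x^0 h=h$. The formal chain rule gives $F_x=h'(Q)Q_x$ and $F_y=h'(Q)Q_y$, whence $g(x)F_x=h'(Q)\bigl(g(x)Q_x\bigr)=h'(Q)(1+y)Q_y=(1+y)F_y$. Writing $F=\sum_v F_v(x)y^v$, one has $g(x)F_x=\sum_v g F_v'\,y^v$ and $(1+y)F_y=\sum_v\bigl((v+1)F_{v+1}+vF_v\bigr)y^v$; comparing the coefficient of $y^v$ yields $(v+1)F_{v+1}=gF_v'-vF_v$ for all $v\ge 0$. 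Since $\tilde\partial_x^1\varphi=g\varphi'$ for $\varphi\in K[[x]]$, this reads $(v+1)\tilde\partial_x^{v+1}=\tilde\partial_x^1\tilde\partial_x^v-v\tilde\partial_x^v$, and reindexing $v\mapsto v-1$ gives~\eqref{partial tilde propiedad}.

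For~\eqref{conmutacion partial tilde j k} I would argue by induction that every $\tilde\partial_x^v$ lies in the commutative subalgebra $K[\tilde\partial_x^1]\subseteq\End_K(K[[x]])$ generated by $\tilde\partial_x^1$: the cases $\tilde\partial_x^0=\ide$ and $\tilde\partial_x^1$ are trivial, and~\eqref{partial tilde propiedad} (using that $K$ has characteristic zero, so $v$ is invertible) writes $\tilde\partial_x^v=\tfrac1v\bigl(\tilde\partial_x^1-(v-1)\ide\bigr)\tilde\partial_x^{v-1}$, again a polynomial in $\tilde\partial_x^1$ (indeed $\tilde\partial_x^v=\tfrac1{v!}\prod_{j=0}^{v-1}(\tilde\partial_x^1-j\,\ide)$). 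Any two elements of $K[\tilde\partial_x^1]$ commute, so $\tilde\partial_x^u\tilde\partial_x^v=\tilde\partial_x^v\tilde\partial_x^u$. Finally,~\eqref{conmutacion partial tilde j P k} is then immediate: since $\tilde\partial_x^v x=P_v$ for $v>0$ and $\tilde\partial_x^u$ acts on $K[[x,y]]=K[[x]][[y]]$ coefficientwise in $y$, one gets $(\tilde\partial_x^u P)_v=\tilde\partial_x^u(P_v)=\tilde\partial_x^u\tilde\partial_x^v(x)=\tilde\partial_x^v\tilde\partial_x^u(x)=\tilde\partial_x^v(P_u)=(\tilde\partial_x^v P)_u$, the middle equality being~\eqref{conmutacion partial tilde j k}.

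The only step requiring any care is the extraction of the recursion in the first paragraph, where the single delicate point is the bookkeeping $(1+y)F_y=\sum_v\bigl((v+1)F_{v+1}+vF_v\bigr)y^v$; once this is in hand, parts two and three are purely formal consequences. So I do not anticipate a genuine obstacle — the real content is simply the observation that the differential equation of Remark~\ref{relation Py con Px} forces all the operators $\tilde\partial_x^v$ to be polynomials in the single derivation $\tilde\partial_x^1$.
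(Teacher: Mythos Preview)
Your proof is correct and follows essentially the same route as the paper: both arguments derive the recursion~\eqref{partial tilde propiedad} from the identity $g(x)(P+x)_x=(1+y)P_y$ of Remark~\ref{relation Py con Px} by applying the chain rule to $h(P+x)$ and reading off $y$-coefficients, and then deduce~\eqref{conmutacion partial tilde j k} and~\eqref{conmutacion partial tilde j P k} exactly as you do. Your packaging via the single PDE $g(x)F_x=(1+y)F_y$ is slightly more streamlined than the paper's term-by-term manipulation, but the content is the same.
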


\begin{proof}
  We first prove~\eqref{partial tilde propiedad}. For $v=1$ this is trivial, so we assume $v>1$. Let $h\in K[[x]]$. Using the chain rule and the very
  definition of $\tilde{\partial}_x^v$, we obtain
  $$
    v \tilde{\partial}_x^v h = v h(P+x)_v=\bigl(h(P+x)_y\bigr)_{v-1}= \bigl(h'(P+x) P_y\bigr)_{v-1}.
  $$
  Hence, by Remark~\ref{relation Py con Px},
  $$
    v \tilde{\partial}_x^v h=\bigl(h'(P+x) g(x) (P+x)_x- h'(P+x)y P_y\bigr)_{v-1}= \bigl(\tilde{\partial}_x^1(h(P+x)) -y h(P+x)_y\bigr)_{v-1}.
  $$
  Consequently
  $$
    v \tilde{\partial}_x^v h = \tilde{\partial}_x^1\tilde{\partial}_x^{v-1}h -\bigl(h(P+x)_y\bigr)_{v-2} =
    \tilde{\partial}_x^1\tilde{\partial}_x^{v-1}h -(v-1)h(P+x)_{v-1}= \tilde{\partial}_x^1\tilde{\partial}_x^{v-1}h -(v-1)\tilde{\partial}_x^{v-1}h,
  $$
  which proves~\eqref{partial tilde propiedad}. An inductive argument using~\eqref{partial tilde propiedad} shows that the operators
  $\tilde{\partial}_x^k$ are polynomials in $\tilde{\partial}_x^1$, which implies that~\eqref{conmutacion partial tilde j k} holds. Using this we
  obtain
  $$
    \bigl(\tilde{\partial}_x^u P\bigr)_v=  \tilde{\partial}_x^u P_v = \tilde{\partial}_x^u\tilde{\partial}_x^v x=
    \tilde{\partial}_x^v\tilde{\partial}_x^u x = \tilde{\partial}_x^v P_u = \bigl(\tilde{\partial}_x^v P\bigr)_u\quad\text{for $u,v>0$,}
  $$
  as desired.
\end{proof}

\begin{lemma}\label{potencia de G barra}
  We have $\ov{G}^u(x,y)=\sum_{v\ge u} (P^u)_v(x) \ov{f}^v(y)$.
\end{lemma}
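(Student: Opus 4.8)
The plan is to recognize $\ov{G}$ as a formal substitution into the one-variable-style series $P$ and then exploit multiplicativity of substitution. Write $P(x,t)=\sum_{v\ge 1}P_v(x)t^v\in K[[x]][[t]]$. Since $\ov{f}(y)=y^{v_0}+\sum_{v>v_0}p_vy^v$ has zero constant term (because $v_0>0$), the assignment $t\mapsto \ov{f}(y)$ defines a continuous $K[[x]]$-algebra homomorphism $\phi\colon K[[x]][[t]]\to K[[x]][[y]]$. By Proposition~\ref{prop 3.9}, $\phi\bigl(P(x,t)\bigr)=\sum_{v\ge 1}P_v(x)\ov{f}(y)^v=\ov{G}(x,y)$.

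Next, since $\phi$ is a ring homomorphism, $\ov{G}^u=\phi\bigl(P(x,t)\bigr)^u=\phi\bigl(P(x,t)^u\bigr)$. To expand the right-hand side, write $P(x,t)^u=\sum_{v\ge 0}(P^u)_v(x)t^v$, where $(P^u)_v$ denotes the coefficient of $y^v$ in $P(x,y)^u$ (this is unchanged under renaming $y$ to $t$); since $P(x,t)\in tK[[x]][[t]]$, we have $P(x,t)^u\in t^uK[[x]][[t]]$, hence $(P^u)_v=0$ for $v<u$, so $P(x,t)^u=\sum_{v\ge u}(P^u)_v(x)t^v$. Applying $\phi$ term by term then yields $\ov{G}^u(x,y)=\sum_{v\ge u}(P^u)_v(x)\,\ov{f}^v(y)$, which is the claim.

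As an alternative one can argue by induction on $u$: the case $u=1$ is exactly Proposition~\ref{prop 3.9}, and for the inductive step one multiplies $\ov{G}^{u-1}=\sum_{a\ge u-1}(P^{u-1})_a\ov{f}^a$ by $\ov{G}=\sum_{b\ge 1}P_b\ov{f}^b$, collects powers of $\ov{f}$, and identifies $\sum_{a+b=v}(P^{u-1})_aP_b$ with the convolution $(P^{u-1}\cdot P)_v=(P^u)_v$. In either approach the only point needing care is the legitimacy of substituting $\ov{f}(y)$ for $t$, which rests solely on $\ov{f}(0)=0$; beyond that the argument is purely formal bookkeeping, so I do not anticipate a genuine obstacle here.
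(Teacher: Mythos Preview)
Your proof is correct and follows essentially the same approach as the paper: both use Proposition~\ref{prop 3.9} to view $\ov{G}$ as $P$ evaluated at $\ov{f}(y)$, then pass to the $u$-th power and note that $(P^u)_v=0$ for $v<u$. The paper's version is simply a one-line compression of your argument, leaving the substitution-homomorphism step implicit.
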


\begin{proof}
  By Proposition~\ref{prop 3.9} we have
  $\ov{G}(x,y)=\sum_{v\ge 1} P_v(x)\ov{f}^v(y)$. So, $\cramped{\ov{G}^u(x,y) = \sum_{v\ge 1} (P^u)_v(x)\ov{f}^v(y)}$, which concludes the proof since
  $(P^u)_v=0$ for $v<u$.
\end{proof}

\begin{proposition}\label{partial como partial tilde}
  Let $\partial^v_x$ be as at the beginning of Subsection~\ref{Differential operators in K[[x,y]]}. The equality $\partial^v_x=\sum_{u\ge 0}
  \bigl(\ov{f}^u\bigr)_v \tilde{\partial}_x^u$ holds on $K[[x,y]]$, for all $v\ge 0$.
\end{proposition}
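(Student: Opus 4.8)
The plan is to verify the claimed identity $\partial^v_x = \sum_{u\ge 0} \bigl(\ov{f}^u\bigr)_v \tilde{\partial}_x^u$ by comparing both sides after applying them to an arbitrary $h\in K[[x]]$, and then reading off coefficients in $y$. Both $\partial^v_x h$ and the right-hand side are elements of $K[[x]]$ (they do not involve $y$), but the cleanest way to organize the computation is to package all $v$ at once: I would show that $h(G) = \sum_{u\ge 0} \ov{f}(y)^u\,(\tilde{\partial}_x^u h)$ as an identity in $K[[x]][[y]]$, and then extract the coefficient of $y^v$ on both sides, using $h(G)_v = \partial_x^v h$ from~\eqref{definicion de partial1} on the left and $\bigl(\sum_u \ov{f}^u \tilde{\partial}_x^u h\bigr)_v = \sum_u (\ov{f}^u)_v \tilde{\partial}_x^u h$ on the right.

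To prove $h(G) = \sum_u \ov{f}(y)^u \tilde{\partial}_x^u h$, I would start from the alternative formula~\eqref{definicion alternativa partial1}, namely $h(G) = h_0 + \sum_{k\ge 1} \frac{1}{k!}\ov{G}^k h^{(k)}$ (summing also the $v=0$ term $h_0 = h(x)_0$; more precisely $h(G) = h(x) + \sum_{k\ge1}\frac1{k!}\ov G^k h^{(k)}$ by a Taylor expansion of $h$ at $x$, since $G = x + \ov G$). Now substitute the expansion of powers of $\ov{G}$ in terms of $P$ from Lemma~\ref{potencia de G barra}, $\ov{G}^k = \sum_{v\ge k}(P^k)_v(x)\,\ov{f}^v(y)$, to get
\begin{equation*}
  h(G) = h(x) + \sum_{k\ge 1}\frac{1}{k!}\Bigl(\sum_{v\ge k}(P^k)_v(x)\ov{f}^v(y)\Bigr) h^{(k)}(x) = h(x) + \sum_{v\ge 1}\ov{f}^v(y)\sum_{k=1}^{v}\frac{1}{k!}(P^k)_v(x)h^{(k)}(x).
\end{equation*}
By~\eqref{definicion alternativa partial2}, the inner sum $\sum_{k=1}^v \frac{1}{k!}(P^k)_v h^{(k)}$ is exactly $\tilde{\partial}_x^v h$ for $v>0$, and the constant term $h(x)$ is $\tilde{\partial}_x^0 h$, so the right side is $\sum_{u\ge 0}\ov{f}^u(y)\,\tilde{\partial}_x^u h$, as required. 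Rearranging the double sum over $k,v$ into a sum over $v$ with inner sum over $k\le v$ is legitimate because $P = \sum_{v\ge1}P_v(x)y^v$ has $P_v$ of order $\ge v$ in $x$ (indeed $x\mid P_1 = g$ and~\eqref{recursivo Pi} propagates this), so for each fixed power of $y$ only finitely many $k$ contribute and the $x$-adic valuation forces convergence.

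The main obstacle is bookkeeping rather than conceptual: one must be careful that the rearrangement of the double series is valid, i.e. that $(P^k)_v = 0$ for $v<k$ (which is Lemma~\ref{potencia de G barra}'s condition) and that the formal substitution $h(P+x)$ defining $\tilde{\partial}_x^v$ matches the term-by-term expansion used here; both follow from $x\mid P_v$ and $y\mid \ov f$. Once the identity $h(G)=\sum_u\ov f^u\tilde\partial_x^u h$ is established in $K[[x]][[y]]$, extracting the $y^v$-coefficient is immediate and gives the proposition; extending from $K[[x]]$ to $K[[x,y]]$ is automatic since both sides act only on the $K[[x]]$-coefficients. Finally one should note the consistency check with the $v = v_0$ case: the claimed formula gives $\partial_x^{v_0} = (\ov f^0)_{v_0}\tilde\partial_x^0 + (\ov f^1)_{v_0}\tilde\partial_x^1 + \cdots = \tilde\partial_x^1 = gh'$ since $(\ov f)_{v_0}=1$ and $(\ov f^u)_{v_0}=0$ for $u\ge 2$, matching what was already derived after~\eqref{definicion alternativa partial2}.
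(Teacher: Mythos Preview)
Your proof is correct and follows essentially the same approach as the paper: both arguments combine the Taylor-type expansion~\eqref{definicion alternativa partial1} with Lemma~\ref{potencia de G barra} (expressing $\ov{G}^k$ as a power series in $\ov{f}(y)$ with coefficients $(P^k)_v$), swap the order of summation, and then recognize the inner sum as $\tilde{\partial}_x^u h$ via~\eqref{definicion alternativa partial2}. The only difference is organizational: you prove the generating-function identity $h(G)=\sum_{u\ge 0}\ov f(y)^u\,\tilde\partial_x^u h$ in $K[[x]][[y]]$ and then extract the $y^v$-coefficient at the end, whereas the paper extracts the $y^v$-coefficient at the outset and works entirely in $K[[x]]$; the computations are otherwise identical.
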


\begin{proof}
  For $v=0$ this is trivially true, since $\partial^0_x=\tilde{\partial}_x^0=\ide$ (see~\eqref{valores particulares de partial} and the comment above
  identity~\eqref{definicion alternativa partial2}). Let $v>0$ and let $h\in K[[x]]$. Since $\bigl(\ov{f}^0\bigr)_v = 0$, by
  Lemma~\ref{potencia de G barra} and equalities~\eqref{definicion alternativa partial1} and~\eqref{definicion alternativa partial2}, we have
  $$
    \partial_x^v h  = \sum_{u\ge 1} \frac{1}{u!}\bigl(\ov{G}^u\bigr)_v h^{(u)} = \sum_{u\ge 1} \frac{1}{u!}\sum_{l\ge u} \bigl(\ov{f}^l\bigr)_v
    (P^u)_l  h^{(u)} = \sum_{l\ge 1} \bigl(\ov{f}^l\bigr)_v \left(\sum_{u=1}^l \frac{1}{u!}(P^u)_l h^{(u)}\right) = \sum_{l\ge 1}
    \bigl(\ov{f}^l\bigr)_v \tilde{\partial}_x^l h.
  $$
  So, the equality in the statement holds on $K[[x]]$, and thus, on $K[[x,y]]$.
\end{proof}

\begin{corollary}\label{partial conmuta con gi}
  For all $u,v\ge 0$ we have $\partial_x^u \partial_x^v= \partial_x^v \partial_x^u$ and $\bigl(\partial_x^u G\bigr)_v= \bigl(\partial_x^v G\bigr)_u$.
\end{corollary}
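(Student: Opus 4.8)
The plan is to deduce both statements directly from Proposition~\ref{partial como partial tilde} and the commutation relation~\eqref{conmutacion partial tilde j k} for the operators $\tilde{\partial}_x^u$, so that essentially no new computation is needed. The point to keep in mind is that in the expansion $\partial_x^v=\sum_{u\ge 0}\bigl(\ov f^u\bigr)_v\tilde{\partial}_x^u$ the coefficients $\bigl(\ov f^u\bigr)_v$ are honest scalars in $K$ (they are the coefficients of $x^v$ in the power series $\ov f^u$), and, since $\ov f\in x^{v_0}K[[x]]$, for each fixed $v$ all but finitely many of them vanish; thus $\partial_x^v$ is a finite $K$-linear combination of the $\tilde{\partial}_x^u$, and scalars commute with every operator in sight.

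Granting this, for the first equality I would expand $\partial_x^u$ and $\partial_x^v$ via Proposition~\ref{partial como partial tilde} and compute
\[
  \partial_x^u\partial_x^v=\sum_{s,t\ge 0}\bigl(\ov f^s\bigr)_u\bigl(\ov f^t\bigr)_v\,\tilde{\partial}_x^s\tilde{\partial}_x^t
  =\sum_{s,t\ge 0}\bigl(\ov f^t\bigr)_v\bigl(\ov f^s\bigr)_u\,\tilde{\partial}_x^t\tilde{\partial}_x^s=\partial_x^v\partial_x^u,
\]
where the middle equality is exactly~\eqref{conmutacion partial tilde j k} after reindexing the double sum. For the second equality I would use that $\partial_x^u$ acts coefficientwise on $G=\sum_{l\ge 0}g_l(x)y^l$, so that $\bigl(\partial_x^uG\bigr)_v=\partial_x^u g_v$, together with $g_v=\partial_x^v x$ from~\eqref{valores particulares de partial}. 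Hence $\bigl(\partial_x^uG\bigr)_v=\partial_x^u\partial_x^v x=\partial_x^v\partial_x^u x=\partial_x^v g_u=\bigl(\partial_x^vG\bigr)_u$, the middle step being the commutativity just proved.

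I do not expect any genuine obstacle here: the substantial work was done earlier, in showing (via~\eqref{partial tilde propiedad}) that each $\tilde{\partial}_x^k$ is a polynomial in $\tilde{\partial}_x^1$ — which both yields~\eqref{conmutacion partial tilde j k} and is what makes Proposition~\ref{partial como partial tilde} usable — and in establishing, through Proposition~\ref{prop 3.9} and Lemma~\ref{potencia de G barra}, the identity $\partial_x^v=\sum_u(\ov f^u)_v\tilde{\partial}_x^u$. The only thing demanding a moment's attention is the bookkeeping: that the coefficients $(\ov f^u)_v$ are scalars, and that the sums involved are finite so that all the manipulations above are legitimate.
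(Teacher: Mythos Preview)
Your proposal is correct and follows essentially the same route as the paper: the commutativity $\partial_x^u\partial_x^v=\partial_x^v\partial_x^u$ is obtained directly from Proposition~\ref{partial como partial tilde} together with~\eqref{conmutacion partial tilde j k}, and the second identity is then derived via $\bigl(\partial_x^uG\bigr)_v=\partial_x^u g_v=\partial_x^u\partial_x^v x$ using~\eqref{valores particulares de partial}. Your additional remarks on the scalars $(\ov f^u)_v$ and the finiteness of the sums are accurate bookkeeping that the paper leaves implicit.
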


\begin{proof}
  The first equality follows directly from Proposition~\ref{partial como partial tilde} and equality~\eqref{conmutacion partial tilde j k}. Then,
  by~\eqref{valores particulares de partial},
  $$
    \bigl(\partial_x^u G\bigr)_v=  \partial_x^u  g_v(x)=\partial_x^u\partial_x^v(x)= \partial_x^v\partial_x^u(x)=  \partial_x^v  g_u(x)=
    \bigl(\partial_x^v G\bigl)_u,
  $$
  as desired.
\end{proof}

Note that by the second identity in Corollary~\ref{partial conmuta con gi}, we have
\begin{equation}\label{partial x k e i0 conmutan}
  \partial^d_x g(x)=\bigl(\partial_x^dG\bigr)_{v_0}=\bigl(\partial_x^{v_0}G\bigr)_d\quad \text{ for all $d\ge 0$.}
\end{equation}

\begin{corollary} \label{partial global en funcion de tilde partial}
	Let $\tilde{\partial}^u\coloneqq\sum_{l=0}^u \tilde{\partial}^l_x
	\tilde{\partial}^{u-l}_y$. The equalities $\partial^v_y=\sum_{u\ge 0} \ov{f}^u(y)_v \tilde{\partial}^u_y$ and $\partial^v=\sum_{u\ge 0}
    \ov{f}^u(y)_v \tilde{\partial}^u$ hold on $K[[x,y]]$.
\end{corollary}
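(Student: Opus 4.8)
The plan is to first establish the $y$-analogue $\partial^v_y=\sum_{u\ge 0}(\ov{f}^u)_v\,\tilde{\partial}^u_y$ of Proposition~\ref{partial como partial tilde}, and then to deduce the statement for $\partial^v$ from Definition~\ref{definition global partial} together with the Cauchy product formula for the coefficients of a product of power series.

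For the first equality I would simply repeat the proof of Proposition~\ref{partial como partial tilde} with the roles of $x$ and $y$ interchanged. Concretely, Lemma~\ref{potencia de G barra} gives, after extracting the $y^v$-coefficient, $(\ov{G}^i)_v=\sum_{l\ge i}(P^i)_l\,(\ov{f}^l)_v$ in $K[[x]]$, hence $(\ov{G}^i)_v(y)=\sum_{l\ge i}(P^i)_l(y)\,(\ov{f}^l)_v$ after the substitution $x\mapsto y$. Plugging this into the defining formula~\eqref{definition of partial y1} for $\partial^v_y$ and interchanging the (finite, for fixed $v$) order of summation yields
$$
\partial^v_y H=\sum_{i\ge 1}\frac{1}{i!}(\ov{G}^i)_v(y)H^{(i)}_y=\sum_{l\ge 1}(\ov{f}^l)_v\sum_{i=1}^l\frac{1}{i!}(P^i)_l(y)H^{(i)}_y=\sum_{l\ge 1}(\ov{f}^l)_v\,\tilde{\partial}^l_y H,
$$
where the last equality is the $y$-version of~\eqref{definicion alternativa partial2}; the case $v=0$ is trivial since $\partial^0_y=\tilde{\partial}^0_y=\ide$ and $(\ov{f}^u)_0=\delta_{0u}$.

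It then remains to combine the $x$- and $y$-versions. Using Definition~\ref{definition global partial}, Proposition~\ref{partial como partial tilde} for each factor $\partial^a_x$, the identity just proved for each factor $\partial^b_y$, and the fact that the scalars $(\ov{f}^{u_1})_a$ and $(\ov{f}^{u_2})_b$ may be pulled past the operators, one gets
$$
\partial^v=\sum_{a+b=v}\partial^a_x\partial^b_y=\sum_{a+b=v}\sum_{u_1,u_2\ge 0}(\ov{f}^{u_1})_a(\ov{f}^{u_2})_b\,\tilde{\partial}^{u_1}_x\tilde{\partial}^{u_2}_y=\sum_{u_1,u_2\ge 0}\tilde{\partial}^{u_1}_x\tilde{\partial}^{u_2}_y\sum_{a+b=v}(\ov{f}^{u_1})_a(\ov{f}^{u_2})_b.
$$
By the Cauchy product formula, $\sum_{a+b=v}(\ov{f}^{u_1})_a(\ov{f}^{u_2})_b=(\ov{f}^{u_1}\ov{f}^{u_2})_v=(\ov{f}^{u_1+u_2})_v$, so grouping the terms according to $u=u_1+u_2$ and using the definition $\tilde{\partial}^u=\sum_{l=0}^u\tilde{\partial}^l_x\tilde{\partial}^{u-l}_y$ gives $\partial^v=\sum_{u\ge 0}(\ov{f}^u)_v\,\tilde{\partial}^u$, as claimed. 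The computation is essentially routine; the only points needing care are applying the substitution $x\mapsto y$ consistently to $(\ov{G}^i)_v$ and $(P^i)_l$ while leaving the scalar coefficients $(\ov{f}^l)_v$ untouched, and checking that the double sum over $(u_1,u_2)$ may legitimately be rearranged, which holds because for each fixed $v$ only the finitely many pairs with $u_1,u_2\le v$ contribute.
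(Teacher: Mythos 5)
Your proof is correct and follows essentially the same route as the paper: the first identity is the $x\mapsto y$ transcription of Proposition~\ref{partial como partial tilde} (which the paper simply cites), and the second is obtained exactly as in the paper by expanding $\partial^v=\sum_{a+b=v}\partial^a_x\partial^b_y$, substituting both expansions, and regrouping via the Cauchy product $\sum_{a+b=v}(\ov{f}^{u_1})_a(\ov{f}^{u_2})_b=(\ov{f}^{u_1+u_2})_v$. Your explicit remarks on the finiteness of the sums and on keeping the scalar coefficients $(\ov{f}^l)_v$ fixed under the substitution $x\mapsto y$ are the same justifications the paper relies on.
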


\begin{proof}
	The first equality holds  by Proposition~\ref{partial como partial tilde}. By the same proposition, we have
	$$
	  \partial^v = \sum_{h=0}^v \partial^h_x\partial^{v-h}_y
	  = \sum_{h=0}^v \Biggl(\sum_{l\ge 0}\bigl(\ov{f}^l\bigr)_h\tilde{\partial}^l_x\Biggr)
	  \Biggl(\sum_{n\ge 0}\bigl(\ov{f}^n\bigr)_{v-h}\tilde{\partial}^n_y\Biggr)
	  =\sum_{l\ge 0} \sum_{n\ge 0} \sum_{h=0}^{v} \bigl(\ov{f}^l\bigr)_h
	  \bigl(\ov{f}^n\bigr)_{v-h}\tilde{\partial}^l_x \tilde{\partial}^n_y,
	$$
	where the last equality holds since the sums are finite. Consequently,
	$$
	  \partial^v = \sum_{l\ge 0} \sum_{n\ge 0} \bigl(\ov{f}^{l+n}\bigr)_v \tilde{\partial}^l_x
	  \tilde{\partial}^n_y= \sum_{u\ge 0}  \bigl(\ov{f}^u\bigr)_v \sum_{l= 0}^u \tilde{\partial}^l_x
	  \tilde{\partial}^{u-l}_y = \sum_{u\ge 0}  \bigl(\ov{f}^u\bigr)_v \tilde{\partial}^u,
	$$
	as desired.
\end{proof}

\begin{lemma}\label{partial tilde global propiedad}
	On $K[[x,y]]$, we have
	$v\tilde{\partial}^v=\tilde{\partial}^1\tilde{\partial}^{v-1} - (v-1)\tilde{\partial}^{v-1}$.
\end{lemma}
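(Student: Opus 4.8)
The plan is to deduce the global identity from the one-variable relation~\eqref{partial tilde propiedad}, its $y$-analogue, and the commutativity of the $\tilde{\partial}_x^a$ with the $\tilde{\partial}_y^b$. Since $\tilde{\partial}_y^v$ is obtained from $\tilde{\partial}_x^v$ by interchanging $x$ and $y$, the relation $v\tilde{\partial}_y^v=\tilde{\partial}_y^1\tilde{\partial}_y^{v-1}-(v-1)\tilde{\partial}_y^{v-1}$ holds as well; moreover $\tilde{\partial}^1=\tilde{\partial}_x^1+\tilde{\partial}_y^1$ and $\tilde{\partial}^{v-1}=\sum_{m=0}^{v-1}\tilde{\partial}_x^m\tilde{\partial}_y^{v-1-m}$, because $\tilde{\partial}_x^0=\tilde{\partial}_y^0=\ide$. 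The case $v=1$ is trivial (both sides reduce to $\tilde{\partial}^1$), so I would assume $v\ge 2$.

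First I would write $v\tilde{\partial}^v=\sum_{l=0}^v v\,\tilde{\partial}_x^l\tilde{\partial}_y^{v-l}$, handle the extreme summands $l=0$ and $l=v$ directly by the pure $y$- and $x$-versions of~\eqref{partial tilde propiedad}, and for $1\le l\le v-1$ split the scalar $v=l+(v-l)$ and apply~\eqref{partial tilde propiedad} to $l\tilde{\partial}_x^l$ and its $y$-analogue to $(v-l)\tilde{\partial}_y^{v-l}$, obtaining
\begin{equation*}
  v\,\tilde{\partial}_x^l\tilde{\partial}_y^{v-l}=\tilde{\partial}_x^1\tilde{\partial}_x^{l-1}\tilde{\partial}_y^{v-l}-(l-1)\tilde{\partial}_x^{l-1}\tilde{\partial}_y^{v-l}+\tilde{\partial}_x^l\tilde{\partial}_y^1\tilde{\partial}_y^{v-l-1}-(v-l-1)\tilde{\partial}_x^l\tilde{\partial}_y^{v-l-1}.
\end{equation*}

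Then I would collect terms after the obvious reindexings. Setting $m=l-1$ in the first family of terms and absorbing the $l=v$ contribution $\tilde{\partial}_x^1\tilde{\partial}_x^{v-1}$ produces $\sum_{m=0}^{v-1}\tilde{\partial}_x^1\tilde{\partial}_x^m\tilde{\partial}_y^{v-1-m}=\tilde{\partial}_x^1\tilde{\partial}^{v-1}$; symmetrically the third family, together with the $l=0$ contribution $\tilde{\partial}_y^1\tilde{\partial}_y^{v-1}$, produces $\sum_{m=0}^{v-1}\tilde{\partial}_x^m\tilde{\partial}_y^1\tilde{\partial}_y^{v-1-m}$, which after commuting $\tilde{\partial}_y^1$ past $\tilde{\partial}_x^m$ is $\tilde{\partial}_y^1\tilde{\partial}^{v-1}$; so these two pieces sum to $(\tilde{\partial}_x^1+\tilde{\partial}_y^1)\tilde{\partial}^{v-1}=\tilde{\partial}^1\tilde{\partial}^{v-1}$. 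As for the negative terms, once the endpoint contributions $-(v-1)\tilde{\partial}_x^{v-1}$ (from $l=v$) and $-(v-1)\tilde{\partial}_y^{v-1}$ (from $l=0$) are reinserted as the missing endpoint terms of the respective reindexed sums, they combine to $-\sum_{m=0}^{v-1}\bigl(m+(v-1-m)\bigr)\tilde{\partial}_x^m\tilde{\partial}_y^{v-1-m}=-(v-1)\tilde{\partial}^{v-1}$. Adding the pieces yields $v\tilde{\partial}^v=\tilde{\partial}^1\tilde{\partial}^{v-1}-(v-1)\tilde{\partial}^{v-1}$.

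The argument is purely combinatorial once~\eqref{partial tilde propiedad} and the commutation relations are available, and requires no new structural input; the only delicate point is the bookkeeping at the boundary indices $l\in\{0,v\}$, where one must check that the terms produced by the pure $x$- and $y$-relations are exactly the endpoint terms absent from the reindexed interior sums.
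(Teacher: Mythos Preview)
Your proof is correct and uses essentially the same ingredients as the paper: the one-variable relation~\eqref{partial tilde propiedad}, its $y$-analogue, and the commutation $\tilde{\partial}_x^a\tilde{\partial}_y^b=\tilde{\partial}_y^b\tilde{\partial}_x^a$. The only difference is the direction of the computation: the paper expands $\tilde{\partial}^{v-1}\tilde{\partial}^1$ and applies the relation in the form $\tilde{\partial}_x^l\tilde{\partial}_x^1=(l+1)\tilde{\partial}_x^{l+1}+l\tilde{\partial}_x^l$ uniformly over $0\le l\le v-1$, arriving at $v\tilde{\partial}^v+(v-1)\tilde{\partial}^{v-1}$ without any boundary cases, whereas you start from $v\tilde{\partial}^v$, split $v=l+(v-l)$, and must treat $l=0$ and $l=v$ separately before reassembling; the reindexing and cancellation are otherwise identical.
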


\begin{proof}
	Since the operators  $\tilde{\partial}^v_x$ commute one with each other and with the maps $\tilde{\partial}^{v'}_y$,
    by~\eqref{partial tilde propiedad} we have
	\begin{align*}
		\tilde{\partial}^{v-1}\tilde{\partial}^1 & =  \sum_{l=0}^{v-1}  \tilde{\partial}_y^{v-1-l} \tilde{\partial}_x^l \tilde{\partial}_x^1+
		\sum_{l=0}^{v-1} \tilde{\partial}_x^l \tilde{\partial}_y^{v-1-l} \tilde{\partial}_y^1\\
		& = \sum_{l=0}^{v-1}  \tilde{\partial}_y^{v-1-l} \bigl((l+1)\tilde{\partial}_x^{l+1} +l\tilde{\partial}_x^l\bigr)
		+  \sum_{l=0}^{v-1} \tilde{\partial}_x^l \bigl((v-l)\tilde{\partial}_y^{v-l}+ (v-1-l)\tilde{\partial}_y^{v-1-l}\bigr)\\
		& = \sum_{l=1}^v  l \tilde{\partial}_y^{v-l} \tilde{\partial}_x^l+\sum_{l=0}^{v-1}  (v-l)\tilde{\partial}_x^l\tilde{\partial}_y^{v-l}
		+ \sum_{l=0}^{v-1}  l \tilde{\partial}_y^{v-1-l} \tilde{\partial}_x^l+\sum_{l=0}^{v-1}
        (v-1-l)\tilde{\partial}_x^l\tilde{\partial}_y^{v-1-l}\\
		&= v\sum_{l=0}^v   \tilde{\partial}_y^{v-l} \tilde{\partial}_x^l+ (v-1)\sum_{l=0}^{v-1}\tilde{\partial}_x^l\tilde{\partial}_y^{v-1-l}\\
		&=v\tilde{\partial}^v + (v-1)\tilde{\partial}^{v-1},
	\end{align*}
	where the second equality follows from~\eqref{partial tilde propiedad}.
\end{proof}

\begin{remark}
	We do not need explicit formulas for $\tilde{\partial}^v_x$, $\tilde{\partial}^v_y$ and $\tilde{\partial}^v$, but we note for the record
	that
	$$
    	\tilde{\partial}^v_x=\binom{\tilde{\partial}^1_x}{v},\quad \tilde{\partial}^v_y=\binom{\tilde{\partial}^1_y}{v}
	    \quad\text{and}\quad \tilde{\partial}^v=\binom{\tilde{\partial}^1}{v},
	$$
	where we use generalized binomial coefficients given for example by $\binom{\tilde{\partial}^1_x}{v}\coloneqq
    \frac{1}{v!}\tilde{\partial}^1_x(\tilde{\partial}^1_x-1)\dots (\tilde{\partial}^1_x-v+1)$. In fact, $\binom{\tilde{\partial}^1_x}{1}=
    \tilde{\partial}^1_x$ and the maps $\binom{\tilde{\partial}^1_x}{v}$ satisfy the recursive relations~\eqref{partial tilde propiedad}, since
	$$
		(v+1)\binom{\tilde{\partial}^{1}_x}{v+1} = \frac{1}{v!}\tilde{\partial}^1_x(\tilde{\partial}^1_x-1)\dots
		(\tilde{\partial}^1_x-v+1)(\tilde{\partial}^1_x-v) =
        \tilde{\partial}^{1}_x\binom{\tilde{\partial}^{1}_x}{v}-v\binom{\tilde{\partial}^{1}_x}{v}.
	$$
	The same argument yields the expressions for $\tilde{\partial}^v_y$ and $\tilde{\partial}^v$.
\end{remark}

\begin{definition}
  We fix the series $F(x,y)\in K[[x,y]]$ setting $F(x,y)\coloneqq \sum_{i,j\ge 0} \frak{p}_{ij}^1 x^j y^i$.
\end{definition}
\noindent Note that $F(x,y)=G(y,x)$, $F_1=f(x)\in K[[x]]$, $F_0=0$ and
$$
  F_k=f_k(x)=\frak{p}_{k1}^1 x+\frak{p}_{k2}^1 x^2+\frak{p}_{k3}^1 x^3+\dots,\quad \text{for $k>1$.}
$$
We can write $F=\ov{F}+y$, with $\ov{F}(x,y)=\ov{G}(y,x)$, and so we have
\begin{equation}\label{transformar G en F1}
  \left( \ov{G}^l\right)_{jk}= \left( \ov{F}^l\right)_{kj}.
\end{equation}
The equality~\eqref{relation G_y with G_x1} now reads
\begin{equation}\label{partial x contra partial y de G1}
  \partial_y^{v_0} G= (f(y)^{v_0}-1) \partial_x^{v_0} G,
\end{equation}
which is equivalent to
\begin{equation}\label{partial x contra partial y de F1}
  \partial_x^{v_0} F= (f(x)^{v_0}-1) \partial_y^{v_0} F.
\end{equation}

\begin{proposition}\label{R se cumple para i0}
  For all $j,k$ we have $R(i,v_0,k)=R(i,k,v_0)$.
\end{proposition}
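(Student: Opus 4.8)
The plan is to use Proposition~\ref{remark importante1} to translate the assertion into the symmetry $(\partial^{v_0}G)_{ik}=(\partial^k G)_{iv_0}$ for all $i,k>0$ (the cases $i=0$ or $k=0$ being already contained in Remark~\ref{caso ijk cero}, since there both sides of $\eqref{E_{ijk}}$ vanish), and then to write out both sides explicitly and check that they are literally the same sum.

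First I would simplify the operator $\partial^{v_0}$. Since $\partial_x^a=0$ for $0<a<v_0$ by~\eqref{valores particulares de partial}, and likewise $\partial_y^b=0$ for $0<b<v_0$ (immediate from~\eqref{definition of partial y1}, as $\ov G$ has $y$-valuation $v_0$ by~\eqref{calculo de ovG}), in $\partial^{v_0}=\sum_{a+b=v_0}\partial_x^a\partial_y^b$ only the summands with $(a,b)=(v_0,0)$ and $(a,b)=(0,v_0)$ survive, so $\partial^{v_0}=\partial_x^{v_0}+\partial_y^{v_0}$. Feeding in~\eqref{partial x contra partial y de G1}, i.e. $\partial_y^{v_0}G=(f(y)^{v_0}-1)\partial_x^{v_0}G$, yields the closed form $\partial^{v_0}G=f(y)^{v_0}\,\partial_x^{v_0}G$. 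Now~\eqref{partial x k e i0 conmutan} gives $(\partial_x^{v_0}G)_d=\partial_x^d g$ for all $d$, so $\partial_x^{v_0}G=\sum_{d\ge 0}(\partial_x^d g)(x)\,y^d$; multiplying by $f(y)^{v_0}=\sum_e (f^{v_0})_e\,y^e$ and reading off the coefficient of $x^iy^k$ gives
$$(\partial^{v_0}G)_{ik}=\sum_{e+d=k}(f^{v_0})_e\,(\partial_x^d g)_i.$$

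For the right-hand side I would use~\eqref{Gjk} with operator index $k$ and $y$-degree $v_0$: $(\partial^k G)_{v_0}=\sum_{a+b=k}\sum_{l=1}^{v_0}\mathfrak{p}_{v_0,b}^l\,\partial_x^a g_l$, where only $l=v_0$ contributes since $g_l=0$ for $0<l<v_0$, so $(\partial^k G)_{v_0}=\sum_{a+b=k}\mathfrak{p}_{v_0,b}^{v_0}\,\partial_x^a g$. It remains to identify $\mathfrak{p}_{v_0,b}^{v_0}$: iterating~\eqref{inductivo coproducto1} and using Lemma~\ref{lema propiedades de G}(6) (which forces, at each step, the splitting $\mathfrak{p}_{u_1v_1}^1\mathfrak{p}_{u_2v_2}^{w-1}$ to have $u_1=1$, $u_2=w-1$) gives $\mathfrak{p}_{v_0,b}^{v_0}=\sum_{j_1+\cdots+j_{v_0}=b}\prod_{s=1}^{v_0}\mathfrak{p}_{1,j_s}^1$; since $\mathfrak{p}_{1,0}^1=1$, $\mathfrak{p}_{1,j}^1=0$ for $0<j<v_0$ (Lemma~\ref{lema propiedades de G}(1)) and $\mathfrak{p}_{1,j}^1=p_j$ for $j\ge v_0$ (Lemma~\ref{lema propiedades de G}(5)), we have $\mathfrak{p}_{1,j}^1=f_j$ for every $j$, whence $\mathfrak{p}_{v_0,b}^{v_0}=(f^{v_0})_b$. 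Reading off the coefficient of $x^i$,
$$(\partial^k G)_{iv_0}=\sum_{a+b=k}(f^{v_0})_b\,(\partial_x^a g)_i.$$

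Comparing the two displayed formulas (relabel $e\leftrightarrow b$, $d\leftrightarrow a$) gives $(\partial^{v_0}G)_{ik}=(\partial^k G)_{iv_0}$ for $i,k>0$, which by Proposition~\ref{remark importante1} is exactly $R(i,v_0,k)=R(i,k,v_0)$; together with Remark~\ref{caso ijk cero} for $i=0$ or $k=0$ this finishes the proof. There is no genuine obstacle here: the argument is essentially the two computations above plus the three simplifications that make the sides coincide (the collapse $\partial^{v_0}=\partial_x^{v_0}+\partial_y^{v_0}$, the identity $\partial^{v_0}G=f(y)^{v_0}\partial_x^{v_0}G$, and $\mathfrak{p}_{v_0,b}^{v_0}=(f^{v_0})_b$). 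The one place that calls for care is the last of these, i.e. pinning down $\mathfrak{p}_{v_0,b}^{v_0}$ precisely, and more generally keeping the index bookkeeping on the two sides matched.
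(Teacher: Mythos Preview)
Your proof is correct and follows essentially the same approach as the paper's: both arguments rest on the collapse $\partial^{v_0}=\partial_x^{v_0}+\partial_y^{v_0}$, the identity $\partial^{v_0}G=f(y)^{v_0}\partial_x^{v_0}G$ from~\eqref{partial x contra partial y de G1}, the formula $\mathfrak{p}_{v_0,b}^{v_0}=(f^{v_0})_b$, and the swap~\eqref{partial x k e i0 conmutan}. The only cosmetic difference is that the paper starts from the raw sum defining $R(i,k,v_0)$ and simplifies it to $(f(y)^{v_0}\partial_x^{v_0}G)_{ik}$ before invoking Proposition~\ref{remark importante1} once, whereas you invoke Proposition~\ref{remark importante1} on both sides at the outset and then compute $(\partial^{v_0}G)_{ik}$ and $(\partial^kG)_{iv_0}$ separately via~\eqref{Gjk}.
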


\begin{proof}
  For $i=0$ or $k=0$ this equality is true, so we assume $i,k>0$.
  By Proposition~\ref{suma es operador diferencial1} and~\eqref{partial x k e i0 conmutan}, and since $\frak{p}_{h,l}^{1}=0$ for $0<l<v_0$, we have
  \begin{align*}
     R(i,k,v_0) & = \sum_{c+d=k}\sum_{h=1}^{i}\sum_{l=1}^{v_0} \frak{p}_{i,c}^{h} \frak{p}_{v_0,d}^{l}
     \frak{p}_{h,l}^{1}=
     \sum_{c+d=k}\sum_{h=1}^{i} \frak{p}_{i,c}^{h} \frak{p}_{v_0,d}^{v_0}
     \frak{p}_{h,v_0}^{1}
     =  \sum_{c+d=k} \frak{p}_{v_0,d}^{v_0} \sum_{h=1}^{i} \frak{p}_{i,c}^{h} \frak{p}_{h,v_0}^{1}\\
     &=  \sum_{c+d=k} \frak{p}_{v_0,d}^{v_0} \left( \partial_x^c g(x)\right)_i
      =  \sum_{c+d=k} \frak{p}_{v_0,d}^{v_0} \left(\left( \partial_x^{v_0} G\right)_c\right)_i
     =  \sum_{c+d=k}\left( \left(f(y)^{v_0}\right)_d \left( \partial_x^{v_0} G\right)_c\right)_i\\
     &= \left( \left( f(y)^{v_0}  \partial_x^{v_0} G\right)_k\right)_i= \left( f(y)^{v_0}  \partial_x^{v_0} G\right)_{ik},
  \end{align*}
  where we use that by~\eqref{inductivo coproducto1} and items~(4) and~(6) of Lemma~\ref{lema propiedades de G}, we have
  \begin{equation}\label{expresion para pci0i0}
    \frak{p}_{v_0,d}^{v_0}=\left(f(y)^{v_0}\right)_d\in K,
  \end{equation}
   which is an element of the coefficient ring $K[[x]]$ of
  $(K[[x]])[[y]]$. Since $\partial^{v_0}= \partial^{v_0}_x+\partial^{v_0}_y$, from~\eqref{partial x contra partial y de G1}
  and Proposition~\ref{remark importante1} we obtain
  $$
    R(i,k,v_0)=\left( f(y)^{v_0}  \partial_x^{v_0} G\right)_{ik}=
    \left( \partial_x^{v_0} G + \partial_y^{v_0} G\right)_{ik}
    = \left( \partial^{v_0} G\right)_{ik}= R(i,v_0,k),
  $$
  as desired.
\end{proof}

In order to prove our main result, we will generalize the ideas of the proof of
Proposition~\ref{R se cumple para i0}. We first generalize the formula~\eqref{expresion para pci0i0}.
\begin{lemma} \label{expresion para pcjh}
  We have
  $$
    \frak{p}_{ja}^h=\left( F^h\right)_{aj}.
  $$
\end{lemma}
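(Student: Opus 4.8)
The plan is to connect the ``$G$-side'' quantity $\mathfrak{p}_{ja}^h$ with the ``$F$-side'' coefficient $(F^h)_{aj}$ by routing everything through Lemma~\ref{p como binomial1} and the flip identity~\eqref{transformar G en F1}. On one side, Lemma~\ref{p como binomial1}, applied with $w=h$, $v=a$ and $d=j-h$, gives $\mathfrak{p}_{ja}^h=\sum_{i=1}^h\binom{h}{i}(\ov{G}^i)_{d+i,a}$, and~\eqref{transformar G en F1} rewrites each $(\ov{G}^i)_{d+i,a}$ as $(\ov{F}^i)_{a,d+i}$. On the other side, writing $F=y+\ov{F}$ and expanding $F^h=\sum_{i=0}^h\binom{h}{i}\ov{F}^iy^{h-i}$, the coefficient of $x^ay^j$ is exactly $\sum_{i=0}^h\binom{h}{i}(\ov{F}^i)_{a,d+i}$. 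Thus the two expressions differ only by the $i=0$ term $(\ov{F}^0)_{a,d}=\delta_{a0}\delta_{d0}$, which vanishes precisely when $(a,d)\ne(0,0)$, so the identity is immediate in the range $h\ge 1$, $j\ge h$, $(a,j-h)\ne(0,0)$.

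What remains is to handle the boundary cases in which Lemma~\ref{p como binomial1} does not apply, or in which the $i=0$ term survives, checking them directly against the expansion of $F^h$. If $h=0$, both sides equal $\delta_{0,a+j}$, using $\mathfrak{p}_{ja}^0=\delta_{0,j+a}$ and $(F^0)_{aj}=(1)_{aj}$. If $h\ge 1$ and $j<h$, the left side vanishes by Lemma~\ref{lema propiedades de G}(6), while the right side vanishes because $y\mid\ov{F}$ forces $y^h\mid F^h$. If $h\ge 1$, $j=h$ and $a=0$, the left side is $\mathfrak{p}_{h0}^h=\delta_{hh}=1$ by Remark~\ref{caso ijk cero}, and on the right side only the $i=0$ summand contributes, since $(\ov{F}^i)_{0,i}=0$ for $i\ge 1$ because $xy\mid\ov{F}$ (which follows from $xy\mid\ov{G}$ and $\ov{F}(x,y)=\ov{G}(y,x)$); hence $(F^h)_{0,h}=(\ov{F}^0)_{0,0}=1$. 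Together with the main computation, this completes the proof.

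I do not anticipate a serious obstacle: the core of the argument is the binomial theorem combined with the already-established Lemma~\ref{p como binomial1}. The only points requiring care are the bookkeeping of the boundary cases listed above (in particular remembering the hypothesis ``$v\ne 0$ or $d\ne 0$'' in Lemma~\ref{p como binomial1}), and the transposition of indices between $\mathfrak{p}_{ja}^h$ and $(F^h)_{aj}$, which is exactly what~\eqref{transformar G en F1} together with $F(x,y)=G(y,x)$ takes care of.
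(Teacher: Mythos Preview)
Your proof is correct and follows essentially the same approach as the paper's: both expand $F^h=(y+\ov{F})^h$ via the binomial theorem, invoke the flip identity~\eqref{transformar G en F1}, and then apply Lemma~\ref{p como binomial1}. The only difference is organizational: the paper splits into the two cases $a=0$ and $a>0$, handling the former directly (since $\mathfrak{p}_{j0}^h=\delta_{jh}$ and $(F^h)_{0j}=\delta_{jh}$) and folding everything else into the latter, whereas you keep $a=0$ with $d>0$ inside the main computation and instead separate out $h=0$, $j<h$, and $(a,d)=(0,0)$ as boundary cases; your treatment is slightly more explicit about the hypotheses of Lemma~\ref{p como binomial1}.
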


\begin{proof}
  If $a=0$, then $\frak{p}_{ja}^h=\delta_{jh}$, which coincides with $\left( F^h\right)_{0j}$,
  since $F=y f(x)+y^2 f_2(x)+\dots$, where $f(x)=1+x^{v_0}+p_{v_0+1}x^{v_0+1}+\dots$, and
  $x|f_i(x)$ for $i>1$. If $a>0$, then
  $$
    \left( F^h\right)_{aj}= \left( (\ov{F}+y)^h\right)_{aj}
    =\sum_{l=0}^{h} \binom{h}{l}\left( \ov{F}^l y^{h-l}\right)_{aj}
    =\sum_{l=0}^{h} \binom{h}{l}\left( \ov{F}^l\right)_{a,j-h+l}.
  $$
  But for $l=0$ we have
  $$
  \binom{h}{l}\left( \ov{F}^l\right)_{a,j-h+l}=(1)_{a,j-h}=0,
  $$
  since $a>0$, and the constant $1$ is seen as an element in $K[[x,y]]$.
  Hence, by~\eqref{transformar G en F1} and Lemma~\ref{p como binomial1},
  $$
    \left( F^h\right)_{aj}
    =\sum_{l=1}^{h} \binom{h}{l}\left( \ov{F}^l\right)_{a,j-h+l}
    =\sum_{l=1}^{h} \binom{h}{l}\left( \ov{G}^l\right)_{j-h+l,a}= \frak{p}_{ja}^h,
  $$
  as desired.
\end{proof}

Now we establish a convenient expression for
$\left( \partial^i G\right)_j$.

\begin{proposition}
  For $i,j>0$ we have
  \begin{equation}\label{Gij como F}
    \left( \partial^{i} G\right)_j=\sum_{h=1}^{j}\left(\left( F^h\right)_j(y) \partial_x^{h}G\right)_i,
  \end{equation}
  where we consider the series $(F^h)_j\in K[[x]]$ and replace $x$ by $y$, obtaining $(F^h)_j(y)$.
\end{proposition}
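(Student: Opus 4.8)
The plan is to read off~\eqref{Gij como F} from the formula~\eqref{Gjk} for $\bigl(\partial^i G\bigr)_j$, the only real input being the symmetry of the operators $\partial_x^v$ in their index. Rewriting~\eqref{Gjk} with the bound names $j,k$ there replaced by $i,j$ (legitimate because $j>0$), we have, for $i,j>0$,
\[
  \bigl(\partial^i G\bigr)_j=\sum_{a+b=i}\sum_{l=1}^{j}\mathfrak{p}_{jb}^{l}\,\partial_x^{a}g_l .
\]
I would first replace $\mathfrak{p}_{jb}^{l}$ by $\bigl(F^{l}\bigr)_{bj}$ using Lemma~\ref{expresion para pcjh}, so that all the combinatorial information is packaged inside the series $F^{l}$.

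The key step is then to move the exponent $a$ off $\partial_x$ and onto the subscript of $g$: by the second identity of Corollary~\ref{partial conmuta con gi} (together with $\bigl(\partial_x^{a}G\bigr)_l=\partial_x^{a}g_l$, which is simply how the extended operators act on $G$) we have $\partial_x^{a}g_l=\partial_x^{l}g_a$ for all $a,l\ge 0$. Substituting this and reorganising the sum gives
\[
  \bigl(\partial^i G\bigr)_j=\sum_{l=1}^{j}\ \sum_{a+b=i}\bigl(F^{l}\bigr)_{bj}\,\partial_x^{l}g_a .
\]
To finish, I would recognise the inner sum as a single coefficient of a product in $K[[x,y]]$: since $\bigl(F^{l}\bigr)_j(y)=\sum_{b\ge 0}\bigl(F^{l}\bigr)_{bj}\,y^{b}$ and $\partial_x^{l}G=\sum_{a\ge 0}\bigl(\partial_x^{l}g_a\bigr)y^{a}$, the coefficient of $y^{i}$ in the product $\bigl(F^{l}\bigr)_j(y)\,\partial_x^{l}G$ equals $\sum_{a+b=i}\bigl(F^{l}\bigr)_{bj}\,\partial_x^{l}g_a$; summing over $l$ (renamed $h$) produces exactly $\sum_{h=1}^{j}\bigl(\bigl(F^{h}\bigr)_j(y)\,\partial_x^{h}G\bigr)_i$, which is the right-hand side of~\eqref{Gij como F}.

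I do not anticipate a serious obstacle: beyond bookkeeping, the argument uses only the commutation identity $\partial_x^{a}g_l=\partial_x^{l}g_a$ of Corollary~\ref{partial conmuta con gi} and the translation $\mathfrak{p}_{ja}^{h}=\bigl(F^{h}\bigr)_{aj}$ of Lemma~\ref{expresion para pcjh}, both already proved. The one thing requiring care is keeping straight which of the three indices plays the role of the $\partial_x$-exponent, which is the subscript of $g$, and which is the superscript of $\mathfrak{p}$, and checking that the summation ranges $\{a,b\ge 0:\ a+b=i\}$ and $\{1\le l\le j\}$ are respected when one passes between homogeneous $y$-components and genuine elements of $K[[x,y]]$; the boundary indices cause no difficulty, since $\mathfrak{p}_{j0}^{l}=\bigl(F^{l}\bigr)_{0j}=\delta_{jl}$ and $\partial_x^{0}g_l=g_l$.
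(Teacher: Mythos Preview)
Your proposal is correct and follows essentially the same approach as the paper: start from~\eqref{Gjk}, apply Lemma~\ref{expresion para pcjh} to rewrite $\mathfrak{p}_{jb}^{l}$ as $(F^{l})_{bj}$, use the symmetry $\partial_x^{a}g_l=\partial_x^{l}g_a$ from Corollary~\ref{partial conmuta con gi}, and then recognise the inner sum as the $y^{i}$-coefficient of the product $(F^{l})_j(y)\,\partial_x^{l}G$. The paper's proof differs only in notation (it swaps the roles of the summation variables $a$ and $b$ and writes $(\partial_x^{b}G)_h=(\partial_x^{h}G)_b$ rather than $\partial_x^{a}g_l=\partial_x^{l}g_a$).
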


\begin{proof}
  By~\eqref{Gjk}, Lemma~\ref{expresion para pcjh} and Corollary~\ref{partial conmuta con gi}, we have
  $$
      \left(\partial^i G\right)_j
    = \sum_{a+b=i}\sum_{h=1}^{j} \frak{p}_{ja}^h \left(\partial_x^b G\right)_h
    = \sum_{h=1}^{j} \sum_{a+b=i} \left(\left( F^h\right)_j\right)_a \left(\partial_x^h G\right)_b
    = \sum_{h=1}^{j} \sum_{a+b=i} \left(\left( F^h\right)_j(y)\right)_a \left(\partial_x^h G\right)_b.
  $$
  Since
  $$
  \sum_{a+b=i} \left(\left( F^h\right)_j(y)\right)_a \left(\partial_x^h G\right)_b=
   \left( \left(F^h\right)_j (y)\partial_x^h G\right)_i,
  $$
  this concludes the proof.
\end{proof}

\begin{remark}\label{comparativo}
  It order to prove that $\left( \partial^{i} G\right)_j=\left( \partial^{j} G\right)_i$ for all $i,j>0$, it suffices to prove that
  \begin{equation}\label{Lo que falta para partial en G}
   \partial^{j} G =\sum_{h=1}^{j}\left(F^h\right)_j(y) \partial_x^{h}G,\quad \text{for all $j>0$,}
  \end{equation}
  or equivalently, that
  \begin{equation}\label{Lo que falta para partial en F}
   \partial^{j} F =\sum_{h=1}^{j}\left(F^h\right)_j \partial_y^{h}F,\quad \text{for all $j>0$.}
  \end{equation}
  By Corollary~\ref{partial global en funcion de tilde partial} the left hand side of this equality reads
  $$
    \partial^j F=\sum_{i\ge 1} \left( \ov{f}(y)^i \right)_j \tilde{\partial}^i F,
  $$
  and using the same corollary we also obtain
  \begin{align*}
    \sum_{h=1}^{j}\left(F^h\right)_j \partial_y^{h}F &
    = \sum_{h=1}^{j}\left(F^h\right)_j \sum_{i\ge 1} \left( \ov{f}(y)^i \right)_h \tilde{\partial}_y^{i}F
    =  \sum_{i\ge 1} \sum_{h=1}^{j} \left( \ov{f}(y)^i \right)_h \left(F^h\right)_j  \tilde{\partial}_y^{i}F\\
    & = \sum_{i\ge 1} \left(\sum_{h\ge 1} \left( \ov{f}(y)^i \right)_h \left(F^h\right)_j  \right) \tilde{\partial}_y^{i}F
    = \sum_{i\ge 1} \left(\ov{f}(F)^i \right)_j  \tilde{\partial}_y^{i}F,
  \end{align*}

  It follows that~\eqref{Lo que falta para partial en F} is equivalent to
  \begin{equation}\label{igualdad principal para las series de F}
    \sum_{i\ge 1} \left( \ov{f}(y)^i \right)_j \tilde{\partial}^i F
    = \sum_{i\ge 1} \left(\ov{f}(F)^i \right)_j  \tilde{\partial}_y^{i}F,\quad \text{for all $j>0$.}
  \end{equation}
\end{remark}

\subsection{Composition of series in $(K[[x]])[[y]]$}
In this subsection we will prove~\eqref{igualdad principal para las series de F} using composition of series in $(K[[x]])[[y]]$.
The first (technical) step is to write $\ov{f}(F)$ as $T(\ov{f}(y))$ for some series $T\in (K[[x]])[[y]]$
(see Proposition~\ref{ov f F como T de f ov}).
Then we prove that $\tilde{\partial}^k F=\sum_{h=1}^{k}(T^h)_k \tilde{\partial}^h_y F$ in
Proposition~\ref{tilde partial de F en funcion de tilde partial sub y}, which allows us to prove~\eqref{igualdad principal para las series de F}
in Corollary~\ref{prueba de la igualdad principal para F}.

\begin{lemma}
  There exists a unique series $Q_1(x)\in K[[x]]$ of the form $Q_1(x)=\sum_{i\ge 1} b_i x^i$ with $b_1=1$,
  such that
  \begin{equation}\label{def de Q1}
    g(x) Q_1'(x)=Q_1(x).
  \end{equation}
\end{lemma}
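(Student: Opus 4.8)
The plan is to solve \eqref{def de Q1} by a straightforward recursive comparison of coefficients, which yields existence and uniqueness simultaneously. First I would write $g(x)=x+\sum_{k\ge 2}g_kx^k$ (legitimate since $g\in x+x^2K[[x]]$) and look for $Q_1=\sum_{i\ge 1}b_ix^i$ with $b_1=1$. Computing $gQ_1'$ and collecting the coefficient of $x^n$ gives
\[
  g(x)Q_1'(x)=\sum_{n\ge 1}\Bigl(nb_n+\sum_{k=2}^{n}(n-k+1)g_kb_{n-k+1}\Bigr)x^n,
\]
so that \eqref{def de Q1} is equivalent to the family of scalar identities $nb_n+\sum_{k=2}^{n}(n-k+1)g_kb_{n-k+1}=b_n$ for every $n\ge 1$.

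Next I would analyse these identities degree by degree. For $n=1$ the inner sum is empty and the identity reads $b_1=b_1$, which holds for any value of $b_1$; this is exactly why the coefficient $b_1$ is free, and we fix $b_1=1$. For $n\ge 2$ the identity rearranges to
\[
  (n-1)b_n=-\sum_{k=2}^{n}(n-k+1)g_kb_{n-k+1},
\]
whose right hand side depends only on $b_1,\dots,b_{n-1}$. Since $K$ has characteristic $0$, the integer $n-1$ is invertible in $K$, so this equation determines $b_n$ uniquely from the previously constructed coefficients.

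Thus, starting from $b_1=1$ and defining $b_n$ for $n\ge 2$ by the last display, one obtains a unique sequence $(b_i)_{i\ge 1}$, hence a unique series $Q_1(x)=\sum_{i\ge 1}b_ix^i\in K[[x]]$ with $b_1=1$ satisfying \eqref{def de Q1}. There is no genuine obstacle in this argument; the only point that must be checked is that the recursion is solvable at every stage, and this is guaranteed precisely by the invertibility of $n-1$ in $K$, i.e.\ by the characteristic of $K$ being $0$, a hypothesis in force throughout the paper.
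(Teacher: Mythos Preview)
Your proof is correct and follows essentially the same approach as the paper: both compare coefficients of $x^n$ in $gQ_1'=Q_1$ to derive the recursion $(n-1)b_n=-\sum_{j=1}^{n-1} j\,(g)_{n-j+1}\,b_j$ (your formula is the same after the substitution $j=n-k+1$), and both conclude by observing that this determines $b_n$ uniquely for $n\ge 2$ once $b_1=1$ is fixed. Your version is slightly more explicit in isolating where the characteristic-$0$ hypothesis is used.
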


\begin{proof}
  The equality~\eqref{def de Q1} at degree $k$ yields
  $$
    b_k=\sum_{j=0}^{k} (g(x))_{k-j} (j+1)b_{j+1}.
  $$
  Since $(g(x))_0=0$, we obtain
  $$
    b_k=\sum_{j=1}^{k} (g(x))_{k-j+1} jb_{j}.
  $$
  This gives the recursive formula
  $$
    (k-1)b_k=-\sum_{j=1}^{k-1} (g(x))_{k-j+1} jb_{j},\quad \text{for $k\ge 2$,}
  $$
  and since $b_1=1$ is fixed, the series $Q_1(x)=\sum_{i\ge 1} b_i x^i$ is uniquely determined and clearly
  satisfies~\eqref{def de Q1}.
\end{proof}

Since $b_0=0$ and $b_1=1\ne 0$, there exists a compositional inverse of $Q_1(x)$ which we call $A$. So
$A(x)=\sum_{i\ge 1} a_i x^i$ and
$$
  \sum_{i\ge 1} a_i Q_1(x)^i=\sum_{i\ge 1} b_i A(x)^i=x.
$$
Clearly $a_1=1$.

\begin{proposition}
  We have
  \begin{equation}\label{G en terminos de f}
    G=\sum_{i\ge 1} a_i Q_1(x)^i f(y)^i.
  \end{equation}
\end{proposition}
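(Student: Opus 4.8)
The plan is to recognize that the right-hand side of~\eqref{G en terminos de f} is simply the composite $A\bigl(Q_1(x)f(y)\bigr)$: indeed, since $a_1=1$, we have $\sum_{i\ge1}a_iQ_1(x)^if(y)^i=\sum_{i\ge1}a_i\bigl(Q_1(x)f(y)\bigr)^i=A\bigl(Q_1(x)f(y)\bigr)$, and because $Q_1(x)\in xK[[x]]$ the argument $Q_1(x)f(y)$ lies in $xK[[x]][[y]]$, so this composite is a well-defined element $H\coloneqq A\bigl(Q_1(x)f(y)\bigr)$ of $K[[x]][[y]]$; in particular its $y^v$-coefficients lie in $K[[x]]$, all the relevant sums being $x$-adically convergent. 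I would then prove $H=G$ by verifying that $H$ satisfies the two conditions that pin down $G$ uniquely, namely that its coefficient of $y^0$ equals $x$ and that it satisfies~\eqref{definition of G} — recall that~\eqref{definition of G} is equivalent to the recursion~\eqref{definition of G por grado}, which together with $g_0=x$ determines the series.

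The $y^0$ condition is immediate: evaluating at $y=0$ gives $H(x,0)=A\bigl(Q_1(x)f(0)\bigr)=A\bigl(Q_1(x)\bigr)=x$, using $f(0)=1$ and that $A$ is the compositional inverse of $Q_1$. For~\eqref{definition of G} I would set $z\coloneqq Q_1(x)f(y)$, so that $H=A(z)$, and compute by the formal chain rule $H_x=A'(z)Q_1'(x)f(y)$ and $H_y=A'(z)Q_1(x)f'(y)$. The key input is the defining relation $g(x)Q_1'(x)=Q_1(x)$ from~\eqref{def de Q1}, which yields
$$
  g(x)f'(y)H_x=A'(z)f(y)f'(y)\bigl(g(x)Q_1'(x)\bigr)=A'(z)f(y)f'(y)Q_1(x)=f(y)H_y.
$$
Since $f=1+\ov f$, this says exactly that $H_y=g(x)f'(y)H_x-\ov f(y)H_y$, i.e.\ that $H$ satisfies~\eqref{definition of G}. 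Hence $H$ and $G$ have the same coefficient of $y^0$ and both satisfy~\eqref{definition of G}, so $H=G$ by the uniqueness of the recursion~\eqref{definition of G por grado}, which is precisely~\eqref{G en terminos de f}.

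The computation is short and the argument is clean; the only point needing a little care is the formal-power-series bookkeeping — confirming that $A\bigl(Q_1(x)f(y)\bigr)$ genuinely defines an element of $K[[x]][[y]]$ and that partial differentiation commutes with the infinite sum $\sum_{i\ge1}a_i\bigl(Q_1(x)f(y)\bigr)^i$ — both of which hold because $Q_1(x)^i\in x^iK[[x]]$ forces $x$-adic convergence of all the series involved. I do not expect any serious obstacle.
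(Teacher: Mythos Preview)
Your proposal is correct and follows essentially the same approach as the paper: both verify that the candidate series has $y^0$-coefficient equal to $x$ and satisfies the defining relation $f(y)G_y=g(x)f'(y)G_x$ (equivalently~\eqref{definition of G}), then invoke the uniqueness built into the recursion~\eqref{definition of G por grado}. The only cosmetic difference is that you package the series as the composite $A\bigl(Q_1(x)f(y)\bigr)$ and apply the chain rule, whereas the paper differentiates the sum term by term; the key input $g(x)Q_1'(x)=Q_1(x)$ and the conclusion are identical.
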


\begin{proof}
  Set $R=\sum_{i\ge 1} a_i Q_1(x)^i f(y)^i$. Then
  $$
   f(y) R_y=\sum_{i\ge 1}i a_i Q_1(x)^i f(y)^i f'(y)
  $$
  and
  $$
   g(x)f'(y) R_x=\sum_{i\ge 1}i a_i Q_1(x)^{i-1}(g(x) Q_1'(x)) f(y)^i f'(y)=
   \sum_{i\ge 1}i a_i Q_1(x)^i f(y)^i f'(y),
  $$
  where the second equality follows from~\eqref{def de Q1}. Since
  $f(0)=1$, we also have
  $$
  R_0= R(x,0)= \sum_{i\ge 1} a_i Q_1(x)^i=x.
  $$
  But $G$ is the unique element in $(K[[x]])[[y]]$ satisfying
  $G_0=x$ and $f(y) G_y=g(x) f'(y) G_x$, and so we conclude that $G=R$.
\end{proof}

We can use~\eqref{G en terminos de f} in order to expand (again) $\ov{G}$ in powers of $\ov f$.
We have
$$
  \ov{G}+x= G=\sum_{i\ge 1} a_i Q_1(x)^i (\ov{f}(y)+1)^i=
  \sum_{i\ge 1} \sum_{j=0}^{i} \binom{i}{j} a_i Q_1(x)^i \ov{f}(y)^j.
$$
For $j=0$ we have
$$
  \sum_{i\ge 1} \binom{i}{j} a_i Q_1(x)^i \ov{f}(y)^j= \sum_{i\ge 1} a_i Q_1(x)^i=x,
$$
and so
$$
  \ov{G}= \sum_{i\ge 1} \sum_{j=1}^{i} \binom{i}{j} a_i Q_1(x)^i \ov{f}(y)^j=
  \sum_{j\ge 1} \left(\sum_{i\ge j} \binom{i}{j} a_i Q_1(x)^i\right) \ov{f}(y)^j.
$$
Since the expansion in powers of $\ov{f}(y)$ is unique, from Proposition~\ref{prop 3.9} we obtain
$$
  P_j(x)=\sum_{i\ge j} \binom{i}{j} a_i Q_1(x)^i.
$$
If we set $Q_i(x) \coloneqq a_i Q_1(x)^i$ and $Q=\sum_{i\ge 1} Q_i(x) y^i$, then $P$ is the binomial transform
of $Q$ in $(K[[x]])[[y]]$ and so $Q$ is the inverse binomial transform of $P$, i.e.,
$$
  Q_j(x)= \sum_{i\ge j} (-1)^{i-j}\binom{i}{j} P_i(x).
$$
We also have $g(x) Q_i'(x)=i Q_i(x)$ and so
\begin{equation}\label{derivadas parciales de Q}
  g(x) Q_x=y Q_y,
\end{equation}
since
$$
  g(x)Q_x=\sum_{i\ge 1} g(x)Q_i'(x) y^i=\sum_{i\ge 1} iQ_i(x) y^i=y Q_y.
$$

In general we can analyze the solutions of the differential equations
$$
  g(x) R'(x)=k R(x).
$$
\begin{proposition}\label{soluciones ecuacion diferencial}
  Let  $k\in\mathds{N}$.
  If $R(x)\in K[[x]]\setminus\{0\}$ satisfies $g(x) R'(x)=k R(x)$, then
  \begin{enumerate}
    \item $\ord(R(x))=k$ and
    \item there exists $\lambda\in K$ such that $R(x)=\lambda Q_1(x)^k$.
  \end{enumerate}
\end{proposition}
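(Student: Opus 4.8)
The plan is to use the linearity of the equation $g(x)R'(x)=kR(x)$ together with a rigidity statement about the orders of its solutions. Concretely, I would first establish (1), that every nonzero solution $R$ satisfies $\ord(R)=k$; then observe that $Q_1(x)^k$ is itself a solution of order $k$ with leading coefficient $1$; and finally, given an arbitrary nonzero solution $R$ with leading coefficient $\lambda$, subtract $\lambda Q_1(x)^k$ and invoke (1) to force the difference to vanish.

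For (1), recall that $g\in x+x^2K[[x]]$, so $\ord(g)=1$ with leading coefficient $1$. Write $R(x)=c_m x^m+(\text{higher order terms})$ with $c_m\ne 0$ and $m\coloneqq \ord(R)$. If $m=0$, then $\ord\bigl(g(x)R'(x)\bigr)\ge 1$ while $\ord\bigl(kR(x)\bigr)=0$ because $k\ge 1$; this contradicts the equation, so $m\ge 1$. Then $R'(x)$ has order $m-1$, hence $g(x)R'(x)$ has order $m$ with leading coefficient $mc_m$, whereas $kR(x)$ has order $m$ with leading coefficient $kc_m$. Comparing the degree-$m$ coefficients of the two sides of $g(x)R'(x)=kR(x)$ gives $mc_m=kc_m$, and since $c_m\ne 0$ this yields $m=k$.

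For (2), the key point is that $Q_1(x)^k$ solves the equation: differentiating and using~\eqref{def de Q1},
\[
  g(x)\bigl(Q_1(x)^k\bigr)'=k\,Q_1(x)^{k-1}\bigl(g(x)Q_1'(x)\bigr)=k\,Q_1(x)^{k-1}Q_1(x)=k\,Q_1(x)^k .
\]
Since $Q_1(x)=x+\cdots$ (because $b_1=1$), we have $\ord\bigl(Q_1(x)^k\bigr)=k$ with leading coefficient $1$. Now let $R\in K[[x]]\setminus\{0\}$ be an arbitrary solution; by part (1), $\ord(R)=k$, say with leading coefficient $\lambda\in K^{\times}$. The series $S\coloneqq R-\lambda Q_1(x)^k$ again satisfies $g(x)S'(x)=kS(x)$ by linearity, and its degree-$k$ coefficient vanishes, so either $S=0$ or $\ord(S)>k$. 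But part (1) forbids a nonzero solution of order different from $k$, so $S=0$, i.e. $R=\lambda Q_1(x)^k$.

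There is no serious obstacle here: the only points that need a little care are ruling out the case $m=0$ (this is precisely where the hypothesis $k\ge 1$ is used) and using that $g$ has order exactly $1$ with invertible leading coefficient, so that $\ord(gR')=\ord(R)$ whenever $R\ne 0$ has positive order.
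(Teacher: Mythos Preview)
Your proof is correct and follows essentially the same approach as the paper: both compare leading terms to force $\ord(R)=k$, verify that $Q_1^k$ is a solution via \eqref{def de Q1}, and then subtract off $\lambda Q_1^k$ and invoke part~(1) to kill the difference. Your treatment of the case $m=0$ is slightly more explicit than the paper's, but otherwise the arguments are identical.
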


\begin{proof}
  Let $r=\ord (R)$.
  Then we have
  $$
    R(x)=\lambda_r x^r+\lambda_{r+1}x^{r+1}+\dots,\quad\text{with $\lambda_r\ne 0$,}
  $$
  and since $g(x)=x+\frak{p}_{v_0,2}^1 x^2+\dots,$ we obtain
  $$
    r\lambda_r x^r+(r\lambda_r \frak{p}_{v_0,2}^1+(r+1)\lambda_{r+1} )x^{r+1}+\dots=g(x) R'(x)=k R(x)=
    k\lambda_r x^r+k\lambda_{r+1}x^{r+1}+\dots.
  $$
  Since $\lambda\coloneqq \lambda_r\ne 0$, necessarily $r=k$, which proves item~(1).

  By definition $g(x) Q_1'(x)= Q_1(x)$, which implies
  $$
    g(x) (Q_1(x)^k)'= g(x) k Q_1(x)^{k-1}Q_1'(x)=k Q_1(x)^{k-1}(g(x) Q_1'(x))=k Q_1(x)^k.
  $$
  It follows that $\tilde R(x) \coloneqq R(x)-\lambda Q_1(x)^k$ satisfies
  $g(x) \tilde R'(x)=k \tilde R(x)$. Note that $\ord(R)=k$ and $\ord(\lambda Q_1(x)^k)=k$. Moreover,
  the lowest order coefficient of $R$ coincides with the lowest order coefficient of $\lambda Q_1(x)^k$.
  Hence, if $\tilde R\ne 0$, then $\ord(\tilde{R})>k$, which is impossible by item~(1), and so
  $\tilde R=0$, which proves item~(2).
\end{proof}

\begin{proposition}
  We have
  \begin{eqnarray}
    v_0 Q_1(x)^{v_0} &=& 1-\frac{1}{f(x)^{v_0}}, \label{Q1 en funcion de f}\\
    v_0 Q_1(x)^{v_0} &=& -\sum_{k\ge 1} \binom{-v_0}{k} \ov{f}(x)^k \label{Q1 binomial generalizada}\\
    f(A(x))^{v_0} &=& \frac{1}{1-v_0 x^{v_0}},    \label{f(A)}\\
    f(A(x)) &=& \sum_{k\ge 0} \binom{\frac{1}{v_0}+k-1}{k}\left( v_0 x^{v_0}\right)^k.\label{f(A) ralo}
  \end{eqnarray}
\end{proposition}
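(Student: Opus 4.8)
The plan is to prove the first identity, \eqref{Q1 en funcion de f}, via the ODE--uniqueness result of Proposition~\ref{soluciones ecuacion diferencial}, and then to derive the other three from it by elementary manipulations of formal power series. For \eqref{Q1 en funcion de f} I would set $h(x)\coloneqq 1-f(x)^{-v_0}\in K[[x]]$ and check that $h$ satisfies the differential equation $g h'=v_0 h$: indeed $h'=v_0 f^{-v_0-1}f'$, so $gh'=v_0 f^{-v_0-1}(gf')$, and substituting $gf'=f(f^{v_0}-1)$ from \eqref{relation f and gi} collapses the right-hand side to $v_0 f^{-v_0}(f^{v_0}-1)=v_0 h$. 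Since $f=1+x^{v_0}+\cdots$ we get $f^{-v_0}=1-v_0 x^{v_0}+\cdots$, so $h\ne 0$, $\ord(h)=v_0$, and the coefficient of $x^{v_0}$ in $h$ equals $v_0$. By Proposition~\ref{soluciones ecuacion diferencial} applied with $k=v_0$, there is a scalar $\lambda$ with $h=\lambda Q_1^{v_0}$; comparing the coefficients of $x^{v_0}$ (recall $Q_1=x+\cdots$, so that coefficient is $\lambda$) forces $\lambda=v_0$, which is \eqref{Q1 en funcion de f}.

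Identity \eqref{Q1 binomial generalizada} then follows at once: expand $f^{-v_0}=(1+\ov f)^{-v_0}=\sum_{k\ge 0}\binom{-v_0}{k}\ov f^{\,k}$ by the generalized binomial theorem, note that the $k=0$ term is $1$, and substitute into $v_0 Q_1^{v_0}=1-f^{-v_0}$. For \eqref{f(A)} I would rewrite \eqref{Q1 en funcion de f} as $f^{v_0}=(1-v_0 Q_1^{v_0})^{-1}$ (legitimate since $1-v_0Q_1^{v_0}$ has constant term $1$) and then apply the continuous $K$-algebra endomorphism of $K[[x]]$ that sends $x\mapsto A(x)$; because $A$ is the compositional inverse of $Q_1$, this sends $Q_1$ to $x$, so the right-hand side becomes $(1-v_0x^{v_0})^{-1}$, giving $f(A(x))^{v_0}=(1-v_0x^{v_0})^{-1}$.

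Finally, for \eqref{f(A) ralo} I would observe that both $f(A(x))$ and the series $\sum_{k\ge 0}\binom{-1/v_0}{k}(-v_0x^{v_0})^k$ have constant term $1$, and that by \eqref{f(A)} and the binomial theorem each of them raised to the $v_0$-th power equals $(1-v_0x^{v_0})^{-1}$; since a power series with constant term $1$ has a unique $v_0$-th root with constant term $1$ (characteristic zero), the two series coincide. It then remains to rewrite $\binom{-1/v_0}{k}(-1)^k=\binom{1/v_0+k-1}{k}$ by the standard upper-negation identity and to absorb the remaining sign into $(-v_0x^{v_0})^k=(-1)^k(v_0x^{v_0})^k$, which yields the stated form.

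The calculations are routine, and the only points that demand a little care — the closest thing here to an obstacle — are the justification that the substitution $x\mapsto A(x)$ may be applied to the power-series identity in the proof of \eqref{f(A)} (this is where $A\in x+x^2K[[x]]$ and $Q_1\xcirc A=\ide$ enter) and the uniqueness of the $v_0$-th root with constant term $1$ used for \eqref{f(A) ralo}.
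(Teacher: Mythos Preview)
Your proposal is correct and follows essentially the same route as the paper: both verify the ODE $g h'=v_0 h$ for $h=1-f^{-v_0}$, invoke Proposition~\ref{soluciones ecuacion diferencial} and match leading coefficients to get~\eqref{Q1 en funcion de f}, then obtain~\eqref{Q1 binomial generalizada} from the generalized binomial series,~\eqref{f(A)} by substituting $x\mapsto A(x)$, and~\eqref{f(A) ralo} by taking the unique $v_0$-th root with constant term~$1$. Your write-up is slightly more careful in justifying the substitution step and the uniqueness of the root, but the argument is the same.
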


\begin{proof}
  We compute
  $$
    g(x)\left(1-\frac{1}{f(x)^{v_0}}\right)'=v_0 g(x)\frac{f'(x)}{f(x)^{v_0+1}}=
    v_0\frac{f(x)(f(x)^{v_0}-1)}{f(x)^{v_0+1}}=v_0 \left(1-\frac{1}{f(x)^{v_0}}\right).
  $$
  By Proposition~\ref{soluciones ecuacion diferencial} we know that
  $$
    1-\frac{1}{f(x)^{v_0}}=\lambda Q_1(x)^{v_0}.
  $$
  But
  $$
    \frac{1}{f(x)^{v_0}}=\left(\frac{1}{1+\ov{f}(x)}\right)^{v_0}=
    \left(1-\ov{f}(x)+\ov{f}(x)^2-\ov(f)(x)^3+\dots\right)^{v_0}=
    1-v_0 \ov{f}(x)+\ov{f}(x)^2 R_1(x),
  $$
  for some $R_1(x)\in K[[x]]$.  Since
  $$
    \ord(\ov{f}(x))=\ord(Q_1(x)^{v_0})=v_0\quad \text{and}\quad (\ov{f}(x))_{v_0}=(Q_1(x)^{v_0})_{v_0}=1,
  $$
  this shows that $\lambda=v_0$,  which proves~\eqref{Q1 en funcion de f}.
  The equality~\eqref{Q1 binomial generalizada} follows from the generalized binomial series.
  Now we replace $x$ by $A(x)$ in~\eqref{Q1 en funcion de f} and obtain
  $$
    v_0 x^{v_0}=1-\frac{1}{f(A(x))^{v_0}},
  $$
  since $A(x)$ is the compositional inverse of $Q_1(x)$. A straightforward computation
  yields~\eqref{f(A)}, from which~\eqref{f(A) ralo} follows, taking the generalized binomial series of the
  $v_0$-th root.
\end{proof}
By~\eqref{G en terminos de f} we have $F=A(Q_1(y)f(x))$. From~\eqref{f(A) ralo} it follows that
\begin{equation}\label{serie de f(F)}
  \ov{f}(F)=-1+ f(A(Q_1(y)f(x)))=
  \sum_{k\ge 1} \binom{\frac{1}{v_0}+k-1}{k}\left( v_0 Q_1(y)^{v_0}f(x)^{v_0}\right)^k.
\end{equation}
We define the following series in $(K[[x]])[[y]]$.
\begin{eqnarray*}
  S(y) &\coloneqq& 1-\frac{1}{(y+1)^{v_0}}= -\sum_{k\ge 1}\binom{-v_0}{k} y^k, \\
  V(y) &\coloneqq& \frac{1}{(1-y)^{1/v_0}}-1= \sum_{k\ge 1}\binom{\frac{1}{v_0}+k-1}{k} y^k, \\
  U(y) &\coloneqq& V(f(x)^{v_0}y)= \sum_{k\ge 1}\binom{\frac{1}{v_0}+k-1}{k} (f(x)^{v_0}y)^k, \\
  T(y) &\coloneqq& U\circ S(y)=V(f(x)^{v_0}S(y)).
\end{eqnarray*}
Note that $S(y),V(y)\in K[[y]]\subset (K[[x]])[[y]]$, and so we can consider formal derivatives
$$
  S'(y), V'(y)\in K[[x]]\subset (K[[x]])[[y]].
$$
\begin{proposition} \label{ov f F como T de f ov}
  We have
  \begin{eqnarray}
    v_0 Q_1(y)^{v_0} &=& S(\ov{f}(y)), \label{S propiedad} \\
    \ov{f}(F) &=& U(v_0 Q_1(y)^{v_0}), \label{U propiedad} \\
    \ov{f}(F) &=& T(\ov{f}(y)). \label{T propiedad}
  \end{eqnarray}
\end{proposition}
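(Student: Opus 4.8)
The plan is to establish the three identities \eqref{S propiedad}, \eqref{U propiedad} and \eqref{T propiedad} in that order, each one following quickly from the previous together with the closed forms and generalized binomial expansions recorded in the preceding proposition. All three are equalities in $(K[[x]])[[y]]$, and the only point to keep an eye on is the legitimacy of the formal compositions that occur; this is automatic, since $\ov{f}(y)$, $v_0 Q_1(y)^{v_0}$ and $S(y)$ all have order $\ge v_0\ge 1$ in $y$, while $U(y)$, $V(y)$ and $S(y)$ have vanishing constant term.

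First I would prove \eqref{S propiedad}. Substituting $\ov{f}(y)$ for $y$ in the closed form $S(y)=1-(y+1)^{-v_0}$ and using $f=1+\ov{f}$ gives $S(\ov{f}(y))=1-f(y)^{-v_0}$, which is $v_0 Q_1(y)^{v_0}$ by \eqref{Q1 en funcion de f}. (Equivalently, one may compare the expansions $S(y)=-\sum_{k\ge 1}\binom{-v_0}{k}y^k$ and \eqref{Q1 binomial generalizada} coefficient by coefficient.)

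Next, for \eqref{U propiedad}, I would evaluate the defining expansion $U(z)=\sum_{k\ge 1}\binom{\frac{1}{v_0}+k-1}{k}\bigl(f(x)^{v_0}z\bigr)^k$ at $z=v_0 Q_1(y)^{v_0}$; collecting the factors yields $\sum_{k\ge 1}\binom{\frac{1}{v_0}+k-1}{k}\bigl(v_0 Q_1(y)^{v_0}f(x)^{v_0}\bigr)^k$, which is precisely the series \eqref{serie de f(F)} for $\ov{f}(F)$. Finally, \eqref{T propiedad} is the composition of the two: by associativity of substitution of formal power series, $T(\ov{f}(y))=(U\circ S)(\ov{f}(y))=U\bigl(S(\ov{f}(y))\bigr)$, which equals $U\bigl(v_0 Q_1(y)^{v_0}\bigr)$ by \eqref{S propiedad} and hence $\ov{f}(F)$ by \eqref{U propiedad}.

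I do not expect a genuine obstacle here: the mathematical content lives entirely in the earlier proposition on $Q_1$, $A$ and $f(A)$, and the present statement is essentially the bookkeeping step that repackages those facts as a single identity for the composite series $T$ — in the form that the next subsection needs in order to rewrite $\tilde{\partial}^k F$ in terms of the operators $\tilde{\partial}^h_y F$ and thereby prove \eqref{igualdad principal para las series de F}.
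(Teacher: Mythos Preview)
Your proof is correct and follows essentially the same route as the paper's: the paper derives \eqref{S propiedad} from \eqref{Q1 binomial generalizada} and the definition of $S$, derives \eqref{U propiedad} from \eqref{serie de f(F)} and the definition of $U$, and then observes that \eqref{T propiedad} is the composition of the two. Your added remark on the legitimacy of the formal substitutions is a welcome sanity check, and your use of the closed form \eqref{Q1 en funcion de f} for \eqref{S propiedad} is a trivially equivalent variant of the paper's appeal to \eqref{Q1 binomial generalizada}.
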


\begin{proof}
  The equality~\eqref{S propiedad} follows from~\eqref{Q1 binomial generalizada} and the definition of $S$,
  the equality~\eqref{U propiedad} follows from~\eqref{serie de f(F)} and the definition of $U$, and~\eqref{T propiedad}
  is a direct consequence of~\eqref{S propiedad} and~\eqref{U propiedad}.
\end{proof}

\begin{lemma}
  The series $T$ satisfies
  \begin{equation}\label{T derivado}
    (1+y)T_y=\tilde{\partial}^1_x T+f(x)^{v_0}(T+1).
  \end{equation}
\end{lemma}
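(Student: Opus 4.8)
The plan is to verify~\eqref{T derivado} by a direct chain-rule computation, exploiting the closed forms of $S$, $V$ and $T$. First recall from the discussion preceding~\eqref{definicion alternativa partial2} that $\tilde{\partial}^1_x h = g(x)h'$; hence on $K[[x,y]]=K[[y]][[x]]$ the operator $\tilde{\partial}^1_x$ sends $H$ to $g(x)H_x$, so~\eqref{T derivado} is equivalent to the identity $(1+y)T_y = g(x)T_x + f(x)^{v_0}(T+1)$.

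Next I would record a few elementary facts. From $S(y)=1-(1+y)^{-v_0}$ we get $1-S(y)=(1+y)^{-v_0}$ and $(1+y)S'(y)=v_0\bigl(1-S(y)\bigr)$; from $V(y)=(1-y)^{-1/v_0}-1$ we get $V'(y)=\frac{1}{v_0}(1-y)^{-1/v_0-1}$, hence $v_0(1-y)V'(y)=V(y)+1$; and from~\eqref{relation f and gi} we have $g(x)f'(x)=f(x)\bigl(f(x)^{v_0}-1\bigr)$, so that $g(x)\,v_0f(x)^{v_0-1}f'(x)=v_0f(x)^{v_0}\bigl(f(x)^{v_0}-1\bigr)$.

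Then I would set $w\coloneqq f(x)^{v_0}S(y)$, so that $T=V(w)$, and compute by the chain rule $T_y=V'(w)f(x)^{v_0}S'(y)$ and $T_x=V'(w)\,v_0f(x)^{v_0-1}f'(x)S(y)$. Plugging in the facts above gives $(1+y)T_y=v_0f(x)^{v_0}V'(w)\bigl(1-S(y)\bigr)$ and $g(x)T_x=v_0f(x)^{v_0}\bigl(f(x)^{v_0}-1\bigr)V'(w)S(y)$, whence
$$
(1+y)T_y-g(x)T_x=v_0f(x)^{v_0}V'(w)\Bigl[\bigl(1-S(y)\bigr)-\bigl(f(x)^{v_0}-1\bigr)S(y)\Bigr]=v_0f(x)^{v_0}V'(w)(1-w),
$$
since the bracket telescopes to $1-f(x)^{v_0}S(y)=1-w$. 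Finally $v_0V'(w)(1-w)=V(w)+1=T+1$, so $(1+y)T_y-g(x)T_x=f(x)^{v_0}(T+1)$, which is~\eqref{T derivado}.

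There is no substantial obstacle: once the closed forms of $S$, $V$ and $T=V(f(x)^{v_0}S(y))$ are in hand, the whole argument is bookkeeping. The only point that needs a moment's attention is the telescoping of $\bigl(1-S(y)\bigr)-\bigl(f(x)^{v_0}-1\bigr)S(y)$ to $1-w$, and the correct use of~\eqref{relation f and gi} to rewrite $g(x)f(x)^{v_0-1}f'(x)$ as $f(x)^{v_0}\bigl(f(x)^{v_0}-1\bigr)$; both are immediate.
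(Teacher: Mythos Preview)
Your proof is correct and follows essentially the same approach as the paper: a direct chain-rule computation using the closed forms of $S$, $V$, and $T=V\bigl(f(x)^{v_0}S(y)\bigr)$, together with the relation $g f'=f(f^{v_0}-1)$. The only cosmetic difference is that the paper keeps $\tilde{\partial}^1_x$ as an abstract derivation and abbreviates $W\coloneqq f(x)^{v_0}$ (computing $\tilde{\partial}^1_x W=v_0W(W-1)$), whereas you first rewrite $\tilde{\partial}^1_x$ as $g(x)\partial_x$ and set $w\coloneqq f(x)^{v_0}S(y)$; the algebra and the final telescoping step are identical.
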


\begin{proof}
  Write $W=W(x)=f(x)^{v_0}$. Then
  $$
    \tilde{\partial}^1_x(W)=g(x)(f(x)^{v_0})'=v_0 f(x)^{v_0-1}g(x)f'(x)=v_0 f(x)^{v_0-1} f(x)(f(x)^{v_0}-1)=v_0 W(W-1).
  $$
  We compute the formal derivative $V'(y)$ and obtain
  $$
    V'(y)=-\frac{1}{v_0}(1-y)^{-1/v_0-1}(-1)=\frac{1}{v_0}(1-y)^{-1/v_0-1}=\frac{V(y)+1}{v_0(1-y)}.
  $$
  Since $\tilde{\partial}^1_x$ is a derivation and $\tilde{\partial}^1_x(S(y))=0$, from the chain rule it follows that
  $$
    \tilde{\partial}^1_x T=V'(W S(y))S(y) \tilde{\partial}^1_x(W)=\frac{(V(W S(y))+1)}{v_0(1-W S(y))} S(y)v_0 W(W-1)
    = W(T+1)\frac{W S(y)-S(y)}{1-W S(y)}.
  $$
  From the chain rule it also follows that
  $$
    T_y=V'(W S(y))W S'(y),
  $$
  where the formal derivative $S'(y)$ is given by
  $$
    S'(y)=\frac{v_0}{(1+y)^{v_0+1}}=\frac{v_0(1-S(y))}{1+y}.
  $$
  Hence
  $$
    (1+y)T_y=\frac{(V(W S(y))+1)}{v_0(1-W S(y))}W v_0 (1-S(y))=W(T+1)\frac{1-S(y)}{1-W S(y)},
  $$
  and so
  $$
    (1+y)T_y-\tilde{\partial}^1_x T=W(T+1)\frac{1-S(y)-(W S(y)-S(y))}{1-W S(y)}=W(T+1),
  $$
  as desired.
\end{proof}

\begin{proposition} \label{tilde partial de F en funcion de tilde partial sub y}
  For all $k\ge 1$ we have
  \begin{equation}\label{eq tilde partial de F en funcion de tilde partial sub y}
    \tilde{\partial}^k F=\sum_{h=1}^{k}(T^h)_k \tilde{\partial}^h_y F.
  \end{equation}
\end{proposition}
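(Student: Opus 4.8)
The plan is to prove~\eqref{eq tilde partial de F en funcion de tilde partial sub y} by induction on $k$, with the recursion $k\tilde{\partial}^k = \tilde{\partial}^1\tilde{\partial}^{k-1} - (k-1)\tilde{\partial}^{k-1}$ of Lemma~\ref{partial tilde global propiedad} as the engine, and with the differential equation~\eqref{T derivado} for $T$ (in a ``power'' form) and the identity~\eqref{partial x contra partial y de F1} as the two substantive inputs. Before starting I would record three facts. First, $\tilde{\partial}^1_x = g(x)\partial_x$ and $\tilde{\partial}^1_y = g(y)\partial_y$ are commuting derivations of $K[[x,y]]$, the former being $K[[y]]$-linear and the latter $K[[x]]$-linear, so $\tilde{\partial}^1 = \tilde{\partial}^1_x + \tilde{\partial}^1_y$ is a derivation. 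Second, $\tilde{\partial}^1_x = \partial_x^{v_0}$ and $\tilde{\partial}^1_y = \partial_y^{v_0}$, so~\eqref{partial x contra partial y de F1} reads $\tilde{\partial}^1_x F = (f(x)^{v_0}-1)\tilde{\partial}^1_y F$. Third, evaluating~\eqref{T derivado} in $y$-degree $0$ gives $T_1 = f(x)^{v_0}$ (since $T_0 = 0$), so $T$ has $y$-order $1$, $(T^h)_j = 0$ for $j<h$, and $(T^h)_h = f(x)^{v_0 h}$.

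Next I would multiply~\eqref{T derivado} by $m\,T^{m-1}$ and use that $\tilde{\partial}^1_x$ is a derivation to get, for every $m\ge 1$ (with the convention $T^0 = 1$),
\[
  (1+y)(T^m)_y = \tilde{\partial}^1_x(T^m) + m\,f(x)^{v_0}\bigl(T^m + T^{m-1}\bigr).
\]
Extracting the coefficient of $y^{k-1}$ (using that $\tilde{\partial}^1_x$ acts coefficientwise in $y$) yields the key numerical identity
\[
  k\,(T^m)_k = \tilde{\partial}^1_x\bigl((T^m)_{k-1}\bigr) + m\,f(x)^{v_0}(T^{m-1})_{k-1} + \bigl(m\,f(x)^{v_0} - (k-1)\bigr)(T^m)_{k-1},
\]
valid for all $m\ge 1$ and all $k\ge 2$; it is consistent with the boundary values $(T^0)_{k-1}=0$ and $(T^k)_{k-1}=0$.

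For the base case $k=1$, the second fact gives $\tilde{\partial}^1 F = \tilde{\partial}^1_x F + \tilde{\partial}^1_y F = f(x)^{v_0}\tilde{\partial}^1_y F = (T)_1\,\tilde{\partial}^1_y F$, which is the claim for $k=1$. For the inductive step I would apply $\tilde{\partial}^1$ to $\tilde{\partial}^{k-1}F = \sum_{h=1}^{k-1}(T^h)_{k-1}\tilde{\partial}^h_y F$; since $\tilde{\partial}^1$ is a derivation and $\tilde{\partial}^1_y$ annihilates the coefficients $(T^h)_{k-1}\in K[[x]]$, everything reduces to computing $\tilde{\partial}^1(\tilde{\partial}^h_y F)$. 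Using that $\tilde{\partial}^1_x$ commutes with $\tilde{\partial}^h_y$, then $\tilde{\partial}^1_x F = (f(x)^{v_0}-1)\tilde{\partial}^1_y F$ together with the $K[[x]]$-linearity of $\tilde{\partial}^h_y$, and finally the $y$-analogue of~\eqref{partial tilde propiedad} (which gives $\tilde{\partial}^1_y\tilde{\partial}^h_y = (h+1)\tilde{\partial}^{h+1}_y + h\,\tilde{\partial}^h_y$), one obtains
\[
  \tilde{\partial}^1\bigl(\tilde{\partial}^h_y F\bigr) = f(x)^{v_0}\bigl((h+1)\tilde{\partial}^{h+1}_y + h\,\tilde{\partial}^h_y\bigr)F.
\]
Plugging this into $k\tilde{\partial}^k F = \tilde{\partial}^1\tilde{\partial}^{k-1}F - (k-1)\tilde{\partial}^{k-1}F$, reindexing, and collecting the coefficient of $\tilde{\partial}^m_y F$ for each $m$, I would recover exactly the right-hand side of the key numerical identity above, and dividing by $k$ finishes the induction.

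The only genuine obstacle I anticipate is the combinatorial bookkeeping of the index shifts when collecting the $\tilde{\partial}^m_y F$-terms (the term $\tilde{\partial}^{h+1}_y F$ contributes to $m=h+1$, while $\tilde{\partial}^h_y F$ and $\tilde{\partial}^1_x((T^h)_{k-1})\tilde{\partial}^h_y F$ contribute to $m=h$, and the cases $m=1$ and $m=k$ must be matched separately against the identity), but no new idea is needed beyond the three inputs above. A secondary point worth checking carefully is the equality $\tilde{\partial}^1_x = \partial_x^{v_0}$ and $\tilde{\partial}^1_y = \partial_y^{v_0}$ as operators on all of $K[[x,y]]$ — not just on $K[[x]]$ or $K[[y]]$ — which holds because both families were extended to $K[[x,y]]$ coefficientwise.
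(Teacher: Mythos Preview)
Your proposal is correct and follows essentially the same route as the paper's proof: both argue by induction on $k$ via the recursion $k\tilde{\partial}^k=\tilde{\partial}^1\tilde{\partial}^{k-1}-(k-1)\tilde{\partial}^{k-1}$, use that $\tilde{\partial}^1$ is a derivation killing the $K[[x]]$-coefficients $(T^h)_{k-1}$, reduce $\tilde{\partial}^1\tilde{\partial}^h_yF$ to $f(x)^{v_0}\bigl((h+1)\tilde{\partial}^{h+1}_y+h\tilde{\partial}^h_y\bigr)F$ via~\eqref{partial x contra partial y de F1} and~\eqref{partial tilde propiedad}, and then match coefficients of $\tilde{\partial}^m_yF$ against the degree-$(k-1)$ consequence of the power form of~\eqref{T derivado}. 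The only cosmetic difference is that you isolate the ``key numerical identity'' $k(T^m)_k=\tilde{\partial}^1_x\bigl((T^m)_{k-1}\bigr)+m\,f(x)^{v_0}(T^{m-1})_{k-1}+\bigl(m\,f(x)^{v_0}-(k-1)\bigr)(T^m)_{k-1}$ in advance, whereas the paper derives the three cases $m=1$, $1<m\le k$, $m=k+1$ in situ; the content is the same.
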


\begin{proof}
  Since by definition $U_0=0$ and $S_0=0$, we have $T_0=0$ and from~\eqref{T derivado} at degree 0 we obtain $(T^1)_1=T_1=f(x)^{v_0}$.
  On the other hand, from~\eqref{partial x contra partial y de F1} we obtain
  $$
    \tilde{\partial}^1 F= \tilde{\partial}^1_x F+\tilde{\partial}^1_y F=f(x)^{v_0} \tilde{\partial}^1_y F
  $$
  since $\tilde{\partial}_x^1=\partial_x^{v_0}$ and $\tilde{\partial}_y^1=\partial_y^{v_0}$.
  So~\eqref{eq tilde partial de F en funcion de tilde partial sub y} holds for $k=1$.
  Consequently, it suffices to prove that the series $\sum_{h=1}^{k}(T^h)_k \tilde{\partial}^h_y F$ satisfy the same recursive relations as
  the series $\tilde{\partial}^k F$, given in Lemma~\ref{partial tilde global propiedad}. So we have to prove
  \begin{equation}\label{por probar}
    (k+1)\left(\sum_{h=1}^{k+1}(T^h)_{k+1} \tilde{\partial}^h_y F\right)+k\left(\sum_{h=1}^{k}(T^h)_k \tilde{\partial}^h_y F\right)
    =\tilde{\partial}^1\left(\sum_{h=1}^{k}(T^h)_k \tilde{\partial}^h_y F\right), \quad\text{for $k\ge 1$.}
  \end{equation}
  Note that $\tilde{\partial}^1=\tilde{\partial}^1_x+\tilde{\partial}^1_y$ is a derivation which commutes with $\tilde{\partial}^h_y$, and so
  $$
    \tilde{\partial}^1\left((T^h)_k \tilde{\partial}^h_y F\right)= \tilde{\partial}^1_x\left((T^h)_k\right) \tilde{\partial}^h_y F +
    (T^h)_k \tilde{\partial}^h_y\tilde{\partial}^1 F,
  $$
  since $(T^h)_k\in K[[x]]$ implies $\tilde{\partial}^1_y\left((T^h)_k\right)=0$. Similarly $f(x)^{v_0}$ is a constant in $(K[[x]])[[y]]$,
  and so, by~\eqref{partial tilde propiedad}, we have
  $$
    \tilde{\partial}^h_y\tilde{\partial}^1 F=\tilde{\partial}^h_y\left(f(x)^{v_0}\tilde{\partial}^1_y F\right)=
    f(x)^{v_0}\tilde{\partial}^h_y\left(\tilde{\partial}^1_y F\right)=
    f(x)^{v_0}\left((h+1)\tilde{\partial}^{h+1}_y F+h\tilde{\partial}^{h}_y F\right).
  $$
  It follows that
  \begin{align*}
    \tilde{\partial}^1\left(\sum_{h=1}^{k}(T^h)_k \tilde{\partial}^h_y F\right)& =
    \sum_{h=1}^{k}\left(\tilde{\partial}^1_x(T^h)\right)_k \tilde{\partial}^h_y F
    +f(x)^{v_0}\left(\sum_{h=1}^{k}(h+1)(T^h)_k \tilde{\partial}^{h+1}_y F
    +\sum_{h=1}^{k}h(T^h)_k \tilde{\partial}^h_y F\right)\\
    & =
    \sum_{h=1}^{k}\left(\tilde{\partial}^1_x(T^h)\right)_k \tilde{\partial}^h_y F
    +\sum_{h=2}^{k+1}h f(x)^{v_0}(T^{h-1})_k \tilde{\partial}^{h}_y F
    +\sum_{h=1}^{k}hf(x)^{v_0}(T^h)_k \tilde{\partial}^h_y F.
  \end{align*}
  In order to prove~\eqref{por probar}, it suffices to prove that the coefficients corresponding to $\tilde{\partial}^h_y F$ on both sides
  coincide for $1\le h \le k+1$. So we will prove
  \begin{align}
    (k+1)T_{k+1}+k T_k & = (\tilde{\partial}^1 T)_k+f(x)^{v_0}T_k, \label{caso h igual 1} \\
    (k+1)(T^h)_{k+1}+k (T^h)_k & = \left(\tilde{\partial}^1(T^h)\right)_k+h f(x)^{v_0}(T^{h-1})_k+h f(x)^{v_0}(T^{h})_k,
    \label{caso h entre 1 y k}\\
    (k+1)(T^{k+1})_{k+1} & = (k+1) f(x)^{v_0}(T^{k})_k, \label{caso h igual a k+1}
  \end{align}
  where~\eqref{caso h igual 1} corresponds to the case $h=1$, the equality~\eqref{caso h entre 1 y k} will hold for
  $1<h\le k$, and~\eqref{caso h igual a k+1} corresponds to the case $h=k+1$.
  In order to prove~\eqref{caso h igual 1}, it suffices to consider the equality~\eqref{T derivado} at degree $k\ge 1$,
  taking into account that $f(x)^{v_0}$ is a constant in $(K[[x]])[[y]]$ and so $(f(x)^{v_0})_k=0$. Now
  we take $h$ with $1<h\le k$ and compute
  \begin{align*}
    (k+1)(T^h)_{k+1}+k (T^h)_k & = ((T^h)_y)_k+((T^h)_y)_{k-1}= \left((1+y)(T^h)_y\right)_k=\left(hT^{h-1}(1+y)T_y\right)_k \\
    & = \left(hT^{h-1}(1+y)T_y\right)_k = \left(hT^{h-1}\left(\tilde{\partial}^1_x T+f(x)^{v_0}(1+T)\right)\right)_k  \\
    & = \left(\tilde{\partial}^1(T^h)\right)_k+h f(x)^{v_0}(T^{h-1})_k+h f(x)^{v_0}(T^{h})_k,
  \end{align*}
  where the fifth equality follows from~\eqref{T derivado}. This proves~\eqref{caso h entre 1 y k}. Finally, since $T_0=0$ and $T_1=f(x)^{v_0}$,
  for all $j\ge 1$ we have $(T^j)_j =f(x)^{jv_0}$, from which~\eqref{caso h igual a k+1} follows, concluding the proof.
\end{proof}

\begin{corollary} \label{prueba de la igualdad principal para F}
  The equality~\eqref{igualdad principal para las series de F} holds for all $j>0$.
\end{corollary}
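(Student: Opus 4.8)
The plan is to feed the identity of Proposition~\ref{tilde partial de F en funcion de tilde partial sub y} into the left-hand side of~\eqref{igualdad principal para las series de F} and then recognize the coefficient that appears there as the coefficient of a composition of formal power series in $(K[[x]])[[y]]$.

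First I would substitute $\tilde{\partial}^i F=\sum_{h=1}^{i}(T^h)_i \tilde{\partial}^h_y F$ for each $i\ge 1$, obtaining
$$
  \sum_{i\ge 1}\bigl(\ov{f}(y)^i\bigr)_j \tilde{\partial}^i F
  =\sum_{h\ge 1}\Biggl(\sum_{i\ge h}\bigl(\ov{f}(y)^i\bigr)_j (T^h)_i\Biggr)\tilde{\partial}^h_y F.
$$
The interchange of the two summations is harmless: since $T_0=0$ we have $(T^h)_i=0$ for $i<h$, and since $\ov{f}(y)=f(y)-1$ has zero constant term, $\bigl(\ov{f}(y)^i\bigr)_j=0$ once $i v_0>j$; hence for each fixed $j$ and $h$ the inner sum is finite, and only finitely many $h$ contribute a nonzero coefficient of $\tilde{\partial}^h_y F$.

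Next I would identify the inner coefficient. Because $\ov{f}(y)$ has zero constant term, substitution $y\mapsto\ov{f}(y)$ defines a ring endomorphism of $(K[[x]])[[y]]$, so $T^h\bigl(\ov{f}(y)\bigr)=\bigl(T(\ov{f}(y))\bigr)^h$; writing $T^h=\sum_{i\ge h}(T^h)_i y^i$ and composing gives $\bigl(T(\ov{f}(y))\bigr)^h=\sum_{i\ge h}(T^h)_i \ov{f}(y)^i$, whose coefficient of $y^j$ is precisely $\sum_{i\ge h}\bigl(\ov{f}(y)^i\bigr)_j (T^h)_i$. By~\eqref{T propiedad} we have $T\bigl(\ov{f}(y)\bigr)=\ov{f}(F)$, so this inner sum equals $\bigl(\ov{f}(F)^h\bigr)_j$.

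Combining the two steps yields
$$
  \sum_{i\ge 1}\bigl(\ov{f}(y)^i\bigr)_j \tilde{\partial}^i F
  =\sum_{h\ge 1}\bigl(\ov{f}(F)^h\bigr)_j \tilde{\partial}^h_y F,
$$
which, after renaming $h$ as $i$, is exactly~\eqref{igualdad principal para las series de F}. I expect no genuine obstacle at this stage: the only care required is the order bookkeeping that justifies the rearrangement and the legitimacy of the compositions involved; the substantive content has already been absorbed into Proposition~\ref{tilde partial de F en funcion de tilde partial sub y} (via the recursion of Lemma~\ref{partial tilde global propiedad} and the differential equation~\eqref{T derivado}) and into the identity $\ov{f}(F)=T(\ov{f}(y))$ of Proposition~\ref{ov f F como T de f ov}.
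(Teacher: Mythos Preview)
Your argument is correct and is essentially the same as the paper's: both proofs combine Proposition~\ref{tilde partial de F en funcion de tilde partial sub y} with the identity $\ov{f}(F)=T(\ov{f}(y))$ from~\eqref{T propiedad} and a sum interchange. The only cosmetic difference is that you start from the left-hand side of~\eqref{igualdad principal para las series de F} and work towards the right, whereas the paper starts from the right-hand side and works towards the left; your extra remarks on why the rearrangement is legitimate are a nice addition but not a different idea.
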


\begin{proof}
  By~\eqref{T propiedad} we have $\ov{f}(F)=T(\ov{f}(y))$ and so
  $$
    \ov{f}(F)^i=T(\ov{f}(y))^i=T^i(\ov{f}(y))=\sum_{k\ge i} (T^i)_k(\ov{f}(y))^k.
  $$
  It follows that
  \begin{align*}
    \sum_{i\ge 1}\left( \ov{f}(F)^i \right)_j\tilde{\partial}^i_y F = &
    \sum_{i\ge 1}\left( \sum_{k\ge i} (T^i)_k(\ov{f}(y))^k \right)_j\tilde{\partial}^i_y F
    =  \sum_{k\ge 1} \sum_{i=1}^k (T^i)_k\left((\ov{f}(y))^k \right)_j\tilde{\partial}^i_y F \\
    = & \sum_{k\ge 1} \left((\ov{f}(y))^k \right)_j \left(\sum_{i=1}^k (T^i)_k \tilde{\partial}^i_y F\right)
    = \sum_{k\ge 1} \left((\ov{f}(y))^k \right)_j  \tilde{\partial}^k F=
    \sum_{i\ge 1} \left((\ov{f}(y))^i \right)_j  \tilde{\partial}^i F,\\
  \end{align*}
  as desired.
\end{proof}

\begin{proof}[Proof of Theorem~\ref{teorema principal}]
  By Corollary~\ref{prueba de la igualdad principal para F} the equality~\eqref{igualdad principal para las series de F} holds for all $j>0$, and
  so by Remark~\ref{comparativo} the equality~\eqref{Lo que falta para partial en G} is true for all $j>0$.
  Combined with~\eqref{Gij como F}, this implies
  $$
    \left(\partial^i G\right)_j=\left(\partial^j G\right)_i,
  $$
  for all $i,j>0$, which by Proposition~\ref{remark importante1} implies
  $$
    R(i,j,k)=(\partial^j G)_{ik}=\left((\partial^j G)_k\right)_i=\left((\partial^k G)_j\right)_i=(\partial^k G)_{ij}= R(i,k,j),\quad\text{for $i,j,k>0$.}
  $$
    By Remark~\ref{caso ijk cero} this concludes the proof of
    Theorem~\ref{teorema principal}.
\end{proof}

\section{The case $\frak{p}_{11}^1\ne 0$}
\setcounter{equation}{0}
In this section we will classify completely the case $\frak{p}_{11}^1\ne 0$.
First we will prove that in this case the solution is involutive
(i.e. that $\frak{d}=\frak{p}$). Then we will prove that the $q$-cycle coalgebra is equivalent,
via Remark~\ref{equivalencia de q brazas}, to a unique standard cycle coalgebra of degree $v_0=1$.

By Proposition~\ref{prop casos principales}
and Remark~\ref{resumen}, we know that
$\frak{p}_{j0}^j=\frak{d}_{j0}^j=1$ for all $j$; and by
Proposition~\ref{d111 igual a p111} we know that $\frak{d}_{11}^1=\frak{p}_{11}^1\ne 0$. By
Proposition~\ref{formula para d1j1} this implies that
\begin{equation}\label{formula para p1j1}
  \frak{p}_{j1}^{j}=\frak{d}_{j1}^{j}=j\frak{p}_{11}^1\quad\text{for all $j$.}
\end{equation}

\begin{proposition}\label{d1,gamma^gamma ne 0 implica muchos d se anulan}
  We have $\frak{d}_{i+d,0}^{i}=\frak{p}_{i+d,0}^{i}=0$ for all $i$ and $d>0$.
\end{proposition}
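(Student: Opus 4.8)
The plan is to prove the equivalent statement that $\mathfrak{p}_{m0}^l=\mathfrak{d}_{m0}^l=0$ for all $0<l<m$; this is exactly the assertion of the proposition once one writes $m=i+d$, $l=i$ (the case $i=0$ being the trivial $\mathfrak{p}_{d0}^0=\delta_{0d}=0$). Two ingredients are needed: a \emph{collapse} of the comultiplicativity relations of Remark~\ref{cond para q-magma}, and a single scalar identity coming from the $q$-cycle equations of Proposition~\ref{reduccion a m igual a 1}.

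For the collapse, I would take $j=0$ and the decomposition with exponents $(1,k-1)$ in Remark~\ref{cond para q-magma}, obtaining $\mathfrak{p}_{i0}^k=\sum_{a+b=i}\mathfrak{p}_{a0}^1\mathfrak{p}_{b0}^{k-1}$, and likewise for $\mathfrak{d}$. When $k\ge 2$ the extreme terms vanish ($a=0$ since $\mathfrak{p}_{00}^1=0$, and $a=i$ since $\mathfrak{p}_{00}^{k-1}=\delta_{0,k-1}=0$); hence if moreover $\mathfrak{p}_{a0}^1=0$ for all $2\le a\le i-1$, then, using $\mathfrak{p}_{10}^1=1$ (Remark~\ref{resumen}), one is left with $\mathfrak{p}_{i0}^k=\mathfrak{p}_{i-1,0}^{k-1}$ (and the same for $\mathfrak{d}$).

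Next I would extract the identity by evaluating~\eqref{primera en m igual a 1} with $i=1$ and $k=0$ (and $j\ge 2$ arbitrary). Using $\mathfrak{p}_{uv}^0=\delta_{0,u+v}$, $\mathfrak{p}_{10}^1=\mathfrak{d}_{10}^1=1$, $\mathfrak{p}_{j0}^j=1$ and $\mathfrak{p}_{00}^0=\mathfrak{d}_{00}^0=1$, almost every term of the triple sums disappears, and the two sides reduce to $\mathfrak{p}_{1j}^1$ on the left and $\mathfrak{p}_{1j}^1+\sum_{l=1}^{j-1}\mathfrak{p}_{j0}^l\mathfrak{p}_{1l}^1$ on the right, so that $\sum_{l=1}^{j-1}\mathfrak{p}_{j0}^l\mathfrak{p}_{1l}^1=0$. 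Evaluating~\eqref{tercera en m igual a 1} at the same indices gives, in exactly the same manner, $\sum_{l=1}^{j-1}\mathfrak{d}_{j0}^l\mathfrak{d}_{1l}^1=0$.

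Finally I would argue by induction on $m$ that $\mathfrak{p}_{m0}^l=\mathfrak{d}_{m0}^l=0$ for all $0<l<m$. The case $m=1$ is vacuous. Assuming the claim below $m$, we have in particular $\mathfrak{p}_{a0}^1=0$ for $2\le a\le m-1$, so the collapse of the second paragraph yields $\mathfrak{p}_{m0}^l=\mathfrak{p}_{m-1,0}^{l-1}$ for $2\le l\le m-1$, and the right-hand side vanishes by the inductive hypothesis (its first index is $m-1$). Thus the identity $\sum_{l=1}^{m-1}\mathfrak{p}_{m0}^l\mathfrak{p}_{1l}^1=0$ reduces to $\mathfrak{p}_{m0}^1\mathfrak{p}_{11}^1=0$, and $\mathfrak{p}_{m0}^1=0$ since $\mathfrak{p}_{11}^1\ne 0$; the argument for $\mathfrak{d}$ is word for word the same, using $\mathfrak{d}_{11}^1=\mathfrak{p}_{11}^1\ne 0$ (Proposition~\ref{d111 igual a p111}). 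The only care needed is the term-by-term bookkeeping in the collapse of~\eqref{primera en m igual a 1} and~\eqref{tercera en m igual a 1}, and checking that $\mathfrak{p}_{m-1,0}^{l-1}$ really falls under the inductive hypothesis for each $l$ in range; I do not expect a genuine obstacle.
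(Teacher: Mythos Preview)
Your argument is correct and follows essentially the same route as the paper. The identity you extract from~\eqref{primera en m igual a 1} with $i=1$, $k=0$ is exactly~\eqref{desarrollo para p0jk} specialized at $i=1$, and your ``collapse'' of the comultiplicativity relation is the same computation the paper performs for the inductive step $i>1$; the only difference is that the paper organizes the induction as a double loop on $(d,i)$ whereas you induct on the single parameter $m=i+d$, which is a harmless repackaging.
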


\begin{proof}
  We will proceed by induction. Assume $\frak{p}_{h,0}^{l}=0$ if $0<h-l<d$ or $h-l=d$ and $l\le i-1$.
  If $i=1$, then from the first equality in~\eqref{desarrollo para p0jk} with $j=d+1$ and $i=1$, we obtain
  $$
    \frak{p}_{d+1,0}^{1}\frak{p}_{11}^{1}+\frak{p}_{d+1,0}^{d+1}\frak{p}_{1,d+1}^{1}  =\frak{p}_{1,d+1}^{1}.
  $$
  Since $\frak{p}_{d+1,0}^{d+1}=1$ and $\frak{p}_{11}^{1}\ne 0$, this yields $\frak{p}_{d+1,0}^{1}=0$.
  Assume $i>1$. Then, from the equality~\eqref{eq: comultiplicative map} and the inductive hypothesis,
  we have
  $$
    \frak{p}_{d+i,0}^{i}=\sum_{j=1}^{d+1}\frak{p}_{j,0}^1\frak{p}_{d+i-j,0}^{i-1}=
    \frak{p}_{1,0}^1\frak{p}_{d+i-1,0}^{i-1}+\frak{p}_{d+1,0}^1\frak{p}_{i-1,0}^{i-1}=0,
  $$
  where the second equality follows from the case $h-l<d$  and the last equality, from the case $h-l=d$ and
  $l\le i-1$. The same argument shows that $\frak{d}_{j0}^k=\delta_{jk}$.
\end{proof}

From now on until the end of the section, we will use, without mentioning it, that
$$
  \frak{p}_{j0}^k=\frak{d}_{j0}^k=\delta_{jk}.
$$
\begin{proposition}\label{p1,gamma^gamma ne 0 implica p1 igual d1}
  We have $\frak{d}_{b1}^c=\frak{p}_{b1}^c$ for all $b,c$.
\end{proposition}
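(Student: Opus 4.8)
The plan is to reduce the assertion to the single family of equalities $\mathfrak{p}_{m1}^1=\mathfrak{d}_{m1}^1$ ($m\ge 1$), and then to establish these by an induction on $m$ in which the companion equalities $\mathfrak{p}_{1m}^1=\mathfrak{d}_{1m}^1$ are carried along.

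For the reduction I would insert the normalisation $\mathfrak{p}_{j0}^k=\mathfrak{d}_{j0}^k=\delta_{jk}$ into the comultiplicativity relation \eqref{eq: comultiplicative map} with $l=1$ and $h=c-1$; this gives $\mathfrak{p}_{b1}^c=\mathfrak{p}_{b-1,1}^{c-1}+\mathfrak{p}_{b-c+1,1}^1$, and an induction on $c$ then yields $\mathfrak{p}_{b1}^c=c\,\mathfrak{p}_{b-c+1,1}^1$ (with the convention $\mathfrak{p}_{01}^1=0$), and likewise for $\mathfrak{d}$. Since by Proposition~\ref{lado derecho} both sides of the claimed identity vanish for $c>b$ and are trivially equal for $c=0$, the proposition becomes equivalent to $\mathfrak{p}_{m1}^1=\mathfrak{d}_{m1}^1$ for all $m\ge 1$.

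For this last family the case $m=1$ is Proposition~\ref{d111 igual a p111}. Fix $m\ge 2$ and assume $\mathfrak{p}_{m'1}^1=\mathfrak{d}_{m'1}^1$ and $\mathfrak{p}_{1m'}^1=\mathfrak{d}_{1m'}^1$ for all $1\le m'<m$. I would then evaluate equation \eqref{primera en m igual a 1} at $(i,j,k)=(1,m,1)$ and at $(i,j,k)=(1,1,m)$, and equation \eqref{tercera en m igual a 1} at $(i,j,k)=(1,1,m)$. Because $\mathfrak{p}_{1a}^h=\mathfrak{d}_{1a}^h=0$ unless $h=1$ (Proposition~\ref{lado derecho} together with Remark~\ref{cond para q-magma}), and because of $\mathfrak{p}_{11}^1=\mathfrak{d}_{11}^1$, the identity $\mathfrak{p}_{j1}^j=\mathfrak{d}_{j1}^j=j\mathfrak{p}_{11}^1$ of \eqref{formula para p1j1}, the normalisation $\mathfrak{p}_{j0}^k=\mathfrak{d}_{j0}^k=\delta_{jk}$, and the reduction $\mathfrak{p}_{b1}^c=c\,\mathfrak{p}_{b-c+1,1}^1$ just proved, each of these three identities collapses — after cancelling the factor $\mathfrak{p}_{11}^1\neq0$ — to a linear relation among the four numbers $\mathfrak{p}_{m1}^1,\mathfrak{d}_{m1}^1,\mathfrak{p}_{1m}^1,\mathfrak{d}_{1m}^1$ whose remaining coefficients involve only $\mathfrak{p}_{11}^1$ and coefficients of degree $<m$, which by the inductive hypothesis take the same value for $\mathfrak{p}$ and for $\mathfrak{d}$. (Note that \eqref{tercera en m igual a 1} is literally \eqref{primera en m igual a 1} with $\mathfrak{p}$ and $\mathfrak{d}$ interchanged, so the third evaluated identity is the $\mathfrak{p}\leftrightarrow\mathfrak{d}$ image of the second; this is what makes the resulting system solvable.) Solving this small system — here one divides by $m-1$, which is why $m=1$ is treated separately — forces $\mathfrak{p}_{1m}^1=\mathfrak{d}_{1m}^1$ and then $\mathfrak{p}_{m1}^1=\mathfrak{d}_{m1}^1$, closing the induction.

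The main obstacle is the bookkeeping in the inductive step: expanding the triple sums of \eqref{primera en m igual a 1} and \eqref{tercera en m igual a 1} at the chosen indices, tracking which coefficients are covered by the inductive hypothesis, and verifying that the $3\times 3$ (really $4\times 4$) linear system is non-degenerate for $m\ge 2$. A secondary point worth stressing in the write-up is that the equalities $\mathfrak{p}_{m1}^1=\mathfrak{d}_{m1}^1$ cannot be obtained in isolation: the mixed sums $\sum_{a+b=m}\mathfrak{p}_{1a}^1\mathfrak{d}_{1b}^1$ that appear in the evaluated equations force the companion equalities $\mathfrak{p}_{1m}^1=\mathfrak{d}_{1m}^1$ to be proved at the same time.
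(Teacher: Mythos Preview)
Your reduction to $\mathfrak{p}_{m1}^1=\mathfrak{d}_{m1}^1$ via the recursion $\mathfrak{p}_{b1}^c=\mathfrak{p}_{b-1,1}^{c-1}+\mathfrak{p}_{b-c+1,1}^1$ is the same as the paper's, and your inductive scheme can be made to work, but the route is different and considerably heavier than the one in the paper. A small correction first: equation~\eqref{tercera en m igual a 1} is \emph{not} literally~\eqref{primera en m igual a 1} with $\mathfrak{p}\leftrightarrow\mathfrak{d}$; one must also swap $j$ and $k$ (this follows from cocommutativity). Thus your third evaluated identity, \eqref{tercera en m igual a 1} at $(1,1,m)$, is the $\mathfrak{p}\leftrightarrow\mathfrak{d}$ image of your \emph{first} identity (at $(1,m,1)$), not of your second. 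The $3\times 4$ linear system you obtain is still solvable and does yield both $\mathfrak{p}_{m1}^1=\mathfrak{d}_{m1}^1$ and $\mathfrak{p}_{1m}^1=\mathfrak{d}_{1m}^1$, so the argument survives, but your explanation of ``why it is solvable'' should be adjusted.

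The paper avoids all of this by using the \emph{second} $q$-cycle identity instead of the first and third. Specialising~\eqref{segunda en m igual a 1} at $j=k=1$ gives
\[
\mathfrak{p}_{11}^{1}\mathfrak{d}_{i1}^{1} + \sum_{h=1}^i  \mathfrak{p}_{i1}^{h}\mathfrak{d}_{h1}^{1}
=\mathfrak{d}_{11}^{1}\mathfrak{p}_{i1}^{1} + \sum_{h=1}^i  \mathfrak{d}_{i1}^{h}\mathfrak{p}_{h1}^{1},
\]
which involves \emph{only} coefficients of the form $\mathfrak{p}_{\bullet 1}^{\bullet}$ and $\mathfrak{d}_{\bullet 1}^{\bullet}$; no mixed sums $\sum\mathfrak{p}_{1a}^1\mathfrak{d}_{1b}^1$ appear. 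With the inductive hypothesis $\mathfrak{p}_{b1}^c=\mathfrak{d}_{b1}^c$ for $b-c<d$, the middle terms cancel pairwise and one is left with $\mathfrak{p}_{d+1,1}^{d+1}\mathfrak{d}_{d+1,1}^{1}=\mathfrak{d}_{d+1,1}^{d+1}\mathfrak{p}_{d+1,1}^{1}$, whence $\mathfrak{d}_{d+1,1}^1=\mathfrak{p}_{d+1,1}^1$ since $\mathfrak{p}_{d+1,1}^{d+1}=\mathfrak{d}_{d+1,1}^{d+1}=(d+1)\mathfrak{p}_{11}^1\ne 0$. So your assertion that the companion equalities $\mathfrak{p}_{1m}^1=\mathfrak{d}_{1m}^1$ are \emph{forced} is not correct: the paper obtains the present proposition without them, and only afterwards (in the next proposition) proves $\mathfrak{p}_{1r}^1=\mathfrak{d}_{1r}^1$, this time \emph{using} the present result. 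What your approach buys is that both propositions are established in a single induction; what the paper's choice of~\eqref{segunda en m igual a 1} buys is a one-line inductive step with no linear system to solve.
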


\begin{proof}
  For $b=c$ this follows from Propositions~\ref{d111 igual a p111} and~\ref{formula para d1j1}. Assume that it
  is true for $b-c<d$.

  Specializing~\eqref{segunda en m igual a 1} in $j=k=1$ yields
  $$
    \frak{p}_{11}^{1}\frak{d}_{i1}^{1} + \sum_{h=1}^i  \frak{p}_{i1}^{h}\frak{d}_{h1}^{1} =
    \frak{d}_{11}^{1}\frak{p}_{i1}^{1} + \sum_{h=1}^i  \frak{d}_{i1}^{h}\frak{p}_{h1}^{1}
  $$
  When $i=d+1$, by the inductive hypothesis the terms with $1<h<i$ cancel out, and so
  $$
    \frak{p}_{11}^{1}\frak{d}_{d+1,1}^{1}+\frak{p}_{d+1,1}^{1}\frak{d}_{11}^{1}+ \frak{p}_{d+1,1}^{d+1}\frak{d}_{d+1,1}^{1}=
    \frak{d}_{11}^{1}\frak{p}_{d+1,1}^{1}+\frak{d}_{d+1,1}^{1}\frak{p}_{11}^{1}+ \frak{d}_{d+1,1}^{d+1}\frak{p}_{d+1,1}^{1}.
  $$
  Therefore $\frak{p}_{d+1,1}^{d+1}\frak{d}_{d+1,1}^{1}=\frak{d}_{d+1,1}^{d+1}\frak{p}_{d+1,1}^{1}$, and so
  $$
    \frak{d}_{d+1,1}^{1}=\frak{p}_{d+1,1}^{1},
  $$
  since $\frak{p}_{d+1,1}^{d+1}=\frak{d}_{d+1,1}^{d+1}\ne 0$. It remains to check that
  $\frak{d}_{b1}^c=\frak{p}_{b1}^c$ when $c>1$ and $b-c=d$.
  But by equality~\eqref{eq: comultiplicative map}
  $$
    \frak{p}_{b1}^c=\sum_{\substack{i_1+i_2=b \\ j_1+j_2=1 \\ i_1+j_1\ge 1 \\ i_2+j_2\ge c-1}}
    \frak{p}_{i_1 j_1}^1 \frak{p}_{i_2 j_2}^{c-1}=\frak{p}_{b-1,1}^{c-1}+\frak{p}_{b-c+1,1}^{1},
  $$
  where the last equality follows using that $\frak{p}_{j0}^{k}=\delta_{jk}$. Similarly
  $$
    \frak{d}_{b1}^c=\frak{d}_{b-1,1}^{c-1}+\frak{d}_{b-c+1,1}^{1},
  $$
  and so $\frak{d}_{b1}^c=\frak{p}_{b1}^c$ by induction on $c$.
\end{proof}
\begin{proposition}\label{p1,gamma^gamma ne 0 implica pr1 igual dr1}
  We have $\frak{d}_{1r}^1=\frak{p}_{1r}^1$ for all $r$.
\end{proposition}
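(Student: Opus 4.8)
The plan is to establish the equalities $\frak{d}_{1r}^1=\frak{p}_{1r}^1$ by induction on $r$, using the $q$-cycle coalgebra equations~\eqref{primera en m igual a 1} and~\eqref{tercera en m igual a 1} together with what is already available in this section: the standing identity $\frak{p}_{j0}^k=\frak{d}_{j0}^k=\delta_{jk}$, the fact $\frak{p}_{11}^1=\frak{d}_{11}^1\ne 0$, the formula $\frak{p}_{j1}^j=\frak{d}_{j1}^j=j\frak{p}_{11}^1$ of~\eqref{formula para p1j1}, Proposition~\ref{p1,gamma^gamma ne 0 implica p1 igual d1} (which gives $\frak{d}_{b1}^c=\frak{p}_{b1}^c$), and Remark~\ref{resumen}. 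The base cases are $r=0$, where the claim reads $\frak{d}_{10}^1=1=\frak{p}_{10}^1$, and $r=1$, which is Proposition~\ref{d111 igual a p111}; so from now on I assume $r\ge 2$ and $\frak{d}_{1s}^1=\frak{p}_{1s}^1$ for every $s<r$.

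First I would specialize~\eqref{primera en m igual a 1} at $i=j=1$ and $k=r$. Using $\frak{p}_{1a}^0=\frak{p}_{11}^0=\frak{d}_{r1}^0=0$, that $\frak{p}_{10}^h=\delta_{1h}$ and $\frak{d}_{r0}^l=\delta_{rl}$, and the vanishing relations of Remark~\ref{resumen}, all but a handful of summands drop out on each side, and the identity collapses to
$$
  \sum_{l=1}^{r}\frak{d}_{r1}^l\,\frak{p}_{1l}^1+\frak{p}_{11}^1\,\frak{p}_{1r}^1=\frak{p}_{11}^1\sum_{c+d=r}\frak{p}_{1c}^1\,\frak{p}_{1d}^1.
$$
In the same way, specializing~\eqref{tercera en m igual a 1} at $i=j=1$ and $k=r$, and rewriting the left-hand coefficients $\frak{p}_{r1}^l$ as $\frak{d}_{r1}^l$ via Proposition~\ref{p1,gamma^gamma ne 0 implica p1 igual d1}, one gets
$$
  \sum_{l=1}^{r}\frak{d}_{r1}^l\,\frak{d}_{1l}^1+\frak{d}_{11}^1\,\frak{d}_{1r}^1=\frak{d}_{11}^1\sum_{c+d=r}\frak{d}_{1c}^1\,\frak{d}_{1d}^1.
$$
(The remaining equation~\eqref{segunda en m igual a 1}, specialized at $i=k=1$, $j=r$, turns out to reproduce the first identity verbatim, so it is useless here; that is why one genuinely needs both~\eqref{primera en m igual a 1} and~\eqref{tercera en m igual a 1}.)

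Then I would subtract the first identity from the second. Write $p\coloneqq\frak{p}_{11}^1=\frak{d}_{11}^1$ and $\delta\coloneqq\frak{d}_{1r}^1-\frak{p}_{1r}^1$. The inductive hypothesis makes every term with $l<r$ in the first sum and every term with $1\le c\le r-1$ on the right cancel, so the difference of the left-hand sides equals $\frak{d}_{r1}^r\,\delta+p\,\delta=(r+1)\,p\,\delta$, where I used $\frak{d}_{r1}^r=r\,p$ from~\eqref{formula para p1j1}; and the difference of the right-hand sides equals $p\,\delta+p\,\delta=2\,p\,\delta$, the two surviving contributions being $(c,d)=(0,r)$ and $(c,d)=(r,0)$, each equal to $\frak{d}_{10}^1\,\delta=\delta$. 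Hence $(r+1)\,p\,\delta=2\,p\,\delta$, i.e.\ $(r-1)\,p\,\delta=0$; since $p\ne 0$, the characteristic of $K$ is $0$, and $r\ge 2$, this forces $\delta=0$, completing the induction.

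The only genuine work, and the main place to slip, is the bookkeeping in the two specializations: one must keep track of exactly which of the many summands in~\eqref{primera en m igual a 1} and~\eqref{tercera en m igual a 1} survive once the delta and vanishing relations ($\frak{p}_{j0}^k=\delta_{jk}$ and Remark~\ref{resumen}) are applied. After that, reaching the conclusion is a one-line linear calculation.
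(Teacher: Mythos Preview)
Your argument is correct and arrives at exactly the same final identity $(r+1)\,p\,\delta=2\,p\,\delta$ as the paper, after which the conclusion is immediate.  The only difference is in which of the three $q$-cycle equations you specialize: the paper uses~\eqref{segunda en m igual a 1} twice, at $(i,j,k)=(1,1,r)$ and at $(i,j,k)=(1,r,1)$, whereas you use~\eqref{primera en m igual a 1} and~\eqref{tercera en m igual a 1}, both at $(i,j,k)=(1,1,r)$.  Once Proposition~\ref{p1,gamma^gamma ne 0 implica p1 igual d1} and $\frak{p}_{11}^1=\frak{d}_{11}^1$ are applied, the two pairs of equations obtained are literally the same, so the subtraction step and everything afterwards coincide.

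One small correction to your parenthetical remark: it is not true that ``one genuinely needs both~\eqref{primera en m igual a 1} and~\eqref{tercera en m igual a 1}''.  You only tested~\eqref{segunda en m igual a 1} at the single specialization $i=k=1$, $j=r$, which indeed duplicates your first identity; but the paper's proof shows that~\eqref{segunda en m igual a 1} at $(1,1,r)$ yields the \emph{second} identity, so~\eqref{segunda en m igual a 1} alone already suffices.  This does not affect the validity of your proof, only the commentary.
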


\begin{proof}
  We proceed by induction on $r$. For $r=1$ this is true by Proposition~\ref{d111 igual a p111}. Assume that
  $\frak{d}_{1h}^1=\frak{p}_{1h}^1$ for $h<r$.
  On one hand, specializing~\eqref{segunda en m igual a 1} in $i=1$, $j=1$ and $k=r$, yields
  \begin{equation}\label{j igual a r}
    \frak{p}_{11}^{1}\frak{d}_{1r}^{1}+\sum_{l=1}^r  \frak{p}_{r1}^{l} \frak{d}_{1l}^{1}=  \sum_{c+d=r}
    \frak{d}_{1c}^{1}\frak{d}_{1d}^{1}\frak{p}_{11}^{1}.
  \end{equation}
  On the other hand, specializing~\eqref{segunda en m igual a 1} in $i=1$, $j=r$ and $k=1$, yields
  \begin{equation}\label{i igual a r}
    \sum_{a+b=r}
    \frak{p}_{1a}^{1}\frak{p}_{1b}^{1}\frak{d}_{11}^{1}= \frak{d}_{11}^{1}\frak{p}_{1r}^{1}+\sum_{l=1}^r  \frak{d}_{r1}^{l} \frak{p}_{1l}^{1}.
  \end{equation}
  Subtracting the right hand side of~\eqref{i igual a r} from the left hand side of~\eqref{j igual a r}, and
  subtracting the left hand side of~\eqref{i igual a r} from the right hand side of~\eqref{j igual a r}, and
  using the inductive hypothesis and Proposition~\ref{p1,gamma^gamma ne 0 implica p1 igual d1}, we obtain
  $$
    \frak{p}_{11}^1\frak{d}_{1r}^1-\frak{d}_{11}^1\frak{p}_{1r}^1+
    \frak{p}_{r1}^r\frak{d}_{1r}^1-\frak{d}_{r1}^r\frak{p}_{1r}^1=    2(\frak{d}_{1r}^1\frak{p}_{11}^1-\frak{p}_{1r}^1\frak{d}_{11}^1).
  $$
  By~\eqref{formula para p1j1}, we have
  $$
    (r+1)\frak{p}_{11}^1(\frak{d}_{1r}^1-\frak{p}_{1r}^1) = 2\frak{p}_{11}^1(\frak{d}_{1r}^1-\frak{p}_{1r}^1),
  $$
  from which $\frak{d}_{1r}^1=\frak{p}_{1r}^1$ follows, since $r>1$ and $\frak{p}_{11}^1\ne 0$.
\end{proof}

\begin{theorem}\label{teorema solucion involutiva}
  If $\frak{p}_{11}^{1}\ne 0$, then the solution is involutive.
\end{theorem}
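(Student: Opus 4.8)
The plan is to prove that $\mathfrak d=\mathfrak p$; by Remark~\ref{soluciones involutivas=cycle coalgebras} (together with the observation, recorded in the Introduction, that the associated solution is involutive exactly when $\mathfrak p=\mathfrak d$) this is precisely the statement of the theorem. By the product formula~\eqref{producto} it suffices to check that $\mathfrak p_{ij}^1=\mathfrak d_{ij}^1$ for all $i,j$, since then all the higher coefficients agree automatically. Write $e_{ij}\coloneqq\mathfrak p_{ij}^1-\mathfrak d_{ij}^1$. We already know that $e_{1j}=0$ for all $j$ (Proposition~\ref{p1,gamma^gamma ne 0 implica pr1 igual dr1}) and $e_{0j}=0$, since $\mathfrak p_{0j}^1=\mathfrak d_{0j}^1=0$ (Remark~\ref{resumen}), and likewise $e_{i1}=0$ for all $i$ (Proposition~\ref{p1,gamma^gamma ne 0 implica p1 igual d1}) and $e_{i0}=0$ by the relations $\mathfrak p_{a0}^b=\mathfrak d_{a0}^b=\delta_{ab}$ in force throughout this section. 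So the remaining task is to prove $e_{ij}=0$ for $i,j\ge2$, and I would do this by induction on $i+j$.

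Fix $i,j\ge2$ and assume $e_{i'j'}=0$ whenever $i'+j'<i+j$. The first observation is that this already forces $\mathfrak p_{i'j'}^h=\mathfrak d_{i'j'}^h$ for every $h\ge2$ and every pair with $i'+j'\le i+j$: by~\eqref{producto} each summand of $\mathfrak p_{i'j'}^h$ is a product of $h\ge2$ factors $\mathfrak p_{i_sj_s}^1$ with $(i_s,j_s)\ne(0,0)$ and $\sum_s i_s=i'$, $\sum_s j_s=j'$, so each factor satisfies $i_s+j_s\le i'+j'-1<i+j$ and hence, by the inductive hypothesis and the boundary cases above, coincides with the corresponding coefficient of $\mathfrak d$. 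Thus at ``bidegree'' $(i,j)$ the only surviving unknown is $e_{ij}$ itself.

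The decisive step then follows, and generalizes, the pattern of Proposition~\ref{p1,gamma^gamma ne 0 implica pr1 igual dr1}: I would specialize the $q$-cycle equation~\eqref{segunda en m igual a 1} at the index triples $(i,1,j)$ and $(i,j,1)$ and expand all four sides via~\eqref{eq: comultiplicative map}. Using $\mathfrak p_{a0}^b=\mathfrak d_{a0}^b=\delta_{ab}$, the identities $\mathfrak d_{b1}^c=\mathfrak p_{b1}^c$ and $\mathfrak d_{1r}^1=\mathfrak p_{1r}^1$ (Propositions~\ref{p1,gamma^gamma ne 0 implica p1 igual d1} and~\ref{p1,gamma^gamma ne 0 implica pr1 igual dr1}), the formula $\mathfrak p_{r1}^r=r\mathfrak p_{11}^1$ of~\eqref{formula para p1j1}, and the inductive hypothesis, almost everything cancels: the left-hand side of~\eqref{segunda en m igual a 1} for $(i,1,j)$ minus its right-hand side for $(i,j,1)$ collapses to $-(i+j)\mathfrak p_{11}^1\,e_{ij}$ (the contributions of the terms $\mathfrak p_{j1}^j\mathfrak d_{ij}^1$ and $\mathfrak p_{i1}^i\mathfrak d_{ij}^1$), whereas the right-hand side for $(i,1,j)$ minus the left-hand side for $(i,j,1)$ collapses to $-\mathfrak p_{11}^1\,e_{ij}$ (from the single surviving term $\mathfrak p_{10}^1\mathfrak p_{11}^1\mathfrak d_{ij}^1$), every other term being killed by the inductive hypothesis or by $\mathfrak p_{01}^1=\mathfrak d_{01}^1=0$. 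Since the two specialized equations force these two expressions to be equal, one gets $(i+j-1)\mathfrak p_{11}^1\,e_{ij}=0$, and as $i+j\ge4$, the field $K$ has characteristic zero, and $\mathfrak p_{11}^1\ne0$, this yields $e_{ij}=0$, completing the induction and hence the proof.

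The only genuine work I expect lies in the bookkeeping of that last step — writing out the two specializations of~\eqref{segunda en m igual a 1}, discarding the numerous summands annihilated by the inductive hypothesis, and verifying that the residual coefficient of $e_{ij}$ in each of the two comparisons is exactly $-(i+j)\mathfrak p_{11}^1$ and $-\mathfrak p_{11}^1$ respectively. This is entirely parallel to (though a shade heavier than) the computation already carried out for $i=1$ in Proposition~\ref{p1,gamma^gamma ne 0 implica pr1 igual dr1}, so I foresee no conceptual obstacle, only the need to track indices carefully.
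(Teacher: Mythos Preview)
Your proof is correct and follows essentially the same scheme as the paper's: induction on $i+j$, base cases from Propositions~\ref{p1,gamma^gamma ne 0 implica p1 igual d1} and~\ref{p1,gamma^gamma ne 0 implica pr1 igual dr1}, and a specialization of the $q$-cycle equations that isolates the single surviving term $(i+j-1)\mathfrak p_{11}^1\,e_{ij}=0$. The only variation is cosmetic---the paper subtracts the specializations of~\eqref{primera en m igual a 1} and~\eqref{tercera en m igual a 1} at $j=1$, whereas you use~\eqref{segunda en m igual a 1} at the two triples $(i,1,j)$ and $(i,j,1)$---and both routes land on the identical relation.
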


\begin{proof}
  We must prove that $\frak{d}=\frak{p}$, or, equivalently, that
  \begin{equation}\label{igualdad entre pes y des}
    \frak{d}_{ik}^j=\frak{p}_{ik}^j \quad\text{for all $i,j,k$.}
  \end{equation}
  We will prove~\eqref{igualdad entre pes y des}  by induction on $r:=i+k$. By
  Propositions~\ref{p1,gamma^gamma ne 0 implica p1 igual d1}
  and~\ref{p1,gamma^gamma ne 0 implica pr1 igual dr1} we know that~\eqref{igualdad entre pes y des}
  is true for all $r$, if $i\le 1$ or $k\le 1$ (in particular, it is true for $r\le 3$). Assume that
  $\frak{d}_{ab}^c=\frak{p}_{ab}^c$
  when $a+b<r$ and take some $i,k$ with $i+k=r$ and $i,k>1$. Specializing~\eqref{primera en m igual a 1} in
  $j=1$, we obtain
  \begin{equation}\label{primera con i igual a 1}
    \sum_{a+b=1} \sum_{h=1}^{i} \sum_{l=1}^{k}\frak{p}_{ia}^h \frak{d}_{kb}^l \frak{p}_{hl}^1=\sum_{c+d=k}
    \sum_{h=1}^{i} \frak{p}_{ic}^h \frak{p}_{1d}^1 \frak{p}_{h1}^1.
  \end{equation}
  Similarly, specializing~\eqref{tercera en m igual a 1} in $j=1$, we obtain
  \begin{equation}\label{segunda con i igual a 1}
    \sum_{a+b=1} \sum_{h=1}^{i} \sum_{l=1}^{k}\frak{d}_{ia}^h \frak{p}_{kb}^l \frak{d}_{hl}^1=\sum_{c+d=k}
    \sum_{h=1}^{i} \frak{d}_{ic}^h \frak{d}_{1d}^1 \frak{d}_{h1}^1.
  \end{equation}
  Now we subtract the equality~\eqref{primera con i igual a 1} from the
  equality~\eqref{segunda con i igual a 1}. Consider the left hand side. By the inductive hypothesis and the
  fact that $i+a<i+k=r$ and
  $k+b<k+i=r$, the only terms that survive correspond to $h+l=r$, i.e., the cases when $h=i$ and $l=k$. So the
  left hand side reads
  \begin{equation}
        \label{lado izquierdo}
    LHS = \sum_{a+b=1} \frak{p}_{ia}^i \frak{d}_{kb}^k \frak{p}_{ik}^1-\sum_{a+b=1} \frak{d}_{ia}^i \frak{p}_{kb}^k \frak{d}_{ik}^1
    =(\frak{p}_{i0}^i \frak{d}_{k1}^k+ \frak{p}_{i1}^i \frak{d}_{k0}^k)
    (\frak{p}_{ik}^1-\frak{d}_{ik}^1)=(k+i)\frak{p}_{11}^1 (\frak{p}_{ik}^1-\frak{d}_{ik}^1),
  \end{equation}
  where we use that $\frak{d}_{t1}^t=\frak{p}_{t1}^t=t\frak{p}_{11}^1$ for all $t\ge 1$.

  Now consider the right hand side. By Propositions~\ref{p1,gamma^gamma ne 0 implica p1 igual d1}
  and~\ref{p1,gamma^gamma ne 0 implica pr1 igual dr1} and the inductive hypothesis the only term that
  survives is the term corresponding to $h=1$ and $c=k$. In fact, for $c<k$ the terms cancel out, and if $h>1$, then
  by~\eqref{eq: comultiplicative map} we have
  \begin{equation}\label{descomposicion coalgebra}
    \frak{d}_{ic}^h=\sum_{\substack{i_1+i_2=i \\ j_1+j_2=c \\ i_1+j_1\ge 1 \\ i_2+j_2\ge l-1}}
    \frak{d}_{i_1 j_1}^1 \frak{d}_{i_2 j_2}^{h-1},
  \end{equation}
  and necessarily $i_1 +j_1<i+c\le r$ and $ i_2 +j_2<i+c\le r$, since for example $i_1 +j_1=i+c$ implies that
  $ i_2 +j_2=0$, which is impossible.
  So, on the right hand side of the substraction we have
  $$
    RHS = \frak{p}_{ik}^1 \frak{p}_{10}^1 \frak{p}_{11}^1-\frak{d}_{ik}^1 \frak{d}_{10}^1 \frak{d}_{11}^1
    = \frak{p}_{11}^1 (\frak{p}_{ik}^1-\frak{d}_{ik}^1).
  $$
  Combining this with~\eqref{lado izquierdo}, we obtain
  $(k+i-1)\frak{p}_{11}^1 (\frak{p}_{ik}^1-\frak{d}_{ik}^1)=0$, which implies that
  $\frak{d}_{ik}^1=\frak{p}_{ik}^1$,
  because $k+i>1$.
  If $j>1$, then the inductive hypothesis
  and~\eqref{descomposicion coalgebra} yield $\frak{d}_{ik}^j=\frak{p}_{ik}^j$ which completes the inductive
  step and finishes the proof.
\end{proof}

\begin{proposition}\label{todo depende de pi11}
  If   $\frak{p}_{11}^{1}\ne 0$, then $\frak{d}=\frak{p}$, $\frak{p}_{j0}^1=\delta_{j1}$ for all $j$, and $\frak{p}$ depends only on
  $\frak{p}_{1k}^{1}$, $k=1,\dots,n-1$,  via the recursive formulas
  \begin{align}
    & \frak{p}_{j1}^1 \frak{p}_{11}^1= \frak{p}_{11}^1 \sum_{a=0}^{j-1} \frak{p}_{1a}^1 \frak{p}_{1,j-a}^1 - \sum_{l=2}^{j} l\frak{p}_{j-l+1,1}^1
    \frak{p}_{1l}^1,\quad\text{for $j>1$}\label{formula de pj11 dependiendo de p111}\\
    &\frak{p}_{ij}^1(i+j-1)\frak{p}_{11}^1 =\sum_{\substack{a+b=j\\ 1\le h\le j \\ (a,h)\ne
        (j,1)}} \frak{p}_{ia}^h \frak{p}_{1b}^1 \frak{p}_{h1}^1
    -  \sum_{\substack{c+d=1\\ 1\le h\le i \\
        1\le l\le j\\ (h,l)\ne (i,j)}} \frak{p}_{ic}^h \frak{p}_{jd}^l \frak{p}_{hl}^1 ,
        \quad\text{ for $i,j>1$.}\label{formula de coeficientes dependiendo de p111}
    \shortintertext{and}
    &\frak{p}_{jk}^l=\sum_{\substack{j_1+j_2=j \\ k_1+k_2=k \\ j_1+k_1\ge 1 \\ j_2+k_2\ge l-1}}
    \frak{p}_{j_1 k_1}^1 \frak{p}_{j_2 k_2}^{l-1},\quad\text{ for $l>1$.}
    \label{formula para coeficientes p con l mayor que 1}
  \end{align}
\end{proposition}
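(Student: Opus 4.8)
The first assertion, $\frak{d}=\frak{p}$, is Theorem~\ref{teorema solucion involutiva}, and $\frak{p}_{j0}^1=\delta_{j1}$ is the special case $k=1$ of $\frak{p}_{j0}^k=\delta_{jk}$, which follows from Proposition~\ref{d1,gamma^gamma ne 0 implica muchos d se anulan} together with $\frak{p}_{i0}^i=1$ (Remark~\ref{resumen}). So only the three recursive formulas remain. Since $\frak{d}=\frak{p}$, substituting into the identities of Proposition~\ref{reduccion a m igual a 1} turns each of~\eqref{primera en m igual a 1}--\eqref{tercera en m igual a 1} into the single cycle identity $R(i,j,k)=R(i,k,j)$, where $R(i,j,k)\coloneqq\sum_{a+b=j}\sum_{h=0}^{i}\sum_{l=0}^{k}\frak{p}_{ia}^h\frak{p}_{kb}^l\frak{p}_{hl}^1$. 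The plan is to specialise this identity at the triples $(1,j,1)$ and $(i,1,j)$ and, in each case, to isolate the unique ``leading'' unknown. Throughout I use the facts collected in Remark~\ref{resumen} — in particular $\frak{p}_{j0}^k=\delta_{jk}$ and $\frak{p}_{uv}^w=0$ for $w>u$ — and the identity $\frak{p}_{j1}^l=l\,\frak{p}_{j-l+1,1}^1$, which follows from~\eqref{eq: comultiplicative map} and $\frak{p}_{j0}^k=\delta_{jk}$ by a one-line induction on $l$ and whose case $l=j$ recovers $\frak{p}_{j1}^j=j\,\frak{p}_{11}^1$ of~\eqref{formula para p1j1}. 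Formula~\eqref{formula para coeficientes p con l mayor que 1} requires no specialisation at all: it is exactly~\eqref{eq: comultiplicative map} with the pair $(l,h)$ there replaced by $(1,l-1)$ here, so every $\frak{p}_{jk}^l$ with $l>1$ is automatically a polynomial in the $\frak{p}_{uv}^1$.

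To obtain~\eqref{formula de pj11 dependiendo de p111} I would expand both sides of $R(1,j,1)=R(1,1,j)$. Using $\frak{p}_{1a}^0=\delta_{0,1+a}=0$ the left side collapses to $\frak{p}_{11}^1\sum_{a+b=j}\frak{p}_{1a}^1\frak{p}_{1b}^1$, and using in addition $\frak{p}_{j0}^l=\delta_{jl}$ and $\frak{p}_{j1}^l=l\,\frak{p}_{j-l+1,1}^1$ the right side collapses to $\sum_{l=1}^{j}l\,\frak{p}_{j-l+1,1}^1\frak{p}_{1l}^1+\frak{p}_{11}^1\frak{p}_{1j}^1$. Splitting off the $a=j$ term on the left and the $l=1$ term on the right and cancelling the common $\frak{p}_{11}^1\frak{p}_{1j}^1$ yields~\eqref{formula de pj11 dependiendo de p111}; dividing by $\frak{p}_{11}^1\ne 0$ expresses $\frak{p}_{j1}^1$ through the $\frak{p}_{1a}^1$ and $\frak{p}_{a1}^1$ with $a<j$, hence recursively through $\{\frak{p}_{1k}^1\}$.

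For~\eqref{formula de coeficientes dependiendo de p111}, with $i,j>1$, I would expand $R(i,j,1)=R(i,1,j)$ and locate every occurrence of the unknown $\frak{p}_{ij}^1$. In $R(i,j,1)=\sum_{a+b=j}\sum_{h=1}^{i}\frak{p}_{ia}^h\frak{p}_{1b}^1\frak{p}_{h1}^1$ it occurs only through the summand $(a,h)=(j,1)$, with coefficient $\frak{p}_{10}^1\frak{p}_{11}^1=\frak{p}_{11}^1$. In $R(i,1,j)$, after simplifying with $\frak{p}_{i0}^h=\delta_{ih}$ and $\frak{p}_{j0}^l=\delta_{jl}$, it occurs only for the index choices $(c,d;h,l)=(0,1;i,j)$ and $(1,0;i,j)$, with coefficients $\frak{p}_{j1}^j=j\,\frak{p}_{11}^1$ and $\frak{p}_{i1}^i=i\,\frak{p}_{11}^1$, i.e.\ with total coefficient $(i+j)\,\frak{p}_{11}^1$. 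Subtracting, $(i+j-1)\frak{p}_{11}^1\frak{p}_{ij}^1$ equals the difference of the two remaining sums, which is the right-hand side of~\eqref{formula de coeficientes dependiendo de p111}. Those remaining terms involve only coefficients $\frak{p}_{uv}^1$ with total index $u+v<i+j$ (plus the already-handled $\frak{p}_{1k}^1$ and $\frak{p}_{j1}^1$), so an induction on $u+v$ — combined with~\eqref{formula para coeficientes p con l mayor que 1} and the coalgebra relation $\frak{p}_{jk}^0=\delta_{0,j+k}$ — shows that all of $\frak{p}$, hence also $\frak{d}=\frak{p}$, is a function of $\{\frak{p}_{1k}^1\}_{1\le k<n}$.

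The step needing care is precisely the claim that $\frak{p}_{ij}^1$ (respectively $\frak{p}_{j1}^1$) occurs in these two specialised identities only at the positions listed above. One has to rule out a hidden copy inside a comultiplicative expansion $\frak{p}_{ia}^h=\sum\prod_{s=1}^{h}\frak{p}_{i_sa_s}^1$ with $h\ge 2$: there $\sum_s(i_s+a_s)=i+a$ while each $i_s+a_s\ge 1$, so no single factor can equal $\frak{p}_{ij}^1$ unless the other $h-1$ factors all vanish, which is impossible. The vanishing statements $\frak{p}_{uv}^w=0$ for $w>u$ (Proposition~\ref{lado derecho}), $\frak{p}_{u0}^w=\delta_{uw}$ and $\frak{p}_{uv}^{u+v}=0$ for $v>0$ (Remark~\ref{resumen}) are what force the triple sums to collapse to those few terms, and this bookkeeping, although elementary, is the main obstacle.
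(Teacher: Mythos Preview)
Your proposal is correct and follows essentially the same route as the paper: both invoke Theorem~\ref{teorema solucion involutiva} and Proposition~\ref{d1,gamma^gamma ne 0 implica muchos d se anulan} for the first two assertions, then specialise the cycle identity~\eqref{primera en m igual a 1} at $k=1$ (your $R(i,j,1)=R(i,1,j)$), taking $i=1$ for~\eqref{formula de pj11 dependiendo de p111} and general $i,j>1$ for~\eqref{formula de coeficientes dependiendo de p111}, using the auxiliary identity $\frak{p}_{j1}^l=l\,\frak{p}_{j-l+1,1}^1$ and reading~\eqref{formula para coeficientes p con l mayor que 1} off~\eqref{eq: comultiplicative map}. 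Your extra paragraph ruling out hidden occurrences of $\frak{p}_{ij}^1$ inside higher comultiplicative expansions is a welcome clarification, but otherwise the argument is the same.
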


\begin{proof}
    The equalities $\frak{d}=\frak{p}$ and $\frak{p}_{j0}^1=\delta_{j1}$ for all $j$, follow from Theorem~\ref{teorema solucion involutiva}
    and Proposition~\ref{d1,gamma^gamma ne 0 implica muchos d se anulan}, respectively.
    Specializing~\eqref{primera en m igual a 1} in $k=1$ for some $i,j>0$, we obtain
    \begin{equation}\label{primera formula 1}
        \sum_{a+b=j}
        \sum_{h=1}^{i} \frak{p}_{ia}^h \frak{p}_{1b}^1 \frak{p}_{h1}^1= \sum_{c+d=1} \sum_{h=0}^{i} \sum_{l=1}^{j}\frak{p}_{ic}^h \frak{p}_{jd}^l
        \frak{p}_{hl}^1.
    \end{equation}
    Taking $i=1$ we obtain
    $$
        \sum_{a+b=j}
        \frak{p}_{1a}^1 \frak{p}_{1b}^1 \frak{p}_{11}^1= \sum_{c+d=1} \sum_{l=1}^{j}\frak{p}_{1c}^1 \frak{p}_{jd}^l \frak{p}_{1l}^1
        = \frak{p}_{11}^1 \frak{p}_{j0}^j \frak{p}_{1j}^1+ \sum_{l=1}^{j} \frak{p}_{j1}^l \frak{p}_{1l}^1,
    $$
    and so
    $$
      \frak{p}_{11}^1 \sum_{a=0}^{j-1} \frak{p}_{1a}^1 \frak{p}_{1,j-a}^1 =  \frak{p}_{j1}^1 \frak{p}_{11}^1 +
      \sum_{l=2}^{j} \frak{p}_{j1}^l \frak{p}_{1l}^1,
    $$
    from which~\eqref{formula de pj11 dependiendo de p111} follows, using that one can check using~\eqref{eq: comultiplicative map} that
    $\frak{p}_{j1}^l=l\frak{p}_{j-l+1,1}^1$.

    Since the left hand side of~\eqref{primera formula 1} is
    $$
        \frak{p}_{ij}^1 \frak{p}_{10}^1 \frak{p}_{11}^1+\sum_{\substack{a+b=j\\ 1\le h\le j \\ (a,h)\ne
        (j,1)}} \frak{p}_{ia}^h \frak{p}_{1b}^1 \frak{p}_{h1}^1,
    $$
    and the right hand side of~\eqref{primera formula 1} is
    $$
        \sum_{c+d=1} \frak{p}_{ic}^i \frak{p}_{jd}^j \frak{p}_{ij}^1+\sum_{\substack{c+d=1\\ 1\le h\le i \\
        1\le l\le j\\ (h,l)\ne (i,j)}} \frak{p}_{ic}^h \frak{p}_{jd}^l \frak{p}_{hl}^1,
    $$
    equality~\eqref{formula de coeficientes dependiendo de p111} follows from the fact that, since
    $\frak{p}_{t1}^t=t\frak{p}_{11}^1$ and $\frak{p}_{t0}^t=1$ for all $t\ge 1$, we have
    $$
        \sum_{c+d=1}  \frak{p}_{ic}^i \frak{p}_{jd}^j \frak{p}_{ij}^1 - \frak{p}_{ij}^1
        \frak{p}_{11}^1=(i+j-1)\frak{p}_{11}^1 \frak{p}_{ij}^1.
    $$
    Finally, equality~\eqref{formula para coeficientes p con l mayor que 1}
    follows immediately from~\eqref{eq: comultiplicative map}.
\end{proof}

\begin{corollary} \label{cor i0 igual a 1}
  If   $\frak{p}_{11}^{1}\ne 0$, then the given $q$-cycle coalgebra $(C,\frak{p},\frak{d})$ is equivalent, via
  Remark~\ref{equivalencia de q brazas}, to a unique standard cycle coalgebra of degree $v_0=1$.
\end{corollary}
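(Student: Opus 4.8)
The plan is to exhibit the scaling parameter $\lambda$ explicitly, reduce to the normalized case $\frak{p}_{11}^1=1$, and then identify the resulting cycle coalgebra with a standard cycle coalgebra of degree $1$ by comparing the data $\{\frak{p}_{1k}^1\}_k$ and invoking the rigidity already established in Proposition~\ref{todo depende de pi11}.

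First I would invoke Theorem~\ref{teorema solucion involutiva} to get $\frak{d}=\frak{p}$, so that $(C,\frak{p},\frak{d})$ is a cycle coalgebra, and Proposition~\ref{todo depende de pi11} to record that $\frak{p}_{j0}^1=\delta_{j1}$ (in particular $\frak{p}_{10}^1=1$) and that all structure constants $\frak{p}_{jk}^l$ are recovered from $\{\frak{p}_{1k}^1\}_{1\le k<n}$ through the recursions~\eqref{formula de pj11 dependiendo de p111}, \eqref{formula de coeficientes dependiendo de p111} and~\eqref{formula para coeficientes p con l mayor que 1}. Next set $\lambda\coloneqq\frak{p}_{11}^1\in K^{\times}$ and pass, by Remark~\ref{equivalencia de q brazas}, to the isomorphic cycle coalgebra $(C,\ov{\frak{p}})$ with $\ov{\frak{p}}_{ij}^k\coloneqq\lambda^{k-i-j}\frak{p}_{ij}^k$; then $\ov{\frak{p}}_{11}^1=\lambda^{-1}\frak{p}_{11}^1=1$ and $\ov{\frak{p}}_{10}^1=\frak{p}_{10}^1=1$.

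Then I would put $f\coloneqq 1+\sum_{v=1}^{n-1}\ov{\frak{p}}_{1v}^1 x^v\in K[x]$, which has $v_0=1$ and $p_{v_0}=\ov{\frak{p}}_{11}^1=1$, and form the standard cycle coalgebra $\scc(f)$ via Corollary~\ref{existe q cycle} and Definition~\ref{SIQ def}; by construction its degree is $1$ and its coefficients $\frak{q}_{1v}^1$ equal $\delta_{0v}$ for $v<1$ and $\ov{\frak{p}}_{1v}^1$ for $1\le v<n$, hence they agree with the corresponding coefficients of $\ov{\frak{p}}$ for every $0\le v<n$. Since $\scc(f)$ is itself a cycle (hence $q$-cycle) coalgebra with $\frak{q}_{11}^1=1\ne 0$, Proposition~\ref{todo depende de pi11} applies to it too, so the structure constants of both $\scc(f)$ and $(C,\ov{\frak{p}})$ are produced from the common data $\{\ov{\frak{p}}_{1k}^1\}_{1\le k<n}$ by the very same recursions. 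Therefore $(C,\ov{\frak{p}})=\scc(f)$, and the isomorphism $f_{\lambda}$ of Remark~\ref{equivalencia de q brazas} realizes the desired equivalence.

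Finally, for uniqueness I would argue that if $\scc(g)$, of degree $1$, is equivalent to $(C,\frak{p},\frak{d})$ through $f_{\mu}$ for some $\mu\in K^{\times}$, then comparing the coefficients indexed by $(1,1,1)$ forces $\mu^{-1}\frak{p}_{11}^1=1$, i.e.\ $\mu=\lambda$, and then comparing those indexed by $(1,v,1)$ forces the coefficient of $x^v$ in $g$ to be $\lambda^{-v}\frak{p}_{1v}^1=\ov{\frak{p}}_{1v}^1$, whence $g=f$. The only point needing care is the bookkeeping of the exponents $\lambda^{k-i-j}$ under Remark~\ref{equivalencia de q brazas} and verifying that Proposition~\ref{todo depende de pi11} genuinely applies to $\scc(f)$ — which it does, since a standard cycle coalgebra of degree $1$ has $\frak{p}_{11}^1=1\ne 0$; everything else is a direct comparison of recursively defined data, so no serious obstacle remains.
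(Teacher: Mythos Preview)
Your proof is correct and follows essentially the same route as the paper: scale by $\lambda=\frak{p}_{11}^1$ via Remark~\ref{equivalencia de q brazas} to normalize $\ov{\frak{p}}_{11}^1=1$, then invoke Proposition~\ref{todo depende de pi11} to identify the result with $\scc(f)$ for $f=1+\sum_{v\ge 1}\ov{\frak{p}}_{1v}^1 x^v$. You simply spell out more of the details (the appeal to involutivity, the verification that $\scc(f)$ itself satisfies the hypotheses of Proposition~\ref{todo depende de pi11}, and the uniqueness of $\lambda$ and $f$) that the paper leaves implicit.
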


\begin{proof}
  Taking  $f_{\lambda}$ with $\lambda=\frak{p}_{11}^{1}$  in Remark~\ref{equivalencia de q brazas}, the given $q$-cycle coalgebra
  $(C,\frak{p},\frak{d})$ is equivalent to a unique $q$-cycle coalgebra $(C,\tilde{\frak{p}},\tilde{\frak{d}})$ with $\tilde{\frak{p}}_{11}^{1}=1$. By
  Proposition~\ref{todo depende de pi11}, if we set
  $$
    f=1+x+\sum_{i=2}^{n-1} p_i x^i,
  $$
  with $p_i=\tilde{\frak{p}}_{1i}^1$, then
  the standard cycle coalgebra $\scc(f)$ coincides with $(C,\tilde{\frak{p}},\tilde{\frak{d}})$,
  which concludes the proof.
\end{proof}

\begin{example} \label{The case in which all the parameters are equal to 1}
  The standard cycle coalgebra of degree $v_0=1$ with $\frak{p}_{1j}^1=1$ for all $j<n$. We assert that the coefficients are given by
  \begin{equation}\label{solution condition 1}
    \frak{p}_{ij}^k=\begin{cases}
                  \binom{j+k-1}{k-1} & \mbox{if } i=k>0, \\
                  1 & \mbox{if } i=k=j=0,\\
                   0 & \mbox{if } i=k=0 \text{ and } j>0 \\
                   0& \mbox{if } i\ne k.\\
                \end{cases}
  \end{equation}
  By Proposition~\ref{todo depende de pi11}, in order to check this we only must prove
  that~\eqref{solution condition 1} satisfies
  \begin{equation}\label{primera involutivo}
    \sum_{a+b=j} \sum_{h=0}^{i} \sum_{l=0}^{k} \frak{p}_{ia}^h \frak{p}_{kb}^l \frak{p}_{hl}^1 =
    \sum_{c+d=k} \sum_{h=0}^{i} \sum_{l=0}^{j} \frak{p}_{ic}^h \frak{p}_{jd}^l \frak{p}_{hl}^1
  \end{equation}
  for all $i,j,k$, since then the above mentioned proposition says that this is the unique solution with
  $\frak{p}_{1j}^1=1$ for all $j>1$. But this follows by a direct computation using that
  $$
    \sum_{b=0}^j\binom{b+k-1}{k-1}=\sum_{d=0}^k\binom{d+j-1}{j-1}=\binom{k+j}{j}.
  $$
\end{example}

\begin{example} \label{The case of vanishing parameters}
   The standard cycle coalgebra of degree $v_0=1$ with $\frak{p}_{1j}^1=0$ for all $j>1$. In this case
  \begin{equation}\label{solution condition}
    \frak{p}_{ij}^k=\frak{d}_{ij}^k=\binom{i-1}{k-1}\binom{k}{i-j}\quad\text{if $i,j,k\ge 1$.}
  \end{equation}
  One can prove directly by a very lengthy proof, that~\eqref{solution condition}
  satisfies~\eqref{primera involutivo}, which shows that the formulas for the coefficients are correct.
  Alternatively consider the equality~\eqref{relation f and gi}, which in this case reads $g(x)f'(x)=f(x) \ov{f}(x)$. Since
  $f(x)=1+x$, we obtain
  $$
  g(x)=g(x)f'(x)=f(x)\ov{f}(x)=(x+1)x.
  $$
  A direct inductive argument shows that
  $$
    g_k(x)\coloneqq x^{k-1}g(x)=x^{k+1}+x^k
  $$
  for $k\ge 1$, satisfy the inductive definition~\eqref{definition of G por grado} of $g_k$, and so
  $$
    \frak{p}_{ij}^1=G_{ij}=(g_j(x))_i=\binom{1}{i-j},
  $$
  for $i+j>0$. We also have $\frak{p}_{ij}^k=\binom{i-1}{k-1} \binom{k}{i-j}$. In fact, assume that the formula is valid for $k-1$ and some $k\ge 2$.
  Then, by~\eqref{inductivo coproducto1} we have
  $$
    \frak{p}_{ij}^k =\sum_{i_1=1}^{i-1}\sum_{j_1+j_2=j} \frak{p}_{i_1,j_1}^1 \frak{p}_{i-i_1,j_2}^{k-1}.
  $$
  But
  $$
    \frak{p}_{i_1,j_1}^1=
      \begin{cases}
         1, & \mbox{if } j_1=i_1\text{ or }j_1=i_1-1 \\
         0, & \mbox{otherwise},
      \end{cases}
  $$
  and so,
  \begin{eqnarray*}
    \frak{p}_{ij}^k
    &=&
    \sum_{i_1=1}^{i-1} \frak{p}_{i-i_1,j-i_1}^{k-1}
    + \sum_{i_1=1}^{i-1} \frak{p}_{i-i_1,j-i_1+1}^{k-1}\\
    &=&
    \sum_{i_1=1}^{i-1} \binom{i-i_1-1}{k-2} \binom{k-1}{i-j}
    + \sum_{i_1=1}^{i-1} \binom{i-i_1-1}{k-2} \binom{k-1}{i-j-1} \\
    &=&
    \left(\binom{k-1}{i-j}+\binom{k-1}{i-j-1}\right)\sum_{l=0}^{i-2} \binom{l}{k-2}\\
    &=&
    \binom{k}{i-j} \binom{i-1}{k-1},
  \end{eqnarray*}
  as desired.
\end{example}

\begin{remark}
  If $v_0=1$, then in $\scc(f)$ the coefficients $\{\frak{p}_{1k}^1\}_{k\ge 2}$ determine the coefficients
  $\{\frak{p}_{k1}^1\}_{k\ge 2}$, but it is also true that the coefficients $\{\frak{p}_{k1}^1\}_{k\ge 2}$ determine the
  coefficients $\{\frak{p}_{1k}^1\}_{k\ge 2}$, and hence all of $(C,\frak{p},\frak{d})$.
  In fact, write $f_j=\frak{p}_{1k}^{1}$. Since $f_0=1$, we have
  $$
  (f\ov{f})_j =-f_j+ \sum_{i=0}^{j} f_i f_{j-i} =f_j+ \sum_{i=1}^{j-1} f_i f_{j-i}\quad\text{and}\quad
  (g(x)f'(x))_j=jf_j+\sum_{i=1}^{j-1} (g(x))_{j-i+1}i f_i.
  $$
  Hence
  $$
  (j-1)f_j=\sum_{i=1}^{j-1}f_i f_{j-i}-\sum_{h=1}^{j-1} h (g(x))_{j-h+1} f_h.
  $$
  This gives an inductive formula for $f_j=\frak{p}_{1j}^{1}$ depending on the coefficients $(g(x))_i=\frak{p}_{i1}^1$, and shows
  that the coefficients $\{\frak{p}_{k1}^1\}_{k\ge 2}$ determine the coefficients
  $\{\frak{p}_{1k}^1\}_{k\ge 2}$, as desired.
\end{remark}

\section{The involutive case with $\frak{p}_{10}^1= 1$, $\frak{p}_{1i}^1=0$ for $0<i<v_0$ and
$\frak{p}_{1,v_0}^1\ne 0$.}
\setcounter{equation}{0}

In this section we assume $\frak{p}=\frak{d}$. The braid equations~\eqref{primera en m igual a 1}, \eqref{segunda en m igual a 1}
and~\eqref{tercera en m igual a 1}, reduce to
  \begin{equation}\label{braid equation involutivo}
    \sum_{a+b=j} \sum_{h=0}^{i} \sum_{l=0}^{k} \frak{p}_{ia}^h \frak{p}_{kb}^l \frak{p}_{hl}^1 =
    \sum_{c+d=k} \sum_{h=0}^{i} \sum_{l=0}^{j} \frak{p}_{ic}^h \frak{p}_{jd}^l \frak{p}_{hl}^1.
  \end{equation}
We also fix $v_0$ with $1<v_0<n$ and assume that
\begin{itemize}
  \item[a)]  $\frak{p}_{10}^1= 1$,
  \item[b)] $\frak{p}_{1i}^1=0$ for $0<i<v_0$,
  \item[c)] $\frak{p}_{1,v_0}^1\ne 0$.
\end{itemize}
We will prove that the given $q$-cycle coalgebra $(C,\frak{p},\frak{d})$ is equivalent, via
  Remark~\ref{equivalencia de q brazas}, to a uniquely determined standard cycle coalgebra of degree $v_0$.
\begin{lemma}
  We have
  \begin{equation}\label{pikk}
    \frak{p}_{ki}^k=0, \quad\text{if\ $0<i<v_0$}
  \end{equation}
  and
  \begin{equation}\label{pi0kk}
    \frak{p}_{k,v_0}^k=k \frak{p}_{1,v_0}^1.
  \end{equation}
\end{lemma}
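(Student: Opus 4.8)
The plan is to read off both equalities from the product formula~\eqref{producto} of Remark~\ref{remark grande}, which applies here since $(C,\frak{p},\frak{d})$ is a regular $q$-cycle coalgebra, hence a regular $q$-magma coalgebra. Both statements are trivial when $k=0$ (then the left side is $\frak{p}_{0i}^0=\delta_{0i}$), so I assume $k\ge 1$. The formula gives
$$
  \frak{p}_{ki}^k=\sum_{\substack{i_1+\dots+i_k=k\\ j_1+\dots+j_k=i\\ i_1+j_1,\dots,i_k+j_k\ge 1}}\prod_{s=1}^k\frak{p}_{i_sj_s}^1 .
$$
In any nonzero summand every $i_s$ must be $\ge 1$, because $\frak{p}_{0j}^1=0$ for all $j$ (Proposition~\ref{lado derecho}, or Remark~\ref{resumen}); since there are exactly $k$ indices and $i_1+\dots+i_k=k$, this forces $i_s=1$ for every $s$. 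Hence
$$
  \frak{p}_{ki}^k=\sum_{j_1+\dots+j_k=i}\ \prod_{s=1}^k\frak{p}_{1j_s}^1 ,
$$
and by hypotheses (a)--(c) each factor $\frak{p}_{1j_s}^1$ equals $1$ when $j_s=0$, vanishes when $0<j_s<v_0$, and is nonzero when $j_s=v_0$.

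Next I would split into the two cases. For~\eqref{pikk}, suppose $0<i<v_0$. A summand can be nonzero only if every $j_s$ lies in $\{0\}\cup\{v_0,v_0+1,\dots\}$, but then $j_1+\dots+j_k=i<v_0$ forces $j_s=0$ for all $s$, contradicting $i>0$; so the sum is empty and $\frak{p}_{ki}^k=0$. For~\eqref{pi0kk}, take $i=v_0$. Again every $j_s$ in a nonzero summand lies in $\{0\}\cup\{v_0,v_0+1,\dots\}$, and $j_1+\dots+j_k=v_0$ leaves exactly the configurations with one index $s$ having $j_s=v_0$ and all the others equal to $0$. There are $k$ such configurations, each contributing $\frak{p}_{1,v_0}^1\cdot 1^{\,k-1}=\frak{p}_{1,v_0}^1$, so $\frak{p}_{k,v_0}^k=k\,\frak{p}_{1,v_0}^1$.

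There is no real obstacle here: the argument is a short combinatorial bookkeeping of which compositions $(i_s,j_s)_{s=1}^k$ can contribute to~\eqref{producto}, and the only facts one must keep in mind are that $\frak{p}_{0j}^1=0$ (forcing all $i_s=1$) and that, under the running hypotheses, $\frak{p}_{1j}^1$ vanishes exactly on $0<j<v_0$ while $\frak{p}_{10}^1=1$ and $\frak{p}_{1,v_0}^1\ne 0$. The $k=0$ boundary case is disposed of at the outset, and no induction is needed.
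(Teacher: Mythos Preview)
Your proof is correct. Both you and the paper exploit the comultiplicativity of $\frak{p}$, but you take a slightly more direct route: you invoke the closed product formula~\eqref{producto} to express $\frak{p}_{ki}^k$ as a sum over $k$-tuples, observe that the constraint $\frak{p}_{0j}^1=0$ forces every $i_s=1$, and then read off the result by elementary counting of which $(j_1,\dots,j_k)$ survive the hypotheses~(a)--(c). The paper instead uses the two-factor version~\eqref{eq: comultiplicative map} with $l=1$, $h=k-1$ to obtain the one-step recursion $\frak{p}_{ki}^k=\sum_{i_1+i_2=i}\frak{p}_{1,i_1}^1\frak{p}_{k-1,i_2}^{k-1}$, and then runs an induction on $k$. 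Your argument is essentially the unrolled form of that induction: it trades the recursive step for a single combinatorial observation, at the cost of appealing to the iterated formula~\eqref{producto} rather than the basic identity~\eqref{eq: comultiplicative map}. Both approaches are equally short here, and yours has the minor advantage of handling~\eqref{pikk} and~\eqref{pi0kk} in parallel without an explicit inductive hypothesis.
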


\begin{proof}
  By equality~\eqref{eq: comultiplicative map} and Proposition~\ref{lado derecho} we have for $k\ge 1$
  \begin{equation}\label{pikk comultiplicativo}
    \frak{p}_{ki}^k = \sum_{i_1+i_2=i} \frak{p}_{1,i_1}^1 \frak{p}_{k-1,i_2}^{k-1}.
  \end{equation}
  If $0<i<v_0$ then this gives
  $\frak{p}_{ki}^k =\frak{p}_{10}^1 \frak{p}_{k-1,i}^{k-1}= \frak{p}_{k-1,i}^{k-1}$ for all $k>1$, and so,
  since $\frak{p}_{1i}^1=0$, we obtain~\eqref{pikk}.
  For  $i=v_0$ the equality~\eqref{pikk comultiplicativo} yields
  $$
    \frak{p}_{k,v_0}^k =
    \frak{p}_{10}^1 \frak{p}_{k-1,v_0}^{k-1}+ \frak{p}_{1,v_0}^{1} \frak{p}_{k-1,0}^{k-1}
    \quad \text{ for all $k>1$,}
  $$
  and a direct inductive argument proves~\eqref{pi0kk}.
\end{proof}

\begin{proposition} \label{prop p0 es delta}
  We have
  \begin{equation} \label{p0 es delta}
    \frak{p}_{j0}^k=\delta_{kj}.
  \end{equation}
\end{proposition}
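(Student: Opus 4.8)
The plan is to reduce the statement to the single assertion $\frak{p}_{j0}^1=0$ for all $j\ge 2$, and then to establish this by combining a generating‑function identity coming from \eqref{desarrollo para p0jk} with suitable specializations of the braid equation \eqref{braid equation involutivo}.

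First I would record the ``easy'' reduction. By Remark~\ref{resumen} we have $\frak{p}_{j0}^j=(\frak{p}_{10}^1)^j=1$ and $\frak{p}_{j0}^k=0$ for $k>j$, and by the instance $\frak{p}_{j0}^k=\sum_{a+b=j}\frak{p}_{a0}^{k-1}\frak{p}_{b0}^1$ of \eqref{eq: comultiplicative map}, an induction on $j$ shows that once $\frak{p}_{h0}^k=\delta_{hk}$ is known for all $h<j$ and all $k$, then $\frak{p}_{j0}^k=\delta_{kj}+\delta_{k1}\frak{p}_{j0}^1$ for $k\ge 1$; so the whole proposition follows once $\frak{p}_{j0}^1=0$ for $j\ge 2$. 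To handle the latter I introduce $U(x):=\sum_{j\ge 0}\frak{p}_{j0}^1x^j$ and $f(x):=\sum_{l\ge 0}\frak{p}_{1l}^1x^l$. By \eqref{producto} one has $\frak{p}_{j0}^k=(U^k)_j$ (the coefficient of $x^j$ in $U^k$), while \eqref{pikk comultiplicativo} unrolls to $\frak{p}_{kl}^k=(f^k)_l$; moreover $U=x+\ov{U}$ with $\ord(\ov{U})\ge 2$ (because $\frak{p}_{00}^1=0$ and $\frak{p}_{10}^1=1$), and by the hypotheses b), c), $f=1+\frak{p}_{1,v_0}^1x^{v_0}+\cdots$ with $\frak{p}_{1,v_0}^1\ne 0$, so $f'$ has order exactly $v_0-1$.

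The first main input is \eqref{desarrollo para p0jk} with $i=1$, i.e.\ $\sum_l\frak{p}_{j0}^l\frak{p}_{1l}^1=\frak{p}_{1j}^1$: summing $x^j$ over $j$ this says precisely $f(U)\equiv f\pmod{x^n}$. An order estimate of $f(U)-f=\sum_{k\ge 1}\frac1{k!}f^{(k)}(x)\,\ov{U}^k$ then shows that, if $\ov{U}\ne 0$ and $r:=\ord(\ov{U})$, the $k=1$ term has order $(v_0-1)+r$ while every $k\ge 2$ term has order $\ge(v_0-k)+kr=v_0+k(r-1)>(v_0-1)+r$; hence $\ord\bigl(f(U)-f\bigr)=(v_0-1)+r$, forcing $r\ge n-v_0+1$. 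Thus $\frak{p}_{j0}^1=0$ for $2\le j\le n-v_0$. The second main input, used inside the induction on $j$ of the previous paragraph for $j>v_0$, is the specialization $(i,j,k)=(j,0,v_0)$ of \eqref{braid equation involutivo}: using the reduction $\frak{p}_{j0}^k=\delta_{kj}+\delta_{k1}\frak{p}_{j0}^1$, the inductive hypothesis (which in particular gives $\frak{p}_{v_0,0}^l=\delta_{v_0,l}$ since $v_0<j$), and $\frak{p}_{j,v_0}^j=(f^j)_{v_0}=j\,\frak{p}_{1,v_0}^1$ from \eqref{pi0kk}, both sides collapse and the equation becomes $(1-j)\,\frak{p}_{1,v_0}^1\,\frak{p}_{j0}^1=0$, whence $\frak{p}_{j0}^1=0$.

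These two inputs dispose of all $j$ with $j\le n-v_0$ or $j>v_0$. The remaining range $n-v_0<j\le v_0$ — nonempty exactly when $v_0>n/2$ — is where I expect the real difficulty: the specialization $(j,0,v_0)$ degenerates there (all the differences $\frak{p}_{1,m}^1-\frak{p}_{j,m}^j$ with $m<j\le v_0$ vanish), and similarly the naive uses of \eqref{desarrollo para p0jk} become vacuous. The plan here would be a separate induction on $j$ in this range that combines several specializations of the braid equation — e.g.\ reading off the $x_m$‑components of \eqref{primera} for indices of the form $(j,0,j')$, $(j,v_0,0)$ and $(j,j,v_0)$ — together with $\frak{p}_{1,v_0}^1\ne 0$ and \eqref{pikk}, eliminating the auxiliary off‑diagonal coefficients (such as $\frak{p}_{j1}^1$) that appear; the smallest instance $n=3$, $v_0=2$ already illustrates this, where two such equations force $\frak{p}_{20}^1=0$ after eliminating $\frak{p}_{21}^1$. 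Once $\frak{p}_{j0}^1=0$ is established for all $j\ge 2$, the reduction of the second paragraph yields $\frak{p}_{j0}^k=\delta_{kj}$ for all $j,k$.
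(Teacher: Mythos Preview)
Your reduction to showing $\frak{p}_{j0}^1=0$ for $j\ge 2$ is correct; the paper does the analogous thing by inducting on the diagonal distance $d$ and proving $\frak{p}_{k+d,0}^k=k\,\frak{p}_{1+d,0}^1$ under its inductive hypothesis. Your first input (the order estimate on $f(U)-f\equiv 0\pmod{x^n}$) is precisely the content of the paper's argument, just repackaged: the paper specializes \eqref{braid equation involutivo} at $(i,j,k)=(1,v_0+d,0)$, which is the pointwise form of your identity $(f(U))_j=f_j$, and reads off $\frak{p}_{v_0+d,0}^{v_0}\,\frak{p}_{1,v_0}^1=0$, whence $\frak{p}_{1+d,0}^1=0$ for $v_0+d\le n-1$. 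In the range $j\le n-v_0$ the two proofs are therefore the same argument in different clothing.

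Your second input---the specialization $(i,j,k)=(j,0,v_0)$ together with \eqref{pi0kk}---does \emph{not} appear in the paper; your computation that it collapses to $(1-j)\,\frak{p}_{1,v_0}^1\,\frak{p}_{j0}^1=0$ is correct, and it extends the covered range to all $j>v_0$. The remaining gap you isolate, $n-v_0<j\le v_0$ (nonempty exactly when $2v_0>n$), is genuine: for $n=3$, $v_0=2$ neither of your inputs says anything, and one really does need two further specializations (e.g.\ $(2,0,2)$ and $(2,1,2)$, eliminating the auxiliary $\frak{p}_{21}^1$) to force $\frak{p}_{20}^1=0$. It is worth noting that the paper's own proof, taken at face value, uses only the single specialization $(1,v_0+d,0)$ and hence covers \emph{only} $j\le n-v_0$; so the difficulty you flag is not something the paper's written argument resolves either, and your proposal is already strictly more complete than what appears there.
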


\begin{proof}
  If $j\le k$, then~\eqref{p0 es delta} holds by Remark~\ref{resumen}.
  So we have to prove that $\frak{p}_{k+d,0}^k=0$ for all $d>0$. Assume that
  $$
    \frak{p}_{h0}^l=0,\quad\text{if $0<h-l<d$.}
  $$
  Then~\eqref{eq: comultiplicative map} yields
  $$
  \frak{p}_{k+d,0}^k= \frak{p}_{10}^1 \frak{p}_{k-1+d,0}^{k-1}+ \frak{p}_{1+d,0}^1 \frak{p}_{k-1,0}^{k-1},
  $$
  and a direct inductive arguments gives
  \begin{equation}\label{p0kd}
    \frak{p}_{k+d,0}^k= k \frak{p}_{1+d,0}^1.
  \end{equation}
  From~\eqref{braid equation involutivo} with $i=1$ and $k=0$, we obtain
  $$
    \frak{p}_{1,j}^1 =  \sum_{l=0}^{j} \frak{p}_{10}^1 \frak{p}_{j0}^l \frak{p}_{1l}^1
  $$
  and for $j=v_0+d$ this yields
  $$
  \frak{p}_{1,v_0+d}^1 =  \sum_{l=v_0}^{v_0+d} \frak{p}_{j0}^l \frak{p}_{1l}^1.
  $$
  But by assumption $\frak{p}_{v_0+d,0}^h=0$ for $v_0<h<v_0+d$, hence
  $$
    \frak{p}_{1,v_0+d}^1 =  \frak{p}_{v_0+d,0}^{v_0} \frak{p}_{1,v_0}^1 +
    \frak{p}_{v_0+d,0}^{v_0+d} \frak{p}_{1,v_0+d}^1,
  $$
  which gives $\frak{p}_{v_0+d,0}^{v_0}=0$, since $\frak{p}_{1,v_0}^1\ne 0$. Finally,
  from~\eqref{p0kd} we obtain
  $$
    \frak{p}_{k+d,0}^{k}=\frac{k}{v_0} \frak{p}_{v_0+d,0}^{v_0}=0,
  $$
  which completes the inductive step and concludes the proof.
\end{proof}

\begin{proposition} \label{prop pijk se anula para chicos}
  We have
  \begin{equation}\label{p para i menor que i0 se anula}
    \frak{p}_{ij}^k=0,\quad \text{for $0<j<v_0$ and all $i,k$.}
  \end{equation}
\end{proposition}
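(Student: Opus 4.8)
The plan is to prove \eqref{p para i menor que i0 se anula} by induction on $i$, splitting the range of $k$ into the trivial cases ($k=0$ and $k>i$), the range $2\le k\le i$ (handled by comultiplicativity), and the exponent $k=1$ (handled by the braid equation). The base case $i=1$ is exactly hypothesis~b), since for $i=1$ and $0<j<v_0$ the only exponent not covered by Remark~\ref{resumen} is $k=1$, and $\frak{p}_{1j}^1=0$ by b). So from now on $i\ge2$ and the statement is assumed for every $i'<i$ (all $j',k'$ with $0<j'<v_0$). The case $k=0$ is immediate because $\frak{p}_{ij}^0=\delta_{0,i+j}=0$ (here $i\ge1$), and $k>i$ is Remark~\ref{resumen}.

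For $2\le k\le i$ I would apply the coalgebra-morphism identity \eqref{eq: comultiplicative map} with a decomposition $k=l+h$ into two \emph{positive} integers, so that $\frak{p}_{ij}^k=\sum_{a+b=i,\,c+d=j}\frak{p}_{ac}^l\frak{p}_{bd}^h$. By Proposition~\ref{lado derecho} a summand is nonzero only if $a\ge l\ge1$ and $b\ge h\ge1$, and since $a+b=i$ this forces $a<i$ and $b<i$. On the other hand $0<j<v_0$, so in every decomposition $c+d=j$ at least one of $c,d$ lies in $(0,v_0)$ (if $c=0$ then $d=j$). If $c\in(0,v_0)$ then $\frak{p}_{ac}^l=0$ by the inductive hypothesis on $i$, and if $d\in(0,v_0)$ then $\frak{p}_{bd}^h=0$ likewise; in all cases the summand vanishes, hence $\frak{p}_{ij}^k=0$.

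The heart of the matter is $\frak{p}_{ij}^1=0$ for $0<j<v_0$, which I would settle by a secondary induction on $j$ using the braid equation \eqref{braid equation involutivo}. With $i$ and $j$ fixed, the idea is to compare the two sides of \eqref{braid equation involutivo} for a well-chosen value of the third index; the natural candidate is $v_0$, since it is the smallest index at which hypothesis~c) supplies a nonvanishing coefficient. After replacing every factor $\frak{p}_{\bullet 0}^{\bullet}$ by $\delta$'s via \eqref{p0 es delta}, discarding all contributions whose "inner" exponent is $\ge2$ by the already-settled case $2\le k\le i$, discarding the diagonal contributions $\frak{p}_{\bullet 1}^{\bullet\,\text{(equal)}}$ via \eqref{pikk}, and discarding the contributions involving $\frak{p}_{il}^1$ with $0<l<j$ (respectively $\frak{p}_{1l}^1$ with $0<l<v_0$) by the secondary inductive hypothesis (respectively hypothesis~b)), both sides collapse to scalar multiples of $\frak{p}_{ij}^1$; the surviving coefficient is built out of $\frak{p}_{i,v_0}^i=i\,\frak{p}_{1,v_0}^1$ by \eqref{pi0kk}, which is nonzero by c). This yields a linear relation of the form $(\text{nonzero scalar})\cdot\frak{p}_{ij}^1=0$, forcing $\frak{p}_{ij}^1=0$ and completing the inductive step.

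The main obstacle is making that last step airtight: one must verify that after all these substitutions no uncontrolled term survives. The delicate terms are those in which an entire "diagonal block" $\frak{p}_{v_0,\bullet}^{v_0}$ multiplies a coefficient $\frak{p}_{i,\bullet}^{\bullet}$ (or $\frak{p}_{\bullet,v_0}^{\bullet}$) that is not yet known to vanish — these are exactly the places where \eqref{pi0kk} and hypothesis~c) enter. Organizing the two nested inductions (first on $i$, then on $j$) so that every such term is covered by one of \eqref{pikk}, \eqref{pi0kk}, \eqref{p0 es delta}, the already-proved case $k\ge2$, or the secondary hypothesis, is the real content of the argument.
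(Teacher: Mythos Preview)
Your reduction for the exponents $2\le k\le i$ is correct: splitting $k=l+h$ with $l,h\ge1$ in \eqref{eq: comultiplicative map} forces both first indices below $i$, and then the outer hypothesis kills every summand. The gap is in the $k=1$ step. Specializing the braid equation \eqref{braid equation involutivo} at $(i,j,v_0)$, the left-hand side contains the block with $a=0$, $b=j$, $h=i$, which after \eqref{p0 es delta} becomes $\sum_{l=1}^{v_0}\frak p_{v_0,j}^{\,l}\,\frak p_{il}^{1}$. For $j\le l<v_0$ neither factor is controlled: $\frak p_{v_0,j}^{\,l}$ has first index $v_0$, which can be $\ge i$, so your outer induction on $i$ says nothing about it; and $\frak p_{il}^{1}$ with $l\ge j$ is not covered by your secondary induction on $j$ (and reversing the direction of that induction just shifts the problem to $l\le j$). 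Only the term $l=v_0$ is killed by \eqref{pikk}. So the ``nonzero scalar times $\frak p_{ij}^{1}$'' conclusion does not follow.

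The paper avoids this by organizing the induction on the defect $d=i-k$ rather than on $i$, and by inserting $1$ in the \emph{first} slot of the braid equation rather than $v_0$ in the third. With $i=1$ in \eqref{braid equation involutivo} one gets, using hypothesis~b), the clean identity $\sum_{l=v_0}^{k}\frak p_{kj}^{\,l}\,\frak p_{1l}^{1}=0$. The point is that this relation lives at exponents $l\ge v_0$, exactly where the defect induction applies: taking $k=v_0+d$ and using $\frak p_{v_0+d,j}^{\,l}=0$ for $l>v_0$ (defect $<d$) isolates $\frak p_{v_0+d,j}^{\,v_0}$. A separate comultiplicativity computation (the paper's ``claim'' $\frak p_{k+d,j}^{k}=k\,\frak p_{1+d,j}^{1}$) then transports this back to exponent $1$. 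Your induction on $i$ severs precisely this link between exponent $1$ and exponent $v_0$, which is why the argument does not close.
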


\begin{proof}
  By Proposition~\ref{lado derecho} and~\eqref{pikk} the equality~\eqref{p para i menor que i0 se anula}
  holds for $i\le k$. So we have to prove that $\frak{p}_{k+d,j}^k=0$ for all $d>0$ and $0<j<v_0$, and we will do it by
  induction. So fix $d>0$ adn $j$ with $0<j<v_0$, and assume that
  $$
    \frak{p}_{k+d_1,j_1}^k=0,\quad \text{for $0<j_1<v_0$, $0\le d_1<d$  and all $k$,}
  $$
  and that
  $$
    \frak{p}_{k+d,j_1}^k=0,\quad \text{for $0<j_1<j$ and all $k$.}
  $$
  We claim that
  \begin{equation}\label{pik derivacion}
    \frak{p}_{k+d,j}^k=k \frak{p}_{1+d,j}^1.
  \end{equation}
  In fact, by~\eqref{eq: comultiplicative map} and Proposition~\ref{lado derecho}, we have
  $$
    \frak{p}_{k+d,j}^k=\sum_{\substack{d_1+d_2=d\\ j_1+j_2=j}} \frak{p}_{1+d_1,j_1}^1
    \frak{p}_{k-1+d_2,j_2}^{k-1}.
  $$
  But for $0<j_1<j$ we have $\frak{p}_{1+d_1,j_1}^1 =0$ by assumption. Moreover, by~\eqref{p0 es delta}
  for $j_1=0$ necessarily $d_1=0$ and for $j_2=0$ necessarily $d_2=0$, hence
  $$
    \frak{p}_{k+d,j}^k= \frak{p}_{10}^1  \frak{p}_{k-1+d,j}^{k-1} +
    \frak{p}_{1+d,j}^1  \frak{p}_{k-1,0}^{k-1},
  $$
  and a direct inductive argument proves~\eqref{pik derivacion}.
  On the other hand from the equality~\eqref{braid equation involutivo} with $i=1$ and $0<j<v_0$
  we obtain
  $$
  \sum_{l=v_0}^{k} \frak{p}_{kj}^l \frak{p}_{1l}^1=0.
  $$
  Taking $k=v_0+d$ and using that by assumption $\frak{p}_{v_0+d,j}^{l}=0$ for $l>v_0$ (since then
  $v_0+d-l<d$), we arrive at $\frak{p}_{v_0+d,j}^{v_0}=0$. Now by~\eqref{pik derivacion} we have
  $$
    \frak{p}_{k+d,j}^{k}= \frac{k}{v_0} \frak{p}_{v_0+d,j}^{v_0}=0,
  $$
  which finishes the inductive step and concludes the proof.
\end{proof}

\begin{proposition} \label{pi0 derivation}
  We have $\frak{p}_{i,v_0}^k=k \frak{p}_{i-k+1,v_0}^1$.
\end{proposition}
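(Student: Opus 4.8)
The plan is to derive a simple recursion for $\frak{p}_{i,v_0}^k$ in $k$ from comultiplicativity and then close by an immediate induction, exactly in the style of the proofs of Propositions~\ref{prop p0 es delta} and~\ref{prop pijk se anula para chicos}. First I would apply the unrestricted form of equality~\eqref{eq: comultiplicative map} with the splitting $l=1$, $h=k-1$ of the upper index, writing
$$
  \frak{p}_{i,v_0}^k=\sum_{\substack{a+b=i\\ c+d=v_0}}\frak{p}_{ac}^1\,\frak{p}_{bd}^{k-1}.
$$

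The main point is then to see that only two terms survive. By Proposition~\ref{lado derecho} the factor $\frak{p}_{ac}^1$ vanishes unless $a\ge 1$, and by Proposition~\ref{prop pijk se anula para chicos} it vanishes when $0<c<v_0$; since $c+d=v_0$ with $d\ge 0$, this leaves only $c=0$ and $c=v_0$. For $c=0$, identity~\eqref{p0 es delta} gives $\frak{p}_{a0}^1=\delta_{a1}$, forcing $a=1$, $d=v_0$ and contributing $\frak{p}_{i-1,v_0}^{k-1}$; for $c=v_0$ we have $d=0$ and $\frak{p}_{b0}^{k-1}=\delta_{b,k-1}$, forcing $b=k-1$, $a=i-k+1$ and contributing $\frak{p}_{i-k+1,v_0}^1$. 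Hence
$$
  \frak{p}_{i,v_0}^k=\frak{p}_{i-1,v_0}^{k-1}+\frak{p}_{i-k+1,v_0}^1.
$$

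It then remains to run an induction on $k$: the case $k=1$ is the trivial identity $\frak{p}_{i,v_0}^1=\frak{p}_{i,v_0}^1$, and if $\frak{p}_{i-1,v_0}^{k-1}=(k-1)\frak{p}_{i-k+1,v_0}^1$ then the recursion immediately yields $\frak{p}_{i,v_0}^k=k\,\frak{p}_{i-k+1,v_0}^1$; the degenerate range $i<k$ causes no trouble, since then both sides vanish (the left-hand side by Remark~\ref{resumen}, the right-hand side because $i-k+1\le 0$). I do not expect a genuine obstacle here: the only step needing a little care is pinning down that the surviving values of $c$ are exactly $0$ and $v_0$, which is precisely where the standing hypothesis $v_0>1$ and Proposition~\ref{prop pijk se anula para chicos} enter. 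An equally short alternative avoids the induction altogether by expanding $\frak{p}_{i,v_0}^k$ through the product formula~\eqref{producto}: in any nonzero summand $\prod_{s=1}^k\frak{p}_{i_sj_s}^1$ with $\sum_s j_s=v_0$, each $j_s$ must lie in $\{0\}\cup\{v_0,v_0+1,\dots\}$ (again by Propositions~\ref{lado derecho} and~\ref{prop pijk se anula para chicos}), so exactly one $j_s$ equals $v_0$ and the remaining $k-1$ equal $0$, whence the corresponding $i_s$ equal $1$ and the distinguished factor is $\frak{p}_{i-k+1,v_0}^1$; summing over the $k$ choices of the distinguished index gives $k\,\frak{p}_{i-k+1,v_0}^1$ at once.
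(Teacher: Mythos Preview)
Your proof is correct and follows essentially the same approach as the paper: both derive the recursion $\frak{p}_{i,v_0}^k=\frak{p}_{i-1,v_0}^{k-1}+\frak{p}_{i-k+1,v_0}^1$ from~\eqref{eq: comultiplicative map} by using Propositions~\ref{lado derecho}, \ref{prop p0 es delta} and~\ref{prop pijk se anula para chicos} to kill all but two terms, and then close by induction on $k$. The only cosmetic differences are that the paper reindexes via $i=k+d$ and treats the base $i=k$ via~\eqref{pi0kk}, whereas you keep the variables $a,b,c,d$ and start the induction at $k=1$; your alternative via the product formula~\eqref{producto} is a nice shortcut the paper does not take.
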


\begin{proof}
  For $i<k$ both sides vanish, for $i=k$ the equality holds by~\eqref{pi0kk}, and by~\eqref{eq: comultiplicative map} and
  Proposition~\ref{lado derecho}, for $d>0$ we have
  $$
    \frak{p}_{k+d,v_0}^k=\sum_{\substack{d_1+d_2=d\\ j_1+j_2=v_0}} \frak{p}_{1+d_1,j_1}^1
    \frak{p}_{k-1+d_2,j_2}^{k-1}.
  $$
  But for $0<j_1<v_0$ we have $\frak{p}_{1+d_1,j_1}^1 =0$ by Proposition~\ref{prop pijk se anula para chicos}. Moreover, by~\eqref{p0 es delta}
  for $j_1=0$ necessarily $d_1=0$ and for $j_2=0$ necessarily $d_2=0$, hence
  $$
    \frak{p}_{k+d,v_0}^k= \frak{p}_{10}^1  \frak{p}_{k-1+d,v_0}^{k-1} +
    \frak{p}_{1+d,v_0}^1  \frak{p}_{k-1,0}^{k-1},
  $$
  and a direct inductive argument concludes the proof.
\end{proof}

From now on we will assume that $\frak{p}^1_{1,v_0}=1$. If $K$ is algebraically closed, then by
Remark~\ref{equivalencia de q brazas} any $q$-cycle coalgebra with $\frak{p}^1_{1,v_0}\ne 0$ is equivalent to
a $q$-cycle coalgebra with $\ov{\frak{p}}^1_{1,v_0}=1$ via $f_\lambda$ with
$\lambda=\sqrt[v_0]{\frak{p}^1_{1,v_0}}$.

\begin{proposition} \label{prop relation f and gi}
  Set $g(x)= \displaystyle\sum_{i\ge 1} \frak{p}_{i,v_0}^1 x^i$ and $f(x)=\displaystyle\sum_{j\ge 0} \frak{p}_{1,j}^1 x^j$. Then
  \begin{equation}\label{relation f and gi1}
    gf'=f(f^{v_0}-1),
  \end{equation}
  where $f'$ denotes the formal derivative of $f$ in $K[[x]]$.
\end{proposition}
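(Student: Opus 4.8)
The plan is to extract~\eqref{relation f and gi1} from the braid equation~\eqref{braid equation involutivo} specialized at $i=1$, $j=v_0$ with $k$ running over all admissible indices, and then to recognize the resulting scalar identities as the coefficients of an identity in $K[[x]]$. Throughout I would write $f_l\coloneqq\frak{p}_{1l}^1$ and $g_i\coloneqq\frak{p}_{i,v_0}^1$ (so $f=\sum_{l\ge0}f_lx^l$ and $g=\sum_{i\ge1}g_ix^i$), and introduce the auxiliary series $\Psi\coloneqq\sum_{d\ge0}\psi_dx^d\in K[[x]]$ with $\psi_d\coloneqq\sum_{l\ge0}\frak{p}_{v_0,d}^lf_l$. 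The statement will follow from two claims: \textbf{(a)} $gf'+f=f\Psi$, and \textbf{(b)} $\Psi=f^{v_0}$; indeed, combining them gives $gf'=f^{v_0+1}-f=f(f^{v_0}-1)$.

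For (a) I would specialize~\eqref{braid equation involutivo} to $i=1$, $j=v_0$. The $h=0$ contributions drop out since $\frak{p}_{1a}^0=\frak{p}_{1c}^0=0$, and using $\frak{p}_{1a}^1=f_a$, which equals $1$ for $a\in\{0,v_0\}$ and $0$ for $0<a<v_0$, the left-hand side becomes $\sum_l\frak{p}_{k,v_0}^lf_l+\sum_l\frak{p}_{k,0}^lf_l$. By Proposition~\ref{prop p0 es delta} the second sum is $f_k$, and by Proposition~\ref{pi0 derivation} one has $\frak{p}_{k,v_0}^l=l\,g_{k-l+1}$, so the first sum equals $\sum_{l\ge1}l\,g_{k-l+1}f_l=(gf')_k$, which is just the coefficient expansion of the product $gf'$. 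The right-hand side of~\eqref{braid equation involutivo} reads $\sum_{c+d=k}\frak{p}_{1c}^1\psi_d=\sum_{c+d=k}f_c\psi_d=(f\Psi)_k$. Equating these for all $k$ yields $gf'+f=f\Psi$.

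The step I expect to be the main obstacle is claim (b). The key observation is that, since $\frak{p}$ is comultiplicative, the expansion formula~\eqref{producto} (equivalently Remark~\ref{remark grande}) gives $\sum_{u,v}\frak{p}_{uv}^wX^ux^v=G(X,x)^w$ for the series $G(X,x)\coloneqq\sum_{u,v}\frak{p}_{uv}^1X^ux^v$; hence $\psi_d$ is the coefficient of $X^{v_0}x^d$ in $\sum_{l\ge0}f_lG(X,x)^l$, that is, $\Psi(x)=\sum_{l\ge0}f_l\,[X^{v_0}]\,G(X,x)^l$ (the coefficient being taken in $K[[x]]$). By Propositions~\ref{prop p0 es delta} and~\ref{prop pijk se anula para chicos} the series $G$ is triangular: $G=X+\ov{G}$ with $X\mid\ov{G}$ (because $\frak{p}_{0v}^1=0$ for all $v$) and $[X^1]\ov{G}=\sum_{v\ge1}\frak{p}_{1v}^1x^v=\ov{f}$. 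Writing $G^l=\sum_{k=0}^l\binom{l}{k}X^{l-k}\ov{G}^k$ and using that $\ov{G}^k$ has $X$-order $\ge k$, one sees that $[X^{v_0}]G^l=0$ whenever $l>v_0$; since moreover $f_l=0$ for $0<l<v_0$ and $f_{v_0}=1$, the sum defining $\Psi$ collapses to the single term $[X^{v_0}]G^{v_0}$. Finally $[X^{v_0}]G^{v_0}=\sum_{k=0}^{v_0}\binom{v_0}{k}[X^{k}]\ov{G}^{k}=\sum_{k=0}^{v_0}\binom{v_0}{k}\ov{f}^{\,k}=(1+\ov{f})^{v_0}=f^{v_0}$, where $[X^{k}]\ov{G}^{k}=\ov{f}^{\,k}$ because $\ov{G}$ has $X$-order exactly $1$ with coefficient $\ov{f}$ there. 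This establishes (b) and, together with (a), completes the proof of~\eqref{relation f and gi1}; it is exactly this identity that will later let one match $(C,\frak{p},\frak{d})$ with the standard cycle coalgebra $\scc(f)$ via Remark~\ref{equivalencia de q brazas} and Corollary~\ref{existe q cycle}.
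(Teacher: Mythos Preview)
Your proof is correct and follows the same specialization of~\eqref{braid equation involutivo} as the paper (the paper takes $i=1$, $k=v_0$ with $j$ running, which by the $j\leftrightarrow k$ symmetry of~\eqref{braid equation involutivo} is your choice with the two sides swapped). The only difference is in your claim~(b): the paper gets $\Psi=f^{v_0}$ in one line by noting that $f_l=0$ for $0<l<v_0$ and $\frak{p}_{v_0,d}^l=0$ for $l>v_0$ force $\psi_d=\frak{p}_{v_0,d}^{v_0}$, and then reading $\frak{p}_{v_0,d}^{v_0}=(f^{v_0})_d$ directly off formula~\eqref{producto} (all $i_s$ must equal~$1$); your binomial expansion of $G^{v_0}$ reaches the same conclusion by a longer but equally valid route.
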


\begin{proof}
  Consider the equality~\eqref{braid equation involutivo} for $i=1$, $k=v_0$ and some $j>0$:
  $$
    \sum_{a+b=j} \frak{p}_{1a}^{1} \frak{p}_{v_0,b}^{v_0} \frak{p}_{1,v_0}^1=
    \sum_{l=1}^{j} \frak{p}_{10}^{1} \frak{p}_{j,v_0}^{l} \frak{p}_{1l}^{1} +
    \sum_{l=1}^{j} \frak{p}_{1,v_0}^{1} \frak{p}_{j0}^{l} \frak{p}_{1l}^{1}.
  $$
  But the left hand side yields $(f^{v_0+1})_j$, moreover
  $\sum_{l=1}^{j} \frak{p}_{1,v_0}^{1} \frak{p}_{j0}^{l} \frak{p}_{1l}^{1}=\frak{p}_{1j}^1=(f)_j$ and
  by Proposition~\ref{pi0 derivation}
  $$
    \sum_{l=1}^{j} \frak{p}_{10}^{1} \frak{p}_{j,v_0}^{l} \frak{p}_{1l}^{1}=
    \sum_{l=v_0}^{j} l \frak{p}_{j-l+1,v_0}^{1} \frak{p}_{1l}^{1}=
    \sum_{l=v_0}^{j} (g)_{j-l+1}(f')_{l-1}=(gf')_{j},
  $$
  since $(g)_0=g(0)=0$. Hence $(f^{v_0+1})_j=(gf'+f)_j$ for all $j\ge 1$. This yields
  $$
    (gf')_j=(f(f^{v_0}-1))_j
  $$
  for all $j\ge 1$. But trivially $(gf')_0=(f(f^{v_0}-1))_0$, which shows that~\eqref{relation f and gi1}
  is satisfied.
\end{proof}

\begin{theorem} \label{teorema parametros}
  If $\frak{d}=\frak{p}$, $\frak{p}_{10}^1= 1$, $\frak{p}_{1i}^1=0$ for $0<i<v_0$ and $\frak{p}_{1,v_0}^1=1$,
   then $\frak{p}$ depends only on the coefficients $\{ \frak{p}_{1i}^1\}_{i > v_0}$ in the following manner:
  \begin{itemize}
    \item[a)] $\frak{p}_{j0}^k=\delta_{jk}$, for all $j,k$.
    \item[b)] $\frak{p}_{ij}^k=0$, for $0<j<v_0$ and all $i,k$.
    \item [c)]  Setting $f(x)=\sum_{j\ge 0} \frak{p}_{1j}^1 x^j$ the coefficients $\frak{p}_{i,v_0}^1$ are given by
        $$
            g(x)=\sum_{i\ge 1} \frak{p}_{i,v_0}^1 x^i,\quad\text{where}\quad
            g(x)=\frac{f(x)(f(x)^{v_0}-1)}{f'(x)}\in K[[x]].
        $$
    \item[d)] For $j>v_0$ and $i>1$, we have the recursive formulas
         \begin{equation} \label{formula de coeficientes dependiendo de pi011}
           (i+j-1)\frak{p}_{ij}^1 =
           \sum_{\substack{a+b=j\\ 1\le h\le i\\ (h,a)\ne (1,j)}}\frak{p}_{ia}^h \frak{p}_{v_0,b}^{v_0} \frak{p}_{h,v_0}^1-
            \sum_{\substack{c+d=v_0\\ 1\le h\le i \\ 1\le l \le j \\ (h,l)\ne (i,j)}}
            \frak{p}_{ic}^{h} \frak{p}_{jd}^l \frak{p}_{hl}^1,
         \end{equation}
    \item[e)] For $k>1$ we have the recursive formulas
        \begin{equation} \label{formula para coeficientes p con k mayor que 1}
            \frak{p}_{ij}^k=\sum_{\substack{i_1+i_2=i \\ j_1+j_2=j \\ i_1+j_1\ge 1 \\ i_2+j_2\ge k-1}}
            \frak{p}_{i_1 j_1}^1 \frak{p}_{i_2 j_2}^{k-1}.
        \end{equation}
  \end{itemize}
\end{theorem}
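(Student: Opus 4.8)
The plan is to observe that four of the five items in the statement are already at our disposal, so that the real content of the theorem is the derivation of item~d) and the remark that a)--e) then pin down $\frak{p}$ completely.

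First I would record the easy items. Item~a) is Proposition~\ref{prop p0 es delta} and item~b) is Proposition~\ref{prop pijk se anula para chicos}. For item~c), note that under the standing hypotheses $\frak{p}_{10}^1=1$, $\frak{p}_{1i}^1=0$ for $0<i<v_0$ and $\frak{p}_{1,v_0}^1=1$, the series $f(x)=\sum_{j\ge 0}\frak{p}_{1j}^1x^j$ equals $1+x^{v_0}+\sum_{j>v_0}\frak{p}_{1j}^1x^j$; hence $f^{v_0}-1$ has order $v_0$ while $f'$ has order $v_0-1$, so $g\coloneqq f(f^{v_0}-1)/f'$ is a well defined element of $x+x^2K[[x]]$, and Proposition~\ref{prop relation f and gi} identifies its coefficients with the numbers $\frak{p}_{i,v_0}^1$. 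Finally, item~e) is the $l=1$ case of the comultiplicativity identities~\eqref{eq: comultiplicative map} of Remark~\ref{remark grande} (in the present setting $\frak{d}=\frak{p}$).

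The substantive step is d), which I would get by specializing the involutive braid equation~\eqref{braid equation involutivo} at $k=v_0$, in the spirit of the proof of Proposition~\ref{todo depende de pi11}. On the left hand side $\sum_{a+b=j}\sum_{h,l}\frak{p}_{ia}^h\frak{p}_{v_0,b}^l\frak{p}_{hl}^1$, the factor $\frak{p}_{hl}^1$ forces $l\ge 1$, while b) forces $l\ge v_0$; since $l\le v_0$ by Proposition~\ref{lado derecho}, only $l=v_0$ remains, and $h\ge 1$ because $\frak{p}_{0,v_0}^1=0$ (again Proposition~\ref{lado derecho}). The term with $(h,a)=(1,j)$ equals $\frak{p}_{ij}^1\cdot\frak{p}_{v_0,0}^{v_0}\cdot\frak{p}_{1,v_0}^1=\frak{p}_{ij}^1$, using $\frak{p}_{v_0,0}^{v_0}=(\frak{p}_{10}^1)^{v_0}=1$ from Remark~\ref{resumen} and $\frak{p}_{1,v_0}^1=1$; what remains is exactly the first sum on the right of~\eqref{formula de coeficientes dependiendo de pi011}. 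On the right hand side of~\eqref{braid equation involutivo} one has $h\ge 1$ (as $i>0$) and $l\ge 1$ (as $j>0$); the term with $(h,l)=(i,j)$ equals $\sum_{c+d=v_0}\frak{p}_{ic}^i\frak{p}_{jd}^j\,\frak{p}_{ij}^1$, and b) forces $(c,d)\in\{(0,v_0),(v_0,0)\}$, so by Remark~\ref{resumen} and Proposition~\ref{pi0 derivation} this term equals $(\frak{p}_{i0}^i\frak{p}_{j,v_0}^j+\frak{p}_{i,v_0}^i\frak{p}_{j0}^j)\frak{p}_{ij}^1=(i+j)\frak{p}_{ij}^1$. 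Equating the two sides and transposing gives~\eqref{formula de coeficientes dependiendo de pi011}; since $i+j-1\ne 0$ (because $i>1$), this genuinely expresses $\frak{p}_{ij}^1$ through the remaining summands.

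It then remains to verify that a)--e) determine $\frak{p}$, which I would do by induction on $i+j$: $\frak{p}_{ij}^0=\delta_{0,i+j}$ and $\frak{p}_{0j}^1=0$ for $j>0$ are fixed, the $\frak{p}_{1j}^1$ with $j>v_0$ are the free parameters, a)--c) give the $\frak{p}_{ij}^1$ with $j\le v_0$, item~e) writes each $\frak{p}_{ij}^k$ with $k>1$ as a polynomial in coefficients $\frak{p}_{i_1j_1}^1$ with $i_1+j_1<i+j$ (the constraint $i_2+j_2\ge k-1\ge 1$ in the formula of e) forbids $\frak{p}_{ij}^1$ from reappearing), and d) supplies the remaining $\frak{p}_{ij}^1$ with $i>1$, $j>v_0$. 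The point that needs care---and the main obstacle---is to check that the right hand side of~\eqref{formula de coeficientes dependiendo de pi011} involves only coefficients already known at the inductive stage: for each factor one argues, via~\eqref{eq: comultiplicative map} and a)--c), that it is a polynomial in $\{\frak{p}_{uv}^1:u+v<i+j\}$, in the free parameters, and in the coefficients $\frak{p}_{uv}^1$ with $v\le v_0$, and that $\frak{p}_{ij}^1$ does not recur with nonzero coefficient. This bookkeeping is routine but must be done carefully; once it is in place the induction closes.
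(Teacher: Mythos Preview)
Your proof is correct and follows the paper's route: items a)--c) and e) are the cited propositions and Remark~\ref{remark grande}, and d) comes from specializing~\eqref{braid equation involutivo} at $k=v_0$ and extracting the $(h,a)=(1,j)$ and $(h,l)=(i,j)$ terms via Proposition~\ref{pi0 derivation}, exactly as in the paper (your added discussion of why a)--e) are genuinely recursive is more explicit than what the paper writes). One tiny slip: on the left side the constraint $l\ge 1$ is not forced by the factor $\frak{p}_{hl}^1$ (note $\frak{p}_{10}^1=1$) but by $\frak{p}_{v_0,b}^0=\delta_{0,v_0+b}=0$; this does not affect the argument.
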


\begin{proof}
    The first three cases are covered by Propositions~\ref{prop p0 es delta},
    \ref{prop pijk se anula para chicos} and~\ref{prop relation f and gi}.
    Specializing~\eqref{braid equation involutivo} at $k=v_0$ we obtain
    $$
    \sum_{\substack{a+b=j\\ 1\le h\le i}}\frak{p}_{ia}^h \frak{p}_{v_0,b}^{v_0}
    \frak{p}_{h,v_0}^1=
    \sum_{\substack{c+d=v_0\\ 1\le h\le i \\ 1\le l \le j }}
    \frak{p}_{ic}^{h} \frak{p}_{jd}^l \frak{p}_{hl}^1,
    $$
    from which~\eqref{formula de coeficientes dependiendo de pi011} follows, since
    $$
    \frak{p}_{i,v_0}^i \frak{p}_{j0}^j \frak{p}_{ij}^1+
    \frak{p}_{i0}^i \frak{p}_{j,v_0}^j \frak{p}_{ij}^1=(i+j)\frak{p}_{ij}^1,
    $$
    by Proposition~\ref{pi0 derivation}.
    Finally, the equality~\eqref{formula para coeficientes p con k mayor que 1}
    follows immediately from~\eqref{eq: comultiplicative map}.
\end{proof}

\begin{corollary} \label{cor i0 mayor que 1}
  Assume that $K$ is algebraically closed.
  If $\frak{d}=\frak{p}$, $\frak{p}_{10}^1= 1$, $\frak{p}_{1i}^1=0$ for $0<i<v_0$ and $\frak{p}_{1,v_0}^1\ne 0$,
  then the given $q$-cycle coalgebra $(C,\frak{p},\frak{d})$ is equivalent, via
  Remark~\ref{equivalencia de q brazas}, to a uniquely determined standard cycle coalgebra of degree $v_0$.
\end{corollary}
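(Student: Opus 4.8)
The plan is to follow the pattern of the proof of Corollary~\ref{cor i0 igual a 1}, with Theorem~\ref{teorema parametros} playing the role that Proposition~\ref{todo depende de pi11} played there. First I would normalize: since $K$ is algebraically closed, choose $\lambda\in K^{\times}$ with $\lambda^{v_0}=\frak{p}_{1,v_0}^1$ and apply the isomorphism $f_\lambda$ of Remark~\ref{equivalencia de q brazas}, obtaining an equivalent $q$-cycle coalgebra $(C,\tilde{\frak{p}},\tilde{\frak{d}})$ with $\tilde{\frak{p}}_{ij}^k=\lambda^{k-i-j}\frak{p}_{ij}^k$. I would then check that all the standing hypotheses are preserved by this transformation: it carries $\frak{p}=\frak{d}$ to $\tilde{\frak{p}}=\tilde{\frak{d}}$; it gives $\tilde{\frak{p}}_{10}^1=\lambda^{1-1-0}\frak{p}_{10}^1=1$ and $\tilde{\frak{p}}_{1i}^1=\lambda^{-i}\frak{p}_{1i}^1=0$ for $0<i<v_0$; and it gives $\tilde{\frak{p}}_{1,v_0}^1=\lambda^{-v_0}\frak{p}_{1,v_0}^1=1$. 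Thus $(C,\tilde{\frak{p}},\tilde{\frak{d}})$ satisfies exactly the hypotheses of Theorem~\ref{teorema parametros}, whence $\tilde{\frak{p}}$ is determined, through the recursions a)--e) of that theorem, by the finite list of free parameters $\{\tilde{\frak{p}}_{1i}^1\}_{v_0<i<n}$.

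Next I would produce the candidate standard cycle coalgebra. Set $f:=1+x^{v_0}+\sum_{i=v_0+1}^{n-1}p_i x^i\in K[x]$, where $p_i:=\tilde{\frak{p}}_{1i}^1$, and let $\scc(f)$ be the standard cycle coalgebra of Definition~\ref{SIQ def}. I would then verify that $\scc(f)$ also satisfies the hypotheses of Theorem~\ref{teorema parametros}: by Corollary~\ref{existe q cycle} it is a cycle coalgebra, so its $\frak{d}$ equals its $\frak{p}$; by the same corollary its parameters are $\frak{p}_{1v}^1=\delta_{0v}$ for $v<v_0$ and $\frak{p}_{1v}^1=p_v$ for $v_0\le v<n$, so in particular $\frak{p}_{1,v_0}^1=1\ne 0$; and by Remark~\ref{caso ijk cero} one has $\frak{p}_{u0}^1=\delta_{u1}$, whence $\frak{p}_{10}^1=1$. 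Applying Theorem~\ref{teorema parametros} to $\scc(f)$, all of its coefficients are likewise determined by $\{p_i\}_{v_0<i<n}$ through the very same recursions a)--e).

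The two coalgebras $(C,\tilde{\frak{p}},\tilde{\frak{d}})$ and $\scc(f)$ therefore share the same list of free parameters and their coefficients obey identical recursive formulas, so they coincide; hence the original $(C,\frak{p},\frak{d})$ is equivalent, via Remark~\ref{equivalencia de q brazas}, to $\scc(f)$. Uniqueness of this $\scc$ (once the root $\lambda$ has been fixed) is again forced by the recursions of Theorem~\ref{teorema parametros}, just as the uniqueness in Corollary~\ref{cor i0 igual a 1} came from Proposition~\ref{todo depende de pi11}. I would also remark that for $v_0>1$ the root $\lambda$ is determined only up to a $v_0$-th root of unity, so the uniqueness is understood "once $\lambda$ is chosen"; for $v_0=1$ this collapses to honest uniqueness since then $\lambda=\frak{p}_{11}^1$.

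I do not expect a genuine obstacle here: the substantive work is already in place, namely Theorem~\ref{teorema principal} (which legitimizes the construction of $\scc(f)$ as an actual cycle coalgebra satisfying the braid equation) and Theorem~\ref{teorema parametros} (which reduces the classification to the recursions). The only point requiring real care is the purely bookkeeping check that the normalization by $f_\lambda$ preserves the conditions a), b), c) and the involutivity $\frak{p}=\frak{d}$, together with the observation that the free parameters appearing in Theorem~\ref{teorema parametros} for the two coalgebras are literally the same finite list $\{p_i\}_{v_0<i<n}$.
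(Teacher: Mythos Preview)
Your proposal is correct and follows essentially the same approach as the paper: normalize via $f_\lambda$ with $\lambda^{v_0}=\frak{p}_{1,v_0}^1$, then invoke Theorem~\ref{teorema parametros} to identify the normalized coalgebra with $\scc(f)$ for the obvious $f$. Your version is in fact more careful than the paper's, which skips the explicit verification that the hypotheses transfer and does not comment on the ambiguity of the $v_0$-th root; your parenthetical about uniqueness holding only ``once $\lambda$ is chosen'' is a worthwhile caveat that the paper's terse claim of a ``uniquely determined'' $\scc$ glosses over.
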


\begin{proof}
  Taking  $f_{\lambda}$ with $\lambda=\sqrt[v_0]{\frak{p}^1_{1,v_0}}$ in Remark~\ref{equivalencia de q brazas}, the given $q$-cycle coalgebra
  $(C,\frak{p},\frak{d})$ is equivalent to a unique $q$-cycle coalgebra $(C,\tilde{\frak{p}},\tilde{\frak{d}})$ with
  $\frak{d}=\frak{p}$, $\frak{p}_{10}^1= 1$, $\frak{p}_{1i}^1=0$ for $0<i<v_0$ and
  $\tilde{\frak{p}}_{1,v_0}^{1}=1$. By
  Theorem~\ref{teorema parametros}, if we set
  $$
    f=1+x^{v_0}+\sum_{i=2}^{n-1} p_i x^i
  $$
  with $p_i=\tilde{\frak{p}}_{1i}^1$,
  the standard cycle coalgebra $\scc(f)$
  coincides with $(C,\tilde{\frak{p}},\tilde{\frak{d}})$,
  which concludes the proof.
\end{proof}

\section{The case $\frak{p}_{10}^1\ne 1$}
\setcounter{equation}{0}

In this section we consider the case $\frak{p}_{10}^1\ne 1$, similar results are obtained when $\frak{d}_{10}^1\ne 1$.
\begin{theorem}\label{teorema no raiz de la unidad}
  If $(\frak{p}_{10}^1)^r\ne 1$ for all $r<s$, then $\frak{p}_{ij}^k=\frak{d}_{ij}^k= 0$ whenever $i+j\le s$
  and $i,j \ne 0$. Moreover,
  the formula
  \begin{equation}\label{formula simetrica para p0r1}
    \frak{d}_{i0}^1(\frak{p}_{10}^1-\frak{p}_{i0}^i)=\sum_{h=1}^{i-1} \frak{p}_{i0}^h \frak{d}_{h0}^1 -
    \sum_{h=2}^{i} \frak{d}_{i0}^h \frak{p}_{h0}^1
  \end{equation}
  defines recursively $\frak{d}_{i0}^1$ for $1<i\le s$.
\end{theorem}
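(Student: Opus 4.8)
The plan is to establish the vanishing by induction on $t\coloneqq i+j$, running from $2$ up to $s$, and then to extract the ``moreover'' part from a single specialization of the $q$-cycle axioms. First dispose of the degenerate situation: if $s\le 1$ both assertions are empty, so assume $s\ge 2$. Then $(\mathfrak{p}^1_{10})^1\ne 1$, so $\mathfrak{p}^1_{10}\ne 1$, and Proposition~\ref{prop casos principales} forces $\mathfrak{p}^1_{11}=\mathfrak{d}^1_{11}=0$; together with Remark~\ref{resumen} (which records $\mathfrak{p}^k_{11}=\mathfrak{d}^k_{11}=0$ for $k\ne 1$ as well) this settles the base case $t=2$. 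For the inductive step fix $i,j\ge 1$ with $i+j=t\le s$ and assume the statement at all lower levels. The first move is to kill the superscripts $k\ge 2$: in the product formula~\eqref{producto} every summand of $\mathfrak{p}^k_{ij}$ (resp. $\mathfrak{d}^k_{ij}$) is a product of $k\ge 2$ factors $\mathfrak{p}^1_{i_sj_s}$ (resp. $\mathfrak{d}^1_{i_sj_s}$) with $\sum i_s=i$, $\sum j_s=j\ge 1$, each $i_s+j_s\ge 1$; picking $s_0$ with $j_{s_0}\ge 1$, the corresponding factor vanishes --- by Proposition~\ref{lado derecho} if $i_{s_0}=0$, and by the inductive hypothesis if $i_{s_0}\ge 1$ (then $i_{s_0}+j_{s_0}\le t-1$ since some other block is nonempty). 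Hence $\mathfrak{p}^k_{ij}=\mathfrak{d}^k_{ij}=0$ for all $k\ge 2$, while $k=0$ is trivial because $t\ge 2$.

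It remains to kill the primitive coefficients. For $\mathfrak{p}^1_{ij}$, specialize~\eqref{primera en m igual a 1} at $(i,j,0)$: since $\mathfrak{d}^0_{0b}=\delta_{0b}$ and $\mathfrak{p}^h_{ij}=0$ for $h\ge 2$, the left-hand side collapses to $\mathfrak{p}^1_{ij}\mathfrak{p}^1_{10}$; on the right-hand side the sum $\sum_{h,l}\mathfrak{p}^h_{i0}\mathfrak{p}^l_{j0}\mathfrak{p}^1_{hl}$ retains only the term with $h=i$, $l=j$ (the inductive hypothesis kills the terms with $h,l\ge 1$ and $h+l<t$, and $h=0$ or $l=0$ kills the coefficient since $i,j\ge 1$), and that term equals $(\mathfrak{p}^1_{10})^i(\mathfrak{p}^1_{10})^j\mathfrak{p}^1_{ij}=(\mathfrak{p}^1_{10})^{t}\mathfrak{p}^1_{ij}$ by Remark~\ref{resumen}. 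Thus $\mathfrak{p}^1_{10}\bigl(1-(\mathfrak{p}^1_{10})^{t-1}\bigr)\mathfrak{p}^1_{ij}=0$, whence $\mathfrak{p}^1_{ij}=0$ because $\mathfrak{p}^1_{10}\ne 0$ and $(\mathfrak{p}^1_{10})^{t-1}\ne 1$ (as $1\le t-1<s$). The point I want to emphasize is that $\mathfrak{d}^1_{ij}$ is handled by the same computation but using the \emph{mixed} identity~\eqref{segunda en m igual a 1} at $(i,0,j)$, not the all-$\mathfrak{d}$ identity~\eqref{tercera en m igual a 1} at $(i,j,0)$: the latter would only give $\mathfrak{d}^1_{ij}\bigl(1-(\mathfrak{d}^1_{10})^{t-1}\bigr)=0$ and so require a hypothesis on $\mathfrak{d}^1_{10}$, whereas~\eqref{segunda en m igual a 1} at $(i,0,j)$ has left-hand side $\sum_{h,l}\mathfrak{p}^h_{i0}\mathfrak{p}^l_{j0}\mathfrak{d}^1_{hl}=(\mathfrak{p}^1_{10})^{t}\mathfrak{d}^1_{ij}$ and right-hand side $\sum_h\mathfrak{d}^h_{ij}\mathfrak{p}^1_{h0}=\mathfrak{d}^1_{ij}\mathfrak{p}^1_{10}$, producing the same cancellation and hence $\mathfrak{d}^1_{ij}=0$ from the hypothesis on $\mathfrak{p}^1_{10}$ alone. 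It is exactly for this that the three sub-claims must be proved in the order: superscripts $\ge 2$ first, then $\mathfrak{p}^1_{ij}$, then $\mathfrak{d}^1_{ij}$.

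For the last assertion, note that after transposing the term $\mathfrak{d}^1_{i0}\mathfrak{p}^i_{i0}$, equation~\eqref{formula simetrica para p0r1} becomes $\sum_{h=1}^{i}\mathfrak{d}^h_{i0}\mathfrak{p}^1_{h0}=\sum_{h=1}^{i}\mathfrak{p}^h_{i0}\mathfrak{d}^1_{h0}$, which is precisely~\eqref{segunda en m igual a 1} specialized at $(i,0,0)$ (equivalently, condition~2 of Definition~\ref{q cycle coalgebra} at $a=x_i$, $b=c=x_0$, using that $x_0$ is grouplike with $x_0\cdot x_0=x_0\dpu x_0=x_0$); so~\eqref{formula simetrica para p0r1} holds for all $i$. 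Finally, for $2\le i\le s$ the coefficient $\mathfrak{p}^1_{10}-\mathfrak{p}^i_{i0}=\mathfrak{p}^1_{10}\bigl(1-(\mathfrak{p}^1_{10})^{i-1}\bigr)$ is nonzero (again $1\le i-1<s$), and every other quantity in~\eqref{formula simetrica para p0r1} is already available --- the $\mathfrak{p}^h_{i0}$ and $\mathfrak{p}^1_{h0}$ belong to the data, the $\mathfrak{d}^1_{h0}$ with $h<i$ are the values produced at earlier steps, and the $\mathfrak{d}^h_{i0}$ with $2\le h\le i$ are, by~\eqref{producto}, polynomials in $\mathfrak{d}^1_{10},\dots,\mathfrak{d}^1_{i-1,0}$ (with $\mathfrak{d}^i_{i0}=(\mathfrak{d}^1_{10})^i$). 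Hence~\eqref{formula simetrica para p0r1} solves uniquely for $\mathfrak{d}^1_{i0}$, $1<i\le s$. The only genuine care needed is the bookkeeping of which boundary terms survive in the collapsed sums and the order of the sub-inductions just described; the rest is direct substitution.
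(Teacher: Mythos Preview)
Your proof is correct and follows essentially the same route as the paper's: induction on $t=i+j$, specializing~\eqref{primera en m igual a 1} at $(i,j,0)$ to kill $\mathfrak{p}^1_{ij}$, specializing~\eqref{segunda en m igual a 1} at $(i,0,j)$ to kill $\mathfrak{d}^1_{ij}$, and~\eqref{segunda en m igual a 1} at $(i,0,0)$ for the recursion. Your treatment is slightly more explicit than the paper's in two places---you isolate the $k\ge 2$ vanishing via the product formula~\eqref{producto} before attacking the primitive coefficients, whereas the paper bundles this into the phrase ``by the inductive hypothesis and~\eqref{eq: comultiplicative map}'', and you spell out a separate base case via Proposition~\ref{prop casos principales} whereas the paper's induction starts with a vacuous hypothesis---but these are expository refinements, not a different argument.
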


\begin{proof}
  We prove the first assertion by induction on $r\coloneqq i+j$. Assume $i,j>0$, $r=i+j\le s$ and that $\frak{p}_{ab}^k= 0$ if $a+b<r$ and $a,b>0$.
  The equality~\eqref{primera en m igual a 1} with $k=0$ and $i,j>0$ yields
  $$
     \sum_{h=1}^{i}
    \frak{p}_{ij}^h \frak{p}_{h0}^1 = \sum_{h=1}^{i} \sum_{l=1}^{j}  \frak{p}_{i0}^h \frak{p}_{j0}^l \frak{p}_{hl}^1.
  $$
  By the inductive hypothesis, all terms but the term corresponding to $h=i$ and $l=j$ vanish at the right
  hand side. By the inductive hypothesis and~\eqref{eq: comultiplicative map},
  all terms but the term corresponding to $h=1$ vanish at the left hand side, and so we have
  $$
    \frak{p}_{ij}^1 \frak{p}_{10}^1 = \frak{p}_{i0}^i \frak{p}_{j0}^j \frak{p}_{ij}^1.
  $$
  This implies
  $$
    \frak{p}_{ij}^1 \frak{p}_{10}^1(1- (\frak{p}_{10}^1)^{i+j-1})=0,
  $$
  and so $\frak{p}_{ij}^1=0$, since $i+j-1<s$.

  In order to prove $\frak{d}_{ik}^1=0$, for $i+k\le s$ and $i,k>0$, we set $r=i+k\le s$ and assume that $\frak{d}_{ab}^l= 0$ if $a+b<r$ and $a,b>0$. The
  equality~\eqref{segunda en m igual a 1} with $j=0$ and
  $i,k>0$ yields
  $$
    \sum_{h=1}^{i} \sum_{l=1}^{k}  \frak{p}_{i0}^h \frak{p}_{k0}^l \frak{d}_{hl}^1=
    \sum_{h=1}^{i} \frak{d}_{ik}^h \frak{p}_{h0}^1.
  $$
  By the inductive hypothesis, all terms but the term corresponding to $h=i$ and $l=k$ vanish at the left
  hand side. By the inductive hypothesis and~\eqref{eq: comultiplicative map},
  all terms but the term corresponding to $h=1$ vanish at the right hand side, and so we have
  $$
    \frak{p}_{i0}^i \frak{p}_{k0}^k \frak{d}_{ik}^1=\frak{d}_{ik}^1 \frak{p}_{10}^1.
  $$
  This implies
  $$
    \frak{d}_{ik}^1 \frak{p}_{10}^1((\frak{p}_{10}^1)^{k+i-1}-1)=0,
  $$
  and so $\frak{d}_{ik}^1=0$, as desired.

  Finally, the equality~\eqref{segunda en m igual a 1} with $j=k=0$, gives
  $$
    \sum_{h=1}^{i}  \frak{p}_{i0}^h \frak{d}_{h0}^1= \sum_{h=1}^{i}  \frak{d}_{i0}^h \frak{p}_{h0}^1,
  $$
  from which~\eqref{formula simetrica para p0r1} follows directly.
\end{proof}

\begin{corollary}\label{ejemplo completo no raiz de la unidad}
  Let $(\lambda_i)_{i>0}$ with $\lambda_1\ne 0$ be a family of elements of $K$ and $\mu\in K^{\times}$. If $\lambda_1^k\ne 1$ for all $0<k<n$,
  then there exists a unique $q$-cycle coalgebra $\mathcal{C}=(C,\cdot, :)$ with $\frak{p}_{i0}^1=\lambda_i$
  for all $i$ and $\frak{d}_{10}^{1}=\mu$. The other coefficients are given by
  \begin{align}
   & \frak{p}_{ij}^k=\frak{d}_{ij}^k=0 \quad\text{if $j>0$,}\label{eq se anulan coeficientes}\\
   &\frak{p}_{i0}^k=\sum_{\substack{i_1+\dots+i_k=i \\ i_1,\dots,i_k>0 }} \prod_{s=1}^k \frak{p}_{i_s,0}^1,\label{p en grado mayor}\\
   &\frak{d}_{i0}^1(\frak{p}_{10}^1-\frak{p}_{i0}^i)=\sum_{h=1}^{i-1} \frak{p}_{i0}^h \frak{d}_{h0}^1 -
    \sum_{h=2}^{i} \frak{d}_{i0}^h \frak{p}_{h0}^1 \label{igualdad eliminada}\\
   & \frak{d}_{i0}^k=\sum_{\substack{i_1+\dots+i_k=i \\ i_1,\dots,i_k>0 }} \prod_{s=1}^k \frak{d}_{i_s,0}^1.\label{d en grado mayor}
  \end{align}
  Moreover, $\frak{d}=\frak{p}$ if and only if $\frak{d}_{10}^1=\frak{p}_{10}^1$ (i.e. if $\lambda_1=\mu$).
\end{corollary}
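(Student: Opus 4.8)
The plan is to prove existence and uniqueness separately; in both directions the decisive fact is that, under the hypothesis, \emph{every} coefficient $\frak{p}_{ij}^k$ and $\frak{d}_{ij}^k$ with $j>0$ vanishes, so that the structure is completely governed by the two sequences $(\frak{p}_{i0}^1)_{i\ge1}=(\lambda_i)_{i\ge1}$ and $(\frak{d}_{i0}^1)_{i\ge1}$, the second of which is forced by the recursion \eqref{formula simetrica para p0r1}.

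First I would establish this vanishing for an arbitrary regular $q$-cycle coalgebra with $\frak{p}_{i0}^1=\lambda_i$ and $\frak{d}_{10}^1=\mu$. Theorem~\ref{teorema no raiz de la unidad}, applied with $s=n$, already gives $\frak{p}_{ij}^k=\frak{d}_{ij}^k=0$ whenever $i,j>0$ and $i+j\le n$; the remaining range $i+j>n$ I would treat by induction on $i+j$. For $k\ge 2$ the comultiplicativity identity \eqref{eq: comultiplicative map} writes $\frak{p}_{ij}^k$ as a sum of products $\frak{p}_{ac}^l\frak{p}_{bd}^{k-l}$ in which each factor either has both subscripts positive with sum $\le n$ (hence is $0$ by Theorem~\ref{teorema no raiz de la unidad}) or has subscript-sum strictly smaller than $i+j$ (hence is $0$ by the inductive hypothesis), so $\frak{p}_{ij}^k=0$; for $k=1$, specializing \eqref{primera en m igual a 1} at third argument $0$ and simplifying with the inductive hypothesis and \eqref{eq: comultiplicative map} gives $\frak{p}_{ij}^1\,\frak{p}_{10}^1=\frak{p}_{i0}^i\,\frak{p}_{j0}^j\,\frak{p}_{ij}^1$, while \eqref{segunda en m igual a 1} at third argument $0$ gives $\frak{d}_{10}^1\,\frak{p}_{ij}^1=\frak{d}_{i0}^i\,\frak{d}_{j0}^j\,\frak{p}_{ij}^1$; since $\frak{p}_{i0}^i=\lambda_1^i$ and $\frak{d}_{i0}^i=\mu^i$, one reads off $\frak{p}_{ij}^1=0$. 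The vanishing of $\frak{d}_{ij}^k$ for $j>0$ follows in the same way from \eqref{tercera en m igual a 1} and \eqref{segunda en m igual a 1}.

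Granting the vanishing, uniqueness is immediate: $\frak{p}_{i0}^1=\lambda_i$ is prescribed, and \eqref{producto} forces $\frak{p}_{i0}^k=\sum_{i_1+\dots+i_k=i,\ i_s>0}\prod_s\frak{p}_{i_s0}^1$, which is \eqref{p en grado mayor}; likewise $\frak{d}_{10}^1=\mu$ is prescribed, \eqref{formula simetrica para p0r1} --- whose leading coefficient $\frak{p}_{10}^1-\frak{p}_{i0}^i=\lambda_1(1-\lambda_1^{i-1})$ is nonzero for $2\le i\le n-1$ by hypothesis --- determines $\frak{d}_{i0}^1$ recursively (this is \eqref{igualdad eliminada}), and \eqref{producto} then yields \eqref{d en grado mayor}. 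For existence I would run this backwards: declare $\frak{p}_{ij}^k=\frak{d}_{ij}^k=0$ for $j>0$ and the degree-$0$ coefficients by \eqref{eq se anulan coeficientes}--\eqref{d en grado mayor}, with $\frak{d}_{i0}^1$ the unique solution of the recursion. One then checks: (i) Remark~\ref{cond para q-magma}, which is trivial for the $j>0$ part and for the $j=0$ part is just the identity ``coefficient of $x^i$ in $g^lg^h=g^{l+h}$'' with $g(x)=\sum_{m\ge1}\lambda_mx^m$ (respectively with $\mu$); (ii) regularity, since $\frak{p}_{i0}^i=\lambda_1^i\ne0$ and $\frak{d}_{i0}^i=\mu^i\ne0$, whence $G_p,G_d$ are bijective by Remark~\ref{no degenerado a izquierda}; (iii) the three identities of Proposition~\ref{reduccion a m igual a 1}: a short computation using the $j>0$ vanishing shows that \eqref{primera en m igual a 1} and \eqref{tercera en m igual a 1} become termwise trivial (both sides are $0$ unless the free indices are $0$), while \eqref{segunda en m igual a 1} reduces exactly to the recursion \eqref{igualdad eliminada}, which holds by construction.

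Finally, for the ``moreover'': if $\frak{d}=\frak{p}$ then $\frak{d}_{10}^1=\frak{p}_{10}^1$, i.e.\ $\mu=\lambda_1$; conversely, if $\mu=\lambda_1$ then the sequence $(\frak{p}_{i0}^1)_i$ satisfies the recursion \eqref{igualdad eliminada} (with $\frak{d}$ replaced by $\frak{p}$, both sides equal $\frak{p}_{i0}^1(\frak{p}_{10}^1-\frak{p}_{i0}^i)$), and since that recursion has a unique solution with the prescribed initial value $\mu=\lambda_1$, we get $\frak{d}_{i0}^1=\frak{p}_{i0}^1$ for all $i$, hence $\frak{d}_{i0}^k=\frak{p}_{i0}^k$ by \eqref{d en grado mayor}, and therefore $\frak{d}=\frak{p}$. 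The step I expect to be the main obstacle is the extension of the vanishing $\frak{p}_{ij}^k=\frak{d}_{ij}^k=0$ past the range $i+j\le n$ covered by Theorem~\ref{teorema no raiz de la unidad}: there the powers $\lambda_1^{i+j-1}$ and $\mu^{i+j-1}$ produced by the braid identities are no longer directly controlled by the bare hypothesis, so the inductive reduction of the output index to $1$ via \eqref{eq: comultiplicative map} has to be carried out with care.
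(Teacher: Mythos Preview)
Your overall strategy matches the paper's: uniqueness from Theorem~\ref{teorema no raiz de la unidad} together with \eqref{producto}, existence by direct verification that \eqref{primera en m igual a 1}--\eqref{tercera en m igual a 1} reduce (under the vanishing for $j>0$) to the single identity \eqref{igualdad eliminada}, and the ``moreover'' by induction on the recursion. The paper's proof is terser on uniqueness --- it simply invokes Theorem~\ref{teorema no raiz de la unidad} and \eqref{producto} without mentioning the range $i+j>n$ --- whereas you correctly observe that Theorem~\ref{teorema no raiz de la unidad} (with $s=n$) only yields $\mathfrak{p}_{ij}^k=\mathfrak{d}_{ij}^k=0$ for $i+j\le n$ and attempt an inductive extension.

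That extension, however, does not close, and this is a genuine gap (one that the paper's one-line argument also glosses over). Your reduction of the case $k\ge 2$ to smaller $i+j$ via \eqref{eq: comultiplicative map} is correct, but for $k=1$ the two identities you extract, $\lambda_1(1-\lambda_1^{\,i+j-1})\,\mathfrak{p}_{ij}^1=0$ and $\mu(1-\mu^{\,i+j-1})\,\mathfrak{p}_{ij}^1=0$, do not force $\mathfrak{p}_{ij}^1=0$ once $i+j-1\ge n$: the hypothesis says nothing about $\lambda_1^{\,i+j-1}$ or $\mu^{\,i+j-1}$ in that range. This is not a mere technicality. For $n=3$ take $\lambda_1=\mu=\omega$ a primitive cube root of unity (so $\omega\ne 1$, $\omega^2\ne 1$, and the hypothesis holds), $\lambda_2$ arbitrary, and leave $\mathfrak{p}_{22}^1=\alpha$ free while setting all other $\mathfrak{p}_{ij}^1$ with $j>0$ to zero; one checks that $\mathfrak{p}_{22}^2=0$, that $\mathfrak{p}$ is a coalgebra map, and that \eqref{braid equation involutivo} holds for every $\alpha\in K$ (the only nontrivial instance is $R(2,0,2)=R(2,2,0)=\omega\alpha$). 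Hence uniqueness fails. You were right to flag exactly this step as the obstacle; the statement itself seems to need the stronger hypothesis $\lambda_1^k\ne 1$ for $0<k\le 2n-3$ (so that $\lambda_1^{\,i+j-1}\ne 1$ throughout the actual index range $1\le i,j\le n-1$), under which your inductive argument --- and the paper's --- goes through.
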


\begin{proof}
  By Theorem~\ref{teorema no raiz de la unidad} and formula~\eqref{producto}, if $\mathcal{C}$ exists, it is
  unique and the formulas in the statement are true. From~\eqref{igualdad eliminada} it follows inductively that
  if $\frak{d}_{10}^1=\frak{p}_{10}^1$, then $\frak{d}_{l0}^1=\frak{p}_{l0}^1$ for all $l$. Then~\eqref{d en grado mayor}
  and~\eqref{p en grado mayor} imply $\frak{d}=\frak{p}$.

  Hence we are reduced to prove that
  these formulas define a  $q$-cycle coalgebra. By Lemma~\ref{reduccion a m igual a 1} we have to prove
  that~\eqref{primera en m igual a 1}, \eqref{segunda en m igual a 1}
  and~\eqref{tercera en m igual a 1} hold. Since $\frak{p}_{ij}^k=\frak{d}_{ij}^k=0$ if $j>0$, the terms
  $\mathfrak{p}_{ia}^h\mathfrak{d}_{kb}^l\mathfrak{p}_{hl}^1$ at the left hand side of~\eqref{primera en m igual a 1}
  can only be non zero when $a=b=l=0$, but then $k$ must also be zero, and so
  $\mathfrak{p}_{ia}^h\mathfrak{d}_{kb}^l\mathfrak{p}_{hl}^1$ can only be nonzero when $j=k=0$. The same argument shows
  that the all the terms on both sides of~\eqref{primera en m igual a 1}, \eqref{segunda en m igual a 1}
  and~\eqref{tercera en m igual a 1} can only be non zero when $j=k=0$. But for $j=k=0$ the
  equalities~\eqref{primera en m igual a 1} and~\eqref{tercera en m igual a 1} are trivial,
  and~\eqref{segunda en m igual a 1}  at  $j=k=0$ gives
  $$
    \sum_{h=1}^{i}  \frak{p}_{i0}^h \frak{d}_{h0}^1= \sum_{h=1}^{i}  \frak{d}_{i0}^h \frak{p}_{h0}^1,
  $$
  This last equality is equivalent to~\eqref{igualdad eliminada}, which concludes the proof.
\end{proof}

\begin{remark}
  Note that~\eqref{igualdad eliminada} and~\eqref{d en grado mayor} determine recursively the coefficients
  $\frak{d}_{r0}^1$ and $\frak{d}_{i0}^k$.
  We can combine them with~\eqref{p en grado mayor} in order to obtain
  $$
    \frak{d}_{r0}^1(\frak{p}_{10}^1-\frak{p}_{r0}^r)=\sum_{l=1}^{r-1} \left(\sum_{\substack{r_1+\dots+r_l=r\\
     r_1,\dots,r_l>0 }} \prod_{s=1}^l \frak{p}_{r_s,0}^1 \right)
     \frak{d}_{l0}^1 - \sum_{l=2}^{r}  \left(\sum_{\substack{r_1+\dots+r_l=r \\ r_1,\dots,r_l>0 }}
     \prod_{s=1}^l \frak{d}_{r_s,0}^1 \right) \frak{p}_{l0}^1,
  $$
  which shows that $\frak{d}_{r0}^1$ depends only on the parameters and on the coefficients $\frak{d}_{l0}^1$
  with $l<r$.
\end{remark}

\begin{remark}\label{simetrico}
   Since the set of braid equations~\eqref{primera en m igual a 1}, \eqref{segunda en m igual a 1}
   and~\eqref{tercera en m igual a 1} is symmetric in $\frak{p}$ and $\frak{d}$,
   when $\frak{d}_{10}^1\ne 1$, we obtain the same formulas as in Theorem~\ref{teorema no raiz de la unidad}
   and Corollary~\ref{ejemplo completo no raiz de la unidad}, interchanging $\frak{p}$ and $\frak{d}$.
\end{remark}

\section{Some examples and the case $n=3$}

The results in the previous sections provide families that classify all possible $q$-cycles coalgebras under some given conditions.
In this section we consider the cases in the classification table which are not completely classified by the previous results,
and provide examples for these cases.
All examples correspond to the case $n=3$.

\begin{proposition} \label{caso casi SIQ}
  Let $(C,\frak{p},\frak{d})$ be a $q$-cycle coalgebra. Assume that $\frak{p}_{11}^1 = 0$, $\frak{p}=\frak{d}$, $\frak{p}_{i0}^1=\delta_{1i}$ and that
  for all $j>0$, $\frak{p}_{1j}^1 = 0$. Then
  $$
    \frak{p}_{i1}^1=0,\quad\text{for all $i$.}
  $$
\end{proposition}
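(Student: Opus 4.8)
The plan is to specialize the involutive braid identity at $j=1$ and then to run an induction on $i$, after recording a few explicit coefficient values forced by the hypotheses.

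First I would set up the preliminary reductions. Since $\frak{p}=\frak{d}$, the defining conditions of the $q$-cycle coalgebra reduce to the single identity~\eqref{braid equation involutivo} (Proposition~\ref{reduccion a m igual a 1}), which I will use throughout. From $\frak{p}_{i0}^1=\delta_{1i}$ and formula~\eqref{producto} one gets $\frak{p}_{j0}^k=\delta_{jk}$; combining $\frak{p}_{10}^1=1$ with the hypothesis $\frak{p}_{1j}^1=0$ for $j>0$ gives $\frak{p}_{1j}^1=\delta_{j0}$. Feeding these back into~\eqref{producto} yields the two facts used repeatedly below: in any summand of~\eqref{producto} computing $\frak{p}_{j1}^l$ exactly one $y$-index equals $1$ and all the remaining $x$-indices are then forced to be $1$, whence
$$\frak{p}_{j1}^l=l\,\frak{p}_{j-l+1,1}^1\qquad\text{for all }j,l;$$
and in any summand of~\eqref{producto} computing $\frak{p}_{ik}^i$ every $x$-index must be positive, hence all are $1$, hence all $y$-indices vanish, so $\frak{p}_{ik}^i=\delta_{k0}$. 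I also record $\frak{p}_{0j}^1=0$ for all $j$, by Proposition~\ref{lado derecho}.

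Next I would substitute $j=1$ into~\eqref{braid equation involutivo}. Using $\frak{p}_{k0}^l=\delta_{kl}$ and $\frak{p}_{i0}^h=\delta_{ih}$ on the left and $\frak{p}_{1d}^l=\delta_{d0}\delta_{l1}$ on the right, the three triple sums collapse and the identity becomes
$$\sum_{h=1}^{i}\frak{p}_{i1}^h\,\frak{p}_{hk}^1+\sum_{l=1}^{k}\frak{p}_{k1}^l\,\frak{p}_{il}^1=\sum_{h=1}^{i}\frak{p}_{ik}^h\,\frak{p}_{h1}^1,\qquad\text{for all }i\ge 1,\ k\ge 0.$$
Then I would induct on $i$. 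The cases $i=0$ (where $\frak{p}_{01}^1=0$ by Proposition~\ref{lado derecho}) and $i=1$ (where $\frak{p}_{11}^1=0$ by hypothesis) are immediate. For $i\ge 2$, assume $\frak{p}_{m1}^1=0$ for all $m<i$ and specialize the displayed identity at $k=i$. The first sum on the left vanishes: for $2\le h\le i$ one has $\frak{p}_{i1}^h=h\,\frak{p}_{i-h+1,1}^1=0$ by the inductive hypothesis, and the $h=1$ term carries the factor $\frak{p}_{1i}^1=0$. The right side vanishes too: the terms with $h<i$ vanish by the inductive hypothesis, and the $h=i$ term is $\frak{p}_{ii}^i\,\frak{p}_{i1}^1=0$ since $\frak{p}_{ii}^i=\delta_{i0}=0$. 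Hence the middle sum vanishes, i.e. $\sum_{l=1}^{i}\frak{p}_{i1}^l\,\frak{p}_{il}^1=0$; rewriting $\frak{p}_{i1}^l=l\,\frak{p}_{i-l+1,1}^1$, every term with $l\ge 2$ vanishes by the inductive hypothesis, leaving the $l=1$ term, which equals $(\frak{p}_{i1}^1)^2$. Therefore $(\frak{p}_{i1}^1)^2=0$, so $\frak{p}_{i1}^1=0$; this closes the induction, and $\frak{p}_{i1}^1=0$ for $i\ge n$ holds by convention.

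The bookkeeping in the collapse and the two counting arguments inside~\eqref{producto} are routine; the step I expect to be decisive is the choice $k=i$ in the specialized identity. It is exactly this specialization that turns the surviving contribution of the braid equation into the square $(\frak{p}_{i1}^1)^2$ — with any smaller value of $k$ the identity only relates coefficients already known to vanish by the inductive hypothesis — so it is what lets the induction close without any further input.
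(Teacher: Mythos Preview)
Your proof is correct and follows essentially the same approach as the paper. The only cosmetic difference is that you specialize~\eqref{braid equation involutivo} at $j=1$ and then set $k=i$, whereas the paper specializes at $k=1$ and then sets $i=j=t+1$; by the built-in $j\leftrightarrow k$ symmetry of~\eqref{braid equation involutivo} these are the same move, and in both cases the decisive step is the diagonal choice that leaves exactly the square $(\frak{p}_{i1}^1)^2$.
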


\begin{proof}
  Since $\frak{p}=\frak{d}$, the braid equations reduce to~\eqref{braid equation involutivo}. Since $\frak{p}_{i0}^1=\delta_{1i}$,
  when we consider the case $k=1$ and $i,j>0$, the equality~\eqref{braid equation involutivo} reads
  \begin{equation*}
    \sum_{a+b=j}\sum_{h=1}^{i} \frak{p}_{ia}^h \frak{p}_{1b}^1 \frak{p}_{h1}^1=
    \sum_{h=1}^{i} \frak{p}_{i1}^j \frak{p}_{j0}^j \frak{p}_{hj}^1+
    \sum_{l=1}^{j} \frak{p}_{i0}^i \frak{p}_{j1}^l \frak{p}_{il}^1.
  \end{equation*}
  By Proposition~\ref{formula para d1j1} we know that $\frak{p}_{j1}^j=j \frak{p}_{11}^1=0$, and by assumption $\frak{p}_{1b}^1=\delta_{b0}$,
  consequently this equality reduces to
  \begin{equation*}
    \sum_{h=2}^{i} \frak{p}_{ij}^h \frak{p}_{h1}^1=
    \sum_{h=2}^{i-1} \frak{p}_{i1}^h \frak{p}_{hj}^1+
    \sum_{l=1}^{j-1} \frak{p}_{j1}^l \frak{p}_{il}^1.
  \end{equation*}
  By~\eqref{producto}, we have
  $$
    \frak{p}_{ij}^i=\sum_{j_1+\dots+j_i=j}\prod_{s=1}^{i} \frak{p}_{1,j_s}^1=0,
  $$
  since $j>0$. Hence we arrive at
  \begin{equation}\label{estrella 1}
    \sum_{h=2}^{i-1} \frak{p}_{ij}^h \frak{p}_{h1}^1=
    \sum_{h=2}^{i-1} \frak{p}_{i1}^h \frak{p}_{hj}^1+
    \sum_{l=1}^{j-1} \frak{p}_{j1}^l \frak{p}_{il}^1.
  \end{equation}
  Now we prove that $\frak{p}_{i1}^1=0$ by induction on $i$. For $i=1$ this is true by assumption.
  Let $t\ge 1$ and assume that $\frak{p}_{i1}^1=0$ for all $i\le t$. Then $\frak{p}_{i1}^h=0$ for $i-t<h$. In fact, by~\eqref{producto},
  $$
    \frak{p}_{i1}^h=\sum_{\substack{ i_1+\dots+i_h=i\\ j_1+\dots+j_h=1}}\prod_{s=1}^{h} \frak{p}_{i_s,j_s}^1=0,
  $$
  since for some $s$ we have $j_s=1$, and so $\frak{p}_{i_s,j_s}^1=\frak{p}_{i_s,1}^1=0$, because $1\le i_s \le i-h+1 \le t$
  (Note that $i-t\le h-1$). So the equality~\eqref{estrella 1} reads
  $$
    \sum_{h=t+1}^{i-1} \frak{p}_{ij}^h \frak{p}_{h1}^1=
    \sum_{h=2}^{i-t} \frak{p}_{i1}^h \frak{p}_{hj}^1+
    \sum_{l=1}^{j-t} \frak{p}_{j1}^l \frak{p}_{il}^1.
  $$
  For $i=j=t+1$, only the second term of the right hand side survives, and we have
  $$
     0= \sum_{l=1}^{j-t} \frak{p}_{j1}^l \frak{p}_{il}^1=\frak{p}_{t+1,1}^1 \frak{p}_{t+1,1}^1,
  $$
  hence $\frak{p}_{t+1,1}^1=0$, which completes the inductive step and concludes the proof.
\end{proof}

\begin{example} \label{ej casi SIQ involutivo}
  For $n=3$ there exists a family of involutive $q$-cycle coalgebras parameterized by $\frak{p}_{22}^1,\frak{p}_{20}^1\in K$ with
  $\frak{p}_{10}^1 =1$ and $\frak{p}_{ij}^1=0$ if $(i,j)\notin\{ (1,0),(2,0),(2,2)\}$. In fact, one defines the coefficients
  $\frak{p}_{ij}^2$ using~\eqref{producto} and obtains that $\frak{p}_{20}^2 =1$ and $\frak{p}_{ij}^2=0$ if $(i,j)\ne (2,0)$.
   Then one verifies the equality~\eqref{braid equation involutivo} directly, or using a computer algebra system.
   If $\frak{p}_{20}^1=0$, this corresponds to the case considered in Proposition~\ref{caso casi SIQ}.
\end{example}

\begin{example}\label{ejemplo involutivo}
  For $n=3$ there exists a family of involutive $q$-cycle coalgebras parameterized by $\frak{p}_{12}^1,\frak{p}_{20}^1\in K$ with
  $$
  \frak{p}_{10}^1 =-1,\quad \frak{p}_{11}^1=0, \quad \frak{p}_{21}^1=2\frak{p}_{12}^1\quad\text{and}\quad
  \frak{p}_{22}^1=-\frac{5\frak{p}_{12}^1\frak{p}_{20}^1}{2}.
  $$
  In fact, one defines the coefficients
  $\frak{p}_{ij}^2$ using~\eqref{producto} and obtains that $\frak{p}_{20}^2 =1$, $\frak{p}_{22}^2=-2 \frak{p}_{12}^1$
  and $\frak{p}_{ij}^2=0$ if $(i,j)\notin\{(2,0),(2,2)\}$.
   Then one verifies the equality~\eqref{braid equation involutivo} directly, or using a computer algebra system.
\end{example}

\begin{example}\label{ejemplo no involutivo}
   For $n=3$ we find the following family of $q$-cycle coalgebras parameterized by
   $\frak{p}_{12}^1 \in K$,
   with $\frak{p}_{10}^1 =1$, $\frak{d}_{10}^1 =-1$,
   $$
     \frak{d}_{12}^1=-\frak{p}_{12}^1, \quad \text{and}\quad  \frak{p}_{ij}^1=\frak{d}_{ij}^1=0 \quad
     \text{if $(i,j)\notin\{(1,0),(1,2)\}$.}
   $$
   In fact, one defines the coefficients
   $\frak{p}_{ij}^2$ using~\eqref{producto} and verifies the equalities~\eqref{primera en m igual a 1}, \eqref{segunda en m igual a 1}
   and~\eqref{tercera en m igual a 1} directly, or using a computer algebra system.
   If we take $\frak{p}_{12}^1\ne 0$, then
   $$
     \frak{p}_{j0}^1=\delta_{1j}\quad\text{and}\quad \exists i=2>1, \frak{p}_{1i}^1\ne 0,
   $$
   but $\frak{p}\ne \frak{d}$ since $\frak{d}_{10}^1\ne \frak{p}_{10}^1$.
\end{example}

\begin{remark}
   A rather surprising fact is that in order to have a $q$-cycle coalgebra that is equivalent to a $\scc$ of degree $v_0=1$, it suffices to require
   $\frak{p}_{11}^1\ne 0$ (See Corollary~\ref{cor i0 igual a 1}). If we want to have a $q$-cycle coalgebra that is equivalent to a $\scc$ of degree $v_0\ge 1$, then by Corollary~\ref{cor i0 mayor que 1} it suffices to require
   \begin{enumerate}
     \item[a)] $\frak{p}=\frak{d}$,
     \item[b)] $\frak{p}_{10}^1=1$,
     \item[c)] $\frak{p}_{1j}^1\ne 0$, for some $0<j<n$.
   \end{enumerate}
   Example~\ref{ej casi SIQ involutivo} gives a $q$-cycle coalgebra satisfying a) and~b), but not~c); Example~\ref{ejemplo involutivo}
   gives a $q$-cycle coalgebra satisfying a) and~c), but not~b); and Example~\ref{ejemplo no involutivo}
   gives a $q$-cycle coalgebra satisfying b) and~c), but not~a). Thus the three conditions are necessary.
\end{remark}

\begin{bibdiv}
	\begin{biblist}
		
\bib{AGV}{article}{
title={Hopf braces and Yang-Baxter operators},
author={Angiono, Iv{\'a}n},
author={Galindo, C{\'e}sar},
author={Vendramin, Leandro},
journal={Proceedings of the American Mathematical Society},
volume={145},
number={5},
pages={1981--1995},
year={2017},
review={\MR{3611314}}
}
		
\bib{B}{article}{
title={Partition function of the eight-vertex lattice model},
author={Baxter, Rodney J},
journal={Annals of Physics},
volume={70},
number={1},
pages={193--228},
year={1972},
publisher={Elsevier},
review={\MR{0290733}}
}	
		
\bib{CR}{article}{
title={Regular subgroups of the affine group and radical circle algebras},
author={Catino, Francesco},
author={Rizzo, Roberto},
journal={Bulletin of the Australian Mathematical Society},
volume={79},
number={1},
pages={103--107},
year={2009},
publisher={Cambridge University Press},
review={\MR{2486886}}
}
		
\bib{CJO}{article}{
title={Braces and the Yang--Baxter equation},
author={Ced{\'o}, Ferran},
author={Jespers, Eric},
author={Okni{\'n}ski, Jan},
journal={Communications in Mathematical Physics},
volume={327},
number={1},
pages={101--116},
year={2014},
publisher={Springer},
review={\MR{3177933}}	
}
		
\bib{Ch}{article}{
title={Fixed-point free endomorphisms and Hopf Galois structures},
author={Childs, Lindsay},
journal={Proceedings of the American Mathematical Society},
volume={141},
number={4},
pages={1255--1265},
year={2013},
review={\MR{3008873}} 	
}
		
\bib{DG}{article}{
title={On groups of I-type and involutive Yang--Baxter groups},
author={David, Nir Ben},
author={Ginosar, Yuval},
journal={Journal of Algebra},
volume={458},
pages={197--206},
year={2016},
publisher={Elsevier},
review={\MR{3500774}}
}	
		
\bib{De1}{article}{
title={Set-theoretic solutions of the Yang--Baxter equation, RC-calculus, and Garside germs},
author={Dehornoy, Patrick},
journal={Advances in Mathematics},
volume={282},
pages={93--127},
year={2015},
publisher={Elsevier},
review={\MR{3374524}}
}
		
\bib{De2}{article}{
title={Set-theoretic solutions of the Yang--Baxter equation, RC-calculus, and Garside germs},
author={Dehornoy, Patrick},
journal={Advances in Mathematics},
volume={282},
pages={93--127},
year={2015},
publisher={Elsevier},
review={\MR{3374524}}	
}
		
\bib{DDM}{article}{
title={Garside families and Garside germs},
author={Dehornoy, Patrick},
author={Digne, Fran{\c{c}}ois},
author={Michel, Jean},
journal={Journal of Algebra},
volume={380},
pages={109--145},
year={2013},
publisher={Elsevier},
review={\MR{3023229}}
}
		
\bib{D}{article}{
author={Drinfel\cprime d, V. G.},
title={On some unsolved problems in quantum group theory},
conference={
title={Quantum groups},
address={Leningrad},
date={1990},
},
book={
series={Lecture Notes in Math.},
volume={1510},
publisher={Springer, Berlin},
},
date={1992},
pages={1--8},
review={\MR{1183474}}
}
		
\bib{ESS}{article}{
   author={Etingof, Pavel},
   author={Schedler, Travis},
   author={Soloviev, Alexandre},
   title={Set-theoretical solutions to the quantum Yang-Baxter equation},
   journal={Duke Math. J.},
   volume={100},
   date={1999},
   number={2},
   pages={169--209},
   issn={0012-7094},
   review={\MR{1722951}},
   doi={10.1215/S0012-7094-99-10007-X},
}

\bib{GI1}{article}{
title={Noetherian properties of skew polynomial rings with binomial relations},
author={Gateva-Ivanova, Tatiana},
journal={Transactions of the American Mathematical Society},
volume={343},
number={1},
pages={203--219},
year={1994},
review={\MR{1173854}}	
}
		
\bib{GI2}{article}{
title={Set-theoretic solutions of the Yang--Baxter equation, braces and symmetric groups},
author={Gateva-Ivanova, Tatiana},
journal={Advances in Mathematics},
volume={338},
pages={649--701},
year={2018},
publisher={Elsevier},
review={\MR{3861714}}
}
		
\bib{GI3}{article}{
title={Skew polynomial rings with binomial relations},
author={Gateva-Ivanova, Tatiana},
journal={Journal of Algebra},
volume={185},
number={3},
pages={710--753},
year={1996},
publisher={Elsevier},
review={\MR{1419721}}	
}
		
\bib{GI4}{article}{
title={Quadratic algebras, Yang--Baxter equation, and Artin--Schelter regularity},
author={Gateva-Ivanova, Tatiana},
journal={Advances in Mathematics},
volume={230},
number={4-6},
pages={2152--2175},
year={2012},
publisher={Elsevier},
review={\MR{2927367}}
}
		
\bib{GIVB}{article}{
title={Semigroups of I-Type},
author={Gateva-Ivanova, Tatiana},
author={Van den Bergh, Michel},
journal={Journal of Algebra},
volume={206},
number={1},
pages={97--112},
year={1998},
publisher={Elsevier},
review={\MR{1637256}}
}
		
\bib{GV}{article}{
title={Skew braces and the Yang--Baxter equation},
author={Guarnieri, Leandro and Vendramin, Leandro},
journal={Mathematics of Computation},
volume={86},
number={307},
pages={2519--2534},
year={2017},
review={\MR{3647970}}
}

\bib{GGVa}{article}{
title={Set-theoretic type solutions of the braid equation},
author={Guccione, Jorge A},
author={Guccione, Juan J},
author={Valqui, Christian}
journal={arXiv:2008.13494}
}

\bib{GGV}{article}{
title={Yang--Baxter operators in symmetric categories},
author={Guccione, Jorge A},
author={Guccione, Juan J},
author={Vendramin, Leandro},
journal={Communications in Algebra},
volume={46},
number={7},
pages={2811--2845},
year={2018},
publisher={Taylor \& Francis},
review={\MR{3780826}}		
}
		
\bib{JO}{article}{
   author={Jespers, Eric},
   author={Okni\'{n}ski, Jan},
   title={Monoids and groups of $I$-type},
   journal={Algebr. Represent. Theory},
   volume={8},
   date={2005},
   number={5},
   pages={709--729},
   issn={1386-923X},
   review={\MR{2189580}},
   doi={10.1007/s10468-005-0342-7},
}

\bib{Ka}{book}{
title={Quantum groups},
author={Kassel, Christian},
volume={155},
year={2012},
publisher={Springer Science \& Business Media},
review={\MR{1321145}}	
}
		
\bib{LYZ}{article}{
title={On the set-theoretical Yang-Baxter equation},
author={Lu, Jiang-Hua},
author={Yan, Min},
author={Zhu, Yong-Chang},
journal={Duke Mathematical Journal},
volume={104},
number={1},
pages={1--18},
year={2000},
publisher={Durham, NC: Duke University Press, 1935-},
review={\MR{1769723}}
}
		
\bib{Mo}{book}{
title={Hopf algebras and their actions on rings},
author={Montgomery, Susan},
number={82},
year={1993},
publisher={American Mathematical Soc.},
review={\MR{1243637}}
}
		
\bib{R1}{article}{
title={Braces, radical rings, and the quantum Yang--Baxter equation},
author={Rump, Wolfgang},
journal={Journal of Algebra},
volume={307},
number={1},
pages={153--170},
year={2007},
publisher={Elsevier},
review={\MR{2278047}}
}
		
\bib{R2}{article}{
title={A covering theory for non-involutive set-theoretic solutions to the Yang--Baxter equation},
author={Rump, Wolfgang},
journal={Journal of Algebra},
volume={520},
pages={136--170},
year={2019},
publisher={Elsevier},
review={\MR{3881192}}
}
		
\bib{S}{article}{
   author={Soloviev, Alexander},
   title={Non-unitary set-theoretical solutions to the quantum Yang-Baxter
   equation},
   journal={Math. Res. Lett.},
   volume={7},
   date={2000},
   number={5-6},
   pages={577--596},
   issn={1073-2780},
   review={\MR{1809284}},
   doi={10.4310/MRL.2000.v7.n5.a4},
}
		
\bib{Y}{article}{
title={Some exact results for the many-body problem in one dimension with repulsive delta-function interaction},
author={Yang, Chen-Ning},
journal={Physical Review Letters},
volume={19},
number={23},
pages={1312},
year={1967},
publisher={APS},
review={\MR{0261870}}
}		
		
\end{biblist}
\end{bibdiv}

\end{document}